\theoremstyle{plain}
\newtheorem{thm}{Theorem}[section]
\newtheorem{conj}[thm]{Conjecture}
\newtheorem*{hypHH}{Hypothesis~$(\mathrm{HH}_1)$}
\newtheorem{lem}[thm]{Lemma}
\newtheorem{cor}[thm]{Corollary}
\newtheorem{prop}[thm]{Proposition}
\theoremstyle{definition}
\newtheorem{cond}[thm]{Condition}
\newtheorem{defn}[thm]{Definition}
\newtheorem{rmk}[thm]{Remark}
\newtheorem{rmks}[thm]{Remarks}
\newtheorem{examples}[thm]{Examples}
\numberwithin{equation}{section}
\DeclareFontFamily{U}{russian}{}
\DeclareFontShape{U}{russian}{m}{n}
        { <5><6> wncyr5
        <7><8><9> wncyr7
        <10><10.95><12><14.4><17.28><20.74><24.88> wncyr10 }{}
\DeclareSymbolFont{Russian}{U}{russian}{m}{n}
\DeclareSymbolFontAlphabet{\mathcyr}{Russian}
\let\@math@cyr\mathcyr
\renewcommand{\mathcyr}[1]{\@math@cyr{\cyracc #1}}
\newcommand{\mylangle}{\xy(0,0),(1.17,1.485)**[|(1.025)]@{-},(0,0),(1.17,-1.485)**[|(1.025)]@{-}\endxy\mkern2mu}
\newcommand{\myrangle}{\mkern2mu\xy(0,0),(-1.17,1.485)**[|(1.025)]@{-},(0,0),(-1.17,-1.485)**[|(1.025)]@{-}\endxy}
\newcommand{\Frob}{\mathrm{Fr}}
\newcommand{\barsigma}{{\mkern2.5mu\overline{\mkern-2.5mu{}\sigma\mkern-1.7mu}\mkern1.7mu}}
\newcommand{\kbar}{{\mkern1mu\overline{\mkern-1mu{}k\mkern-1mu}\mkern1mu}}
\def\myrightarrow{{\setbox\z@\hbox{$\rightarrow$}\dimen0\ht\z@\multiply\dimen0 6\divide\dimen0 10\ht\z@\dimen0\box\z@}}
\def\myrightarrowfill@{\arrowfill@\relbar\relbar\myrightarrow}
\newcommand{\myxrightarrow}[2][]{\ext@arrow 0359\myrightarrowfill@{#1}{#2}}
\newcommand{\mtilde}{{\mathchoice
    {\widetilde{m}}
    {\widetilde{m}}
    {\rlap{$\scriptscriptstyle{m}$}\vphantom{\raise0pt\hbox{$m$}}\smash{\lower2.5pt\hbox{$\scriptscriptstyle\widetilde{\phantom{\scriptscriptstyle{m}}}$}}}
    {\rlap{$\scriptscriptstyle{m}$}\vphantom{\raise.2pt\hbox{$m$}}\smash{\lower2.05pt\hbox{$\scriptscriptstyle\widetilde{\phantom{\scriptscriptstyle{m}}}$}}}}}
\newcommand{\ctilde}{{\widetilde{c}\mkern1.1mu}}
\newcommand{\Mtilde}{{\mathchoice
    {\rlap{$M$}\mkern1mu\smash[b]{\lower.5pt\hbox{$\widetilde{\phantom{M}}$}}\mkern-1mu}
    {\rlap{$M$}\mkern1mu\smash[b]{\lower.5pt\hbox{$\widetilde{\phantom{M}}$}}\mkern-1mu}
    {\rlap{$\scriptstyle{M}$}\mkern1mu\smash[b]{\lower.5pt\hbox{$\widetilde{\phantom{\scriptstyle{M}}}$}}\mkern-1mu}
    {\widetilde{M}}}}
\newcommand{\etabar}{{\bar\eta}}
\newcommand{\xibar}{{\bar\xi}}
\newcommand{\sC}{{\mathscr C}}
\newcommand{\sD}{{\mathscr D}}
\newcommand{\sE}{{\mathscr E}}
\newcommand{\sF}{{\mathscr F}}
\newcommand{\sO}{{\mathscr O}}
\newcommand{\sOint}{{\mathcal O}}
\newcommand{\sB}{{\mathscr B}}
\newcommand{\sL}{{\mathscr L}}
\newcommand{\sT}{{\mathscr T}}
\newcommand{\sU}{{\mathscr U}}
\newcommand{\sV}{{\mathscr V}}
\newcommand{\sW}{{\mathscr W}}
\newcommand{\sX}{{\mathscr X}}
\newcommand{\sY}{{\mathscr Y}}
\newcommand{\A}{{\mathbf A}}
\renewcommand{\C}{{\mathbf C}}
\newcommand{\cE}{(\mathrm{E})}
\newcommand{\cEu}{(\mathrm{E}_1)}
\newcommand{\F}{{\mathbf F}}
\newcommand{\Fp}{{\mathbf F}_{\mkern-2mup}}
\renewcommand{\Im}{{\mathrm{Im}}}
\newcommand{\effred}{{\mathrm{eff},\mathrm{red}}}
\newcommand{\eff}{{\mathrm{eff}}}
\newcommand{\Supp}{{\mathrm{Supp}}}
\newcommand{\Sym}{{\mathrm{Sym}}}
\newcommand{\N}{{\mathbf N}}
\renewcommand{\P}{{\mathbf P}}
\newcommand{\Q}{{\mathbf Q}}
\newcommand{\R}{{\mathbf R}}
\newcommand{\Z}{{\mathbf Z}}
\newcommand{\Zcyc}{{\mathrm Z}}
\newcommand{\CH}{\mathrm{CH}}
\newcommand{\Azero}{\mathrm{A}_0}
\renewcommand{\vert}{\mathrm{vert}}
\newcommand{\NS}{\mathrm{NS}}
\newcommand{\Gm}{\mathbf{G}_\mathrm{m}}
\newcommand{\Gal}{\mathrm{Gal}}
\newcommand{\Pic}{\mathrm{Pic}}
\newcommand{\Br}{\mathrm{Br}}
\newcommand{\Spec}{\mathrm{Spec}}
\newcommand{\Div}{\mathrm{Div}}
\renewcommand{\phi}{\varphi}
\renewcommand{\emptyset}{\varnothing}
\newcommand{\red}{{\mathrm{red}}}
\newcommand{\Ker}{{\mathrm{Ker}}}
\newcommand{\Coker}{{\mathrm{Coker}}}
\newcommand{\Hom}{{\mathrm{Hom}}}
\newcommand{\RHom}{R\mathscr{H}\mkern-4muom}
\newcommand{\xihat}{{\widehat{\xi}}}
\newcommand{\zhat}{{\widehat{z}}}
\newcommand{\Zhat}{{\widehat{\Z}}}
\newcommand{\chapeau}{{\rlap{\smash{\hbox{\lower4pt\hbox{\hskip1pt$\widehat{\phantom{u}}$}}}}}}
\newcommand{\Picplushat}{\Pic_+^{{\smash{\hbox{\lower4pt\hbox{\hskip0.4pt$\widehat{\phantom{u}}$}}}}}}
\newcommand{\PicplusAhat}{\Pic_{+,\A}^{{\smash{\hbox{\lower4pt\hbox{\hskip.4pt$\widehat{\phantom{u}}$}}}}}}
\newcommand{\Picplus}{\Pic_+}
\newcommand{\PicplusA}{\Pic_{+,\A}}
\newcommand{\Brplus}{\Br_+}
\newcommand{\CHzhat}{\CH_{0\phantom{,}}^\chapeau}
\newcommand{\CHzAhat}{\CH_{0,\A}^\chapeau}
\newcommand{\CHzA}{\CH_{0,\A}}
\newcommand{\Pichat}{\Pic^{{\smash{\hbox{\lower4pt\hbox{\hskip0.4pt$\widehat{\phantom{u}}$}}}}}}
\newcommand{\PicA}{\Pic_\A}
\newcommand{\PicAhat}{\Pichat_\A}
\renewcommand{\div}{{\mathrm{div}}}
\newcommand{\Cores}{\mathrm{Cores}}
\DeclareMathOperator{\inv}{inv}
\newcommand{\Fv}[1]{\F_{\mkern-2mu#1}}
\let\@wraptoccontribs\wraptoccontribs\makeatother
\date{August 29th, 2014; revised on April 2nd, 2015}
\title[On the fibration method]{On the fibration method for zero-cycles and\\rational points}
\author{Yonatan Harpaz}
\address{Institute for Mathematics, Astrophysics and Particle Physics, Faculty of Science,
Radboud University,
P.O. Box 9010, 6500~GL Nijmegen, The Netherlands}
\email{y.harpaz@math.ru.nl}
\author{Olivier Wittenberg}
\address{D\'epartement de math\'ematiques et applications, \'Ecole normale sup\'erieure, 45~rue d'Ulm, 75320 Paris Cedex 05, France}
\email{wittenberg@dma.ens.fr}
\begin{document}
\begin{abstract}
Conjectures on the existence of zero-cycles on arbitrary smooth projective
varieties over number fields were proposed by Colliot-Thélène, Sansuc,
Kato and
Saito in the 1980's.  We prove that these conjectures are compatible with
fibrations, for fibrations into rationally connected varieties over a
curve.  In particular, they hold for the total space of families of
homogeneous spaces of linear groups with connected geometric stabilisers.
We prove the analogous result for rational points, conditionally on a
conjecture on locally split values of polynomials which a recent work of
Matthiesen establishes in the case of linear polynomials over the rationals.
\end{abstract}

\maketitle

\vspace*{-1mm}
\section{Introduction}
\label{sec:intro}

Let~$X$ denote a smooth and proper algebraic variety over
a number field~$k$.

It is customary to embed the set~$X(k)$ of rational points of~$X$
diagonally into the space of adelic points~$X(\A_k)=\prod_{v \in \Omega}X(k_v)$.
In his foundational paper~\cite{maninicm}, Manin combined local class field
theory, global class field theory and Grothendieck's theory of Brauer
groups of schemes to define a natural closed subset~$X(\A_k)^{\Br(X)}$
of~$X(\A_k)$ which contains the image of~$X(k)$.  The absence of any
rational point on~$X$ is in many examples explained by the vacuity of
$X(\A_k)^{\Br(X)}$.  The following conjecture, first
formulated by Colliot-Th\'el\`ene and Sansuc in~1979 in the case of surfaces,
posits a partial converse.

\newcommand{\citeconjct}{\cite[p.~174]{ctbudapest}}
\begin{conj}[\citeconjct]
\label{conj:points}
For any smooth, proper, geometrically irreducible, rationally connected variety~$X$ over a number field~$k$,
the set~$X(k)$ is dense in $X(\A_k)^{\Br(X)}$.
\end{conj}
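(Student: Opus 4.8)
Strictly speaking this is a conjecture rather than a theorem, so I will only describe the line of attack on which the present paper rests — the \emph{fibration method} — and indicate where it ceases to be unconditional. The plan is to induct on $\dim X$. When $\dim X\le 1$ the variety~$X$ is a smooth conic, and Conjecture~\ref{conj:points} holds for it: either $X\cong\P^1_k$, where $\Br(\P^1_k)=\Br(k)$ and $X(k)$ is dense in $X(\A_k)$ by weak approximation, or $X$ has no rational point, in which case the nontrivial quaternion class of $\Br(X)/\Br(k)$ obstructs $X(\A_k)^{\Br(X)}$ from being nonempty. For $\dim X\ge 2$ the idea is to exploit a fibration; and since neither $X(\A_k)^{\Br(X)}$ nor the validity of the conjecture changes under birational modification of the smooth proper rationally connected variety~$X$, we may first replace~$X$ by a convenient birational model.

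Suppose this model carries a dominant morphism $f\colon X\to B$ onto a smooth proper curve~$B$ whose generic fibre $X_\eta$ is smooth, proper, geometrically irreducible and geometrically rationally connected; rational connectedness of~$X$ forces $B$ to have genus zero, and we may take $B=\P^1_k$. The typical sources of such an~$f$ are conic bundles, del Pezzo fibrations, and total spaces of families of homogeneous spaces over~$\P^1$. Fix $(x_v)\in X(\A_k)^{\Br(X)}$ and a neighbourhood~$U$ of it in $X(\A_k)$; we must produce $x\in X(k)\cap U$. Choose a finite set~$S$ of places, containing the archimedean ones and those of bad reduction, large enough that finitely many generators of the group of vertical Brauer classes $\Br(X)\cap f^{*}\Br(k(B))$ are unramified away from~$S$ and from the closed points of~$B$ over which~$f$ is not split. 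Pairing $(x_v)$ against these vertical classes constrains the images $f(x_v)\in B(k_v)$; combined with a Harari-type ``formal lemma'' on the base, it allows one to move $(x_v)$ within~$U$ so that for $v\notin S$ the fibre through~$x_v$ has good reduction, and it reduces the problem to exhibiting a point $t_0\in B(k)$, close to $f(x_v)$ for $v\in S$, such that $X_{t_0}$ is smooth and has a $k_v$-point for \emph{every} place~$v$, while carrying an adelic point near $(x_v)$ that is still orthogonal to $\Br(X_{t_0})$. Granting such a~$t_0$, the induction hypothesis applied to the smooth proper rationally connected variety $X_{t_0}$, of dimension $\dim X-1$, yields a $k$-point of $X_{t_0}$ inside~$U$, hence the desired $x\in X(k)\cap U$.

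The main obstacle is exactly the construction of~$t_0$: one must prescribe that the fibre of~$f$ acquire local points at the places of~$S$ while retaining them at the places outside~$S$ — equivalently, that a polynomial encoding the non-split closed points of~$f$ take ``locally split'' values at a controlled finite set of places. Over~$\P^1$ this is the unproved Hypothesis~$(\mathrm{HH}_1)$, and over a higher-dimensional base it is an avatar of Schinzel's hypothesis. A recent theorem of Matthiesen establishes~$(\mathrm{HH}_1)$ for linear polynomials over~$\Q$ by additive combinatorics in the spirit of Green--Tao, which is precisely what lets the rational-point case go through \emph{conditionally}; for zero-cycles the rigidity of needing ``one good~$t_0$'' can be relaxed, because a zero-cycle may be spread over several fibres, so a Chinese-remainder-type argument in the tradition of Colliot-Th\'el\`ene--Skorobogatov--Swinnerton-Dyer and Salberger replaces the polynomial hypothesis and the fibration step becomes unconditional.

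I should also stress a limitation intrinsic to this route, even granting~$(\mathrm{HH}_1)$: it only reaches varieties birational to a fibration into rationally connected varieties over a rationally connected base, so the ``non-fibred'' cases — in particular Fano varieties of Picard number one, such as cubic and quartic threefolds and their higher-dimensional analogues — are not touched, and Conjecture~\ref{conj:points} for them remains a hard open problem unaffected by anything proved here. What is realistically attainable along these lines, and what the paper carries out, is the compatibility of Conjecture~\ref{conj:points} with fibrations over a curve: unconditionally for zero-cycles, and, modulo~$(\mathrm{HH}_1)$, for rational points.
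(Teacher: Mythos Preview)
You rightly note that this is a conjecture, not a theorem; the paper offers no proof of it, so there is nothing to compare your proposal against at the level of a proof. Your sketch of the fibration strategy is a fair summary of the general programme, and your closing paragraph correctly delimits what the paper actually establishes.

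That said, two points in your description attribute to the paper the methods of its predecessors rather than its own innovations. First, the hypothesis the paper invokes for rational points is not~$(\mathrm{HH}_1)$ but a new Conjecture~\ref{conj:mainstrong} on locally split values of polynomials; Schinzel's~$(\mathrm{H})$, via~$(\mathrm{HH}_1)$, implies Conjecture~\ref{conj:mainstrong} only when the splitting fields of the non-split fibres are abelian or almost abelian (Theorem~\ref{th:schinzelconjmain}), and the whole point of the paper is precisely to dispense with this abelianness restriction. Second, for zero-cycles the decisive new idea is not a Chinese-remainder or reciprocity argument in the tradition of Salberger and Colliot-Th\'el\`ene--Skorobogatov--Swinnerton-Dyer, but the replacement of Dirichlet's theorem on primes in arithmetic progressions by a consequence of strong approximation for affine space minus a codimension~$2$ subset (Lemma~\ref{lem:strongapprox}); this is what bypasses the global reciprocity step and hence the abelianness assumptions that constrained all earlier versions of the fibration method.
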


By ``rationally connected'', we mean that for any algebraically closed field~$K$ containing~$k$,
two general $K$\nobreakdash-points of~$X$ can be joined by a rational curve
(\emph{e.g.}, geometrically unirational varieties are rationally connected;
see~\cite[Chapter~IV]{kollarbook} for more on this notion).

In a parallel development, a conjecture concerning the image of the Chow group~$\CH_0(X)$ of zero-cycles up to rational equivalence
in the product of the groups~$\CH_0(X\otimes_k k_v)$
over the places~$v$ of~$k$ was put forward by Colliot-Th\'el\`ene, Sansuc, Kato and Saito (for rational surfaces in~\cite[\textsection4]{ctsansuc},
for arbitrary varieties in~\cite[\textsection7]{katosaitocontemp} and in~\cite[\textsection1]{ctbordeaux}).
Let us recall its statement.
Denote by $\Omega_f$ (resp.~$\Omega_\infty$) the set of finite (resp.~infinite) places of~$k$
and by~$\CHzA(X)$ the group
\begin{align}
\label{eq:defchza}
\CHzA(X)=\prod_{v \in \Omega_f} \CH_0(X\otimes_k k_v) \times \prod_{v \in \Omega_\infty} \frac{\CH_0(X \otimes_k k_v)}{N_{\kbar_v/k_v}(\CH_0(X\otimes_k\kbar_v))}\rlap{\text{.}}
\end{align}
By the reciprocity law of global class field theory,
the pairings
\begin{align*}
\rlap{ }\mylangle-,-\myrangle_v: \Br(X\otimes_k k_v) \times \CH_0(X \otimes_k k_v) \to \Br(k_v) \hookrightarrow \Q/\Z
\end{align*}
for $v \in \Omega$,
characterised by the property
that $\mylangle \alpha,P \myrangle_v$ is the local invariant of $\alpha(P) \in \Br(k_v(P))$ whenever~$P$ is a closed point of~$X \otimes_k k_v$,
fit together in a complex
\begin{align}
\label{eq:conjE}
\xymatrix@C=4.5em{
\widehat{{\mathrm{CH}}_0(X)} \ar[r] &
\widehat{{\mathrm{CH}}_{0,\mathbf{A}}(X)}
\ar[r]^(.5){\mbox{$\sum_{\scriptscriptstyle{}v \in \Omega}\scriptscriptstyle\mylangle-,-\myrangle_v$}} &
*!<-2em,0ex>\entrybox{\mathrm{Hom}(\mathrm{Br}(X),{\mathbf Q}/{\mathbf Z})\rlap{\text{,}}}
}
\end{align}
where we denote $\widehat{M}=\varprojlim_{n\geq 1} M/nM$ for any abelian group~$M$.
In the formulation given by
van~Hamel~\cite[Theorem~0.2]{vanhamel},
the above-mentioned conjecture is then the following.

\newcommand{\citeconjctsks}{\cite[\textsection7]{katosaitocontemp}, \cite[\textsection1]{ctbordeaux}; see also~\cite[\textsection1.1]{wittdmj}}
\begin{conj}[\citeconjctsks]
\label{conj:cycles}
For any smooth, proper, geometrically irreducible variety~$X$ over a number field~$k$,
the complex~\eqref{eq:conjE} is exact.
\end{conj}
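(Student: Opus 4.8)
In full generality — for an arbitrary smooth, proper, geometrically irreducible variety $X$ — this is one of the central open conjectures on zero-cycles, so the realistic target is to establish it for the largest class of varieties currently within reach, and the natural mechanism is the \emph{fibration method}: a dévissage of the conjecture for a total space into the conjecture for a base curve and for the fibres. The plan is to choose a dominant morphism $f\colon X\to C$ onto a smooth proper curve whose smooth fibres $X_c$ are already known to satisfy Conjecture~\ref{conj:cycles} — rationally connected fibres being the case of interest, and homogeneous spaces of linear groups with connected geometric stabilisers the intended application — and then, given $(z_v)_{v}\in\CHzAhat(X)$ in the kernel of $\sum_{v\in\Omega}\mylangle-,-\myrangle_v$, to prove exactness at $\CHzAhat(X)$ as follows: push forward to $f_*(z_v)\in\CHzAhat(C)$, which is still orthogonal to $f^*\Br(C)$; invoke Conjecture~\ref{conj:cycles} for the curve $C$, which is a theorem (a consequence of the reciprocity law and the classical duality between $\Pic$ and $\Br$ of curves over local and global fields), to approximate $f_*(z_v)$ by a global zero-cycle $z_0$ on $C$; and finally lift $z_0$ through $f$, fibre by fibre, using exactness of~\eqref{eq:conjE} for the fibres above the closed points in the support of $z_0$.

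The crux is the lifting step, which splits into two regimes. Away from the finitely many ``bad'' points of $C$ — those below a non-smooth fibre or a fibre with no geometrically irreducible component of multiplicity one — one must produce a closed point of $C$, or rather, exploiting the flexibility zero-cycles enjoy over rational points, an effective zero-cycle of controlled degree, whose fibre is smooth and carries local cycles of the prescribed type at every place at once. This is exactly where an arithmetic input on the base is needed: a statement guaranteeing that the polynomial cutting out the non-split locus of $f$ takes ``locally split'' values at a prescribed finite set of places while remaining a norm at the other places — the content of Hypothesis~$(\mathrm{HH}_1)$, whose appearance is the reason the theorem must be stated conditionally. At the bad points one argues separately: glue the generic-fibre data to local zero-cycles on the singular fibres, use strong approximation on $C$ minus a well-chosen place together with a compactness argument to absorb the corrections into the effective cycle, and keep track throughout of the \emph{vertical} Brauer classes — those whose restriction to the generic fibre $X_\eta$ is already defined over $k(C)$ — since it is precisely this vertical subgroup that the orthogonality hypothesis controls.

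The main obstacle is the base case of the dévissage. When $X$ is rationally connected one can arrange that the fibres of $f$ are again rationally connected of dimension $\dim X-1$ and induct, the case $\dim X\le 1$ being a smooth conic or $\P^1$, where the conjecture is known; but for an arbitrary variety no such descent is available, so the method delivers not the conjecture itself but its \emph{compatibility} with fibrations — Conjecture~\ref{conj:cycles} for $X$ granted it for the fibres and Hypothesis~$(\mathrm{HH}_1)$ for $C$ — together with the unconditional consequences in the cases where $(\mathrm{HH}_1)$ is a theorem, for instance when $C=\P^1$, the bad fibres lie over rational points, and Matthiesen's results on locally split values of linear polynomials over $\Q$ apply. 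A secondary but genuine difficulty is that the whole argument lives in the completions $\widehat{M}=\varprojlim_{n} M/nM$, so every diagram chase — push-forward, the residue sequences relating $\Br(X)$, $\Br$ of the fibres and $\Br(X_\eta)$, and the orthogonality to $\Br(X)$ — must be carried out compatibly with the inverse limits, and one must verify that nothing is lost in passing between $\CH_0$ and its completion, in particular at the real places where~\eqref{eq:defchza} introduces a norm quotient.
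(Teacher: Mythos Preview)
The statement you are addressing is a \emph{conjecture}; the paper does not prove it in general, but rather establishes its compatibility with fibrations (Theorem~\ref{th:introcycles} and the refinements in \S\ref{sec:maintheorems}). Your outline correctly identifies the fibration method as the mechanism and the broad shape of the argument --- push forward to the base, use the conjecture for the curve, lift to the fibres --- matches the paper's architecture.

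There is, however, a genuine and central misunderstanding. You assert that the lifting step ``is exactly where an arithmetic input on the base is needed'' and that this input is Hypothesis~$(\mathrm{HH}_1)$, ``whose appearance is the reason the theorem must be stated conditionally''. This is wrong for zero-cycles: the paper's principal innovation is precisely that the result for zero-cycles is \emph{unconditional}. The key observation is Lemma~\ref{lem:strongapprox}: because zero-cycles, unlike rational points, can be taken of arbitrarily high degree on the base, one gains enough parameters that the required ``locally split values'' statement becomes a consequence of strong approximation for affine space minus a codimension~$2$ subset, rather than a Schinzel-type conjecture. Your invocation of~$(\mathrm{HH}_1)$ and of Matthiesen's theorem belongs to the \emph{rational points} side of the story (\S\ref{sec:rationalpoints}), where the analogous statement (Conjecture~\ref{conj:mainstrong}) is indeed conditional. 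By conflating the two, you miss the whole point of the paper's contribution to Conjecture~\ref{conj:cycles}.

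A secondary consequence of this confusion is that you do not register the removal of the abelianness hypothesis. Earlier fibration results needed each bad fibre to contain a multiplicity-one component split by an \emph{abelian} extension, because the reciprocity argument underlying Dirichlet-type inputs only controls abelian splitting. Since the paper bypasses that reciprocity argument entirely via Lemma~\ref{lem:strongapprox} and the machinery of $\Picplus(C)$, $\Brplus(C)$ from \S\ref{sec:reminders}, no abelianness is required; this is why Corollary~\ref{cor:cycleshomspace} covers arbitrary norm forms. Finally, a small inaccuracy: the conjecture for the base curve~$C$ is not an unconditional theorem; it is known (by Saito) under the assumption that the divisible part of $\Sha$ of the Jacobian vanishes.
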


Rationally connected varieties which satisfy Conjecture~\ref{conj:points} over every finite extension
of~$k$
are known to satisfy Conjecture~\ref{conj:cycles} (cf.~\cite{liangarithmetic}).

Two general methods were elaborated to attack Conjecture~\ref{conj:points} and Conjecture~\ref{conj:cycles}.
The
descent method (see~\cite{ctsandescent2}, \cite{skorobeyond}, \cite{skobook}; see~\cite{salbergerdescent}
for an adaptation to the context of zero-cycles) generalises the
classical descent theory of elliptic curves to arbitrary varieties.
The fibration method,
with which we are concerned in the present paper, consists, under various sets of hypotheses,
in exploiting the structure of a fibration $f:X\to Y$ to establish
the conjectures for~$X$ when they are known for~$Y$ and for the fibers of~$f$.

Our aim in this paper is twofold.  First, in \textsection\textsection\ref{sec:reminders}--\ref{sec:maintheorems}, we prove
the following general fibration theorem for the existence of zero-cycles, which is the
main result of the paper.

\begin{thm}
\label{th:introcycles}
Let~$X$ be a smooth, proper, geometrically irreducible variety over a number field~$k$ and let $f:X\to\P^1_k$ be a dominant morphism with rationally connected geometric generic fiber.
If the smooth fibers of~$f$ satisfy Conjecture~\ref{conj:cycles}, then so does~$X$.
\end{thm}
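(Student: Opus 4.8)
By a standard device recalled in~\textsection\ref{sec:reminders}, the exactness of~\eqref{eq:conjE} for a smooth, proper, geometrically irreducible variety~$X$ is equivalent to the following effective property~$\cE(X)$: for every finite set~$S$ of places of~$k$ containing~$\Omega_\infty$, every integer~$n\geq1$, and every family $(z_v)_{v\in S}$ of local zero-cycles on~$X$ of a common degree~$\delta$ with $\sum_{v\in S}\mylangle\alpha,z_v\myrangle_v=0$ for all $\alpha\in\Br(X)$, there exists a global zero-cycle~$z$ on~$X$ of degree~$\delta$ having the same class as~$z_v$ in $\CH_0(X\otimes_k k_v)/n$ for every~$v\in S$. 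The nontrivial direction of this equivalence asserts that an element of $\widehat{\CHzA(X)}$ annihilated by the Brauer pairing is, modulo~$n$ and place by place, represented by genuine local zero-cycles of a common degree; that is formal, given the duality and finiteness inputs of~\textsection\ref{sec:reminders}. So it suffices to prove~$\cE(X)$, knowing~$\cE(X')$ for every smooth fibre~$X'$ of~$f$, taken over its field of definition.

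\textbf{Step 2: descent to~$\P^1_k$.} Fix data $(S,n,(z_v)_{v\in S})$ of degree~$\delta$ as above. Enlarging~$S$, I may assume that~$X$, $f$ and the degenerate fibres have good reduction outside~$S$ and, by a moving lemma, that each~$z_v$ is supported at closed points where~$f_{k_v}$ is smooth; then $(f_*z_v)_{v\in S}$ is a family of local zero-cycles of degree~$\delta$ on~$\P^1_k$. The heart of the matter is to construct an effective zero-cycle $\theta=\sum_i n_i\,\theta_i$ on~$\P^1_k$, with the~$\theta_i$ distinct closed points of a fixed affine line, such that: \emph{(i)}~at each~$v\in S$, $\theta$ approximates~$f_*z_v$ closely enough that the fibres of~$f$ over the points of~$\theta$ above~$v$ carry local zero-cycles which, pushed into~$X$, reproduce the class of~$z_v$ in $\CH_0(X\otimes_k k_v)/n$; \emph{(ii)}~each~$X_{\theta_i}$ is smooth over~$k(\theta_i)$; \emph{(iii)}~taking the cycles of~(i) above~$S$, arbitrary closed points of the fibres at the remaining finite places (any variety over a $p$-adic field has closed points), and rational points of the fibres at the archimedean places (secured by placing the real places of~$\theta$ near~$\Supp(f_*z_v)$ for~$v\mid\infty$), one obtains on each~$X_{\theta_i}$ an adelic zero-cycle orthogonal to~$\Br(X_{\theta_i})$; and \emph{(iv)}~the various degrees fit together so that Step~3 returns a zero-cycle of degree exactly~$\delta$. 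Producing~$\theta$ uses strong approximation among the closed points of~$\P^1_k$ (equivalently, among monic polynomials), Lang--Weil together with Hensel's lemma at the places of good reduction (to handle the fibres and Brauer evaluations there), and the fact that, for~$\theta$ avoiding a suitable thin subset, the Brauer--Manin condition on each~$X_{\theta_i}$ is governed by that on~$X$ together with the finitely many classes of~$\Br(k(\P^1))$ ramified along the degenerate fibres --- all Brauer groups in sight being finite thanks to rational connectedness of the geometric generic fibre, and the constraints imposed by those finitely many classes being met, unconditionally for zero-cycles, through a judicious choice of the residue fields~$k(\theta_i)$.

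\textbf{Step 3: conclusion.} Each~$X_{\theta_i}$ is a smooth fibre of~$f$ over the number field~$k(\theta_i)$, hence satisfies Conjecture~\ref{conj:cycles}, i.e.~property~$\cE(X_{\theta_i})$. Applying it to the adelic zero-cycle of~(iii) yields a global zero-cycle~$z_i$ on~$X_{\theta_i}$ agreeing with that data modulo~$n$ at the relevant places; pushing forward along the closed immersion $\iota_i\colon X_{\theta_i}\hookrightarrow X$ and setting $z=\sum_i n_i\,(\iota_i)_*z_i$ gives, by~(i) and~(iv), a global zero-cycle on~$X$ of degree~$\delta$ with the same class as~$z_v$ in $\CH_0(X\otimes_k k_v)/n$ for all~$v\in S$. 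This is~$\cE(X)$, and therefore Conjecture~\ref{conj:cycles} for~$X$.

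\textbf{Main obstacle.} The whole weight of the proof rests on the construction of~$\theta$ in Step~2, and within it on clause~(iii): meeting the splitting and invariant conditions imposed by the finitely many Brauer classes supported on the degenerate fibres, while simultaneously respecting the approximation~(i), the smoothness~(ii), and membership in the complement of a thin set. Over a general base this is precisely what forces a hypothesis on locally split values of polynomials (the analogue of Hypothesis~$(\mathrm{HH}_1)$ needed for the rational-points statement). The decisive gain in the zero-cycle setting is that~$\theta$ may be spread over several closed points instead of being a single value $t\in\P^1(k)$, so the conditions at the degenerate fibres --- possibly unsatisfiable for every individual~$t$ --- can always be achieved by an appropriate choice of the residue fields~$k(\theta_i)$; turning this into a usable lemma and dovetailing it with the good-reduction analysis and the Brauer-group bookkeeping is the technical core of the argument.
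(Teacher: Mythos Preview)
Your overall architecture---reduce to an effective approximation statement, construct a suitable zero-cycle on the base whose fibres inherit a Brauer--Manin-orthogonal adelic point, then apply the hypothesis on the fibres---is the same as the paper's, and you have correctly located the entire difficulty in Step~2, clause~(iii). The gap is that your explanation of how to secure~(iii) is not a mechanism but a restatement of the problem.

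You propose to meet the constraints at the degenerate fibres ``unconditionally for zero-cycles, through a judicious choice of the residue fields~$k(\theta_i)$'', the freedom coming from spreading~$\theta$ over several closed points rather than a single rational one. That intuition is exactly the starting point of all earlier work on the fibration method for zero-cycles, and it is also exactly where those papers were forced to assume that every degenerate fibre contains a multiplicity-one component split by an \emph{abelian} extension of the residue field. Concretely, ``choosing the residue fields'' means finding closed points of~$\A^1_k$ above which the fibre acquires a split component; in the classical approach this is done via Dirichlet's theorem together with a global reciprocity argument, and reciprocity only controls the unspecified prime when the splitting field is abelian. Your ``strong approximation among monic polynomials'' is strong approximation on an affine space, which by itself imposes no splitting behaviour at all.

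What the paper does instead is this. For each bad point~$m$ with splitting field~$L_m/k(m)$ (possibly non-abelian), one applies strong approximation not on~$\A^1$ but on $R_{L_m/k(m)}(\A^1)$ minus its codimension-$2$ singular stratum (Lemma~\ref{lem:strongapprox}); this produces, with no abelianness hypothesis, an element of~$k(m)^*$ which is a local norm at the prescribed places and whose valuation is nonzero, outside~$S$, only at places of~$k(m)$ possessing a degree-$1$ place in~$L_m$. These elements are then interpolated via Riemann--Roch into a single divisor~$c$ on~$\P^1_k$ (\textsection\ref{subsec:constructionc}). For Lemma~\ref{lem:strongapprox} to apply, the relevant local values must be norms from~$L_m$; this is guaranteed by first adjusting the adelic cycle with the formal lemma (Proposition~\ref{prop:formallemma}) so that it becomes orthogonal to the finite group~$\Br_+(\P^1_k)$, and then invoking the arithmetic duality Theorem~\ref{th:arithduality} to lift its push-forward to a class in~$\Pic_+(\P^1_k)$. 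None of these three ingredients---the formal lemma in the~$\Br_+$ form, the~$\Pic_+/\Br_+$ duality, and the strong-approximation Lemma~\ref{lem:strongapprox} on the norm variety---appears in your outline, and together they constitute precisely the ``usable lemma'' you allude to in your final paragraph but do not supply.
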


The precise results we prove on zero-cycles are stated in~\textsection\ref{sec:maintheorems}.  They generalise Theorem~\ref{th:introcycles} in the following respects:
\begin{enumerate}
\item the base of the fibration is allowed to be any variety birationally equivalent to the product of a projective space with a curve~$C$ which
satisfies Conjecture~\ref{conj:cycles};
\smallskip
\item
only the smooth fibers of~$f$ above the closed points of a Hilbert subset (for instance, of a dense open subset)
of the base
are assumed to satisfy Conjecture~\ref{conj:cycles};
\smallskip
\item instead of requiring that the geometric generic fiber be rationally connected, it is enough to assume that
it is connected, that its abelian \'etale fundamental group is trivial and
that its Chow group of zero-cycles of degree~$0$ is trivial over any algebraically closed field extension;
\smallskip
\item when the geometric generic fiber is rationally connected, the assumption that the smooth fibers (above the closed points of a Hilbert subset) satisfy Conjecture~\ref{conj:cycles} may be replaced with the assumption
that they satisfy Conjecture~\ref{conj:points}.
\end{enumerate}

We note that a smooth, proper, geometrically irreducible curve~$C$ satisfies
Conjecture~\ref{conj:cycles} as soon as the divisible subgroup of the
Tate--Shafarevich group of its Jacobian is trivial, by a theorem of
Saito~\cite{saito} (see~\cite[Remarques~1.1~(iv)]{wittdmj}).  At least when $C(k)\neq\emptyset$,
this condition is even equivalent to Conjecture~\ref{conj:cycles}
(see~\cite[Ch.~I, Theorem~6.26~(b)]{milneadt}).

To illustrate Theorem~\ref{th:introcycles}, let us consider the very specific example
of the affine $n$\nobreakdash-dimensional hypersurface~$V$
given by
\begin{align}
\label{eq:normhyp}
N_{K/k}(x_1\omega_1 + \dots + x_n\omega_n)=P(t)\rlap{\text{,}}
\end{align}
where $P(t) \in k[t]$ is a nonzero polynomial and where the left-hand side is the norm
form associated to a finite extension~$K/k$ and to a basis
$(\omega_1,\dots,\omega_n)$
of the $k$\nobreakdash-vector space~$K$.
The study of the arithmetic of a smooth and proper model~$X$ of~$V$ has a long history.
Conjecture~\ref{conj:cycles} was previously known for~$X$ only when~$K/k$ is cyclic (see~\cite{ctsksd98},
which builds on~\cite{salberger}),
when~$K/k$ has prime degree or degree~$pq$ for distinct primes~$p,q$ (see~\cite[Theorem~4.3]{weioneq} and \cite[Example~3.1, Remark~3.7]{lianglocalglobal}),
when~$K/k$ is quartic or is abelian with Galois group $\Z/n\Z \times \Z/n\Z$
under certain assumptions on~$P(t)$ (see~\cite{weioneq}, \cite{derenthalsmeetswei}, \cite[Corollary~2.3]{liangtowards}
and~\cite[Example~3.9]{lianglocalglobal}),
and, finally, for arbitrary~$K/k$, when $P(t)=ct^m(1-t)^n$ for some $c \in k^*$ and some integers~$m,n$ (see~\cite{swarbrickjones}, which builds
on~\cite{heathbrownskorobogatov} and~\cite{cthasko} and
applies descent theory and the Hardy--Littlewood circle method
to establish Conjecture~\ref{conj:points} for~$X$ over every finite
extension of~$k$, and see~\cite[Example~3.10]{lianglocalglobal}).
By contrast,
it follows uniformly from Theorem~\ref{th:introcycles} that~$X$ satisfies
Conjecture~\ref{conj:cycles} for any~$K/k$ and any~$P(t)$.
Indeed, on the one hand,
the generic fiber of the projection map~$V\to\A^1_k$ given by the~$t$ coordinate is a torsor under the norm torus associated to the finite extension~$K/k$,
and on the other hand, smooth compactifications of torsors under tori satisfy Conjecture~\ref{conj:points} (see~\cite[Theorem~6.3.1]{skobook} and use~(4) above;
alternatively, these varieties satisfy Conjecture~\ref{conj:cycles} by~\cite{liangarithmetic}).
One can even go further and replace the~``constant'' field extension~$K/k$ with an arbitrary finite extension~$K/k(t)$:
Theorem~\ref{th:introcycles} implies Conjecture~\ref{conj:cycles} for smooth and proper models
of the affine $n$\nobreakdash-dimensional hypersurface defined by
$N_{K/k(t)}(x_1\omega_1 + \dots + x_n\omega_n)=P(t)$,
if~$(\omega_1,\dots,\omega_n)$
now denotes a basis
of the $k(t)$\nobreakdash-vector space~$K$.

More generally, we obtain the following result as a corollary of Theorem~\ref{th:introcycles}.
This covers in particular all fibrations into toric varieties, into Ch\^atelet surfaces or into
Ch\^atelet $p$\nobreakdash-folds
in the sense of~\cite{varillyviraychatelet}.
See~\textsection\ref{subsec:statements} for a more detailed discussion.

\begin{thm}
Let~$X$ be a smooth, proper, irreducible variety over a number field~$k$.
Let~$Y$ be an irreducible variety over~$k$,
birationally equivalent to either~$\P^n_k$, or~$C$, or $\P^n_k\times C$, for some $n\geq 1$ and some
smooth, proper, geometrically irreducible curve~$C$ over~$k$
which satisfies Conjecture~\ref{conj:cycles}.
Let $f:X \to Y$ be a dominant morphism whose generic fiber is birationally
equivalent to a homogeneous space of a connected linear algebraic group, with
connected geometric stabilisers.
Then~$X$ satisfies Conjecture~\ref{conj:cycles}.
\end{thm}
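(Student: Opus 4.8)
The plan is to reduce the assertion to Theorem~\ref{th:introcycles} by two successive reductions, one on the base and one on the fibers. First I would handle the base. Since Conjecture~\ref{conj:cycles} is a birational invariant of smooth proper models (zero-cycle class groups and Brauer groups, and hence the complex~\eqref{eq:conjE}, depend only on the birational class), and since $X$ is smooth and proper, I may freely replace $Y$ by any birationally equivalent variety; in particular I may assume $Y$ is either $\P^n_k$, or $C$, or $\P^n_k\times C$. The case $Y$ birational to $C$ is already covered by the curve case after composing with a morphism to $C$, using the generalisation (1) stated after Theorem~\ref{th:introcycles}. In the remaining cases, projecting onto $\P^n_k$ (when $Y$ is $\P^n_k$ or $\P^n_k\times C$, noting that $\P^n_k\times C$ is birational to $\P^{n-1}_k\times(\P^1_k\times C)$ and that $\P^1_k\times C$ is again a variety birational to the product of a projective space with $C$), I set up an iterated fibration: the composite $X\to Y\to\P^1_k$ obtained by projecting to a $\P^1$-factor has as its generic fiber a variety of the same type (a homogeneous-space fibration over a variety birational to $\P^{n-1}_k$ or $\P^{n-1}_k\times C$), so by induction on $n$ its smooth fibers satisfy Conjecture~\ref{conj:cycles}, and the generalisation (1) of Theorem~\ref{th:introcycles} (base birational to $\P^1_k$, or to $C$, or to $\P^1_k\times C$) then gives the conclusion for $X$. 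The base case of the induction is $Y$ birational to $\P^1_k$ or to $C$ itself.

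It therefore remains to treat the generic fiber. By generalisation (2), it suffices to verify Conjecture~\ref{conj:cycles} for the smooth fibers of $f$ above the closed points of a dense open subset $U\subseteq Y$; by spreading out, over a suitable such $U$ the fibers $X_y$ for $y\in U$ a closed point are smooth proper models of homogeneous spaces of a connected linear algebraic group over the residue field $k(y)$, with connected geometric stabilisers. For such a variety, smooth compactifications of torsors under tori—and more generally of homogeneous spaces of connected linear groups with connected geometric stabilisers—satisfy Conjecture~\ref{conj:points} over every finite extension of the base field, by \cite[Theorem~6.3.1]{skobook}; and rationally connected varieties satisfying Conjecture~\ref{conj:points} over all finite extensions satisfy Conjecture~\ref{conj:cycles}, by \cite{liangarithmetic} (this is the implication recalled in the excerpt, just before the statement of the two general methods). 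Alternatively, I could invoke generalisation (4), which allows me to replace the hypothesis ``smooth fibers satisfy Conjecture~\ref{conj:cycles}'' by ``smooth fibers satisfy Conjecture~\ref{conj:points}'' when the geometric generic fiber is rationally connected—and a homogeneous space of a connected linear group with connected geometric stabilisers is geometrically rational, hence rationally connected—so that \cite[Theorem~6.3.1]{skobook} can be applied directly without passing through \cite{liangarithmetic}.

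The main obstacle is bookkeeping rather than substance: one must ensure that the iterated-fibration argument on the base is set up so that at each stage the generic fiber genuinely remains a homogeneous-space fibration of the prescribed type over a base of the prescribed type, and that the Hilbert-subset/dense-open refinement (2) is compatible with this iteration—i.e.\ that the locus where spreading-out is valid can be taken to contain the relevant Hilbert subsets at each step. A secondary point requiring care is the passage, via (4) or via \cite{liangarithmetic}, from Conjecture~\ref{conj:points} for the geometric fibers to Conjecture~\ref{conj:cycles}: one needs Conjecture~\ref{conj:points} not just for the fibers but for the fibers base-changed to all finite extensions of their (varying) fields of definition, which is exactly the form in which \cite[Theorem~6.3.1]{skobook} is available. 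Once these compatibilities are checked, the result follows formally by combining Theorem~\ref{th:introcycles} (in its generalised form) with the known validity of Conjecture~\ref{conj:points} for the fibers.
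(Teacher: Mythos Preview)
Your overall strategy matches the paper's: reduce to the higher-dimensional fibration theorem (what you call generalisation~(1), stated in the paper as Corollary~\ref{cor:zerocycleshigherdimbase}) and verify the hypothesis on the fibers using the known results on homogeneous spaces. Two points, however, need correction.

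First, the reference \cite[Theorem~6.3.1]{skobook} covers only smooth compactifications of torsors under tori, not arbitrary homogeneous spaces of connected linear groups with connected geometric stabilisers. For the general case you need Borovoi's theorem \cite[Corollary~2.5]{borovoi}, which is what the paper invokes. This is a citation issue rather than a structural one, since the underlying fact you use is true.

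Second, and more seriously, your induction on~$n$ projects to a $\P^1$-factor of~$\P^n$ first. The fibers of $X\to\P^1_k$ over the \emph{closed} points~$c$ of a Hilbert subset of~$\P^1_k$ are then, over~$k(c)$, fibrations over $\P^{n-1}_{k(c)}\times C_{k(c)}$, and to apply the induction hypothesis you would need $C_{k(c)}$ to satisfy Conjecture~\ref{conj:cycles} over~$k(c)$. This is neither assumed nor known to follow from the hypothesis that~$C$ satisfies Conjecture~\ref{conj:cycles} over~$k$. The paper's proof of Corollary~\ref{cor:zerocycleshigherdimbase} avoids this by projecting to~$C$ first: the fibers of $X\to C$ over closed points~$c$ of~$C$ are then fibrations over~$\P^n_{k(c)}$, with no curve in the base, so the inner induction takes place entirely over projective spaces and no hypothesis on~$C_{k(c)}$ is needed. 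Reordering your induction in this way (or simply invoking generalisation~(1) directly without redoing the induction) closes the gap.
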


We then proceed to rational points in~\textsection\ref{sec:rationalpoints}.  We propose
in~\textsection\ref{subsec:conj}
 a conjecture on locally split values of polynomials in one variable
(``locally split'' means that the values are required to be products of primes which split in a given
finite extension~$L/k$) and prove, in~\textsection\ref{subsec:consequencesrationalpoints}, that this conjecture implies
a general fibration theorem for the existence of rational points.

\begin{thm}
\label{th:intropoints}
Let~$X$ be a smooth, proper, geometrically irreducible variety over a number field~$k$ and let
 $f:X\to\P^1_k$ be a dominant morphism with rationally connected geometric generic fiber.
Suppose Conjecture~\ref{conj:mainstrong} holds.
If the smooth fibers of~$f$ above the rational points
of~$\P^1_k$ satisfy Conjecture~\ref{conj:points}, then so does~$X$.
\end{thm}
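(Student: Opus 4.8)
The plan is to mimic, on the level of rational points, the fibration argument that underlies Theorem~\ref{th:introcycles}, replacing the class-field-theoretic input on zero-cycles by the conjectural statement on locally split values of polynomials (Conjecture~\ref{conj:mainstrong}). Concretely, fix an adelic point $(P_v)_{v\in\Omega}\in X(\A_k)$ orthogonal to $\Br(X)$; we want to approximate it by a rational point. First I would replace~$X$ by a convenient birational model and choose coordinates on~$\P^1_k$ so that the ``bad'' locus of~$f$ — the finitely many closed points of~$\P^1_k$ above which the fiber is singular or fails to be geometrically irreducible — is contained in $\A^1_k$, and I would record the finite extensions $L_i/k(\theta_i)$ (equivalently, a finite extension $L/k$ and a polynomial $P\in k[t]$) that encode, via the Hilbertian/arithmetic description of the fibration, when a fiber $X_t$ has a $k_v$-point for the relevant~$v$. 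The key geometric fact I would invoke is that, because the geometric generic fiber is rationally connected, its abelianized étale fundamental group is trivial, so there is no Brauer-group obstruction coming from vertical classes beyond the ramification of~$f$: the pullback $f^*\Br(\P^1_k)$ together with the residues at the bad points controls $\Br(X)/\Br_0$ up to the contribution of the fibers, exactly as in the setup of~\textsection\ref{sec:reminders}.

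Second, the heart of the argument: I would feed the local data $(P_v)_v$ into Conjecture~\ref{conj:mainstrong} to produce a rational point $t_0\in\P^1(k)$, close to $f(P_v)$ at a large finite set~$S$ of places and with good reduction behaviour outside~$S$, such that (a) the fiber $X_{t_0}$ is smooth, (b) $t_0$ lies in the prescribed Hilbert subset so that the hypothesis on the fibers applies to $X_{t_0}$, and (c) for every place~$v$ the fiber $X_{t_0}$ has a $k_v$-point near~$P_v$ — this last point is where ``locally split'' enters, since at the places not in~$S$ the existence of a $k_v$-point on $X_{t_0}$ is guaranteed precisely when the value $P(t_0)$ (or the splitting of~$v$ in the $L_i$) is of the required form, which is what the conjecture delivers along with the archimedean/$S$-adic approximation. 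I would also need to arrange that the resulting adelic point $(Q_v)_v\in X_{t_0}(\A_k)$ is orthogonal to $\Br(X_{t_0})$; this follows from the orthogonality of $(P_v)_v$ to $\Br(X)$ by a Brauer–Manin compatibility (a continuity/specialization argument for the Brauer pairing along the section $t\mapsto X_t$), using that every class in $\Br(X_{t_0})$ that matters extends, after shrinking, to a class on a neighbourhood of the fiber in~$X$ — again because the geometric fibers are rationally connected so $\Br(\overline{X_{t_0}})=0$ and $\Br(X_{t_0})/\Br(k)$ is finite and ``uniform in~$t$''.

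Third, the conclusion: apply Conjecture~\ref{conj:points} to the smooth fiber $X_{t_0}$ (legitimate by step two, point (b)) to the adelic point $(Q_v)_v$, obtaining a rational point on $X_{t_0}$, hence on~$X$, as close as desired to $(P_v)_{v\in S}$. Since $S$ was arbitrary, this shows $X(k)$ is dense in $X(\A_k)^{\Br(X)}$.

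The main obstacle, and where the conjectural hypothesis is doing all the work, is step two: one must simultaneously (i) approximate prescribed local points at the finitely many ``controlled'' places, (ii) force the fiber over $t_0$ to be split (have a local point) at \emph{every} other place, and (iii) stay inside a Hilbert subset. Unconditionally, producing such $t_0$ is exactly the hard Diophantine problem about values of polynomials being norms / having prescribed splitting behaviour that is known only in special cases (e.g.\ Matthiesen's theorem for linear polynomials over~$\Q$, via additive-combinatorial and circle-method techniques); Conjecture~\ref{conj:mainstrong} is precisely the abstraction of this. A secondary technical point is ensuring the Brauer-orthogonality is preserved under specialization — this requires a careful ``uniform'' analysis of how $\Br(X_t)$ sits inside $\Br$ of a neighbourhood and a Harari-type formal-lemma argument — but this is of the same nature as, and no harder than, the corresponding step in the proof of Theorem~\ref{th:introcycles}, so I expect it to go through with the tools of~\textsection\textsection\ref{sec:reminders}--\ref{sec:maintheorems}.
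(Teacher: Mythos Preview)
Your overall architecture is right and matches the paper's: apply the formal lemma, invoke Conjecture~\ref{conj:mainstrong} to produce a good~$t_0$, build local points on~$X_{t_0}$ via Lang--Weil and Hensel, use Proposition~\ref{prop:specialisation} to control~$\Br(X_{t_0})$, then finish with the hypothesis on the fibers. But there is a genuine gap at the point where you say ``feed the local data $(P_v)_v$ into Conjecture~\ref{conj:mainstrong}''. That conjecture has a \emph{hypothesis}: one must produce elements $b_i\in k_i^*$ such that $b_i(t_v-a_i)$ is a local norm from $L_i\otimes_k k_v$ for every $v\in S$. You never explain where the~$b_i$ come from or why this norm condition holds; the mere existence of local points $P_v\in X_{t_v}(k_v)$ does not imply it.

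In the paper this is the heart of the argument and is exactly what the $\Pic_+/\Br_+$ machinery of~\textsection\ref{sec:reminders} is built for. One first applies Harari's formal lemma to upgrade orthogonality to~$\Br(X)$ into orthogonality to $B+f^*\Br_+(\P^1_k)$; then, by the arithmetic duality Theorem~\ref{th:arithduality} (Poitou--Tate for the norm tori of the $L_i/k_i$), the family $(f(P_v))_{v\in\Omega}$ lifts to a class in $\Pic_+(\P^1_k)$, represented by a divisor~$c_1$. Evaluating $t-a_i$ along~$-c_1$ defines~$b_i$, and Weil reciprocity then yields the required local norm condition. So the formal lemma is used \emph{before} invoking the conjecture, to manufacture its hypothesis, not afterwards to repair Brauer orthogonality as you suggest. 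Once this step is in place, the rest of your outline (specialisation of the Brauer group, Hilbert subset via extra places, approximation in the fiber) is essentially the paper's proof of Theorem~\ref{th:ratpointsmain} and Corollary~\ref{cor:ratpointsfull}.
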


Theorem~\ref{th:intropoints} admits various generalisations similar to those of Theorem~\ref{th:introcycles} outlined in~(1)--(4) above.
See Corollary~\ref{cor:ratpointsfull} and Corollary~\ref{cor:ratpointsRC} for precise statements.

Various cases of Conjecture~\ref{conj:mainstrong} can be established by algebro-geometric methods, by sieve methods or by additive combinatorics.
We discuss these cases in~\textsection\ref{subsec:knowncases}
as well as the relation between Conjecture~\ref{conj:mainstrong} and Schinzel's hypothesis~$(\mathrm{H})$.
The most significant result towards Conjecture~\ref{conj:mainstrong} is Matthiesen's recent work~\cite{matthiesen},
which renders Theorem~\ref{th:intropoints}
unconditional when~$k=\Q$ and the singular fibers of~$f$ lie above rational points of~$\P^1_\Q$. More precisely,
we obtain the following theorem
in~\textsection\ref{subsec:someunconditional}.

\begin{thm}
\label{th:matthintro}
Let~$X$ be a smooth, proper, geometrically irreducible variety over~$\Q$.
Let $f:X \to \P^1_\Q$
be a dominant morphism
with rationally connected geometric generic fiber,
whose non-split fibers all lie over rational points of~$\smash[b]{\P^1_\Q}$.
If~$X_c(\Q)$ is dense in ${X_c(\A_\Q)^{\Br(X_c)}}$
for every rational point~$c$ of a Hilbert subset of~$\P^1_\Q$,
then~$X(\Q)$ is dense in~$\smash[t]{X(\A_\Q)^{\Br(X)}}$.
\end{thm}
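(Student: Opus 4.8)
The plan is to deduce Theorem~\ref{th:matthintro} from the conditional fibration theorem for rational points proved in~\textsection\ref{subsec:consequencesrationalpoints}, namely from the version of Theorem~\ref{th:intropoints} that incorporates generalisations~(1)--(4) above; concretely, from Corollary~\ref{cor:ratpointsRC}. That corollary reduces the assertion to the single instance of Conjecture~\ref{conj:mainstrong} attached to the fibration~$f$, so the whole content of the present proof is to check that, under the hypothesis that the non-split fibers of~$f$ lie over rational points, this instance is a theorem of Matthiesen.

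First I would normalise the affine coordinate on the base. Since $f$ is dominant with geometrically irreducible (indeed rationally connected) generic fiber, its non-split locus is a proper closed subset of~$\P^1_\Q$, hence consists of finitely many closed points $\theta_1,\dots,\theta_n$, which by assumption all lie in $\P^1(\Q)$. As $\P^1(\Q)$ is infinite, we may apply an automorphism of~$\P^1_\Q$ defined over~$\Q$ so as to arrange that $\infty$ does not belong to the non-split locus and that $\theta_1,\dots,\theta_n$ lie in $\A^1(\Q)=\Q$; write $\theta_i=a_i$. To each non-split fiber the fibration method associates a finite extension $L_i/\Q$ governing the Galois action on its multiplicity-one components, and the input demanded from Conjecture~\ref{conj:mainstrong} is the existence of rational points of~$\P^1_\Q$ arbitrarily close to prescribed local points at which, for each~$i$, the value of~$t-a_i$ is --- up to a controlled bounded factor --- a product of rational primes that split completely in~$L_i$. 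Thus the polynomial that appears is $P(t)=\prod_{i=1}^{n}(t-a_i)$, a product of linear polynomials with rational coefficients.

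Matthiesen's work~\cite{matthiesen} establishes Conjecture~\ref{conj:mainstrong} for products of linear polynomials over~$\Q$, which is exactly the case just isolated. Plugging this into Corollary~\ref{cor:ratpointsRC} --- whose remaining hypotheses hold because the geometric generic fiber of~$f$ is rationally connected and the smooth fibers above the rational points of a Hilbert subset of~$\P^1_\Q$ satisfy Conjecture~\ref{conj:points} by assumption --- gives that $X(\Q)$ is dense in $X(\A_\Q)^{\Br(X)}$. The main obstacle is not conceptual but a matter of matching conventions: one has to verify that the precise statement required by the fibration method --- several linear polynomials $t-a_1,\dots,t-a_n$ to be made locally split \emph{simultaneously}, each relative to its own field~$L_i$, subject to prescribed archimedean and finitely many $p$-adic constraints, and in the presence of the split singular fibers, which impose no condition --- coincides with the statement that~\cite{matthiesen} actually proves. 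Once Conjecture~\ref{conj:mainstrong} is known in this generality, Theorem~\ref{th:matthintro} follows at once from the results of~\textsection\ref{subsec:consequencesrationalpoints}.
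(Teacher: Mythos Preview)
Your proposal is essentially correct and follows the same route as the paper: reduce to the conditional fibration theorem of~\textsection\ref{subsec:consequencesrationalpoints} and then invoke Matthiesen's theorem (Theorem~\ref{thm:matthiesen}) to supply the required instance of Conjecture~\ref{conj:mainstrong}, which is the case $k=k_1=\dots=k_n=\Q$ since the non-split fibers lie over rational points.

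The one point to tighten is the precise reference. The statement of Corollary~\ref{cor:ratpointsRC} assumes Conjecture~\ref{conj:mainstrong} in full, so citing it as a black box does not immediately give what you want; you are implicitly unwinding its proof (through Corollary~\ref{cor:ratpointsfull} and Theorem~\ref{th:ratpointsmain}) to see that only the instance of the conjecture for the polynomials vanishing on a suitable~$M'$ is actually used, and that~$M'$ can be chosen to consist of rational points. The paper instead invokes Corollary~\ref{cor:ratpointsfullbis}, which is tailored for exactly this purpose: it isolates the hypothesis that Conjecture~\ref{conj:mainstrong} hold only for the irreducible monic polynomials vanishing on a finite set~$M'\subset\A^1_k$ containing the non-split locus (and a rational point, which is automatic here). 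With that corollary in hand, the proof is a one-liner. Your argument is not wrong, but cor:ratpointsfullbis is the cleaner citation.
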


This unconditional case of Theorem~\ref{th:intropoints} was previously known only under the quite strong assumption
that every singular fiber of~$f$ contains an irreducible component of multiplicity~$1$ split by an abelian extension of~$\Q$ and
that the smooth fibers of~$f$ above the rational points of~$\P^1_\Q$ satisfy weak approximation (see~\cite{hsw}),
or otherwise under the assumption that~$f$ has a unique non-split fiber (see~\cite{hararifleches}).
We note that the work~\cite{matthiesen} relies on the contents of a recent paper
of Browning and Matthiesen~\cite{browningmatthiesen} which applied
the descent method and additive combinatorics to prove Conjecture~\ref{conj:points} for a smooth and proper model of the hypersurface~\eqref{eq:normhyp}
when~$k=\Q$ and~$P(t)$ is a product of linear polynomials,
the number field~$K$ being arbitrary
(a case now covered by Theorem~\ref{th:matthintro}).

\bigskip
The oldest instance of the use of the structure of a fibration to establish a particular case of Conjecture~\ref{conj:points} or of Conjecture~\ref{conj:cycles}
can be traced back to Hasse's proof of the Hasse--Minkowski theorem
for quadratic forms in four variables with rational coefficients
starting from the case of quadratic forms in three variables with rational coefficients.
It~was based on
Dirichlet's theorem on primes in arithmetic progressions and on the global reciprocity law.
In~1979, Colliot-Th\'el\`ene and Sansuc~\cite{ctsansucschinzel} noticed that
a variant of Hasse's proof yields Conjecture~\ref{conj:points} for a large family of conic bundle surfaces over~$\P^1_\Q$
if one assumes
 Schinzel's hypothesis~$(\mathrm{H})$,
a conjectural generalisation of Dirichlet's theorem.
Delicate arguments relying on the same two ingredients as Hasse's proof later allowed Salberger~\cite{salberger}
to settle Conjecture~\ref{conj:cycles} unconditionally for those conic bundle surfaces over~$\P^1_k$ whose Brauer group is reduced to constant classes.
At the same time,
building on the ideas of~\cite{cssI},
an abstract formalism for the fibration method was established in~\cite{skofibration}
in the case of fibrations, over~$\A^n_k$,
whose codimension~$1$ fibers are split
and whose smooth fibers satisfy weak approximation.
Further work of Serre~\cite{serrecollege} and of Swinnerton-Dyer~\cite{sdpencils}
on conic and two-dimensional quadric bundles over~$\P^1_k$
led to the systematic study of fibrations, over the projective line and later over a curve of arbitrary genus or over a projective space,
into varieties which satisfy weak approximation (see~\cite{ctsd94}, \cite{ctsksd98}, \cite{ctreglees}, \cite{frossard}, \cite{vanhamel}, \cite{wittdmj}, \cite{liangastuce},
\cite{hsw}).

To deal with non-split fibers, all of these papers rely on the same reciprocity argument as Hasse's original proof, at the core of which lies the following well-known fact from class field theory:
if~$L/k$ is a finite \emph{abelian} extension of number fields, an element of~$k$ which is a local norm from~$L$ at all places of~$k$ except at most one must be a local norm at
the remaining place too (see~\cite[Ch.~VII, \textsection3, Cor.~1, p.~51]{artintate}).
The failure of this statement for non-abelian extensions~$L/k$ turned out to be a critical hindrance to the development of the fibration method.
On the one hand, it caused all of the papers mentioned in the previous paragraph to assume
that every singular fiber of~$f$ over~$\A^1_k$
contains an irreducible component of multiplicity~$1$ split by an abelian extension of the field over which the fiber is defined.
(We note, however, that a trick allowing to deal with a few very specific non-abelian extensions whose Galois closure contains a nontrivial cyclic subextension, \emph{e.g.}, cubic extensions, appeared in \cite[Theorem~3.5]{weioneq} and \cite[Theorem~4.6]{hsw}.)
On the other hand, it led to the restriction that the
smooth fibers should satisfy weak approximation.
Harari~\cite{harariduke} \cite{hararifleches}
 was able to relax this condition
only in the following two situations: fibrations over~$\P^1_k$ all of whose fibers over~$\A^1_k$ are
geometrically irreducible
and fibrations, over more general bases, which admit a rational section.
Further results in this direction were obtained in~\cite{smeets} and in~\cite{liangtowards}
but were always limited by the assumption that some field
extension built out of
the splitting fields of the irreducible components of the fibers
and of the residues, along these components,
 of a finite collection of classes of the Brauer group of the generic
fiber, should be abelian.

The proofs of Theorem~\ref{th:introcycles} and of Theorem~\ref{th:intropoints} given below do not rely
on Dirichlet's theorem on primes in arithmetic progressions or on any variant of it.
As a consequence, they bypass the reciprocity argument alluded to in the previous paragraph,
whose only purpose was to gain some control over the splitting behaviour of the
unspecified prime output by Dirichlet's theorem.
They bypass the ensuing abelianness assumptions as well.
We replace Dirichlet's theorem
by a simple consequence of strong approximation off a place
for affine space minus a codimension~$2$ closed subset (see~Lemma~\ref{lem:strongapprox}),
or, in the context of rational points, by a certain conjectural variant of this lemma (see Conjecture~\ref{conj:mainstrong}, Proposition~\ref{prop:geomcrit} and Corollary~\ref{cor:strongapproxconj}).  Given a finite Galois extension~$L/k$,
these two statements directly give rise to elements of~$k$ which, outside a finite set of places of~$k$, have positive valuation only at places splitting in~$L$.
To be applicable, however, they require that certain hypotheses be satisfied; the rest of the proofs of Theorem~\ref{th:introcycles} and of Theorem~\ref{th:intropoints}
consists in relating these hypotheses to the Brauer--Manin condition, with the help of
suitable versions of the Poitou--Tate duality theorem
and of the so-called ``formal lemma'', a theorem originally due to Harari~\cite{harariduke}.

\bigskip
The paper is organised as follows.  In~\textsection\ref{sec:reminders}, we
give simple definitions for the groups~$\Pic_+(C)$ and~$\Br_+(C)$
first introduced in~\cite[\textsection5]{wittdmj}
and recall their properties and a consequence of the arithmetic duality theorem that they
satisfy
(Theorem~\ref{th:arithduality}).
The group~$\Pic_+(C)$ lies halfway between the usual Picard group~$\Pic(C)$
and Rosenlicht's relative Picard group~$\Pic(C,M)$ associated to a curve~$C$ endowed
with
a finite closed subset~$M$.
It~plays a central role in the proof
of Conjecture~\ref{conj:cycles} for fibrations over a curve~$C$ of positive genus.
When~$C=\P^1_k$, one could equally well work with the algebraic tori which are implicit in the definition of~$\Pic_+(C)$
(see~\eqref{seq:picp} below),
but the formalism of~$\Pic_+(C)$ and~$\Br_+(C)$ turns out to be very convenient also in this case.
In~\textsection\ref{sec:formallemma}, we state and prove a version of the formal lemma for zero-cycles on a variety~$X$ fibered over a curve~$C$ of positive genus
in which the zero-cycles under consideration are required to lie over a fixed linear equivalence class of divisors on~$C$.
In~\textsection\ref{sec:specialisation}, we give a short proof of
a theorem of Harari on the specialisation maps for the Brauer groups
of the fibers of a morphism $f:X\to\P^1_k$ whose
geometric generic fiber satisfies
$H^1(X_{\bar\eta},\Q/\Z)=0$
and $H^2(X_\etabar,\sO_{X_\etabar})=0$
and we adapt it to fibrations over a curve of positive genus.
Building on the contents of \textsection\textsection\ref{sec:reminders}--\ref{sec:formallemma}, we then prove in~\textsection\ref{sec:mainexistenceresult} the core
existence theorem for effective zero-cycles on fibrations over curves from which Theorem~\ref{th:introcycles} and all its refinements will be deduced (Theorem~\ref{th:existenceresult}).
In~\textsection\ref{sec:hilbertsubsets}, we show that Theorem~\ref{th:existenceresult}
remains valid if the arithmetic assumption on the smooth fibers is replaced by the same
assumption over the closed points of a Hilbert subset of the base.
To prove Conjecture~\ref{conj:cycles}, one needs to deal not only with effective
cycles as in Theorem~\ref{th:existenceresult}
but more generally with elements of completed Chow groups; the reduction
steps required to bridge this gap are carried out in~\textsection\ref{sec:reductions}.
Our main results on zero-cycles,
including Theorem~\ref{th:introcycles}, are stated, established and compared with
the literature
in~\textsection\ref{sec:maintheorems}.
Finally, we devote~\textsection\ref{sec:rationalpoints} to rational points.
Conjecture~\ref{conj:mainstrong} is stated in~\textsection\ref{subsec:conj}.
Its relations
with additive combinatorics, with Schinzel's hypothesis, with strong approximation properties
and with sieve methods are all discussed in~\textsection\ref{subsec:knowncases}.
Various results on rational points are deduced in~\textsection\textsection\ref{subsec:consequencesrationalpoints}--\ref{subsec:someunconditional}.

\bigskip
\emph{Acknowledgements.}
Lilian Matthiesen's paper~\cite{matthiesen} plays a significant role
in~\textsection\ref{sec:rationalpoints}.  We are grateful to her for writing it up.
We also thank Tim Browning for pointing out the relevance of Irving's work~\cite{irving} to Conjecture~\ref{conj:mainstrong} and Jean-Louis Colliot-Th\'el\`ene, David Harari and the anonymous referees for their careful reading of the manuscript.
The results of~\cite{browningmatthiesen} provided the initial impetus for the present work.

\bigskip
\emph{Notation and conventions.}
All cohomology groups appearing in this paper are \'etale (or Galois) cohomology groups.
Following~\cite{skorodescent}, a variety~$X$ over a field~$k$ is said to be \emph{split} if it contains a geometrically integral open subset.
If~$X$ is a quasi-projective variety over~$k$, we denote by~$\Sym_{X/k}$
the disjoint union of the symmetric products~$\smash[t]{\Sym^d_{X/k}}$
for $d\geq 1$.
For any variety~$X$ over~$k$, we denote by~$\Zcyc_0(X)$ the group of zero-cycles on~$X$
and by~$\CH_0(X)$ its quotient by the subgroup of cycles rationally equivalent to~$0$.
When~$X$ is proper, we let~$\Azero(X)=\Ker\left(\deg:\CH_0(X)\to\Z\right)$.
We write $\Supp(z)$ for the support of $z\in \Zcyc_0(X)$
and denote by
\begin{align}
\rlap{ }\mylangle-,-\myrangle:\Br(X) \times \Zcyc_0(X) \to \Br(k)
\end{align}
the pairing
characterised by $\mylangle\beta,P\myrangle=\Cores_{k(P)/k}(\beta(P))$ if~$P$ is a closed point of~$X$
and $\beta \in \Br(X)$ (see~\cite[D\'efinition~7]{maninicm}).
If~$X$ is proper, this pairing factors through rational equivalence and induces a pairing
$\Br(X) \times \CH_0(X) \to \Br(k)$
which we still denote $\mylangle-,-\myrangle$
(see \emph{loc.\ cit.}, Proposition~8).

For any abelian group~$M$,
we set $\smash[t]{\widehat{M}}=\varprojlim_{n\geq 1} M/nM$.

When~$k$ is a number field, we let~$\Omega$ denote the set of its places
and~$\Omega_f$ (resp.~$\Omega_\infty$) the subset of finite (resp.~infinite) places.
For $v \in \Omega$, we denote by~$k_v$ the completion of~$k$ at~$v$ and by~$\sOint_v$ the ring of integers of~$k_v$.
For a finite subset $S \subset \Omega$, we denote by~$\sOint_S$ the ring of $S$\nobreakdash-integers of~$k$
(elements of~$k$ which are integral at the finite places not in~$S$).
If~$X$ is a variety over~$k$, we denote by $\Zcyc_{0,\A}(X)$ the subset
of $\prod_{v\in\Omega} \Zcyc_0(X \otimes_k k_v)$ consisting of those families $(z_v)_{v\in\Omega}$
such that if~$\sX$ is a model of~$X$ over~$\sOint_S$
for some finite subset $S \subset \Omega$
(\emph{i.e.}, a faithfully flat $\sOint_S$\nobreakdash-scheme of finite type with generic fiber~$X$),
the Zariski closure of~$\Supp(z_v)$ in $\sX\otimes_\sOint \sOint_v$ is a finite $\sOint_v$\nobreakdash-scheme
for all but finitely many places~$v$ of~$k$.  (This property does not depend on the choice of~$\sX$ and~$S$.)
Note that $\Zcyc_{0,\A}(X)=\prod_{v\in\Omega} \Zcyc_0(X \otimes_k k_v)$ when~$X$ is proper.
The sum of the local pairings $\mylangle-,-\myrangle:\Br(X \otimes_kk_v)\times \Zcyc_0(X\otimes_kk_v) \to \Br(k_v)$
followed by the invariant map $\inv_v:\Br(k_v)\hookrightarrow \Q/\Z$ of local class field theory gives rise to a well-defined pairing
\begin{align}
\label{eq:globalpairing}
\Br(X) \times \Zcyc_{0,\A}(X) \to \Q/\Z\rlap{\text{.}}
\end{align}
If~$X$ is smooth and proper, we let~$\CHzA(X)$ denote the group defined in~\eqref{eq:defchza}
and, for ease of notation, set
\begin{align*}
\Pichat(X)=\widehat{\Pic(X)},\mkern10mu
\CHzhat(X)=\widehat{\CH_0(X)},\mkern10mu
\CHzAhat(X)=\widehat{\CHzA(X)}\rlap{\text{.}}
\end{align*}
The pairing~\eqref{eq:globalpairing} factors through rational equivalence
and gives rise, as~$\Br(X)$ is a torsion group for smooth~$X$, to a pairing
\begin{align}
\label{eq:globalpairinghat}
\Br(X) \times \CHzAhat(X) \to \Q/\Z\rlap{\text{.}}
\end{align}
When~$X$ is a smooth curve, we write~$\PicA(X)$ and~$\PicAhat(X)$ for~$\CHzA(X)$ and~$\CHzAhat(X)$.

A \emph{reduced} divisor on a smooth curve is a divisor all of whose coefficients belong to~$\{0,1\mkern-1mu\}$.

If $f:X\to Y$ is a morphism of schemes and~$Y$ is integral, we write $X_\etabar = X \times_Y \etabar$
for the geometric
generic fiber of~$f$, where~$\etabar$ is a geometric point lying over the generic point of~$Y$.

If~$X$ is a variety over a field~$k$, endowed with a morphism $f:X\to C$ to a smooth proper curve over~$k$,
we denote by $\Zcyc_0^\effred(X)$ the set of effective zero-cycles~$z$ on~$X$
such that~$f_*z$ is a reduced divisor on~$C$.
If~$k$ is a number field, we let $\Zcyc_{0,\A}^\effred(X)=\Zcyc_{0,\A}(X) \cap \prod_{v\in\Omega} \Zcyc_0^\effred(X\otimes_kk_v)$.
If, in addition, a class $y \in \Pic(C)$ is fixed,
we denote by $\Zcyc_0^{\effred,y}(X\otimes_k k_v)$
the inverse image of~$y\otimes_kk_v$ by the push-forward map
$\Zcyc_0^{\effred}(X\otimes_kk_v)\to\Pic(C\otimes_kk_v)$
for $v\in\Omega$
and by~$\Zcyc_{0,\A}^{\effred,y}(X)$ the inverse image of~$y$
by the push-forward map $\Zcyc_{0,\A}^\effred(X) \to \Pic_\A(C)$.
In other words, the set
$\Zcyc_{0,\A}^{\effred,y}(X)$
consists of those collections of local effective zero-cycles~$(z_v)_{v\in\Omega}$
such that~$z_v$ is integral for all but finitely many~$v\in\Omega$, such that~$f_*z_v$ is reduced for all~$v\in\Omega$, such that~$f_*z_v$
is linearly equivalent to $y \otimes_k k_v$ for all~$v \in \Omega_f$
and such that~$f_*z_v$
is linearly equivalent to $y\otimes_k k_v$ up to a norm from $\kbar_v$
for all~$v\in\Omega_\infty$.

For brevity, we adopt the following terminology.

\begin{defn}
A smooth and proper variety~$X$ over a number field~$k$ \emph{satisfies~$\cE$} if
the complex~\eqref{eq:conjE} associated to~$X$ is exact;
it \emph{satisfies~$\cEu$} if the existence of a family $(z_v)_{v\in\Omega} \in \prod_{v\in\Omega} \Zcyc_0(X\otimes_k k_v)$ orthogonal to~$\Br(X)$ with respect
to~\eqref{eq:globalpairing} and such that $\deg(z_v)=1$ for all $v\in \Omega$
implies the existence of a zero-cycle of degree~$1$ on~$X$.
\end{defn}

Any variety which satisfies~$\cE$ also satisfies~$\cEu$ (see~\cite[Remarques~1.1~(iii)]{wittdmj}).

Following Lang \cite[Chapter~9, \textsection5]{langfunddioph},
we say that a subset~$H$ of an irreducible variety~$X$ is a \emph{Hilbert subset}
if there exist a dense open subset $X^0 \subseteq X$, an integer $n \geq 1$ and
irreducible finite \'etale $X^0$\nobreakdash-schemes~$W_1,\dots,W_n$
such that~$H$ is the set of points of~$X^0$
above which the fiber of~$W_i$ is irreducible for all $i\in\{1,\dots,n\}$.
We stress that~$H$ is not a subset of~$X(k)$; for example, the generic point of~$X$ belongs to every Hilbert subset.
Hilbert subsets in this
sense have also been
referred to as \emph{generalised Hilbert subsets} in the literature
(see~\cite{liangcourbe}).

Finally, for the sake of completeness, we include a proof of the following
lemma, which was observed independently by Cao and Xu~\cite[Proposition~3.6]{caoxu}
and by Wei~\cite[Lemma~1.1]{weitorus}
and which will be used in~\textsection\ref{subsec:applicationstrongapprox} below.
If~$v_0$ is a place of a number field~$k$,
we say that a variety~$X$ over~$k$
satisfies \emph{strong approximation off~$v_0$}
if either $X(k_{v_0})=\emptyset$ or the following condition is satisfied:
for any finite subset $S \subset \Omega$ containing $\Omega_\infty\cup\{v_0\}$
and any model~$\sX$ of~$X$ over~$\sOint_S$,
the diagonal map $\sX(\sOint_S)\to \prod_{v \in S \setminus\{v_0\}} X(k_v) \times \prod_{v\notin S} \sX(\sOint_v)$ has dense image.

\begin{lem}
\label{lem:strongapproxaffine}
Let $n \geq 1$ be an integer, let~$k$ be a number field, let~$v_0$ be a place of~$k$ and let $F \subseteq \A^n_k$ be a closed subset of codimension~$\geq 2$.  The variety $U=\A^n_k \setminus F$ satisfies strong approximation off~$v_0$.
\end{lem}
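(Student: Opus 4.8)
The plan is to reduce to strong approximation for affine space $\A^n_k$ itself (off the place $v_0$), which is a classical consequence of strong approximation for $\Gm^n$ or, more directly, follows from the fact that $\A^n_k$ as a variety over $k$ admits a surjection from a product of copies of $\Gm$ composed with translations — in any case, $\A^n_k$ is well known to satisfy strong approximation off any single place. So the heart of the matter is to show that removing a closed subset $F$ of codimension $\geq 2$ does not destroy this property. First I would fix a finite subset $S\subset\Omega$ containing $\Omega_\infty\cup\{v_0\}$, a model $\sU$ of $U$ over $\sOint_S$, and, after enlarging $S$, arrange that $\sU=\A^n_{\sOint_S}\setminus\sF$ for a closed subset $\sF\subset\A^n_{\sOint_S}$ which is flat over $\sOint_S$ with fibers of codimension $\geq 2$. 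Given a target adelic point $(P_v)_{v\in S\setminus\{v_0\}}\times(\xi_v)_{v\notin S}\in\prod_{v\in S\setminus\{v_0\}}U(k_v)\times\prod_{v\notin S}\sU(\sOint_v)$ and a finite set $T\supseteq S$ of places at which approximation is demanded, I would first use strong approximation off $v_0$ for $\A^n_k$ to produce a point $Q\in\A^n(\sOint_S)$ which is $v$-adically very close to $P_v$ for $v\in T\setminus\{v_0\}$ and which is an $\sOint_v$-point of $\A^n$ for all $v\notin S$; the issue is only that $Q$ may meet $\sF$ at the places outside $T$, or even at a place in $T\setminus\{v_0\}$ (though there, by taking the approximation close enough and using that $P_v\notin F(k_v)$, we can already ensure $Q\notin\sF$).

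The main obstacle is therefore to correct $Q$ at the finitely many bad places outside $T$ where its reduction lands in $\sF$, as well as to handle the infinitely many places $v\notin S$ where a priori nothing prevents $Q\bmod v$ from lying in $\sF(\Fv{v})$. For the latter, the key observation is that since $\sF$ has codimension $\geq 2$ in $\A^n_{\sOint_S}$, its fiber $\sF_v$ over a finite place $v\notin S$ has codimension $\geq 2$ in $\A^n_{\Fv{v}}$, hence $\#\sF(\Fv{v})=O(q_v^{n-2})$ while $\#\A^n(\Fv{v})=q_v^n$ where $q_v=\#\Fv{v}$; but this does not immediately give that $Q\bmod v\notin\sF$ for all such $v$. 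Instead I would argue that for all but finitely many $v\notin S$ one has $\xi_v\notin\sF(\sOint_v)$ by hypothesis (indeed $(\xi_v)\in\prod\sU(\sOint_v)$), and at those places we simply want $Q\equiv\xi_v\pmod v$ up to higher order — but strong approximation for $\A^n$ only controls finitely many places. The correct fix is the standard moving-lemma device: enlarge $T$ to a larger finite set $T'$, apply strong approximation for $\A^n$ again demanding agreement with $\xi_v$ modulo $v$ (or modulo a suitable power of $v$) at all $v\in T'\setminus S$, and choose $T'$ large enough that at the finitely many remaining bad places one can still adjust. Concretely, after obtaining $Q$ one lists the finite set $B$ of finite places $v\notin T$ with $Q\bmod v\in\sF(\Fv{v})$ but $\xi_v\notin\sF$ (note $v\notin T$ means we did not control $Q$ there); then one applies strong approximation a second time, now with $T$ replaced by $T\cup B$ and with the additional local conditions at $v\in B$ that the new point agree with $\xi_v$ modulo $v$, and repeats. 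This terminates because each pass only introduces bad places among those not previously controlled, and a dimension count shows one can always find a coset of the approximation subgroup avoiding $\sF$ at a given place.

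Cleaner is to phrase it as follows, which is the route I expect the authors take. It suffices to prove: for any finite $S'\supseteq S$, any $\sOint_{S'}$-point $Q$ of $\A^n$, and any requirement of integrality/approximation, there is an $\sOint_{S'}$-point of $\sU=\A^n\setminus\sF$ arbitrarily close to $Q$ at the places of $S'\setminus\{v_0\}$ and congruent to $Q$ modulo a high power of $v$ at the places $v\notin S'$ — and this follows from strong approximation off $v_0$ for $\A^n$ together with the fact that $U(k_{v_0})\neq\emptyset$ is dense in $\A^n(k_{v_0})$ for the $v_0$-adic topology (as $F$ has codimension $\geq 2$, hence $F(k_{v_0})$ has empty interior). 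Indeed one may use the $v_0$-adic freedom to slide $Q$ off every bad reduction: the set of $R\in\A^n(\sOint_{S'})$ with $R\in\sF$ at some prescribed finite place $v$ is contained in finitely many residue discs of codimension $\geq 1$, and since the diagonal image of $\sX(\sOint_{S'})$ is dense in the product over $S'\setminus\{v_0\}$ and surjects onto each residue disc at $v_0$, a standard successive-approximation argument (as in the proof that strong approximation is preserved under removing codimension $\geq 2$ subsets, cf. the analogous statements for homogeneous spaces) produces the desired point. The only genuine input beyond strong approximation for affine space is this codimension $\geq 2$ count at the residue fields, which guarantees that the "forbidden" set at each place is small enough to be avoided; I would cite that $\#\sF(\Fv{v})\leq C q_v^{n-2}$ for a constant $C$ depending only on the degree of $\sF$, together with $\#(\sOint_v/\mathfrak m_v^N)^n = q_v^{Nn}$, to close the argument. $\square$
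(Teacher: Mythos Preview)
Your proposal has a genuine gap. The iteration you describe need not terminate: each application of strong approximation for~$\A^n$ may produce a point~$Q$ with a \emph{new} finite set of bad places~$B$ where $Q\bmod v$ lands in~$\sF(\Fv{v})$, and nothing prevents these sets from drifting indefinitely. You assert that ``this terminates because each pass only introduces bad places among those not previously controlled, and a dimension count shows one can always find a coset of the approximation subgroup avoiding~$\sF$ at a given place''---but this is not an argument: the fresh bad places at pass~$n+1$ may be entirely disjoint from those at pass~$n$, and no finiteness is forced. The counting observation $\#\sF(\Fv{v})=O(q_v^{n-2})$ is suggestive of a sieve, but you do not carry out any sieve. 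Finally, invoking ``the proof that strong approximation is preserved under removing codimension~$\geq 2$ subsets'' is circular, since that is precisely the lemma at hand.

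The paper's proof (following Wei) takes a completely different and much shorter route: it reduces to strong approximation on a single affine \emph{line}. Use weak approximation to pick $Q\in U(k)$ close to the target~$P_v$ at the places~$v\in S$; let~$S'\supseteq S$ be finite with $Q\in\sU(\sOint_{S'})$. Then choose $Q'\in U(k)$ close to~$P_v$ at the places of~$S'\setminus S$ and Zariski-general enough that the line~$L=\overline{QQ'}$ through~$Q$ and~$Q'$ lies entirely inside~$U$---this is exactly where codimension~$\geq 2$ enters, since the lines through~$Q$ meeting~$F$ form a proper closed subset of the space of lines through~$Q$. The closure~$\sL$ of~$L$ in~$\sU$ now has an~$\sOint_v$-point for every~$v\notin S$ (namely~$Q$ for $v\notin S'$ and~$Q'$ for $v\in S'\setminus S$), so strong approximation for $L\cong\A^1_k$ off~$v_0$ yields a point $P\in\sL(\sOint_S)$ close to~$Q$ at the places of~$S\setminus\{v_0\}$. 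No iteration, no counting.
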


\begin{proof}[Proof, after \cite{weitorus}]
Let us fix a finite subset $S \subset \Omega$ containing $\Omega_\infty\cup\{v_0\}$,
a model~$\sU$ of~$U$ over~$\sOint_S$
and a family $(P_v)_{v\in \Omega} \in \prod_{v\in S}U(k_v) \times \prod_{v \notin S} \sU(\sOint_v)$.
Using weak approximation, we fix $Q \in U(k)$ arbitrarily close to~$P_v$ for~$v \in S$.
Let $S' \subset \Omega$ be a finite subset, containing~$S$,
such that $Q \in \sU(\sOint_{S'})$.  Let $Q' \in U(k)$ be arbitrarily close
to~$P_v$ for $v \in S'\setminus S$ and general enough, in the Zariski topology, that the line $L \subset \A^n_k$
passing through~$Q$ and~$Q'$ is contained in~$U$.
As~$L$ satisfies strong approximation off~$v_0$ and as the Zariski closure~$\sL$ of~$L$
in~$\sU$ possesses an $\sOint_{S'}$\nobreakdash-point (namely~$Q$) and an $\sOint_v$\nobreakdash-point
for each $v \in S' \setminus S$ (namely~$Q'$),
there exists $P \in \sL(\sOint_S)$ arbitrarily close to~$Q$, and hence to~$P_v$,
at the places of $S \setminus \{v_0\}$.
\end{proof}

\section{Reminders on the groups \texorpdfstring{$\Pic_+(C)$ and $\Br_+(C)$}{Pic₊(C) and Br₊(C)}}
\label{sec:reminders}

The groups we denote~$\Pic_+(C)$ and~$\Br_+(C)$ were defined in~\cite[\textsection5]{wittdmj}
as \'etale hypercohomology groups of certain explicit complexes.
We give down-to-earth definitions of these groups in~\textsection\ref{subsec:overarbitraryfield} and recall
in~\textsection\ref{subsec:overnumberfield} a corollary of an arithmetic duality theorem established
in \emph{loc.\ cit.}\ which we shall use in~\textsection\ref{sec:mainexistenceresult}
and in~\textsection\ref{sec:rationalpoints}
and for the proof of which the hypercohomological point of view seems unavoidable when~$C$ has positive genus.

\subsection{Over an arbitrary field}
\label{subsec:overarbitraryfield}

Let~$C$ be a smooth, proper, geometrically irreducible curve over a field~$k$ of characteristic~$0$.
Let $C^0 \subseteq C$ be a dense open subset.  Let $M=C \setminus C^0$.
For each $m \in M$, let~$L_m$ be a nonzero finite \'etale algebra over the residue field~$k(m)$ of~$m$.
(In all of the applications considered in this paper, the algebra~$L_m$ will be a finite field extension of~$k(m)$ whenever~$k$ is a number
field.)

\begin{defn}
\label{def:picpbrp}
To the data of~$C$, of~$M$ and of the finite $k(m)$\nobreakdash-algebras~$L_m$, we associate two groups, denoted $\Pic_+(C)$ and $\Br_+(C)$:
\begin{itemize}
\item the group $\Pic_+(C)$ is the quotient of $\Div(C^0)$ by the subgroup of principal divisors~$\div(f)$ such that for every $m \in M$,
the rational function~$f$ is invertible at~$m$ and its value~$f(m)$ belongs to the subgroup $N_{L_m/k(m)}(L_m^*) \subseteq k(m)^*$.
\smallskip
\item the group $\Br_+(C)$ is the subgroup of $\Br(C^0)$ consisting of those classes whose residue at~$m$ belongs to the kernel of the restriction map
\begin{align*}
H^1(k(m),\Q/\Z) \xrightarrow{\mkern10mu{}r_m\mkern10mu} H^1(L_m,\Q/\Z)
\end{align*}
for every $m\in M$.
\end{itemize}
\end{defn}

Thus~$\Pic_+(C)$ and~$\Br_+(C)$ depend on~$M$ and on the algebras~$L_m$,
though this dependence is not made explicit in the notation.
The group $\Pic_+(C)$ fits into a natural exact sequence
\begin{align}
\label{seq:picp}
\xymatrix{
k^* \ar[r] & \displaystyle\bigoplus_{m \in M} k(m)^*/N_{L_m/k(m)}(L_m^*) \ar[r]^(.68){\delta} & \Pic_+(C) \ar[r] & \Pic(C) \ar[r] & 0\rlap{\text{,}}
}
\end{align}
where~$\delta$ sends the class
of $(a_m)_{m \in M} \in \bigoplus_{m \in M} k(m)^*$
to the class in~$\Pic_+(C)$ of $\div(f)$ for any rational function~$f$ on~$C$, invertible in a neighbourhood of~$M$, such that $f(m)=a_m$ for every $m \in M$ (the existence of such~$f$ follows from the vanishing
of $H^1(U,\sO_U(-M))$ for an affine open neighbourhood~$U$ of~$M$).
Similarly, there is a natural exact sequence
\begin{align}
\label{seq:brp}
\xymatrix{
0 \ar[r] & \Br(C) \ar[r] & \Br_+(C) \ar[r] & \displaystyle\bigoplus_{m\in M}\Ker(r_m) \ar[r] & H^3(C,\Gm)
}
\end{align}
(see \cite[p.~2148]{wittdmj}).

\begin{rmks}
\label{rks:picbr}
(i) The group $\Pic_+(C)$ is a quotient of the relative Picard group $\Pic(C,M)$,
first considered by Rosenlicht.
Recall that $\Pic(C,M)$ may be defined either as the quotient of $\Div(C^0)$ by the subgroup of principal divisors~$\div(f)$ such that $f(m)=1$ for all $m \in M$,
or, in cohomological terms,
as $\Pic(C,M)=H^1(C,\Ker(\Gm\to i_*\Gm))$,
where $i:M \hookrightarrow C$ denotes the inclusion
of~$M$ in~$C$ (see \cite[\textsection2]{suslinvoevodsky}).

(ii) As $\Ker(r_m)$ is finite for each $m \in M$, the quotient $\Br_+(C)/\Br(C)$ is finite.
The finiteness of this quotient will play a crucial role in~\textsection\ref{sec:mainexistenceresult} and~\textsection\ref{sec:rationalpoints},
as the ``formal lemma'' (see~\textsection\ref{sec:formallemma}) can only be applied to a finite subgroup of the Brauer group.
This is the main reason why~$\Pic_+(C)$ is needed in this paper instead of the more classical $\Pic(C,M)$.  Using the latter would entail
dealing with the usually infinite quotient $\Br(C^0)/\Br(C)$.

(iii) The groups $\Pic_+(C)$ and $\Br_+(C)$ were defined in a slightly more general context in \cite[\textsection5]{wittdmj},
where
the input was,
for each~$m\in M$, a finite collection~$(K_{m,i})_{i\in I_m}$
of finite extensions of~$k(m)$ together
with a collection of integers $(e_{m,i})_{i\in I_m}$.
The more restrictive situation considered here, in which~$I_m$ has cardinality~$1$ and $e_{m,i}=1$ for all~$m$, suffices for our purposes.
\end{rmks}

It is also possible to give cohomological definitions for the groups~$\Pic_+(C)$ and~$\Br_+(C)$.
Namely,
let $j:C^0 \hookrightarrow C$
and $\rho:C \to \Spec(k)$ denote the canonical morphisms, and
 for $m \in M$, let $i_m:\Spec(L_m) \to C$ denote the map induced by the $k(m)$\nobreakdash-algebra~$L_m$ and by the inclusion of~$m$ in~$C$.
Endow the category of smooth $k$\nobreakdash-schemes with the \'etale topology.
Let~$\sE$ be the object of the bounded derived category of sheaves of abelian groups, on the corresponding site,
defined by
$\sE=\tau_{\leq 1}R\rho_*\big[j_*\Gm \to \bigoplus_{m \in M} i_{m*}\Z\big]$, where the brackets denote a two-term complex placed in degrees~$0$ and~$1$.

\begin{prop}
\label{prop:cohdef}
There are canonical isomorphisms
 $\Pic_+(C)=H^0(k, \RHom(\sE,\Gm))$
and $\Br_+(C)=H^2(k,\sE)$.
\end{prop}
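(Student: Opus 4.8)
The plan is to unwind the definition of $\sE$ and identify the two hypercohomology groups appearing in the statement with the down-to-earth groups of Definition~\ref{def:picpbrp}. Write $\sE=\tau_{\leq 1}R\rho_*[j_*\Gm \to \bigoplus_{m\in M} i_{m*}\Z]$, where the complex~$[j_*\Gm \to \bigoplus_m i_{m*}\Z]$ is placed in degrees~$0$ and~$1$; here the differential sends a section of~$j_*\Gm$ (a rational function invertible near~$M$) to the tuple of its valuations at the points of~$M$. I will treat the two isomorphisms in turn, in each case reducing to the short exact sequences~\eqref{seq:picp} and~\eqref{seq:brp} that already characterise $\Pic_+(C)$ and $\Br_+(C)$ on the nose.

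For $\Br_+(C)=H^2(k,\sE)$: since $\rho$ is proper of relative dimension~$1$, $R^q\rho_*$ of a constructible sheaf vanishes for $q>2$, so the truncation $\tau_{\leq 1}$ and the explicit two-term shape make $H^2(k,\sE)$ computable through a spectral sequence (or rather a long exact sequence coming from the defining triangle of the two-term complex, pushed forward and truncated). The key inputs are: $R^1\rho_*(j_*\Gm)$ and $R^2\rho_*(j_*\Gm)$ govern, via the Leray spectral sequence for $j$ and $\rho$, the group $\Br(C^0)$ together with its residues; and $R\rho_*(i_{m*}\Z)=R\Gamma(L_m,\Z)[0]$ contributes the groups $H^1(L_m,\Q/\Z)$ after a shift (using $\Z \to \Q \to \Q/\Z$ and that $L_m$ has cohomological dimension issues only in the relevant degrees). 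Matching the connecting maps, the truncation $\tau_{\le 1}$ is exactly what cuts $\Br(C^0)$ down to the subgroup of classes whose residue at each~$m$ dies in $H^1(L_m,\Q/\Z)$, i.e.\ to $\Br_+(C)$, and the tail term $H^3(C,\Gm)$ in~\eqref{seq:brp} appears as the obstruction to surjectivity onto $\bigoplus_m \Ker(r_m)$. So $H^2(k,\sE)$ sits in the same four-term exact sequence~\eqref{seq:brp} as $\Br_+(C)$, compatibly, hence equals it.

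For $\Pic_+(C)=H^0(k,\RHom(\sE,\Gm))$: dualising the two-term complex, $\RHom([j_*\Gm \to \bigoplus_m i_{m*}\Z],\Gm)$ is a two-term complex in degrees~$-1$ and~$0$ built from $\RHom(i_{m*}\Z,\Gm)$ and $\RHom(j_*\Gm,\Gm)$; the first is $i_{m*}\Gm$ (adjunction, $i_m$ being a closed-point-type map on the relevant étale site, up to the algebra~$L_m$ giving $N_{L_m/k(m)}$), and $\RHom(j_*\Gm,\Gm)$ feeds back the usual $\Pic$ after applying $R\rho_*$ and taking $H^0$. Taking $H^0(k,R\rho_*(-))=H^0(C,-)$ of the resulting complex, together with the truncation interacting correctly under $\RHom(-,\Gm)$, produces precisely the cokernel description in Definition~\ref{def:picpbrp}: divisors on~$C^0$ modulo those principal divisors $\div(f)$ with $f$ invertible near~$M$ and $f(m)\in N_{L_m/k(m)}(L_m^*)$, and one checks this against the exact sequence~\eqref{seq:picp} with the map~$\delta$. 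The norm subgroup $N_{L_m/k(m)}(L_m^*)$ is exactly the image of the corestriction/norm map $H^0(L_m,\Gm)\to H^0(k(m),\Gm)$ that $\RHom(i_{m*}\Z,\Gm)$ encodes.

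The main obstacle I expect is bookkeeping the truncation functor $\tau_{\le 1}$ through both $R\rho_*$ and (for the Picard statement) $\RHom(-,\Gm)$, and keeping the connecting maps — valuation at~$m$ on one side, residue at~$m$ on the other — correctly identified with those in~\eqref{seq:picp} and~\eqref{seq:brp}. Strictly speaking this is the content already recorded in~\cite[\textsection5]{wittdmj} under the slightly more general hypotheses of Remark~\ref{rks:picbr}~(iii), so the cleanest route is to specialise that computation to $I_m=\{*\}$, $e_{m,*}=1$, and simply verify that the specialised hypercohomology groups coincide with Definition~\ref{def:picpbrp}; the only genuine work is checking that the two elementary exact sequences~\eqref{seq:picp} and~\eqref{seq:brp} are reproduced by the derived-category gymnastics, which pins down both isomorphisms uniquely.
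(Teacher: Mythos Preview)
For $\Br_+(C)=H^2(k,\sE)$, you and the paper agree: the paper simply cites \cite[Remarques~5.2~(i)]{wittdmj}, and your spectral-sequence sketch is in the right spirit.

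For $\Pic_+(C)$, the paper takes a different and sharper route, and your approach as written has a gap. You propose to dualise the two-term complex $[j_*\Gm\to\bigoplus_{m\in M} i_{m*}\Z]$ on~$C$ and then take $H^0(C,-)$. But $\sE$ is $\tau_{\leq 1}R\rho_*$ of that complex, and $\RHom(\sE,\Gm)$ is computed on the big \'etale site over~$k$, not over~$C$: there is no formal reason for $H^0(k,\RHom(\tau_{\leq 1}R\rho_* K,\Gm))$ to agree with $H^0$ over~$C$ of the dual of~$K$. The interchange of $\RHom(-,\Gm)$ with $R\rho_*$ (and with the truncation) is exactly the obstacle you flag, and it is not free. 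The paper resolves it by invoking Deligne's universal coefficient formula \cite[(5.4)]{wittdmj}, which identifies $H^0(k,\RHom(\tau_{\leq 1}R\rho_*(j_*\Gm),\Gm))$ with Rosenlicht's relative Picard group $\Pic(C,M)$ (using the sequence $0\to\Gm\to j_*\Gm\to i_*\Z\to 0$ on~$C$ and the cohomological description of $\Pic(C,M)$ in Remark~\ref{rks:picbr}~(i)), and then applies $\RHom(-,\Gm)$ over~$k$ to the distinguished triangle
\[
\sE \longrightarrow \tau_{\leq 1}R\rho_*(j_*\Gm) \longrightarrow \bigoplus_{m\in M}(\rho\circ i_m)_*\Z \xrightarrow{\ +1\ }.
\]
Since $H^0(k,\RHom((\rho\circ i_m)_*\Z,\Gm))=L_m^*$, this presents $H^0(k,\RHom(\sE,\Gm))$ as the quotient of $\Pic(C,M)$ by the images of the~$L_m^*$, which is $\Pic_+(C)$ on the nose. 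So the paper pivots through $\Pic(C,M)$ and handles the truncation once via this triangle, whereas your direct term-by-term dualisation never establishes the link between the over-$k$ $\RHom$ and the over-$C$ computation you sketch; without the universal coefficient formula, that step is genuinely missing.
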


\begin{proof}
The second isomorphism is obtained in \cite[Remarques~5.2~(i)]{wittdmj}.
For the first isomorphism, we note that
$\Pic(C,M)=H^0(k,\RHom(\tau_{\leq 1}R\rho_*(j_*\Gm),\Gm))$, as
 follows from
Deligne's universal coefficient formula~\cite[(5.4)]{wittdmj},
 from the cohomological definition
of~$\Pic(C,M)$
and
from the exact sequence
$0 \to \Gm \to j_*\Gm \to i_*\Z \to 0$, where~$i$ denotes the inclusion of~$M$ in~$C$.
The natural distinguished triangle
\begin{align}
\xymatrix{
\sE \ar[r] & \tau_{\leq 1}R\rho_*(j_*\Gm) \ar[r] & \displaystyle\bigoplus_{m\in M} (\rho \circ i_m)_*\Z \ar[r]^(.75){+1} &\text{ }
}
\end{align}
then presents~$H^0(k,\RHom(\sE,\Gm))$ as the desired quotient of $\Pic(C,M)$.
\end{proof}

The next proposition is an immediate consequence of Proposition~\ref{prop:cohdef}
(see~\cite[Theorem~18.6.4~(vii)]{kashiwaraschapira}).
The interested reader may also deduce it directly from
Definition~\ref{def:picpbrp} as an amusing exercise.

\begin{prop}
\label{prop:pairing}
The natural pairing $\mylangle-,-\myrangle:\Br(C^0) \times \Div(C^0) \to \Br(k)$
induces a pairing $\Br_+(C) \times \Pic_+(C) \to \Br(k)$.
\end{prop}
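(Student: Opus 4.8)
The plan is to argue directly from Definition~\ref{def:picpbrp}. The pairing $\mylangle-,-\myrangle$ sends $(\beta,P)$, for a closed point $P$ of~$C^0$, to $\Cores_{k(P)/k}(\beta(P))$, where $\beta(P)\in\Br(k(P))$ is the evaluation of~$\beta$ (well defined since $\beta\in\Br(C^0)$ is unramified at~$P$). As $\Br_+(C)$ is a subgroup of~$\Br(C^0)$ and $\Pic_+(C)$ is a quotient of~$\Div(C^0)$, it is enough to prove that for every $\beta\in\Br_+(C)$ and every rational function~$f$ on~$C$ which is invertible at each $m\in M$ and satisfies $f(m)\in N_{L_m/k(m)}(L_m^*)$ for all $m\in M$, one has $\mylangle\beta,\div(f)\myrangle=0$ in~$\Br(k)$; this says precisely that $\mylangle-,-\myrangle$ annihilates, in its second variable, the subgroup of~$\Div(C^0)$ that is divided out in the construction of~$\Pic_+(C)$.

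First I would establish a residue formula. Fix $N\geq1$ with $N\beta=0$ and pass to $\mu_N$-coefficients, so that $\beta$ is a class in $H^2(k(C),\mu_N)$, the function~$f$ yields a Kummer class $(f)\in H^1(k(C),\mu_N)=k(C)^*/N$, and $\xi:=\beta\cup(f)$ lies in $H^3(k(C),\mu_N^{\otimes2})$. For a closed point~$P$ of~$C$ the residue $\partial_P\xi\in H^2(k(P),\mu_N)=\Br(k(P))[N]$ is computed from the Leibniz rule for residues of cup products: for $P\in C^0$ one has $\partial_P\beta=0$, hence $\partial_P\xi=v_P(f)\cdot\beta(P)$, while for $m\in M$ one has $v_m(f)=0$, hence $\partial_m\xi=\pm\,\partial_m(\beta)\cup(f(m))$, where $\partial_m(\beta)\in H^1(k(m),\Q/\Z)$ is the usual residue character of the Brauer class and $(f(m))\in k(m)^*/N$. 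Since $\div(f)$ is supported on~$C^0$, the reciprocity law for residues on the smooth proper curve~$C$ — the vanishing, for any $\xi\in H^3(k(C),\mu_N^{\otimes2})$, of $\sum_P\Cores_{k(P)/k}(\partial_P\xi)$ in $H^2(k,\mu_N)=\Br(k)[N]$ — together with $\partial_P\xi=v_P(f)\cdot\beta(P)$ for $P\in C^0$ yields
\[
\mylangle\beta,\div(f)\myrangle=\sum_{P\in C^0}v_P(f)\,\Cores_{k(P)/k}(\beta(P))=-\sum_{m\in M}\Cores_{k(m)/k}(\partial_m\xi)\rlap{\text{.}}
\]
It therefore suffices to see that $\partial_m\xi=0$ for each $m\in M$, the sign being immaterial. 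Decomposing the \'etale algebra~$L_m$ into its field factors, write $f(m)=N_{L_m/k(m)}(b)$ with $b\in L_m^*$, so that $(f(m))=\Cores_{L_m/k(m)}\bigl((b)\bigr)$ under Kummer theory; then the projection formula gives
\[
\partial_m(\beta)\cup(f(m))=\partial_m(\beta)\cup\Cores_{L_m/k(m)}\bigl((b)\bigr)=\Cores_{L_m/k(m)}\bigl(\Res_{L_m/k(m)}(\partial_m\beta)\cup(b)\bigr)\rlap{\text{,}}
\]
which vanishes because $\Res_{L_m/k(m)}(\partial_m\beta)=r_m(\partial_m\beta)=0$ by the definition of~$\Br_+(C)$. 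Hence $\mylangle\beta,\div(f)\myrangle=0$, and the pairing descends to $\Br_+(C)\times\Pic_+(C)\to\Br(k)$ as claimed.

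I expect the only delicate point to be the residue formula above: the sign bookkeeping in the Leibniz rule and the precise form of the reciprocity law used; the rest is formal. A shorter, more structural alternative bypasses this computation altogether. By Proposition~\ref{prop:cohdef} one has $\Pic_+(C)=H^0(k,\RHom(\sE,\Gm))$ and $\Br_+(C)=H^2(k,\sE)$, so the canonical evaluation morphism $\RHom(\sE,\Gm)\otimes^{L}\sE\to\Gm$ in the derived category (see \cite[Theorem~18.6.4~(vii)]{kashiwaraschapira}), followed by the cup product on hypercohomology over~$\Spec k$ and the identification $H^2(k,\Gm)=\Br(k)$, produces the desired pairing at once; one then checks that it is induced by $\mylangle-,-\myrangle$ by naturality, comparing with the corresponding evaluation for $\tau_{\leq1}R\rho_*(j_*\Gm)$ from the proof of Proposition~\ref{prop:cohdef} and using the surjection $\Div(C^0)\twoheadrightarrow\Pic(C,M)$ and the inclusion $\Br_+(C)\hookrightarrow\Br(C^0)$. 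I would nevertheless spell out the elementary computation, as it makes the mechanism of the pairing transparent.
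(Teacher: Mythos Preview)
Your proposal is correct. The paper's own proof is precisely the ``shorter, more structural alternative'' you describe at the end: it invokes Proposition~\ref{prop:cohdef} and the evaluation pairing $\RHom(\sE,\Gm)\otimes^L\sE\to\Gm$ (the Kashiwara--Schapira reference), and then remarks that the reader may instead deduce the result directly from Definition~\ref{def:picpbrp} ``as an amusing exercise''. Your primary argument carries out that exercise in full: you cup~$\beta$ with the Kummer class of~$f$, apply the reciprocity law $\sum_P\Cores_{k(P)/k}(\partial_P\xi)=0$ on the proper curve~$C$ (which follows from the localization sequence and the identity $\mathrm{trace}\circ\mathrm{Gysin}_P=\Cores_{k(P)/k}$), and then kill the contributions at~$m\in M$ via the projection formula and the very definition of~$\Br_+(C)$. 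So you have given both the paper's proof and the omitted direct one; the latter has the merit of being self-contained and of exhibiting exactly how the norm condition on~$f(m)$ interacts with the residue condition on~$\beta$.
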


\subsection{Over a number field}
\label{subsec:overnumberfield}

Let us now assume that~$k$ is a number field.
We shall denote by $\Pic_+(C \otimes_k k_v)$, $\Br_+(C \otimes_k k_v)$ the
groups associated to the curve, the open subset, and the finite algebras
obtained by scalar extension from~$k$ to~$k_v$.  Let
\begin{align*}
\PicplusA(C) =
\prod_{v \in \Omega_f}\smash{\rlap{\raise 5pt\hbox{$\mkern-8mu{}'$}}} \Pic_+(C\otimes_kk_v)
\times \prod_{v \in \Omega_\infty}\Coker\mkern-1mu\left(N_{\kbar_v/k_v}\!:
\Picplus(C \otimes_k \kbar_v) \to \Picplus(C \otimes_k k_v)\right)
\end{align*}
where the symbol~$\prod'$ denotes the restricted product with respect
to the subgroups $\Im(\Pic_+(\sC \otimes \sOint_v) \to \Pic_+(C \otimes_k k_v))$, for a model~$\sC$ of~$C$
over a dense open subset of the spectrum of the ring of integers of~$k$
(see \cite[\textsection5.3]{wittdmj}).
Composing the pairing $\Pic_+(C \otimes_k k_v) \times \Br_+(C \otimes_k k_v) \to \Br(k_v)$
given by Proposition~\ref{prop:pairing} with the local invariant map $\inv_v:\Br(k_v) \hookrightarrow \Q/\Z$
of local class field theory, and then summing over all $v \in \Omega$, yields a well-defined pairing
\begin{align}
\label{eq:pairingbmplus}
\Brplus(C) \times \PicplusA(C) \to \Q/\Z\rlap{\text{,}}
\end{align}
since $\Br(\sOint_v)=0$ for $v\in\Omega_f$
(see \emph{loc.\ cit.}).  By the global reciprocity law, the image of $\Pic_+(C)$
in $\PicplusA(C)$ is contained in the right kernel of~\eqref{eq:pairingbmplus}.  Thus we obtain a complex of abelian groups
\begin{align}
\xymatrix{
\Pic_+(C) \ar[r] & \PicplusA(C) \ar[r] & \Hom(\Br_+(C),\Q/\Z)\rlap{\text{.}}
}
\end{align}
For topological reasons, this complex fails to be exact when~$C$ has positive genus.
(Indeed, assuming, for simplicity, that $C(k)\neq\emptyset$ and that $L_m=k(m)$ for all $m\in M$,
the group $\Pic_+(C)=\Pic(C)$ is finitely generated by the Mordell--Weil theorem,
so that its degree~$0$ subgroup
often fails to be closed
in the group of adelic points of the Jacobian of~$C$
and therefore also in $\PicplusA(C)=\PicA(C)$.)
The~exactness of an analogous complex where the groups $\Pic_+(C)$ and $\PicplusA(C)$ are replaced with suitable completions
is investigated in~\cite[Th\'eor\`eme~5.3]{wittdmj};
we shall only need the following corollary of this result.

\newcommand{\citecorcinqsept}{{\cite[Corollaire~5.7]{wittdmj}}}
\begin{thm}[\citecorcinqsept]
\label{th:arithduality}
Any element of $\PicplusA(C)$ which is orthogonal to~$\Brplus(C)$
with respect to~\eqref{eq:pairingbmplus}
and whose image in $\Pic_\A(C)$ comes from $\Pic(C)$
is itself the image of an element of $\Picplus(C)$.
\end{thm}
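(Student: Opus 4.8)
The statement is \cite[Corollaire~5.7]{wittdmj}, and the plan is to reproduce the deduction given there from the arithmetic duality theorem \cite[Th\'eor\`eme~5.3]{wittdmj}, which asserts the exactness of the analogue of the complex $\Picplus(C)\to\PicplusA(C)\to\Hom(\Brplus(C),\Q/\Z)$ obtained after the first two groups are replaced by suitable completions. The r\^ole of the extra hypothesis — that the image of the given adelic class in $\Pic_\A(C)$ lifts to an honest element of $\Pic(C)$, and not merely to an element of the closure of $\Pic(C)$ in $\Pic_\A(C)$ — is precisely to allow one to dispense with those completions.

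First I would use~\eqref{seq:picp} to dispose of the Picard part. Let $(P_v)_v\in\PicplusA(C)$ be orthogonal to $\Brplus(C)$ for~\eqref{eq:pairingbmplus} and suppose its image in $\Pic_\A(C)$ is that of some $y\in\Pic(C)$. Since the map $\Picplus(C)\to\Pic(C)$ of~\eqref{seq:picp} is surjective, pick a lift $y'\in\Picplus(C)$ of~$y$; by the global reciprocity law the image of~$y'$ in $\PicplusA(C)$ is again orthogonal to $\Brplus(C)$, so after replacing $(P_v)_v$ by $(P_v)_v-(y'\otimes_kk_v)_v$ we may assume that $(P_v)_v$ maps to~$0$ in $\Pic_\A(C)$. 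The sequence~\eqref{seq:picp} is compatible with base change to each completion~$k_v$ and with the integral models~$\sC$ used to form the restricted products, so taking restricted products yields the exactness at $\PicplusA(C)$ of the adelic version of~\eqref{seq:picp}; hence $(P_v)_v=\delta_\A\big((a_m)_{m\in M}\big)$, where $\delta_\A$ is the adelic analogue of the map~$\delta$ of~\eqref{seq:picp} and $(a_m)_{m\in M}$ is an element of the adelic analogue of $\bigoplus_{m\in M}k(m)^*/N_{L_m/k(m)}(L_m^*)$, formed by restricted products over the places of~$k$ exactly as for $\PicplusA(C)$ (with the cokernel modification at the archimedean places). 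As $\ker\delta_\A$ is the image of the id\`eles of~$k$, it now suffices to prove that $(a_m)_{m\in M}$ lies in the sum of $\ker\delta_\A$ and the image of the \emph{global} group $\bigoplus_{m\in M}k(m)^*/N_{L_m/k(m)}(L_m^*)$: applying~$\delta$ to the corresponding global element then produces the required preimage of $(P_v)_v$ in $\Picplus(C)$.

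Next I would translate the orthogonality hypothesis. A residue computation on~$C$ — using that a class in $\Brplus(C)\subseteq\Br(C^0)$ is unramified outside~$M$, together with the reciprocity law on~$C$ — shows that the restriction of~\eqref{eq:pairingbmplus} to the subgroup $\delta_\A\big(\bigoplus_{m\in M}\cdots\big)$ of $\PicplusA(C)$ factors as the residue map $\Brplus(C)\to\bigoplus_{m\in M}\Ker(r_m)$ of~\eqref{seq:brp} followed by the sum over $m\in M$ of the global pairings of class field theory for the number fields~$k(m)$, namely $H^1(k(m),\Q/\Z)\times\big(\text{id\`eles of }k(m)\text{ modulo norms from }L_m\big)\to\Q/\Z$ built from cup-product, corestriction to $\Br(k)$ and the invariant maps. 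By~\eqref{seq:brp} the image of $\Brplus(C)$ in $\bigoplus_{m\in M}\Ker(r_m)$ is the kernel of the map to $H^3(C,\Gm)$, so the hypothesis on $(P_v)_v$ becomes: $(a_m)_{m\in M}$ is orthogonal, for those id\`elic pairings, to $\Ker\big(\bigoplus_{m\in M}\Ker(r_m)\to H^3(C,\Gm)\big)$.

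Finally, that this last orthogonality forces $(a_m)_{m\in M}$ into the sum of $\ker\delta_\A$ and the global image is exactly the content of~\cite[Th\'eor\`eme~5.3]{wittdmj} specialised to the present data: it is a Poitou--Tate type duality for the complex~$\sE$ introduced before Proposition~\ref{prop:cohdef} — so that $\Picplus(C)$ and $\Brplus(C)$ are the hypercohomology groups entering it, by Proposition~\ref{prop:cohdef} — fed with global class field theory for the residue fields~$k(m)$. I expect the only genuine obstacle to be the topological one noted at the start: the uncompleted complex underlying~\eqref{eq:pairingbmplus} fails to be exact because $\Pic(C)$, hence the Mordell--Weil group of the Jacobian of~$C$, need not be closed in $\Pic_\A(C)$; the reduction carried out in the second paragraph is precisely what removes this obstruction, leaving only the ``toric'' exact sequence~\eqref{seq:picp}, for which the completed duality of \emph{loc.\ cit.}\ does collapse to the uncompleted statement invoked here.
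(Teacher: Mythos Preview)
The paper does not give a proof of this theorem: it is stated with the header \cite[Corollaire~5.7]{wittdmj}, and the only comment the paper adds is the sentence immediately following, namely that when $C=\P^1_k$ the result is a simple consequence of Poitou--Tate duality for the norm~$1$ tori attached to the \'etale algebras~$L_m/k(m)$. There is thus no ``paper's own proof'' to compare against beyond that pointer.

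Your sketch is nonetheless a sensible reconstruction of how Corollaire~5.7 is deduced from Th\'eor\`eme~5.3 in the cited reference, and it lines up perfectly with the paper's remark about the genus~$0$ case. The reduction in your second paragraph, using the surjection $\Picplus(C)\twoheadrightarrow\Pic(C)$ from~\eqref{seq:picp} to kill the Picard part, is exactly how the extra hypothesis ``the image in $\Pic_\A(C)$ comes from $\Pic(C)$'' removes the Jacobian (and hence the need for completions). What is then left is the purely toric sequence governed by the groups $k(m)^*/N_{L_m/k(m)}(L_m^*)$, and your identification of the residual orthogonality condition via the residue map of~\eqref{seq:brp} and the local class field theory pairings is correct; at that point the conclusion is indeed Poitou--Tate for the relevant tori, modulo the obstruction coming from~$H^3(C,\Gm)$ that~\eqref{seq:brp} encodes --- which is what the general duality theorem of \emph{loc.\ cit.}\ packages. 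So your outline is faithful to the approach announced by the citation, even if the paper itself leaves the details to~\cite{wittdmj}.
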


When $C=\P^1_k$, Theorem~\ref{th:arithduality} is a simple consequence of Poitou--Tate duality for the norm~$1$
tori associated to the \'etale algebras $L_m/k(m)$ (see~\cite[Ch.~I, Theorem~4.20]{milneadt} for the
relevant statement).

\section{A version of the formal lemma over a curve}
\label{sec:formallemma}

The name ``formal lemma'' refers to a theorem of Harari
according to which if~$X^0$ is a dense open subset of a smooth variety~$X$ over a number field~$k$
and $B \subset \Br(X^0)$ is a \emph{finite} subgroup, adelic points of~$X^0$ orthogonal
to~$B$ are dense in the set of adelic points of~$X$ orthogonal
to $B \cap \Br(X)$ (see \cite[Corollaire~2.6.1]{harariduke}, \cite[\textsection3.3]{ctsd94},
\cite[Proposition~1.1]{ctskodescent}, \cite[Th\'eor\`eme~1.4]{ctbudapest}).
This result plays a key role in all instances of the fibration method.
In this section, we establish a variant of this theorem for effective zero-cycles on the total space of a fibration over a curve, in which the zero-cycles
are constrained to lie over a fixed linear equivalence class of divisors on the curve.

\begin{prop}
\label{prop:formallemma}
Let~$C$ be a smooth, proper and geometrically irreducible curve
of genus~$g$
over a number field~$k$.
Let $y \in \Pic(C)$ be such that $\deg(y)>2g+1$.
Let~$X$ be a smooth and irreducible variety over~$k$.
Let $f:X \to C$ be a morphism with geometrically irreducible generic fiber.
Let $X^0 \subseteq X$ be a dense open subset
and $B \subset \Br(X^0)$ be a finite subgroup.
For any $(z_v)_{v \in \Omega} \in \Zcyc_{0,\A}^{\effred,y}(X^0)$
orthogonal to $B \cap \Br(X)$ with respect to~\eqref{eq:globalpairing}
and any finite subset $S \subset \Omega$,
there exists $(z'_v)_{v \in \Omega} \in \Zcyc_{0,\A}^{\effred,y}(X^0)$ orthogonal to~$B$
with respect to~\eqref{eq:globalpairing} such that
$z'_v=z_v$ for $v \in S$.
\end{prop}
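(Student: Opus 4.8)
The plan is to reduce the statement to the classical formal lemma of Harari by means of a clever reparametrisation of the fibration that replaces the constraint "$f_*z_v$ lies in the linear equivalence class $y$" by the constraint "$f_*z_v$ avoids a finite closed subset of~$C$", which is an open condition amenable to the usual formal lemma. Concretely, since $\deg(y) > 2g+1$, the linear system $|y|$ is very ample and base-point free, so it defines a closed immersion $C \hookrightarrow \P^N_k$ with $\sO_C(1) \cong \sO_C(y)$; more useful is that every effective divisor in the class $y$ is the same as a hyperplane section, and one can choose coordinates so that the divisors in $|y|$ supported away from a fixed point correspond to an affine space of sections. Alternatively — and this is probably the cleaner route — one works directly with the symmetric power: the set of effective divisors of degree $d=\deg(y)$ on $C$ linearly equivalent to $y$ is a single fiber of the Abel--Jacobi map $\Sym^d_{C/k} \to \Pic^d_{C/k}$, and since $d > 2g+1 > 2g-2$ this fiber is a projective space $\P^{d-g}$ by Riemann--Roch. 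So $\Zcyc_0^{\effred,y}(X^0)$ sits over the $k$-points of this~$\P^{d-g}$.

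First I would set up the auxiliary variety. Let $T = |y| \cong \P^{d-g}_k$ be the linear system, let $\sD \subset C \times T$ be the universal divisor, and form the fiber product $\sX = X^0 \times_C (C \times T) = X^0 \times T$ together with the incidence variety $Z = \{(x,t) : f(x) \in \sD_t\} \subset X^0 \times T$; a point of $\Zcyc_0^{\effred,y}(X^0)$ amounts to a zero-cycle on $Z$ whose image in $T$ is a single rational point $t$ and whose pushforward to~$C$ (via the other projection) is the reduced divisor $\sD_t$. The key geometric input is that, because $\deg(y)$ is large, a general member of $|y|$ is reduced and meets $C^0$, hence $Z$ is irreducible of the expected dimension with $Z \to T$ dominant and generically smooth with geometrically irreducible generic fiber; this is where the hypothesis $\deg(y) > 2g+1$ (beyond mere $> 2g-2$) does its work, ensuring separation of points and tangents enough to conclude that generic members are reduced. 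Then I would pull back the finite subgroup $B \subset \Br(X^0)$ to $B' \subset \Br(W)$ for a suitable dense open $W \subseteq Z$ on which everything is smooth, noting $B' \cap \Br(\overline{Z})$ maps compatibly to $B \cap \Br(X)$.

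Next I would invoke the classical formal lemma (Harari, in the version cited in the paragraph preceding the proposition) applied to the smooth variety $\overline{Z}$, its dense open $W$, and the finite subgroup~$B'$: given a family of adelic zero-cycles on $W$ orthogonal to $B' \cap \Br(\overline{Z})$, one can move them off $S$ to a family orthogonal to all of $B'$. The point is that our datum $(z_v)_{v\in\Omega}$, being supported over the rational points $t_v \in T(k_v)$, naturally gives an adelic zero-cycle on $Z$ (indeed on $W$, after a harmless perturbation ensuring the supports avoid the bad locus — here one uses that the fibers are rationally connected-like enough, or simply moves within the fiber of $f$ over a varying point of $C^0$, which is possible because $\Zcyc_0^{\effred}$ allows the divisor on $C$ to vary). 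Applying the formal lemma produces $(z'_v)$ equal to $(z_v)$ on $S$, orthogonal to $B'$, and still supported over rational points of $T$ (one must check the formal lemma's output can be arranged to respect the "single rational point of $T$" constraint — this follows by first projecting the adelic points to $T$, where one has genuine rational points to approximate, then lifting). Pushing forward along $Z \to X^0$ recovers the desired $(z'_v) \in \Zcyc_{0,\A}^{\effred,y}(X^0)$ orthogonal to $B$, since the pairing with $\Br(X^0)$ factors through the pairing on $Z$ via the pullback $B \to B'$.

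**The main obstacle**, I expect, is the bookkeeping around the constraint that $f_*z_v$ must be an \emph{effective reduced} divisor lying \emph{exactly} in the class $y$ (not just near it): the formal lemma moves adelic points freely, but we must keep the image in $\Pic(C\otimes_k k_v)$ pinned to $y$ while changing the cycle. The resolution is precisely the device of working over the space $T=|y|$ of effective divisors in the class $y$ rather than over $C$ itself — within a single fiber over $t \in T(k_v)$ one has exactly the cycles with $f_*z_v = \sD_t \sim y$, and the positivity $\deg y > 2g+1$ guarantees this fiber is large (a projective space of positive dimension over which one can apply strong/weak approximation arguments and within which reduced divisors are dense). A secondary technical point is ensuring that after applying the formal lemma the new local cycles still land in the restricted-product subgroup defining $\Zcyc_{0,\A}^{\effred,y}$, i.e. are integral at almost all places with reduced integral pushforward; this is handled by spreading out $\sD \subset C \times T$ and $Z$ over a ring of $S$-integers and observing that the formal lemma only modifies finitely many places, as in the standard applications.
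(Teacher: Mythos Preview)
Your proposal has a genuine gap: the reduction to Harari's classical formal lemma is circular.  The auxiliary variety $Z = \{(x,t) : f(x) \in \sD_t\} \subset X^0 \times T$ does not turn a local zero-cycle $z_v \in \Zcyc_0^{\effred,y}(X^0\otimes_kk_v)$ into a $k_v$\nobreakdash-\emph{point} of~$Z$; it turns it into a zero-cycle on~$Z$ (each closed point~$P$ in the support of~$z_v$ lifts to the closed point $(P,t_v)$).  You then invoke ``the formal lemma for adelic zero-cycles on~$W$'', but that is precisely the kind of statement the proposition is meant to establish.  The versions of the formal lemma cited before the proposition (\cite[Corollaire~2.6.1]{harariduke}, \cite[Th\'eor\`eme~1.4]{ctbudapest}) are for adelic \emph{points}.

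The natural fix would be to replace~$Z$ by the open subset $\Sigma \subset \Sym^d_{X^0/k}$ parametrising reduced effective cycles of degree $d=\deg(y)$ whose push-forward lies in~$|y|$, so that $z_v$ becomes a genuine $k_v$\nobreakdash-point~$\tilde z_v$ of~$\Sigma$, and to transport~$B$ to a finite subgroup $\tilde B \subset \Br(\Sigma)$ via the corestriction along the universal family (so that $\langle\beta,z_v\rangle = \tilde\beta(\tilde z_v)$).  But to apply the point formal lemma to~$(\tilde z_v)$, you must know that orthogonality to $B\cap\Br(X)$ implies orthogonality to $\tilde B \cap \Br(\overline\Sigma)$ for a smooth compactification~$\overline\Sigma$; equivalently, that $\beta\notin\Br(X)$ forces $\tilde\beta\notin\Br(\overline\Sigma)$.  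Unwinding, this says exactly that when $\beta\notin\Br(X)$, the map $z\mapsto\langle\beta,z\rangle$ is nonzero on $\Zcyc_0^{\effred,y}(X^0\otimes_kk_v)$ for infinitely many~$v$ --- which is the key lemma the paper proves directly (Lemma~\ref{lem:evaluationbeta}).

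The paper does not reduce to the classical formal lemma at all.  It re-runs the Pontrjagin duality argument of~\cite[Corollaire~2.6.1]{harariduke} after establishing Lemma~\ref{lem:evaluationbeta}: given $\beta\in B\setminus\Br(X)$, take a place~$v$ and a point $a\in X^0(k_v)$ with $\beta(a)\neq 0$ (this is \cite[Th\'eor\`eme~2.1.1]{harariduke}), then use $\deg(y)-1>2g$ together with Lang--Weil and Hensel to manufacture an integral ``filler'' cycle~$z_0$ with $f_*z_0$ in the class $y-f(a)$ and $\langle\beta,z_0\rangle=0$, so that $z=z_0+a\in\Zcyc_0^{\effred,y}(X^0\otimes_kk_v)$ has $\langle\beta,z\rangle=\beta(a)\neq 0$.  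This ``add a single point and fill up the rest integrally'' trick is the idea you are missing; once one has it, the detour through $\Sym^d$ and its compactification is unnecessary.
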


\begin{proof}
We start with two lemmas.

\begin{lem}
\label{lem:lifty}
Let~$\sX^0$ be a model of~$X^0$ over $\sOint_S$
for some finite subset $S \subset \Omega$.
There exists a finite subset $S' \subset \Omega$ containing~$S \cup \Omega_\infty$ such that
for any $v \in \Omega \setminus S'$,
any finite closed subset $M' \subset C\otimes_kk_v$
and any $y_0 \in \Pic(C \otimes_k k_v)$
with $\deg(y_0)>2g$,
there exists an effective zero-cycle~$z_0$
on $X^0 \otimes_k k_v$ satisfying the following conditions:
\begin{itemize}
\item the divisor~$f_*z_0$ is reduced and supported outside~$M'$;
\item the class of~$f_*z_0$ in $\Pic(C \otimes_k k_v)$ is equal to~$y_0$;
\item the Zariski closure of $\Supp(z_0)$ in $\sX^0 \otimes_{\sOint_S} \sOint_v$ is finite
over~$\sOint_v$.
\end{itemize}
\end{lem}

\begin{proof}
This is a consequence of the moving lemma
\cite[Lemme~4.2]{wittdmj}, of the Lang--Weil--Nisnevich estimates (cf.~\cite{langweil}, \cite{nisnevic})
and of Hensel's lemma. See the proof of \cite[Lemme~4.3]{wittdmj}.
\end{proof}

\begin{lem}
\label{lem:evaluationbeta}
Let $\beta \in \Br(X^0)$.  If~$\beta$ does not belong
to the subgroup $\Br(X) \subseteq \Br(X^0)$,
the map $\Zcyc_0^{\effred,y}(X^0 \otimes_k k_v) \to \Br(k_v)$,
$z \mapsto \mylangle \beta,z\myrangle$ is nonzero
for infinitely many $v\in \Omega$.
\end{lem}

\begin{proof}
Choose a model~$\sX^0$ of~$X^0$ over~$\sOint_S$ for some finite subset $S \subset \Omega$.
After enlarging~$S$, we may assume that~$\beta$ comes from $\Br(\sX^0)$.
Let~$S'$ be given by Lemma~\ref{lem:lifty}.
According to \cite[Th\'eor\`eme~2.1.1]{harariduke}, there are infinitely many $v \in \Omega$
such that $\beta(a)\neq 0$ for some $a\in X^0(k_v)$.
Pick $v \notin S'$ and $a \in X^0(k_v)$
such that $\beta(a)\neq 0$
and let $y_0 \in \Pic(C \otimes_k k_v)$ denote the class $y-f(a)$.
By the definition of~$S'$,
there exists an effective zero-cycle~$z_0$ on $X^0 \otimes_k k_v$ satisfying
the conditions of Lemma~\ref{lem:lifty} with $M'=\{f(a)\}$.
Letting $z=z_0+a$,
we have $z \in \Zcyc_0^{\effred,y}(X^0\otimes_k k_v)$
and $\mylangle \beta, z \myrangle = \mylangle \beta, z_0 \myrangle + \beta(a)$.
As~$\beta$ comes from $\Br(\sX^0)$ and the closure of~$\Supp(z_0)$
in $\sX^0\otimes_{\sOint_S}\sOint_v$ is finite over~$\sOint_v$,
we have $\mylangle \beta,z_0\myrangle=0$, so $\mylangle \beta,z\myrangle\neq 0$.
\end{proof}

Proposition~\ref{prop:formallemma} follows from Lemma~\ref{lem:evaluationbeta}
by the formal argument used in the proof of~\cite[Corollaire~2.6.1]{harariduke}.
For the convenience of the reader, we reproduce it briefly.
Let $B^*=\Hom(B,\Q/\Z)$. For $v \in \Omega$, let $\phi_v:\Zcyc_0^{\effred,y}(X^0 \otimes_k k_v)\to B^*$ be
defined by $\phi_v(z)(\beta)=\inv_v\mylangle \beta, z\myrangle$.
Let $\Gamma \subseteq B^*$ be the subgroup generated by those elements
which belong to the image of~$\phi_v$ for infinitely many~$v\in\Omega$.
Recall that we are given a finite set~$S$ and a family $(z_v)_{v \in \Omega}$
in the statement of Proposition~\ref{prop:formallemma}.
After enlarging~$S$, we
may assume that~$\phi_v$ takes values in~$\Gamma$
for any $v \notin S$ and that $\phi_v(z_v)=0$ for $v \notin S$.
Consider the natural pairing $B \times B^* \to \Q/\Z$.
Any element of~$B$
which is orthogonal to~$\Gamma$ belongs to $B \cap \Br(X)$
according to Lemma~\ref{lem:evaluationbeta}, hence is orthogonal
to $w=\sum_{v\in S}\phi_v(z_v)$.
As~Pontrjagin duality is a perfect duality among finite abelian groups,
it follows that $w \in \Gamma$.
Therefore there exist a finite set $T \subset \Omega$ disjoint from~$S$ and
a family $(z'_v)_{v \in T} \in \prod_{v\in T}\Zcyc_0^{\effred,y}(X^0\otimes_kk_v)$ such that $w=-\sum_{v \in T}\phi_v(z'_v)$.
Letting $z'_v=z_v$ for $v \in \Omega\setminus T$
gives the desired family $(z'_v)_{v\in \Omega}$.
\end{proof}

\section{Specialisation of the Brauer group}
\label{sec:specialisation}

The following result is a theorem of Harari when $C=\P^1_k$ (see~\cite[Th\'eor\`eme~2.3.1]{hararifleches}, see also \cite[\textsection3]{harariduke}).
The arguments of \cite{hararifleches} still apply when~$C$ is an arbitrary curve, once one knows that $H^3(k(C),\Gm)=0$.  For the sake of completeness, we nevertheless include a shorter, self-contained proof.
In the next statement, we
denote by~$\eta$ the generic point of~$C$,
we identify~$h$ and~$\eta$
with $\Spec(k(h))$ and $\Spec(k(C))$
and we
denote by~$f_h:X_h \to \Spec(k(h))$ the fiber of~$f$ above~$h$.

\begin{prop}
\label{prop:specialisation}
Let~$C$ be a smooth irreducible curve over a number field~$k$.
Let~$X$ be an irreducible variety over~$k$
and $f:X \to C$ be a morphism whose
geometric generic fiber~$X_\etabar$ is smooth, proper and irreducible and satisfies
$H^1(X_{\bar\eta},\Q/\Z)=0$
and $H^2(X_\etabar,\sO_{X_\etabar})=0$.
Let $C^0 \subseteq C$ be a dense open subset, let $X^0=f^{-1}(C^0)$ and let
$B \subseteq \Br(X^0)$ be a subgroup.
If the natural map $B \to \Br(X_\eta)/f_\eta^*\mkern.5mu\Br(\eta)$ is surjective,
there exists
a Hilbert subset $H \subseteq C^0$ such that the natural map $B \to \Br(X_h)/f_h^*\mkern.5mu\Br(h)$ is surjective
for all $h \in H$.
\end{prop}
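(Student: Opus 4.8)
The plan is to reduce to a statement about a constructible sheaf on~$C^0$ whose fibers control $\Br(X_h)/f_h^*\Br(h)$, and then invoke the Hilbert irreducibility theorem. Concretely, one first replaces~$X^0$ by a smooth proper model of the fibration over a possibly smaller $C^0$ — shrinking $C^0$ so that $f\colon X^0\to C^0$ is smooth and proper — which changes neither the generic fiber nor (after a further shrink, using the injectivity of $\Br$ for regular schemes) the group~$B$ up to subgroups we can ignore. Over such a $C^0$, the sheaf $R^2f_*\Gm$ and the sheaves $R^if_*\mmu_n$ are constructible, and by smooth and proper base change their formation commutes with passing to a geometric point $\bar h$ over a point $h\in C^0$. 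The hypotheses $H^1(X_\etabar,\Q/\Z)=0$ and $H^2(X_\etabar,\sO_{X_\etabar})=0$ are precisely what is needed so that, for every closed point $h$ of a dense open subset, the geometric fiber $X_{\bar h}$ has $H^1(X_{\bar h},\Q/\Z)=0$ and $\Br(X_{\bar h})$ finite (the Brauer group equals its algebraic part, which is $H^1$ of the geometric fundamental group with coefficients in the Néron–Severi torus, and this in turn is controlled by the vanishing of $H^2(\sO)$); this is where one uses the theory as in Grothendieck's \emph{Le groupe de Brauer~III}.

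The heart of the argument is then a descent-spectral-sequence computation: for the smooth proper morphism $f\colon X_h\to\Spec(k(h))$ with geometrically connected fibers satisfying $H^1(X_{\bar h},\Q/\Z)=0$, the Hochschild–Serre / Leray spectral sequence gives a canonical exact sequence
\begin{align*}
0 \to \Br(k(h)) \to \Ker\bigl(\Br(X_h)\to\Br(X_{\bar h})\bigr) \to H^1\bigl(k(h),\Pic(X_{\bar h})\bigr) \to H^3(k(h),\Gm)
\end{align*}
and since $H^3(k(C),\Gm)=0$ and, for number-field-like $k(h)$, the relevant $H^3$ terms vanish or are controlled, one gets that $\Br(X_h)/f_h^*\Br(h)$ injects into a group built functorially out of the Galois module $\Pic(X_{\bar h})$ together with the finite group $\Br(X_{\bar h})$. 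One makes the same analysis at the generic point~$\eta$. Now the sheaf-theoretic picture over $C^0$ furnishes a constructible sheaf — essentially the sheaf associated to $\underline{\Pic}$ of the fibration, together with the finite étale sheaf $R^2f_*\Gm$ — whose stalk at $\bar\eta$ is the Galois module appearing for the generic fiber and whose stalk at $\bar h$ is the one for $X_h$, compatibly with the specialisation maps. By shrinking $C^0$ we may assume these sheaves are locally constant, i.e.\ correspond to $\pi_1(C^0,\bar\eta)$-modules with the property that the $\pi_1(C^0)$-action factors through a finite quotient; choosing $H\subseteq C^0$ to be the Hilbert subset cut out by the corresponding finite étale cover (so that for $h\in H$ the decomposition group surjects onto that finite quotient) guarantees that the $\Gal(k(h))$-module structure on the stalk at $h$ is ``as large as possible'', hence that the specialisation map $\Br(X_\eta)/f_\eta^*\Br(\eta)\to\Br(X_h)/f_h^*\Br(h)$ is surjective — in fact one arranges it to be an isomorphism. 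Combining this with the surjectivity hypothesis $B\twoheadrightarrow\Br(X_\eta)/f_\eta^*\Br(\eta)$ and the functoriality of the whole construction (the restriction maps from $B\subseteq\Br(X^0)$ to $\Br(X_h)$ are compatible with specialisation of the sheaf) yields that $B\to\Br(X_h)/f_h^*\Br(h)$ is surjective for all $h\in H$, as desired.

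The main obstacle I anticipate is the bookkeeping needed to make the ``constructible sheaf'' genuinely control $\Br(X_h)/f_h^*\Br(h)$ uniformly in $h$: one must handle the $H^3(k(h),\Gm)$ terms (harmless once $H^3(k(C),\Gm)=0$ is known and $k(h)$ has cohomological dimension behaving well — this is why the hypothesis on $k$ being a number field matters), and one must check that the residues of elements of $B$ along the points of $C\setminus C^0$ do not obstruct lifting, which is exactly the role played by choosing $B$ with $B\to\Br(X_\eta)/f_\eta^*\Br(\eta)$ surjective rather than $B\to\Br(X_h)$. A secondary technical point is the passage between the geometric and arithmetic Brauer groups of the fibers, i.e.\ verifying $\Br(X_{\bar h})=\Br_1(X_{\bar h})$ is finite for $h$ in a dense open set, which uses smooth and proper base change for $R^if_*\mmu_n$ together with the two cohomological vanishing hypotheses on the geometric generic fiber; once $C^0$ has been shrunk so that these base-change isomorphisms hold over all of~$C^0$, the rest is the Hilbert-irreducibility packaging described above.
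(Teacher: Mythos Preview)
Your overall strategy matches the paper's: shrink so that $f$ is smooth and proper, observe that the relevant higher direct image is locally constant with action factoring through a finite quotient~$Q$ of $\pi_1(C^0)$, take~$H$ to be the Hilbert subset where the decomposition group surjects onto~$Q$, and use Hochschild--Serre together with $H^1(X_{\bar h},\Q/\Z)=0$ and $H^3(\eta,\Q/\Z(1))=0$. The gap is in how you compare the fibers. You want to control $\Br(X_h)/\Br(h)$ via $H^1(k(h),\Pic(X_{\bar h}))$ and then argue that this $H^1$ agrees with $H^1(k(\eta),\Pic(X_{\bar\eta}))$ once $\Gal(k(h))\twoheadrightarrow Q$. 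But $H^1$ of a profinite group in a module on which it acts through a finite quotient is \emph{not} determined by that quotient (e.g.\ $H^1(G,\Z/2)=\Hom(G,\Z/2)$ with trivial action depends on all of~$G$), so these two $H^1$ groups have no reason to coincide; and there is in any case no natural specialisation map $\Br(X_\eta)/\Br(\eta)\to\Br(X_h)/\Br(h)$ to speak of.

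The paper fixes this by working with $H^0$ (invariants) rather than~$H^1$: Hochschild--Serre with $H^1(X_{\bar h},\Q/\Z)=0$ gives an injection
\[
\alpha_h\colon H^2(X_h,\Q/\Z(1))/f_h^*H^2(h,\Q/\Z(1)) \hookrightarrow H^2(X_{\bar h},\Q/\Z(1))^{\Gal(\bar h/h)},
\]
and invariants \emph{are} determined by the finite quotient, so for $h\in H$ the right-hand side equals $H^0(C^0,R^2f_*\Q/\Z(1))$. One pulls~$B$ back to $V\subseteq H^2(X^0,\Q/\Z(1))$, checks (using the surjectivity hypothesis at~$\eta$, the surjectivity of $\Pic(X^0)\to\Pic(X_\eta)$, and $H^3(\eta,\Q/\Z(1))=0$) that $V\to H^0(C^0,R^2f_*\Q/\Z(1))$ is onto, and a diagram chase then gives the surjectivity of $V\to H^2(X_h,\Q/\Z(1))/f_h^*H^2(h,\Q/\Z(1))$, hence of $B\to\Br(X_h)/\Br(h)$. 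The key technical point your proposal misses is that packaging $\Pic$ and~$\Br$ together inside $H^2(-,\Q/\Z(1))$ reduces the comparison of fibers to a comparison of \emph{invariants}, which is exactly what the Hilbert-subset condition controls.
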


\begin{proof}
By shrinking~$C$, we may assume that $C^0=C$ and that~$f$ is smooth and proper.
Let~$V \subseteq H^2(X,\Q/\Z(1))$ be the inverse image of~$B$ by the second map of the exact sequence
\begin{align}
\label{eq:picbr}
\xymatrix{
0 \ar[r] & \Pic(X) \otimes_\Z\Q/\Z \ar[r] & H^2(X,\Q/\Z(1)) \ar[r] & \Br(X) \ar[r] & 0
}
\end{align}
(see~\cite[II, Th\'eor\`eme~3.1]{grbr}).
Let $\Gal(\etabar/\eta)$ and $\Gal(\bar h/h)$ denote the absolute Galois groups of~$k(C)$ and of~$k(h)$, respectively,
and consider the commutative diagram
\begin{align}
\begin{aligned}
\xymatrix@R=3.5ex@C=5em{
H^2(X_\eta,\Q/\Z(1))/f_\eta^*\mkern.5muH^2(\eta,\Q/\Z(1)) \ar[r]^(.55){\alpha_\eta} & H^2(X_{\bar\eta},\Q/\Z(1))^{\Gal(\etabar/\eta)} \\
V \ar[u]^{\beta_\eta} \ar[d]_(.42){\beta_h} \ar[r] & H^0(C,R^2f_*\Q/\Z(1)) \ar[u]^{\gamma_\eta} \ar[d]_(.42){\gamma_h} \\
H^2(X_h,\Q/\Z(1))/f_h^*\mkern.5muH^2(h,\Q/\Z(1)) \ar[r]^(.55){\alpha_h} & H^2(X_{\bar h},\Q/\Z(1))^{\Gal(\bar h/h)}
}
\end{aligned}
\end{align}
for $h \in C$,
where~$\gamma_\eta$ and~$\gamma_h$ are pull-back maps.
By comparing~\eqref{eq:picbr} with the analogous exact sequence for~$X_\eta$,
we note that the surjectivity
of the natural maps $B \to \Br(X_\eta)/f_\eta^*\mkern.5mu\Br(\eta)$
and $\Pic(X)\to\Pic(X_\eta)$ implies the surjectivity of~$\beta_\eta$.
We shall now deduce the existence of a Hilbert subset~$H \subseteq C$
such that~$\beta_h$ is surjective for all $h\in H$.

\begin{lem}
\label{lem:galfinitequotient}
The group $\Gal(\etabar/\eta)$ acts on $H^2(X_{\etabar},\Q/\Z(1))$ through a finite quotient.
\end{lem}

\begin{proof}
By~\cite[II, \textsection3.5]{grbr},
the group $H^2(X_\etabar,\Q/\Z(1))$ is an extension of $\Br(X_\etabar)$, which
is finite as
$H^2(X_\etabar,\sO_{X_\etabar})=0$ (see, \emph{e.g.}, \cite[Lemma~1.3]{ctskogoodreduction}),
by $\NS(X_\etabar)\otimes_\Z\Q/\Z$,
on which~$\Gal(\etabar/\eta)$ acts through a finite quotient
since~$\NS(X_\etabar)$ is finitely generated.
\end{proof}

\begin{lem}
\label{lem:existshilbert}
There exists a Hilbert subset $H \subseteq C$ such
that~$\gamma_h$ is an isomorphism for every $h \in H$.
In particular, the map~$\gamma_\eta$ is an isomorphism.
\end{lem}

\begin{proof}
As~$f$ is smooth and proper, the \'etale sheaf $R^2f_*\Q/\Z(1)$ is a direct limit of locally constant sheaves with finite stalks
(see \cite[Ch.~VI, Corollary~4.2]{milneet}).
Therefore
it gives rise, for any $h \in C$, to
an action of the profinite group $\pi_1(C,\bar h)$ on $H^2(X_{\bar h},\Q/\Z(1))$.
In terms of this action, the map~$\gamma_h$ can be naturally identified with the inclusion
$$H^2(X_{\bar h},\Q/\Z(1))^{\pi_1(C,\bar h)} \subseteq H^2(X_{\bar h},\Q/\Z(1))^{\Gal(\bar h/h)}$$
(see \cite[p.~156]{milneet}).
By Lemma~\ref{lem:galfinitequotient}, the group~$\pi_1(C,\bar\eta)$ acts on $H^2(X_{\bar\eta},\Q/\Z(1))$
through the automorphism group of some irreducible finite Galois cover $\pi:D\to C$.
The map~$\gamma_h$ is then an isomorphism for any $h \in C$
such that $\Gal(\bar h/h)$ surjects onto the quotient of~$\pi_1(C,\bar h)$
corresponding to~$\pi$, \emph{i.e.}, for any $h\in C$ such that $\pi^{-1}(h)$ is irreducible.
\end{proof}

As the groups $H^3(\eta,\Q/\Z(1))$,
$H^1(X_\etabar,\Q/\Z)$
and therefore also
$H^1(X_{\bar h},\Q/\Z)$ all vanish
(by \cite[Lemma~2.6]{ctpoonen} for the first group
and by the smooth base change theorem \cite[Ch.~VI, Corollary~4.2]{milneet} for the third group),
we deduce from the Hochschild--Serre spectral sequence
that~$\alpha_\eta$ is surjective and~$\alpha_h$ is injective
(these two maps are in fact isomorphisms).
If~$H$ is given by Lemma~\ref{lem:existshilbert},
we conclude that for all $h \in H$, the surjectivity of~$\beta_h$,
and hence of the natural map $B \to \Br(X_h)/f_h^*\mkern.5mu\Br(h)$,
follows from that of~$\beta_\eta$.
\end{proof}

\begin{rmk}
Proposition~\ref{prop:specialisation} and its proof go
through for fibrations over an arbitrary base
as long as the natural map
$H^3(\eta,\Q/\Z(1))\to H^3(X_\eta,\Q/\Z(1))$ is injective.
\end{rmk}

\section{Existence of zero-cycles}
\label{sec:mainexistenceresult}

The goal of~\textsection\ref{sec:mainexistenceresult} is to establish the following
existence result for zero-cycles on the total space of a fibration over a curve of arbitrary genus.

\begin{thm}
\label{th:existenceresult}
Let~$C$ be a smooth, projective and geometrically irreducible curve over
a number field~$k$.
Let~$X$ be a smooth, projective, irreducible variety over~$k$
and $f:X \to C$ be a morphism with geometrically irreducible generic fiber
and no multiple fiber (in the sense that the gcd of the multiplicities of the irreducible components of each fiber is~$1$).
Let~$C^0 \subseteq C$ be a dense open subset over which~$f$ is smooth.
Let~$X^0=f^{-1}(C^0)$.
Let~$B \subset \Br(X^0)$ be a finite subgroup.
Let~$y \in \Pic(C)$
and $(z_v)_{v \in \Omega} \in \Zcyc_{0,\A}^{\effred,y}(X^0)$.
Let $S \subset \Omega_f$ be a finite subset.
Let $\sU \subseteq \prod_{v \in S} \Sym_{X^0/k}(k_v)$
be a neighbourhood of $(z_v)_{v \in S}$.
Assume that
\begin{enumerate}
\item[(i)]
the family $(z_v)_{v \in\Omega}$ is
orthogonal to $(B+f_\eta^*\mkern.5mu\Br(\eta)) \cap \Br(X)$ with respect to~\eqref{eq:globalpairing};
\item[(ii)]
 $\deg(y)\geq\deg(M)+2g+2$, where $M=C \setminus C^0$ and~$g$ denotes the genus of~$C$;
\item[(iii)]
for each real place~$v$ of~$k$, there exists a smooth, proper, geometrically
irreducible curve $Z_v \subset X \otimes_k k_v$ which contains the support of~$z_v$
and dominates~$C$, such that $\deg(y) \geq [k_v(Z_v):k_v(C)]\deg(M)+2g_v+2$,
where~$g_v$ denotes the genus of~$Z_v$.
\end{enumerate}
Then there exist
a family $(z'_v)_{v \in \Omega} \in \Zcyc_{0,\A}(X^0)$
and an effective divisor~$c$ on~$C^0$ whose class
in~$\Pic(C)$ is~$y$,
such that
\begin{enumerate}
\item $f_*z'_v=c$ in $\Div(C^0 \otimes_k k_v)$ for each $v \in \Omega$;
\item the family $(z'_v)_{v \in \Omega}$ is orthogonal to~$B$ with respect to~\eqref{eq:globalpairing};
\item $z'_v$ is effective for each $v \in S$
and $(z'_v)_{v\in S}$ belongs to~$\sU$;
\item \label{item:B} for each real place~$v$ of~$k$
and each connected component~$\sB$ of $X^0(k_v)$,
if we write $z_v-z'_v=\sum_{x \in X^0 \otimes_k k_v} n_x\mkern1mux$,
then $\sum_{x \in \sB} n_x$ is even.
\end{enumerate}
Furthermore, if every fiber of~$f$ possesses an irreducible component of multiplicity~$1$,
one can ensure that~$z'_v$ is effective for all $v \in \Omega$.
\end{thm}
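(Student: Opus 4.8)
\emph{Plan of proof.} The strategy is to feed the three ingredients assembled above---the formal lemma over a curve (Proposition~\ref{prop:formallemma}), the arithmetic duality theorem for $\Picplus(C)$ and $\Brplus(C)$ (Theorem~\ref{th:arithduality}), and strong approximation on the complement of a codimension\nobreakdash-$\ge 2$ closed subset of affine space (Lemma~\ref{lem:strongapproxaffine})---into the moving\nobreakdash-lemma and Lang--Weil--Nisnevich machinery already used for Lemma~\ref{lem:lifty}. The first move is a reduction step. Fix a model $\sX^0$ of $X^0$ over $\sOint_S$ and a model $\sC^0$ of $C^0$, and enlarge $S$ so that $B$ lies in $\Br(\sX^0)$, so that $f$ extends to a smooth morphism $\sX^0\to\sC^0$ whose fibres over the points of a dense open of $\sC^0$ are geometrically integral, and so that $z_v$ is integral for $v\notin S$. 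For each $m\in M$, choose the finite \'etale $k(m)$\nobreakdash-algebra $L_m$ of Definition~\ref{def:picpbrp} so that $f^*\Brplus(C)\subseteq\Br(X)$; this---and this alone---is where the absence of a multiple fibre is used, the construction of the $L_m$ and the attendant residue computation being carried out exactly as in~\cite[\textsection 3]{wittdmj}. Since a class $f^*\alpha$ with $\alpha\in\Brplus(C)\subseteq\Br(C^0)$ restricts on the generic fibre to $f_\eta^*(\alpha_\eta)\in f_\eta^*\mkern.5mu\Br(\eta)$, hypothesis~(i) forces $(z_v)_{v\in\Omega}$ to be orthogonal to $f^*\Brplus(C)$; by the projection formula, the family $(f_*z_v)_{v\in\Omega}$, which lies in $\PicplusA(C)$ because $(z_v)_v\in\Zcyc_{0,\A}(X^0)$, is then orthogonal to $\Brplus(C)$ for~\eqref{eq:pairingbmplus}. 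Finally enlarge $B$ to a finite subgroup $B'\subseteq\Br(X^0)$ containing $B$ together with $f^*\Gamma$ for some finite subgroup $\Gamma\subseteq\Brplus(C)$ surjecting onto $\Brplus(C)/\Br(C)$; the computation just made shows $(z_v)_v$ remains orthogonal to $B'\cap\Br(X)$.

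Next I would dispose of the Brauer group on the total space and descend to the curve. Apply Proposition~\ref{prop:formallemma} to $B'$, its hypothesis $\deg(y)>2g+1$ being weaker than~(ii): after enlarging $S$ we obtain $(z_v)_v\in\Zcyc_{0,\A}^{\effred,y}(X^0)$ orthogonal to $B'$ with $z_v$ unchanged for $v\in S$. It is still orthogonal to $f^*\Brplus(C)$ (as $f^*\Gamma\subseteq B'$ and $f^*\Br(C)$ is automatically orthogonal to families of divisors of a fixed global class), so $(f_*z_v)_v\in\PicplusA(C)$ is orthogonal to $\Brplus(C)$ and its image in $\PicA(C)$, being of constant class $y$, comes from $\Pic(C)$; Theorem~\ref{th:arithduality} then produces $\xi\in\Picplus(C)$ lifting $(f_*z_v)_v$. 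Fix a large finite set $S_2\supseteq S$ (containing the degeneracy loci of the models and the places of small residue characteristic). Using the exact sequence~\eqref{seq:picp} and Riemann--Roch on $C$---this is where the full strength of~(ii), $\deg(y)\ge\deg(M)+2g+2$, is needed, to make the relevant linear systems of effective divisors of class $y$ base\nobreakdash-point free away from $M$ while leaving room to prescribe their values at $M$ and to approximate at $S_2$---I would construct an effective divisor $c$ on $C^0$, reduced, with support disjoint from $M$ and from the degeneracy loci, whose class in $\Picplus(C)$ is $\xi$, and which is arbitrarily close to $f_*z_v$ at every place of $S_2$.

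Realising the prescribed $\Picplus(C)$\nobreakdash-class simultaneously with effectivity and the approximation conditions is the delicate point, and the crux of the whole argument. One starts from a naive effective representative of $y$ disjoint from $M$ and corrects it by a principal divisor whose values at the points of $M$ are constrained to lie in prescribed cosets of the local norm subgroups $N_{L_m/k(m)}(L_m^*)$; it is precisely here that Lemma~\ref{lem:strongapproxaffine}---in the form of the statement on locally split values deduced from it---is used, producing those values as units times products of primes split in the $L_m$, which are therefore automatically norms. This is exactly what replaces the reciprocity argument of earlier treatments of the fibration method and removes the abelianness hypotheses. At the real places one works instead inside the curve $Z_v$ supplied by hypothesis~(iii), the bound $\deg(y)\ge[k_v(Z_v):k_v(C)]\deg(M)+2g_v+2$ playing the role of~(ii) and Riemann--Roch on $Z_v$ furnishing the representative; that linear equivalence is imposed at a real place only up to a norm from $\kbar_v$, i.e.\ modulo~$2$, is exactly what leaves room for, and is accounted for by, the parity statement~\ref{item:B}.

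Finally I would lift $c$ back to zero-cycles. For $v\in S_2$, the moving lemma as in Lemma~\ref{lem:lifty} produces a zero-cycle $z'_v$ on $X^0\otimes_kk_v$, effective and arbitrarily close to $z_v$, with $f_*z'_v=c\otimes_kk_v$; taking $z'_v$ close enough gives $z'_v\in\sU$ for $v\in S$ and $\mylangle\beta,z'_v\myrangle=\mylangle\beta,z_v\myrangle$ for $\beta\in B'$. For $v\notin S_2$, $c\otimes_kk_v$ reduces to a reduced divisor on $\sC^0\otimes\F_v$ supported on the geometrically integral locus, so the Lang--Weil--Nisnevich estimates and Hensel's lemma yield an integral zero-cycle $z'_v$ with $f_*z'_v=c\otimes_kk_v$ and $\mylangle\beta,z'_v\myrangle=0$. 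Orthogonality of $(z'_v)_v$ to $B'$, a fortiori to $B$, is then automatic, since $\sum_{v\in\Omega}\mylangle\beta,z'_v\myrangle=\sum_{v\in S_2}\mylangle\beta,z_v\myrangle=\sum_{v\in\Omega}\mylangle\beta,z_v\myrangle=0$, the outer equalities using integrality of $z_v$ and $z'_v$ for $v\notin S_2$ and the last one the orthogonality of $(z_v)_v$ to $B'$ arranged above; this yields conclusions~(1)--(3) and~\ref{item:B}. When moreover every fibre of $f$ possesses a component of multiplicity~$1$---which, being geometrically integral over a suitable field, makes the fibres of $f$ split everywhere---one can arrange that every fibre of $c$ carries a rational point rather than merely a zero-cycle of degree~$1$, so that $z'_v$ can be kept effective for every $v$; this gives the last assertion. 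The main obstacle, as emphasised, is the construction of $c$: it must carry the $\Picplus(C)$\nobreakdash-class dictated by duality, be effective, approximate the prescribed data at $S_2$, and have everywhere locally soluble fibres, and it is only the degree hypotheses~(ii)--(iii) together with the locally split values furnished by Lemma~\ref{lem:strongapproxaffine} that make all of this compatible.
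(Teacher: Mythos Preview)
Your overall architecture is right---formal lemma, then arithmetic duality for $\Picplus$/$\Brplus$, then strong approximation to build~$c$, then lift---and matches the paper's. But the lifting step at the places $v\notin S_2$ contains a genuine gap that is in fact the technical heart of the proof.

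You assert that for $v\notin S_2$, the reduction of $c\otimes_kk_v$ is supported in $\sC^0\otimes\Fv{v}$. This is false: $c$ lives on $C^0$, but the horizontal closure $\ctilde\subset\sC$ will in general meet $\Mtilde$. Concretely, writing $c=c_1+\div(h)$ with $h(m)=t_m$ as you do, the closed points $w\in\mtilde$ lying on $\ctilde$ are exactly the finite places of $k(m)$ at which $t_m$ has positive valuation; Lemma~\ref{lem:strongapprox} guarantees such~$w$ split in~$L_m$, not that there are none. For a point $y_v\in\Supp(c_{w,m})$ (those whose closure passes through~$w$), the special fibre of $f^{-1}(y_v)$ over~$\sOint_v$ is the bad fibre $\sX_\mtilde\otimes\kappa(w)$, so naive Lang--Weil plus Hensel does not produce a section with $f_*z'_v=c$. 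The paper handles this in~\textsection\ref{subsec:constructionzpv}: for each component $X_{m,i}$ of multiplicity~$e_{m,i}$ one constructs, via a local lifting lemma, an effective $z'_{w,m,i}$ with $f_*z'_{w,m,i}=e_{m,i}c_{w,m}$, reducing to a point~$\xi_{w,m,i}$ that lifts to the auxiliary cover~$\sE_{\mtilde,i}$; one then sets $z'_v=z'_{v,0}+\sum_{m,i,w} f_{m,i}z'_{w,m,i}$ with $\sum_i e_{m,i}f_{m,i}=1$. This is the second place the no-multiple-fibre hypothesis is used (not, as you say, only in the choice of $L_m$), and it explains both why $z'_v$ may fail to be effective and why it \emph{is} effective once a multiplicity~$1$ component exists (take $f_{m,i}\in\{0,1\}$). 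Your reading of the ``furthermore'' clause---that multiplicity~$1$ implies split---conflates two different conditions.

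The choice of~$L_m$ is tied to this. You take $L_m$ so that $f^*\Brplus(C)\subseteq\Br(X)$; the paper instead takes~$L_m$ to contain the algebraic closures~$L_{m,i}$ of~$k(m)$ in extensions $E_{m,i}/k(X_{m,i})$ killing the residues of~$B$. The point is that the splitting of~$w$ in~$L_m$ then gives a rational point of $\sE_{\mtilde,i}$ above~$w$, and this is exactly what forces $\mylangle\beta,z'_{w,m,i}\myrangle=0$ for $\beta\in B$ (the residue of~$\beta$ along $\sY_{\mtilde,i}\times_\sX V$ dies because it comes from a cover trivialised at~$\xi_{w,m,i}$). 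With your choice of~$L_m$ there is no mechanism to make the Brauer pairing vanish on the bad-reduction pieces of~$z'_v$. So the ``locally split values'' you correctly extract from Lemma~\ref{lem:strongapprox} must be fed back into the lifting, not discarded.
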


Let us emphasise that as per the conventions spelled out in~\textsection\ref{sec:intro}, the notation $\smash[b]{\Zcyc_{0,\A}^{\effred,y}(X^0)}$
in the above statement refers to the morphism $X^0 \to C$ induced by~$f$.
Thus, for $v\in \Omega_f$,
the linear equivalence class of~$f_*z_v$ is assumed to coincide with~$y$ in $\Pic(C \otimes_k k_v)$,
not solely in $\Pic(C^0 \otimes_k k_v)$ (and similarly, up to norms, for $v\in\Omega_\infty$).
Moreover,
let us stress that
in~\eqref{item:B},
we identify~$X^0(k_v)$ with a subset
of $X^0 \otimes_k k_v$. Thus, for $x \in X^0 \otimes_k k_v$,
one has~$x \in \sB$ if and only if $k_v(x)=k_v$
and the corresponding $k_v$\nobreakdash-point of~$X^0$ belongs to~$\sB$.

The proof of Theorem~\ref{th:existenceresult} occupies~\textsection\textsection\ref{subsec:applicationstrongapprox}--\ref{subsec:constructionzpv}.
We shall deal with curves~$C$ of arbitrary genus directly,
without reducing to the case of genus~$0$, using the Riemann--Roch theorem in a spirit
closer to~\cite{ctreglees} than to~\cite{wittdmj}.

\subsection{A consequence of strong approximation}
\label{subsec:applicationstrongapprox}

Lemma~\ref{lem:strongapprox} below, which we state with independent notation,
 will play a decisive role in the proof
of Theorem~\ref{th:existenceresult}.
It should be compared with Dirichlet's theorem on primes in arithmetic progressions
for general number fields.  According to the latter,
given
a finite set~$S$ of finite places of~$k$ and elements~$t_v \in k_v^*$ for~$v \in S$,
there exists a totally positive $t \in k^*$ arbitrarily close to~$t_v$ for~$v \in S$ such
that~$t$ is a unit outside~$S$ except at one place, at which it is a
uniformiser (see~\cite[Ch.~V, Theorem~6.2]{neukirchcft}).

\begin{lem}
\label{lem:strongapprox}
Let~$L/k$ be a finite extension of number fields.
Let~$S$ be a finite set of places of~$k$.
For $v \in S$, let $t_v \in k_v^*$.
Suppose~$t_v$ is a norm from $(L \otimes_k k_v)^*$ for every $v \in S$.
Then there
exists $t \in k^*$,
arbitrarily close to~$t_v$
for~$v \in S$,
such that for any finite place~$v \notin S$, either~$t$ is a unit at~$v$
or~$L$ possesses a place of degree~$1$ over~$v$.
In addition, if~$v_0$ is a place of~$k$, not in~$S$, over which~$L$ possesses a place of degree~$1$,
one can ensure that~$t$ is integral outside $S \cup \{v_0\}$.
\end{lem}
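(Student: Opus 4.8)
The plan is to reduce the statement to an instance of strong approximation off a place, applied to the complement of a codimension~$2$ subset in an affine space, which is exactly the content of Lemma~\ref{lem:strongapproxaffine}. First I would choose an auxiliary place: if no place $v_0$ is given, pick any finite place $v_0 \notin S$ over which $L$ has a place of degree~$1$ (infinitely many such places exist by the Chebotarev density theorem applied to the Galois closure of $L/k$). Enlarging $S$ if necessary, I may assume $S$ contains $\Omega_\infty$ and all places dividing the discriminant data needed below; this only strengthens the conclusion. Fix a primitive element so that $L=k[x]/(P(x))$ for a monic irreducible $P \in \sOint_S[x]$ of degree $d=[L:k]$, and let $N(x_0,\dots,x_{d-1})=N_{L/k}(x_0+x_1x+\dots+x_{d-1}x^{d-1})$ be the associated norm form, a homogeneous polynomial of degree $d$ in $d$ variables over $k$.

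The key geometric observation is this: consider the hypersurface $W \subset \A^d_k$ defined by $N(x_0,\dots,x_{d-1})=t$ — no wait, I want to work with the universal situation. Consider instead the affine space $\A^d_k$ with coordinates $x_0,\dots,x_{d-1}$ and the morphism $N\colon \A^d_k \to \A^1_k$. The locus $Z \subset \A^d_k$ where $N$ vanishes is the cone over the projectivization, and since $N$ is a norm form of a field extension, $\{N=0\}=\{x_0=\dots=x_{d-1}=0\}$ is just the origin, which has codimension $d \geq 1$; but for $d\geq 2$ this is codimension $\geq 2$. So $U=\A^d_k \setminus \{0\}$ satisfies strong approximation off $v_0$ by Lemma~\ref{lem:strongapproxaffine} (when $d\geq 2$; the case $d=1$ is trivial since then $L=k$ and any $t$ works). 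Now the hypothesis gives, for each $v \in S$, a point $P_v \in U(k_v)$ with $N(P_v)=t_v$. At the places $v \notin S$, including $v_0$, I need to supply integral local points: take $P_v$ to be the image of $1 \in \sOint_v \subset (\sOint_L \otimes \sOint_v)$, i.e.\ the point $(1,0,\dots,0)$, which lies in $\sU(\sOint_v)$ for a suitable model $\sU$ of $U$ over $\sOint_S$ (the origin is removed but $(1,0,\dots,0)$ is an integral unit point). By strong approximation off $v_0$, there is $P \in \sU(\sOint_S)$ arbitrarily close to $P_v$ at all $v \in S$ and integral, in fact a unit at the relevant coordinate, everywhere outside $S \cup \{v_0\}$. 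Set $t = N(P) \in k^*$.

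It remains to check the two arithmetic conclusions. Closeness: since $N$ is continuous and $P$ is close to $P_v$ at $v\in S$, $t=N(P)$ is close to $N(P_v)=t_v$. Total positivity at real places is arranged by making $S \supseteq \Omega_\infty$ and using that $N(P_v)=t_v$ with $t_v$ a norm from $(L\otimes k_v)^* = \mathbf{C}^*$ is already positive when $v$ is real and $L\otimes k_v$ is not totally real — actually I should be careful here and simply absorb the real places into the approximation, noting the statement only asks closeness at places of $S$, so total positivity is not required unless stated; re-reading, the lemma as stated does not demand total positivity, so I drop this point. Integrality outside $S\cup\{v_0\}$: $P$ has coordinates in $\sOint_S$ and is integral outside $S\cup\{v_0\}$, hence $t=N(P)$ is integral outside $S\cup\{v_0\}$; and if no $v_0$ was specified one can further arrange (by the construction) integrality outside $S$ as well, since $v_0$ was our free choice. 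Split-prime property: suppose $v \notin S$ is a finite place at which $t$ is not a unit, i.e.\ $w(t)>0$ for the valuation $w$ at $v$. Then $w(N_{L/k}(\xi))>0$ where $\xi = x_0+\dots+x_{d-1}x^{d-1}$ evaluated at $P$, an $\sOint_v$-integral element of $L$. The norm of an integral element has positive valuation at $v$ iff $\xi$ lies in some prime of $\sOint_L$ above $v$; and since $P$ is a unit point outside $S\cup\{v_0\}$ — meaning $\xi$ is a unit in $\sOint_L\otimes\sOint_v$ at all primes of residue degree... hmm, this is where the real content sits.

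**The main obstacle.** The delicate step is the split-prime conclusion: from $w_v(t)>0$ I must extract a degree-$1$ place of $L$ over $v$. The mechanism is that $\xi \bmod \mathfrak{p}$, for $\mathfrak{p}$ a prime of $\sOint_L$ over $v$ dividing $\xi$, must generate the residue extension of $\mathfrak p/v$ (because $\xi$ is a unit times a primitive-looking element — this needs the reduction of $P$ to be in "general position"), forcing that residue extension to be trivial exactly when $\xi\equiv 0$, and the Lang--Weil/Hensel input of Lemma~\ref{lem:strongapproxaffine}'s proof style guarantees we can demand $P$ reduce mod $v$ to a point where exactly one conjugate of $\xi$ vanishes, living in the degree-$1$ factor. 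Concretely: over $v\notin S\cup\{v_0\}$, $P$ reduces to an $\mathbf{F}_v$-point of $U$, and $t \bmod v = \prod_{\mathfrak p|v} N_{\mathbf F_{\mathfrak p}/\mathbf F_v}(\bar\xi_{\mathfrak p})$; if this is $0$ in $\mathbf F_v$ then $\bar\xi_{\mathfrak p}=0$ for some $\mathfrak p$, and one shows — using that $\bar\xi$ generates $L\otimes\mathbf F_v$ as an $\mathbf F_v$-algebra for $P$ in general position, which can be imposed as a Zariski-open condition preserved by the approximation argument — that $\mathfrak p$ has residue degree~$1$. Thus I would need to enrich the application of Lemma~\ref{lem:strongapproxaffine} with an additional Zariski-density constraint (general position of the reduction), which is harmless since strong approximation permits intersecting with any dense open set, precisely as in the proof of Lemma~\ref{lem:strongapproxaffine} where one imposes that a line avoids a bad locus. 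That flexibility is the crux, and spelling it out cleanly is the part requiring care.
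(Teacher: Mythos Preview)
Your overall framework is right—identify $\A^d_k$ with $R_{L/k}(\A^1_L)$, use the norm map, and apply strong approximation off~$v_0$ to the complement of a codimension~$2$ locus—but the specific open set you remove is wrong, and this is a genuine gap. You assert that the scheme $\{N=0\}\subset\A^d_k$ ``is just the origin''; this confuses $k$\nobreakdash-points with the scheme. The norm form is anisotropic over~$k$, so the only $k$\nobreakdash-rational zero is the origin, but the scheme $D=\{N=0\}$ is a degree~$d$ hypersurface, geometrically the union of the~$d$ coordinate hyperplanes. Removing only the origin therefore does not impose any useful constraint on the reductions of~$P$ at places~$v\notin S\cup\{v_0\}$: the reduction may well land in the singular locus of~$D$ (where several hyperplanes meet), and then no single prime of~$L$ over~$v$ is forced to have degree~$1$. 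Your proposed fix via a ``Zariski-density constraint'' does not work, because avoiding the singular locus of~$D$ in the reduction is a condition at \emph{every} finite place outside~$S\cup\{v_0\}$, not a Zariski-open condition on a single variety; strong approximation does not let you intersect with infinitely many such conditions at once.

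The paper's proof repairs exactly this point by removing, instead of the origin, the singular locus~$F$ of the divisor~$D$. Since~$F$ has codimension~$\geq 2$ in the affine space, Lemma~\ref{lem:strongapproxaffine} still applies to $W=R_{L/k}(\A^1_L)\setminus F$, and an integral point of the model~$\sW$ over~$\sOint_v$ automatically reduces into $\sW\otimes\F_v$, hence avoids~$\sF\otimes\F_v$. Thus if $v(t)\neq 0$ the reduction of~$x$ lies in $(\sD\setminus\sF)(\F_v)$. The smooth part $D\setminus F$ is the disjoint union of the hyperplane complements, permuted transitively by~$\Gal(\kbar/k)$, so its structure morphism factors through~$\Spec(L)$; an $\F_v$\nobreakdash-point (lifted by Hensel to a $k_v$\nobreakdash-point) therefore yields a $k$\nobreakdash-embedding $L\hookrightarrow k_v$, i.e.\ a place of~$L$ of degree~$1$ over~$v$. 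Once you replace your~$U$ by this~$W$, your argument becomes essentially the paper's.
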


\begin{proof}
The complement of the quasi-trivial
torus $R_{L/k}(\mathbf{G}_{\mathrm{m},{L}})$
in the affine space $R_{L/k}(\A^1_{L})$
is a divisor with normal crossings (geometrically
isomorphic to the union of all coordinate hyperplanes).
We denote it by~$D$, we let~$F$
be its singular locus
and set $W = R_{L/k}(\A^1_L) \setminus F$.
Similarly, we let $\sT=R_{\sOint_L/\sOint_k}(\mathbf{G}_{\mathrm{m},{\sOint_L}})$,
 $\sD=R_{\sOint_L/\sOint_k}(\A^1_{\sOint_L})\setminus \sT$
and
 $\sW=R_{\sOint_L/\sOint_k}(\A^1_{\sOint_L}) \setminus \sF$,
where~$\sF$ denotes the singular locus of~$\sD$.

Letting $t_v=1$ for $v \notin S$, we may enlarge~$S$ and assume that~$L/k$
is unramified outside~$S$.
For each~$v \in S$, fix $x_v \in (L\otimes_kk_v)^*$
such that $t_v = N_{L\otimes_kk_v/k_v}(x_v)$.
For $v\notin S$, let $x_v=1$.
This defines an adelic point $(x_v)_{v \in \Omega} \in W(\A_k)$
which, with respect to~$\sW$, is integral outside~$S$.

Let~$v_0$ denote a place of~$k$, not in~$S$, over which~$L$ possesses a place of degree~$1$.
If~$v_0$ is not given, we fix it by Chebotarev's density theorem (see~\cite[p.~250, Lemma~5]{mumford} for an elementary proof of the existence of~$v_0$ splitting completely in~$L$).

Being the complement of a codimension~$2$ closed subset in an affine
space, the variety~$W$ satisfies strong approximation off any given place
(see Lemma~\ref{lem:strongapproxaffine}).
We can thus find a point $x \in W(k)$, arbitrarily close to~$x_v$ for $v \in S$,
which is integral, with respect to~$\sW$, outside $S\cup\{v_0\}$.
Let us consider~$x$ as an element of~$L^*$ and set $t=N_{L/k}(x)$.
Let~$v$ be a finite place of~$k$, not in~$S$, such that~$t$ has nonzero valuation at~$v$.
We need to check that~$L$ possesses a place of degree~$1$ over~$v$.  If $v=v_0$, there is nothing to prove.
Otherwise $x\in \sW(\sOint_v)$.
On the other hand, as $v(t)\neq 0$, we have $x \notin \sT(\sOint_v)$.
Thus the reduction of~$x$ mod~$v$ lies in
$(\sD \setminus \sF)(\F_v)$,
which, by Hensel's lemma, implies that~$(D\setminus F)(k_v)\neq\emptyset$.
This, in turn, implies that~$L$ embeds $k$\nobreakdash-linearly into~$k_v$,
as the structure morphism $D \setminus F \to \Spec(k)$ factors through $\Spec(L)$.
\end{proof}

\subsection{Reduction to adelic zero-cycles orthogonal to \texorpdfstring{$B$}{B}}
\label{subsec:reduction}

We resume the notation of Theorem~\ref{th:existenceresult}
and keep it until the end of~\textsection\ref{sec:mainexistenceresult}.

The fibers of~$f$ over~$C^0$ are geometrically connected
(see~\cite[Proposition~15.5.9~(ii)]{ega43}) and hence geometrically irreducible, as they are smooth.

For $m \in M$, let $(X_{m,i})_{i \in I_m}$ denote the family of irreducible
components of~$X_m$.
Let~$e_{m,i}$ denote the multiplicity of~$X_{m,i}$ in~$X_m$.
For each $m \in M$ and $i \in I_m$, choose a finite extension~$E_{m,i}/k(X_{m,i})$ such that
the residue of any element of~$B$ at the generic point of~$X_{m,i}$ belongs to the kernel of the
restriction map $H^1(k(X_{m,i}),\Q/\Z)\to H^1(E_{m,i},\Q/\Z)$.
Let~$L_{m,i}$ denote the algebraic closure of~$k(m)$ in~$E_{m,i}$.
Let~$L_m/k(m)$ be a finite Galois extension in which $L_{m,i}/k(m)$ embeds for all $i \in I_m$.

Let~$\Pic_+(C)$ and~$\Br_+(C)$ denote the groups associated in Definition~\ref{def:picpbrp} to the curve~$C$,
to the finite set $M \subset C$ and to the finite extensions~$L_m/k(m)$ for $m \in M$.

\begin{lem}
\label{lem:inclusionfiniteness}
We have the inclusion
$$(B+f_\eta^*\mkern.5mu\Br(\eta))\cap\Br(X) \subseteq B+f^*\Br_+(C)$$
of subgroups of $\Br(X^0)$.
Moreover,
the quotient $((B+f_\eta^*\mkern.5mu\Br(\eta))\cap \Br(X))/f^*\Br(C)$ is finite.
\end{lem}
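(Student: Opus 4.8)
The plan is to analyze the residues of classes in $B + f_\eta^*\Br(\eta)$ at the generic points of the components of the bad fibers, and then to invoke the exact sequences \eqref{seq:picp} and \eqref{seq:brp} governing $\Br_+(C)$ and $\Pic_+(C)$.

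First I would take a class $\gamma \in (B + f_\eta^*\Br(\eta)) \cap \Br(X)$, so $\gamma = \beta + f^*\alpha$ for some $\beta \in B$ and some $\alpha \in \Br(\eta) = \Br(k(C))$. The goal is to show $\alpha$ can be adjusted, modulo $f^*\Br_+(C)$, so that $\gamma \in B + f^*\Br_+(C)$. Since $\gamma \in \Br(X)$ and $X$ is smooth, $\gamma$ has trivial residue at every codimension-$1$ point of $X$; in particular, at the generic point $\xi_{m,i}$ of each component $X_{m,i}$ of a fiber $X_m$ with $m \in M$. The residue of $f^*\alpha$ at $\xi_{m,i}$ is $e_{m,i}$ times (the restriction to $k(X_{m,i})$ of) the residue $\partial_m(\alpha) \in H^1(k(m),\Q/\Z)$, by functoriality of residues and the fact that $f$ has multiplicity $e_{m,i}$ along $X_{m,i}$. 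Hence the residue of $\beta$ at $\xi_{m,i}$ equals $-e_{m,i}\cdot\mathrm{res}_{k(X_{m,i})/k(m)}(\partial_m\alpha)$ in $H^1(k(X_{m,i}),\Q/\Z)$. By the choice of $E_{m,i}$, this residue of $\beta$ dies in $H^1(E_{m,i},\Q/\Z)$, hence in $H^1(L_{m,i},\Q/\Z)$ (as $L_{m,i}$ is the algebraic closure of $k(m)$ in $E_{m,i}$, the restriction from $k(X_{m,i})$ to $E_{m,i}$ factors appropriately and we may pull back to $L_{m,i} \hookrightarrow L_m$); a routine diagram chase with the restriction-corestriction or with the compatibility of residues shows $e_{m,i}\cdot\mathrm{res}_{L_m/k(m)}(\partial_m\alpha) = 0$ in $H^1(L_m,\Q/\Z)$. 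Using the no-multiple-fiber hypothesis, for each $m$ the gcd of the $e_{m,i}$ over $i \in I_m$ is $1$, so a $\Z$-linear combination gives $\mathrm{res}_{L_m/k(m)}(\partial_m\alpha) = 0$, i.e.\ $\partial_m\alpha \in \Ker(r_m)$ for every $m \in M$.

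Next I would conclude from the sequence \eqref{seq:brp} that, after correcting $\alpha$ by an element of $\Br(C)$ (which contributes $f^*\Br(C) \subseteq f^*\Br_+(C)$ to $\gamma$ and is harmless), the class $\alpha$ — viewed in $\Br(C^0)$ — lifts to an element of $\Br_+(C)$, provided its image in $\bigoplus_m \Ker(r_m)$ via the residue maps lands in the kernel of the map to $H^3(C,\Gm)$. But that image is precisely $(\partial_m\alpha)_{m}$, whose further image in $H^3(C,\Gm)$ is automatically zero because $\alpha$ already comes from $\Br(C^0)$ and the sequence \eqref{seq:brp} is exact; so $\alpha \in \Br_+(C)$ up to $\Br(C)$, giving $\gamma \in B + f^*\Br_+(C)$. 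This proves the inclusion. For the finiteness statement, quotient the inclusion by $f^*\Br(C)$: the group $((B + f_\eta^*\Br(\eta)) \cap \Br(X))/f^*\Br(C)$ embeds into $(B + f^*\Br_+(C))/f^*\Br(C)$, which is a quotient of $B \oplus (\Br_+(C)/\Br(C))$; the first summand is finite by hypothesis and the second is finite by Remark~\ref{rks:picbr}(ii), since each $\Ker(r_m)$ is finite.

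The main obstacle I anticipate is the bookkeeping around the residue computation at the components $X_{m,i}$: one must be careful that the residue map $\Br(k(X)) \to H^1(k(X_{m,i}),\Q/\Z)$ composed with restriction along $f$ genuinely multiplies $\partial_m$ by the ramification index $e_{m,i}$ (this is the standard behaviour of residues under a field extension ramified of index $e_{m,i}$), and that the extensions $E_{m,i}$, $L_{m,i}$, $L_m$ interact correctly so that the vanishing in $H^1(L_m,\Q/\Z)$ can be extracted. Everything else is a formal consequence of the exact sequences already recorded and of the finiteness already noted in Remark~\ref{rks:picbr}(ii).
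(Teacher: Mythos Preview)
Your approach is essentially the same as the paper's: compute residues at the generic points of the~$X_{m,i}$, use the definition of~$E_{m,i}$ to kill the contribution of~$\beta$, extract the vanishing of $e_{m,i}\partial_m\alpha$ in $H^1(L_m,\Q/\Z)$, and take a $\Z$-linear combination using $\gcd(e_{m,i})=1$. The finiteness deduction via Remark~\ref{rks:picbr}~(ii) is also identical.

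Two small points, however. First, you assert without proof that $\alpha$ ``already comes from $\Br(C^0)$''. This needs one line: since $\beta\in\Br(X^0)$ and $\gamma\in\Br(X)\subseteq\Br(X^0)$, we have $f^*\alpha\in\Br(X^0)$; then for $m\in C^0$ the fibre~$X_m$ is smooth and geometrically irreducible, so the restriction map $H^1(k(m),\Q/\Z)\to H^1(k(X_m),\Q/\Z)$ is injective and $\partial_m\alpha=0$. The paper makes this step explicit. Second, your detour through the exact sequence~\eqref{seq:brp} is unnecessary and slightly misleading: $\Br_+(C)$ is by Definition~\ref{def:picpbrp} the subgroup of $\Br(C^0)$ consisting of classes whose residue at each $m\in M$ lies in $\Ker(r_m)$. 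Once you know $\alpha\in\Br(C^0)$ and $\partial_m\alpha\in\Ker(r_m)$ for every~$m\in M$, you have $\alpha\in\Br_+(C)$ on the nose---no correction by $\Br(C)$, no lifting argument, and no appeal to $H^3(C,\Gm)$ is needed.
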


\begin{proof}
The second assertion, which will be used in~\textsection\ref{sec:hilbertsubsets} and in~\textsection\ref{sec:rationalpoints}, follows from the first one and from Remark~\ref{rks:picbr}~(ii). To prove the first,
it suffices to check that if $\beta \in B$ and $\gamma\in\Br(\eta)$ satisfy $\beta+f_\eta^*\mkern.5mu\gamma \in \Br(X)$, then $\gamma \in \Br_+(C)$.
Fix $m \in C$. Let $\partial_\gamma \in H^1(k(m),\Q/\Z)$
denote the residue of~$\gamma$ at~$m$.
As the fibers of~$f$ over~$C^0$ are smooth and geometrically irreducible
and as~$f_\eta^*\mkern.5mu\gamma \in \Br(X^0)$, we have $\partial_\gamma=0$ if $m \in C^0$.
Suppose now $m \in M$.
For $i \in I_m$,
the residue of $\beta+f_\eta^*\mkern.5mu\gamma$
at the generic point of~$X_{m,i}$ is trivial and its image in $H^1(E_{m,i},\Q/\Z)$
coincides with the image of $e_{m,i}\partial_\gamma$
(see~\cite[Proposition~1.1.1]{ctsd94}).
Therefore $e_{m,i}\partial_\gamma$ vanishes in $H^1(L_{m,i},\Q/\Z)$, hence in $H^1(L_m,\Q/\Z)$.
As the gcd of $(e_{m,i})_{i\in I_m}$ is equal to~$1$,
it follows that~$\partial_\gamma$
vanishes in $H^1(L_m,\Q/\Z)$.
\end{proof}

\begin{lem}
\label{lem:appformallemma}
There exists a family $(z^1_v)_{v\in\Omega}\in \Zcyc_{0,\A}^{\effred,y}(X^0)$
orthogonal to $B+f^*\Br_+(C)$ with respect to~\eqref{eq:globalpairing}
such that $z^1_v=z_v$ for $v \in S\cup \Omega_\infty$.
\end{lem}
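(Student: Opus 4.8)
The plan is to apply the formal lemma of \textsection\ref{sec:formallemma} (Proposition~\ref{prop:formallemma}) to a well-chosen \emph{finite} subgroup of $\Br(X^0)$, and then to observe that orthogonality to the rest of $f^*\Br_+(C)$ comes for free. The difficulty to get around is that $B+f^*\Br_+(C)$ is in general not finite, since $\Br(C)$ is infinite; but the subgroup $f^*\Br(C)$ is harmless. Indeed, for any $(z'_v)_{v\in\Omega}\in\Zcyc_{0,\A}^{\effred,y}(X^0)$ and any $\gamma\in\Br(C)$, functoriality of the pairing gives $\mylangle f^*\gamma,z'_v\myrangle=\mylangle\gamma,f_*z'_v\myrangle$ in $\Br(k_v)$ for every $v\in\Omega$; since $C$ is proper this depends only on the rational equivalence class of $f_*z'_v$, which by definition of $\Zcyc_{0,\A}^{\effred,y}(X^0)$ is $y\otimes_kk_v$ for $v\in\Omega_f$ and is $y\otimes_kk_v$ up to a norm from $\kbar_v$ for $v\in\Omega_\infty$. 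As $\Br(\kbar_v)=0$, the norm contributes $0$, so $\mylangle f^*\gamma,z'_v\myrangle=\mylangle\gamma,y\myrangle\otimes_kk_v$ for all $v$, whence $\sum_{v\in\Omega}\inv_v\mylangle f^*\gamma,z'_v\myrangle=0$ by the reciprocity law of global class field theory. Thus every element of $\Zcyc_{0,\A}^{\effred,y}(X^0)$ is automatically orthogonal to $f^*\Br(C)$ with respect to~\eqref{eq:globalpairing}.

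Next I would split off a finite subgroup carrying the non-trivial part of $f^*\Br_+(C)$. By Remark~\ref{rks:picbr}~(ii) the quotient $\Br_+(C)/\Br(C)$ is finite, and $\Br_+(C)$ is torsion, being a subgroup of $\Br(C^0)$; hence the subgroup $G\subseteq\Br_+(C)$ generated by finitely many preimages of generators of $\Br_+(C)/\Br(C)$ is finitely generated and torsion, hence finite, and satisfies $G+\Br(C)=\Br_+(C)$. Put $B'=B+f^*G$, a finite subgroup of $\Br(X^0)$ containing $B$; then $B+f^*\Br_+(C)=B'+f^*\Br(C)$.

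It now suffices to find $(z^1_v)_{v\in\Omega}\in\Zcyc_{0,\A}^{\effred,y}(X^0)$ orthogonal to $B'$ with $z^1_v=z_v$ for $v\in S\cup\Omega_\infty$: by the first paragraph such a family is also orthogonal to $f^*\Br(C)$, hence to $B'+f^*\Br(C)=B+f^*\Br_+(C)$. To obtain it I would apply Proposition~\ref{prop:formallemma} to the finite group $B'$, the class $y$ (which satisfies $\deg(y)\geq 2g+2>2g+1$ by hypothesis~(ii) of Theorem~\ref{th:existenceresult}), the given family $(z_v)_{v\in\Omega}\in\Zcyc_{0,\A}^{\effred,y}(X^0)$, and the finite set $S\cup\Omega_\infty$. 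The only hypothesis of Proposition~\ref{prop:formallemma} that needs checking is that $(z_v)_{v\in\Omega}$ is orthogonal to $B'\cap\Br(X)$; but $f^*G\subseteq f^*\Br_+(C)\subseteq f^*\Br(C^0)\subseteq f_\eta^*\mkern.5mu\Br(\eta)$ inside $\Br(X^0)$, so $B'\subseteq B+f_\eta^*\mkern.5mu\Br(\eta)$ and therefore $B'\cap\Br(X)\subseteq(B+f_\eta^*\mkern.5mu\Br(\eta))\cap\Br(X)$, to which $(z_v)_{v\in\Omega}$ is orthogonal by hypothesis~(i). Proposition~\ref{prop:formallemma} then produces the required $(z^1_v)_{v\in\Omega}$, and the lemma follows.

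The only real point of the argument is the one highlighted above: one cannot feed $B+f^*\Br_+(C)$ into the formal lemma directly because it is infinite, and the finiteness of $\Br_+(C)/\Br(C)$ (Remark~\ref{rks:picbr}~(ii)) is precisely what permits the reduction to the finite group $B'$; the complementary part $f^*\Br(C)$ is handled by reciprocity, using that the zero-cycles in play push forward to divisors of one \emph{fixed} linear equivalence class on $C$. The remaining verifications are routine.
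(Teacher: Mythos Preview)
Your proof is correct and follows essentially the same approach as the paper: choose a finite subgroup $G\subset\Br_+(C)$ (the paper calls it~$\Lambda$) with $G+\Br(C)=\Br_+(C)$, apply Proposition~\ref{prop:formallemma} to the finite group $B+f^*G$, and use that every element of $\Zcyc_{0,\A}^{\effred,y}(X^0)$ is automatically orthogonal to~$f^*\Br(C)$. You are simply more explicit than the paper in verifying the hypothesis $(B+f^*G)\cap\Br(X)\subseteq(B+f_\eta^*\mkern.5mu\Br(\eta))\cap\Br(X)$ needed to invoke Proposition~\ref{prop:formallemma}, and in spelling out why the pairing with~$f^*\Br(C)$ vanishes.
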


\begin{proof}
As the group $\Br_+(C)$ is torsion and the quotient $\Br_+(C)/\Br(C)$ is finite
(see Remark~\ref{rks:picbr}~(ii)),
there exists a finite subgroup $\Lambda \subset \Br_+(C)$ such that $\Br_+(C)=\Lambda+\Br(C)$.
The lemma then follows from
Proposition~\ref{prop:formallemma}
applied to the subgroup $B+f^*\Lambda \subset \Br(X^0)$,
in view of the fact that
any element of $\Zcyc_{0,\A}^{\effred,y}(X^0)$ is orthogonal to~$f^*\Br(C)$.
\end{proof}

It follows
from Lemmas~\ref{lem:inclusionfiniteness} and~\ref{lem:appformallemma}
that in order to establish Theorem~\ref{th:existenceresult},
we may assume that~$(z_v)_{v \in \Omega}$ is orthogonal to $B+f^*\Br_+(C) \subseteq \Br(X^0)$ with respect to~\eqref{eq:globalpairing}.
From now on we make this additional assumption.

\subsection{Construction of \texorpdfstring{$c_1$}{c₁} and choice of \texorpdfstring{$v_0$}{v₀}}
\label{subsec:constrc1v0}

We are now in a position to pinpoint the class, in $\Pic_+(C)$, of the desired effective divisor~$c$.
The construction of~$c$ itself will ultimately rely on an application of strong approximation off a place,
say~$v_0$,
on an affine space over~$k$.
The goal of~\textsection\ref{subsec:constrc1v0} is to define this place~$v_0$.

The condition that $(z_v)_{v \in \Omega}$ is orthogonal to~$f^*\Br_+(C)$ with respect to~\eqref{eq:globalpairing}
implies that the class of $(f_*z_v)_{v\in \Omega}$ in~$\PicplusA(C)$
is orthogonal to~$\Br_+(C)$ with respect to the pairing~\eqref{eq:pairingbmplus}.
By Theorem~\ref{th:arithduality}, we deduce
that there exists a divisor $c_1 \in \Div(C^0)$ whose class in~$\Pic(C)$ is~$y$
and whose class in~$\PicplusA(C)$ coincides with that of $(f_*z_v)_{v \in \Omega}$.

After enlarging the finite subset $S \subset \Omega_f$, we may assume that
$\mylangle \beta,z_v\myrangle=0$ for all $v \in \Omega_f\setminus S$ and all $\beta \in B$,
that the order of the finite group~$B$ is invertible in~$\sOint_S$,
that $\Pic(\sOint_S)=0$,
that the finite extensions $L_m/k$ for $m \in M$ are unramified outside~$S$,
that~$C$ and~$X$
extend to smooth and proper $\sOint_S$\nobreakdash-schemes~$\sC$ and~$\sX$ and that $f:X\to C$ extends to a flat morphism,
which we still denote $f:\sX \to \sC$.

For $m \in M$, we let~$\mtilde$ denote the Zariski closure of~$m$ in~$\sC$.
Let~$\Mtilde$ denote the Zariski closure of~$M$ in~$\sC$.
Let $\sC^0=\sC \setminus \Mtilde$ and $\sX^0=\sX \times_{\sC} \sC^0$.
Whenever~$D$ is a divisor on~$C$, we denote by~$\widetilde{D}$ the
horizontal divisor on~$\sC$ which extends~$D$,
\emph{i.e.},
the unique divisor on~$\sC$
whose support is flat over~$\sOint_S$ and
whose restriction to~$C$ is~$D$.
For $m \in M$ and $i \in I_m$, we write~$\sX_{\mtilde,i}$ for the Zariski closure of~$X_{m,i}$ in~$\sX$, endowed with the reduced
scheme structure, and we set $\sX_\mtilde=\sX \times_\sC \mtilde$.
For each $m \in M$ and $i \in I_m$, let $\sY_{\mtilde,i} \subseteq \sX_{\mtilde,i}$ be a dense open subset.
Let~$\sE_{\mtilde,i}$
denote the normalisation of~$\sY_{\mtilde,i}$ in the finite extension~$E_{m,i}/k(X_{m,i})$.
By shrinking~$\sY_{\mtilde,i}$, we may assume that the finite
morphism~$\sE_{\mtilde,i}\to\sY_{\mtilde,i}$
is \'etale and that the scheme $(\sX_{\mtilde})_\red$ is smooth over~$\mtilde$ at the points
of~$\sY_{\mtilde,i}$.

By enlarging~$S$ further, we may assume that
the equality of effective divisors on~$\sX$
\begin{align*}
\sX_\mtilde = \sum_{i \in I_m} e_{m,i}\sX_{\mtilde,i}
\end{align*}
holds for each $m \in M$,
that~$\sX^0$ is smooth over~$\sC^0$,
that $B \subset \Br(\sX^0)$,
that $\Supp(\widetilde{c_1})$ is disjoint from~$\Mtilde$
and that~$\Mtilde$ is \'etale over~$\sOint_S$.
In particular~$\Mtilde$ is a regular scheme, so that we may (and will freely)
identify,
for $m\in M$, the set of finite places of~$k(m)$ which do not lie above~$S$
with the set of closed points of~$\mtilde$.

After a final enlargement of~$S$,
we may assume, 
thanks to the Lang--Weil--Nisnevich bounds~\cite{langweil} \cite{nisnevic},
that the following statements hold:
\begin{itemize}
\item the fiber of~$f$ above any closed point of~$\sC^0$
contains a rational point;
\smallskip
\item for any $m \in M$, any $i \in I_m$ and any closed point $w \in \mtilde$
which, as a place of~$k(m)$, splits completely in~$L_{m,i}$,
the fiber of $\sE_{\mtilde,i} \to \mtilde$ above~$w$ contains a rational point.
\end{itemize}

By shrinking the subset $\sU \subseteq \prod_{v \in S} \Sym_{X^0/k}(k_v)$, we may assume that
$\mylangle \beta,z'_v\myrangle=\mylangle\beta,z_v\myrangle$ for any~$v \in S$, any $\beta \in B$ and any $(z'_v)_{v \in S} \in \sU$.

We now fix a place $v_0 \in \Omega_f \setminus S$ which splits completely
in~$L_m$ for all $m \in M$.

\subsection{Construction of \texorpdfstring{$c$}{c}}
\label{subsec:constructionc}

As the classes of~$c_1$ and of~$(f_*z_v)_{v\in\Omega}$ in $\PicplusA(C)$ are equal and the divisors~$f_*z_v$
for $v \in S$
are effective, we can find,
for each $v \in S$, a rational function $h_v \in H^0(C \otimes_k k_v, \sO_C(c_1))$
such that $\div(h_v)=f_*z_v-c_1$
and such that $h_v(m) \in N_{L_m \otimes_k k_v/k(m)\otimes_kk_v}((L_m\otimes_kk_v)^*)$
for all~$m\in M$.
At the infinite places~$v$ of~$k$, we fix $h_v \in H^0(C\otimes_k k_v,\sO_C(c_1))$ using the following lemma.

\begin{lem}
For each $v\in\Omega_\infty$,
there exist a rational function
 $h_v \in H^0(C \otimes_k k_v,\sO_C(c_1))$
and a zero-cycle
 $z^2_v \in \Zcyc_0^{\effred,y}(X^0 \otimes_kk_v)$
satisfying the following conditions:
\begin{enumerate}
\item[(a)] $\div(h_v)=f_*z^2_v - c_1$;
\item[(b)]
$h_v(m) \in N_{L_m \otimes_k k_v/k(m)\otimes_kk_v}((L_m\otimes_kk_v)^*)$
for all $m \in M$;
\item[(c)] for each connected component~$\sB$ of~$X^0(k_v)$, if we write
$z_v-z^2_v=\sum_{x \in X^0 \otimes_k k_v} n_x\mkern1mux$,
then $\sum_{x \in \sB} n_x$ is even.
\end{enumerate}
\end{lem}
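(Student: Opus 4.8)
The plan is to treat complex and real archimedean places separately, the real places carrying all the difficulty. If $v$ is a complex place then $\kbar_v=k_v$, so the cokernel of $N_{\kbar_v/k_v}$ entering the definition of $\PicplusA(C)$ at $v$ is trivial, each norm subgroup $N_{L_m\otimes_kk_v/k(m)\otimes_kk_v}((L_m\otimes_kk_v)^*)$ is all of $(k(m)\otimes_kk_v)^*$, and $X^0\otimes_kk_v$ --- being smooth and geometrically irreducible over $k$ --- has connected space of $k_v$\nobreakdash-points; thus (b) is vacuous, (c) reduces to the evenness of $\deg(z_v)-\deg(z^2_v)$, and it suffices to produce one admissible pair. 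One picks a reduced effective divisor $D$ on $C^0\otimes_kk_v$ linearly equivalent to $c_1$ and disjoint from $M$, lifts it point by point to an effective zero-cycle $z^2_v$ on $X^0\otimes_kk_v$ with $f_*z^2_v=D$ --- legitimate because every fibre of $f$ over a closed point of $C^0$ is a non-empty variety over an algebraically closed field, hence has a rational point --- and sets $h_v\in H^0(C\otimes_kk_v,\sO_C(c_1))$ with $\div(h_v)=D-c_1$.

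Now suppose $v$ is real. I would use the curve $Z_v\subset X\otimes_kk_v$ supplied by hypothesis~(iii) of Theorem~\ref{th:existenceresult}: smooth, proper, geometrically irreducible, containing $\Supp(z_v)$, mapping onto $C\otimes_kk_v$ by a finite morphism $\phi$ of degree $d_v=[k_v(Z_v):k_v(C)]$, and of genus $g_v$ with $\deg(y)\geq d_v\deg(M)+2g_v+2$. The construction exhibits $z^2_v$ as an effective divisor of degree $\deg(y)$ supported on $Z_v\cap(X^0\otimes_kk_v)$. Fix once and for all an effective divisor $\Delta$ on $Z_v$ such that $\phi_*\Delta$ is linearly equivalent to $c_1$ on $C\otimes_kk_v$; such $\Delta$ exists because $f_*z_v=\phi_*z_v$ differs from $c_1$ by a norm divisor from $\kbar_v$, whose class can be taken in the image of $\phi_*\colon\Pic(Z_v)\to\Pic(C\otimes_kk_v)$, while $\deg(\Delta)=\deg(y)\geq 2g_v-1$ guarantees effective representatives. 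Then any member $z^2_v$ of the complete linear system $|\Delta|$ has $f_*z^2_v=\phi_*z^2_v\sim c_1$, hence lies in $\Zcyc_0^{\effred,y}(X^0\otimes_kk_v)$ as soon as $z^2_v$ is reduced, $\phi$ is unramified along $\Supp(z^2_v)$, and $\phi_*z^2_v$ is reduced and disjoint from $M$ --- a dense open condition on $|\Delta|$, since $\deg(\Delta)>2g_v$ makes $|\Delta|$ base-point-free. The function $h_v$ characterised by $\div(h_v)=\phi_*z^2_v-c_1$ is then well defined up to $k_v^*$ and lies in $H^0(C\otimes_kk_v,\sO_C(c_1))$ because $\phi_*z^2_v\geq 0$, so (a) is automatic.

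It remains to choose $z^2_v$ inside $|\Delta|$ so as to force (b) and (c). Unwinding the norm subgroups over $\R$, condition (b) asks that $h_v$ take positive values at the finitely many real points of $C\otimes_kk_v$ lying over those places of $M$ that do not split completely in the corresponding $L_m$; as $\div(h_v)=\phi_*z^2_v-c_1$ with $c_1$ fixed, the signs of $h_v$ along the ovals of $C(k_v)$ depend only on the positions of the points of $\phi_*z^2_v$, i.e.\ on the positions of the real points of $z^2_v$ along the ovals of $Z_v(k_v)$, so (b) is a finite list of at most $d_v\deg(M)$ parity constraints on that placement. Condition (c) prescribes, for each of the at most $g_v+1$ ovals of $Z_v(k_v)$ (Harnack's bound), the parity of the number of points of $z^2_v$ lying on it; this prescription is consistent, since the number of real points of $z^2_v$ is $\equiv\deg(y)\pmod 2$ while $\sum_\gamma\#(z_v\cap\gamma)\equiv\deg(f_*z_v)\pmod 2$ and $\deg(f_*z_v)\equiv\deg(y)\pmod 2$ (because $f_*z_v$ is linearly equivalent to $y$ up to a norm divisor from $\kbar_v$, which has even degree). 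Since $\dim|\Delta|=\deg(y)-g_v$ by Riemann--Roch, the inequality $\deg(y)\geq d_v\deg(M)+2g_v+2$ leaves enough room to prescribe independently the positions of at most $d_v\deg(M)$ points of $z^2_v$ (for (b)) and of at most $g_v+1$ further points on chosen ovals of $Z_v(k_v)$ (for (c)) while keeping the generic-position requirements above. That all these parity constraints can indeed be met at once --- respecting in particular that the real support of $z^2_v$ is confined to $Z_v(k_v)$, whose image in $C(k_v)$ may be a proper subset --- rests on the elementary fact that a rational function on a smooth curve over $\R$ has an even number of sign changes along each connected component of its real locus.

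The main obstacle is precisely this coordination step: producing a single member $z^2_v$ of $|\Delta|$, hence a single rational function $h_v$, meeting simultaneously the sign requirements at $M$ coming from the norm subgroups and the parity requirements across all ovals of $Z_v(k_v)$, without altering the linear-equivalence class $y$. The two ingredients that make it go through are the even-sign-change property just mentioned, which renders the parity bookkeeping consistent, and the numerical slack built into hypotheses~(ii) and~(iii) of Theorem~\ref{th:existenceresult}, which is exactly what is needed for the linear system $|\Delta|$ on $Z_v$ (and, for complex $v$, the system $|c_1|$ on $C$) to be ample and large enough. I expect the fiddliest point to be handling condition (b) at a place $m\in M$ whose residue field $k(m)$ has several archimedean completions above $v$ with differing splitting behaviour in $L_m$, in tandem with the constraint on where the real points of $z^2_v$ are allowed to lie.
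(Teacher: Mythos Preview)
Your handling of the complex places is fine, but the real case has a genuine gap. The heart of the paper's argument is a piece of input you never invoke: by the construction of~$c_1$ in~\textsection\ref{subsec:constrc1v0} (via Theorem~\ref{th:arithduality}), the classes of~$c_1$ and of~$f_*z_v$ agree not merely in~$\Pic(C\otimes_kk_v)$ modulo norms but in~$\Picplus(C\otimes_kk_v)$ modulo norms. This is what furnishes a rational function~$h_v^1$ on~$C\otimes_kk_v$ with $\div(h_v^1)=f_*z_v-c_1+N_{\kbar_v/k_v}(\xi_v)$ \emph{and} with $h_v^1(m)$ already a norm from~$L_m$ for every~$m$. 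The paper then lifts~$\xi_v$ to~$Z_v$ to get~$z_v^1$ with $z_v-z_v^1$ a norm from~$\kbar_v$, applies Riemann--Roch on~$Z_v$ (using $\deg(y)\geq d_v\deg(M)+2g_v+2$) to produce $\eta_v\in H^0(Z_v,\sO_{Z_v}(z_v^1))$ with $\eta_v|_{M_v}=1$, and sets $z_v^2=z_v^1+\div(\eta_v)$ and $h_v=N_{k_v(Z_v)/k_v(C)}(\eta_v)\cdot h_v^1$. Condition~(b) survives because $h_v(m)=h_v^1(m)$; condition~(c) follows because $z_v-z_v^1$ is a norm and $z_v^1-z_v^2=-\div(\eta_v)$ with~$\eta_v$ positive at every real point of~$M_v$, so on each arc of~$Z_v^0(k_v)$ the function~$\eta_v$ has an even number of sign changes.

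Your route instead tries to manufacture~(b) by positioning the real points of~$z_v^2$ inside~$|\Delta|$, and this can fail outright. If $z_v^2,z_v^{2\prime}\in|\Delta|$ differ by~$\div(g)$, then $h_v$ changes by $N_\phi(g)$, and $N_\phi(g)(m)=\prod_{q\in\phi^{-1}(m)} g(q)^{e_q}$ is automatically positive whenever $\phi^{-1}(m)$ contains no real point of~$Z_v$ (complex conjugate contributions are positive). Since $\phi:Z_v(k_v)\to C(k_v)$ need not be surjective, there may be real points~$m$ of~$M$ at which the sign of~$h_v(m)$ is rigid under your moves; no dimension count on~$|\Delta|$ rescues this. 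A second issue: your reformulation of~(c) as one parity constraint per oval of~$Z_v(k_v)$ is too coarse. The relevant pieces are the connected components of~$X^0(k_v)$, and a single oval of~$Z_v(k_v)$ can be cut by~$M_v$ into several arcs lying in distinct components of~$X^0(k_v)$, each imposing its own parity. The paper's $\eta_v|_{M_v}=1$ trick handles this automatically at the level of arcs.
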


\begin{proof}
The existence of~$h_v$ and~$z_v^2$ for complex~$v$ is clear as~$c_1$ is a very ample
divisor on~$C$ (recall that $\deg(y)\geq 2g+1$).
Let us fix a real place~$v$.
By the definition of~$c_1$, there exist $h_v^1 \in k_v(C)^*$
and $\xi_v \in \Div(C^0 \otimes_k \kbar_v)$
such that $\div(h_v^1)=f_*z_v-c_1+N_{\kbar_v/k_v}(\xi_v)$
and such that $h_v^1(m) \in (k(m)\otimes_k k_v)^*$ is a norm from~$(L_m \otimes_k k_v)^*$ for every $m\in M$.
Let $Z_v\subset X\otimes_kk_v$ denote the curve given by assumption~(iii) of Theorem~\ref{th:existenceresult}.
Let $Z_v^0=Z_v \times_C C^0$ and $M_v = Z_v \times_C M$.
By lifting~$\xi_v$ to $\Div(Z_v^0 \otimes_{k_v} \kbar_v)$,
we can find $z_v^1 \in \Div(Z_v^0)$ such that $z_v-z_v^1$ is a norm
from $\Div(Z_v^0 \otimes_{k_v} \kbar_v)$ and  such that $\div(h_v^1)=f_*z^1_v-c_1$.
According to assumption~(iii)
and to Serre duality, we have
$H^1(Z_v,\sO_{Z_v}(z_v^1-M_v))=0$, hence
there exists a rational function $\eta_v \in H^0(Z_v,\sO_{Z_v}(z_v^1))$
whose restriction to~$M_v$ is equal to~$1$.
We have $\div(\eta_v)=z_v^2-z_v^1$ for some effective $z_v^2\in \Div(Z_v^0)$.
By choosing~$\eta_v$ general enough in the sense of the Zariski topology,
we may assume that $f_*z_v^2$ is reduced (see the proof of \cite[Lemme~3.1]{ctreglees})
and hence that $z_v^2 \in \Zcyc_0^{\effred,y}(X^0\otimes_kk_v)$.
Let $h_v=\mkern2muN_{k_v(Z_v)/k_v(C)}(\eta_v)\mkern1muh_v^1$.
Conditions~(a) and~(b) are clearly satisfied (note that $h_v(m)=h_v^1(m)$ for $m\in M$).
Condition~(c) also holds because on the one hand $z_v-z_v^1$ is a norm from~$\kbar_v$
and on the other hand $z_v^1-z_v^2$ is the divisor, on~$Z_v$, of a rational function which takes positive
values at the $k_v$\nobreakdash-points of~$M_v$.
(The sign of a rational function on a smooth real algebraic curve changes exactly at the points
where the function has a pole or zero of odd order.)
\end{proof}

For each $m \in M$, applying Lemma~\ref{lem:strongapprox}
to the finite extension $L_m/k(m)$
and to the set of places of~$k(m)$ which lie above~$S$
now produces an element $t_m \in k(m)^*$
 arbitrarily close to
 $h_v(m) \in (k(m)\otimes_k k_v)^*$
 for $v \in S\cup\Omega_\infty$,
integral at the places of~$k(m)$ which do not lie above $S \cup \{v_0\}$,
such that~$t_m$ has nonzero valuation only at places of~$k(m)$
which lie above~$S$ and at places of~$k(m)$ which split completely in~$L_m$ (recall
that $L_m/k(m)$ is Galois).

Thanks to assumption~(ii) and to Serre duality,
if $\rho:\sC \to\Spec(\sOint_S)$ denotes the structure morphism,
we have $R^1 \rho_* \sO_{\sC}(\widetilde{c_1}-\Mtilde)=0$
(see \cite[Ch.~II, \textsection5, Corollary~2]{mumford}).
As~$\Spec(\sOint_S)$ is affine,
it follows that
$H^1(\sC,\sO_{\sC}(\widetilde{c_1}-\Mtilde))=0$
and hence that the natural sequence of $\sOint_S$\nobreakdash-modules
\begin{align}
\label{eq:apprr}
\xymatrix{
0 \ar[r]& H^0(\sC, \sO_\sC(\widetilde{c_1}-\Mtilde)) \ar[r] & H^0(\sC,\sO_\sC(\widetilde{c_1})) \ar[r]^(.53){r} & H^0\big(\Mtilde,\sO_\Mtilde\big) \ar[r] & 0
}
\end{align}
is exact.
These modules are free since they are finitely generated and torsion-free
over the principal ideal domain~$\sOint_S$ (recall
$\Pic(\sOint_S)=0$).
Choosing an $\sOint_S$\nobreakdash-linear section~$s$ of~$r$ and using strong approximation off~$v_0$
in the free $\sOint_S$\nobreakdash-module $H^0(\sC,\sO_\sC(\widetilde{c_1}-\Mtilde))$,
we can find
 $h_0 \in H^0(\sC \otimes_{\sOint_S} \sOint_{S\cup\{v_0\}},\sO_{\sC}(\widetilde{c_1}-\Mtilde))$
arbitrarily close to
$h_v-s(r(h_v))\in H^0(C \otimes_k k_v,\sO_C(c_1-M))$
for $v \in S \cup\Omega_\infty$.
Note that $\bigoplus_{m\in M} t_m \in H^0(\Mtilde,\sO_\Mtilde) \otimes_{\sOint_S} \sOint_{S\cup\{v_0\}}$
lies,
by construction,
 arbitrarily close to
$r(h_v) \in H^0(M \otimes_k k_v,\sO_M)$ for $v\in S \cup\Omega_\infty$.
Letting $h=h_0+s\mkern.5mu\big(\mkern-2.5mu\bigoplus_{m\in M}t_m\big)$,
we therefore obtain $h \in H^0(C,\sO_C(c_1))$
arbitrarily close to
$h_v\in H^0(C \otimes_k k_v,\sO_C(c_1))$
for $v \in S \cup\Omega_\infty$,
such that $h(m)=t_m$ for $m\in M$
and such that~$h$,
as a rational function on~$\sC$, has no pole outside~$\Supp(\widetilde{c_1}) \cup (\sC \otimes_{\sOint_S} \Fv{v_0})$.

Let $c \in \Div(C^0)$ be the effective divisor defined by $\div(h)=c-c_1$.
It remains to check the existence of a family $(z'_v)_{v \in \Omega} \in \Zcyc_{0,\A}(X^0)$ satisfying (1)--(4).

\subsection{Construction of \texorpdfstring{$(z'_v)_{v\in\Omega}$}{z'ᵥ for vϵΩ}}
\label{subsec:constructionzpv}

The next lemma takes care of~(3) and~(4).

\begin{lem}
There exists
$(z'_v)_{v \in S\cup\Omega_\infty} \in \prod_{v \in S\cup\Omega_\infty} \Sym_{X^0/k}(k_v)$
such that $(z'_v)_{v \in S} \in \sU$,
such that $f_*z'_v=c$ in $\Div(C^0\otimes_k k_v)$ for $v \in S \cup \Omega_\infty$
and such that~(4) holds.
\end{lem}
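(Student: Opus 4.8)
The plan is to build each~$z'_v$, for $v\in S\cup\Omega_\infty$, by lifting the divisor~$c$ along~$f$ with the implicit function theorem. Put $d=\deg(y)$ and let $\pi_d\colon\Sym^d_{X^0/k}\to\Sym^d_{C^0/k}$ be the morphism induced by~$f$. The crucial preliminary remark is that any $z\in\Zcyc_0^{\effred,y}(X^0\otimes_kk_v)$ is a reduced effective zero-cycle of degree~$d$ lying over a reduced divisor of class~$y$: writing $z=\sum_i n_iP_i$ with the~$P_i$ pairwise distinct, the reducedness of $f_*z=\sum_i n_i[k(P_i):k(f(P_i))]\,f(P_i)$ forces the~$f(P_i)$ to be pairwise distinct and each $n_i[k(P_i):k(f(P_i))]$ to be~$1$, whence $n_i=1$ and $k(P_i)=k(f(P_i))$. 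As~$f$ is smooth over~$C^0$, it follows (by localising \'etale-locally on~$\Sym^d_{C^0/k}$, where~$\pi_d$ becomes a product of base changes of~$f$) that~$\pi_d$ is smooth at every point of~$\Sym^d_{X^0/k}$ lying over a reduced effective divisor — in particular at the classes~$[z_v]$ for $v\in S$ and~$[z_v^2]$ for $v\in\Omega_\infty$, whose images $f_*z_v$ and $f_*z_v^2$ are reduced, effective, of degree~$d$, and of class~$y$ in $\Pic(C\otimes_kk_v)$.

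Next I would note that~$c$ is $v$-adically close to~$f_*z_v$ for $v\in S$, and to~$f_*z_v^2$ for $v\in\Omega_\infty$. Indeed~$c$, $f_*z_v$, and~$f_*z_v^2$ are the divisors of zeros of the sections~$h$ and~$h_v$ of~$\sO_C(c_1)$ — all three supported in~$C^0$ since $h(m)=t_m\neq0$ and~$h_v(m)\neq0$ — and, $\sO_C(c_1)$ being base-point-free as $\deg(y)\geq 2g+2$, the passage from a nonzero section to its divisor of zeros is a morphism $\P(H^0(C,\sO_C(c_1)))\to\Sym^d_{C/k}$, hence a continuous map on $k_v$-points; one uses here that~$h$ was chosen in~\textsection\ref{subsec:constructionc} arbitrarily close to each~$h_v$, $v\in S\cup\Omega_\infty$. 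Applying the implicit function theorem to the smooth morphism~$\pi_d$ — Hensel's lemma at the finite places, the real- or complex-analytic inverse function theorem at the archimedean ones — produces, for every $v\in S\cup\Omega_\infty$, a point $[z'_v]\in\Sym^d_{X^0/k}(k_v)$ with $f_*z'_v=c$ and with~$z'_v$ as close $v$-adically as we wish to~$z_v$ for $v\in S$, resp.\ to~$z_v^2$ for $v\in\Omega_\infty$. Then~$z'_v$ is effective, $f_*z'_v=c$ in $\Div(C^0\otimes_kk_v)$, and (the approximations being taken close enough) $(z'_v)_{v\in S}\in\sU$; this settles~(3) and the asserted equality.

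It remains to check~(4). Fix a real place~$v$ and a connected component~$\sB$ of~$X^0(k_v)$, and write $z_v-z'_v=(z_v-z_v^2)+(z_v^2-z'_v)$. The first term contributes an even integer to~$\sum_{x\in\sB}n_x$ by condition~(c) imposed in the construction of~$z_v^2$. For the second, write $z_v^2=\sum_i P_i$ with the~$P_i$ pairwise distinct and of multiplicity~$1$; since~$z'_v$ is $v$-adically close to~$z_v^2$, we have $z'_v=\sum_i P'_i$ with each~$P'_i$ close to~$P_i$. A point~$P_i$ with residue field~$k_v$ then gives a point~$P'_i$ with residue field~$k_v$ whose associated real point lies in the same connected component of~$X^0(k_v)$ as that of~$P_i$ (components being open), whereas a point~$P_i$ with residue field isomorphic to~$\C$ gives a point~$P'_i$ of the same kind, contributing~$0$ to any sum indexed by a real component. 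Hence $z_v^2-z'_v$ contributes~$0$ to~$\sum_{x\in\sB}n_x$, so this sum is even, as needed.

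The decisive step is the smoothness of~$\pi_d$ at~$[z_v]$ and~$[z_v^2]$ and the resulting $v$-adic lifting of~$c$; this is precisely where the hypothesis that~$z_v$ (for $v\in\Omega_f$) and~$z_v^2$ (for $v\in\Omega_\infty$) push forward to \emph{reduced} divisors enters essentially, since over a non-reduced divisor~$\pi_d$ need not be smooth and~$c$ could not be lifted while staying close to a prescribed cycle. Everything else is routine bookkeeping.
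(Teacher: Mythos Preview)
Your proof is correct and follows essentially the same approach as the paper's: both argue that~$c$ is $v$-adically close to~$f_*z_v$ (for $v\in S$) and to~$f_*z_v^2$ (for $v\in\Omega_\infty$), observe that the push-forward $\Sym_{X^0/k}\to\Sym_{C^0/k}$ is smooth at these cycles because their images are reduced, and conclude by the inverse function theorem. Your write-up is more detailed---in particular you spell out why~(4) holds via the decomposition $z_v-z'_v=(z_v-z_v^2)+(z_v^2-z'_v)$ and the local constancy of residue-field types near a reduced cycle---whereas the paper leaves these verifications implicit.
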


\begin{proof}
Note that for each $v \in S$ (resp.~for each real place~$v$ of~$k$),
the divisor~$c$ is arbitrarily close to~$f_*z_v$ (resp.~to~$f_*z_v^2$)
as a point of $\Sym_{C^0/k}(k_v)$.
As~$f_*z_v$ (resp.~$f_*z_v^2$) is a reduced divisor for any~$v \in S$ (resp.~for any real~$v$), the morphism $f_*:\Sym_{X^0/k} \to \Sym_{C^0/k}$
is smooth at~$z_v$ (resp.~at~$z_v^2$) for such~$v$.
Thus the existence of $(z'_v)_{v \in S \cup \Omega_\infty}$ satisfying the required conditions follows from the inverse function theorem.
\end{proof}

Let us turn to~(1) and~(2).
To construct~$z'_v$ for $v \in \Omega_f\setminus S$, we shall need to
consider the reduction of $f^{-1}(\Supp(c))\to\Supp(c)$ modulo~$v$.

For $v \in \Omega_f\setminus S$, the divisor $c\otimes_k k_v$ on~$C^0 \otimes_k k_v$
decomposes uniquely as
\begin{align*}
c \otimes_kk_v = c_{v,0}+\sum_{m\in M} \sum_{\smash[b]{\genfrac{}{}{0pt}{}{w \in \mtilde}{w|v}}} c_{w,m}
\end{align*}
where~$c_{v,0}$ and the~$c_{w,m}$ are effective divisors on~$C^0\otimes_kk_v$
satisfying the following properties: for any $m \in M$,
any $w \in \mtilde \otimes_{\sOint_S}\Fv{v}$
and any $y_v \in \Supp(c \otimes_kk_v)$,
the point~$y_v$ belongs to $\Supp(c_{w,m})$ if and only if its Zariski
closure
in $\sC\otimes_{\sOint_S}\sOint_v$ contains~$w$;
the closure of $\Supp(c_{v,0})$
in $\sC\otimes_{\sOint_S}\sOint_v$ is contained in $\sC^0 \otimes_{\sOint_S}\sOint_v$.
This is in fact a partition of the support of~$c \otimes_k k_v$.
The points of $\Supp(c_{v,0})$ are those above which the fiber of~$f$ has good reduction in~$\sX$,
while for each~$m$ and each~$w$,
the points of $\Supp(c_{w,m})$ are those above which the fiber of~$f$ has the same reduction modulo~$v$
as~$X_m$ modulo~$w$.

\begin{lem}
\label{lem:cmsplits}
Let $m \in M$ and $w \in \mtilde$ be a closed point.
If $w \in \Supp(\ctilde)$ then~$w$, as a place of~$k(m)$, splits completely in~$L_m$.
\end{lem}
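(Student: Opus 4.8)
The plan is to identify the restriction of the rational function~$h$ to the horizontal prime divisor $\mtilde=\overline{\{m\}}\subset\sC$ with the element $t_m\in k(m)^*$ produced by Lemma~\ref{lem:strongapprox}, and then to read off the splitting of~$w$ from the properties of~$t_m$ recorded during its construction. (Here~$\mtilde$ is indeed a prime divisor on the regular integral surface~$\sC$, its generic point being the point~$m$, of codimension~$1$ in~$\sC$.) If~$w$ lies above~$v_0$, then, since~$v_0$ splits completely in~$L_m$ and ramification indices and residue degrees multiply in the tower $L_m/k(m)/k$, the place~$w$ already splits completely in~$L_m$; so from now on we assume that~$w$ lies above neither~$v_0$ nor~$S$, the latter being automatic since~$w$ is a closed point of the $\sOint_S$\nobreakdash-scheme~$\mtilde$.

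The polar locus of~$h$ on~$\sC$ is contained in $\Supp(\widetilde{c_1})\cup(\sC\otimes_{\sOint_S}\Fv{v_0})$, which meets neither~$\mtilde$ (because $\Supp(\widetilde{c_1})$ is disjoint from~$\Mtilde$) nor the special fibre through~$w$ (because~$w$ does not lie above~$v_0$); hence~$h$ is regular at~$w$ and at the generic point~$m$ of~$\mtilde$, where it takes the value $h(m)=t_m$. Fix a local equation~$\pi$ of~$\mtilde$ in the regular local ring~$\sO_{\sC,w}$. The image~$\bar h$ of~$h$ in the discrete valuation ring $\sO_{\mtilde,w}=\sO_{\sC,w}/(\pi)$ takes the value~$t_m$ at the generic point of~$\mtilde$, and since~$t_m$ is integral at~$w$ this gives $\bar h=t_m$ in~$\sO_{\mtilde,w}$.

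The crux is to show that $w\in\Supp(\ctilde)$ forces $h\in\mathfrak m_w$. Write $\div_{\sC}(h)=\ctilde-\widetilde{c_1}+V$ with~$V$ vertical, the horizontal part being dictated by the identity $\div_C(h)=c-c_1$ on the generic fibre~$C$. In~$\sO_{\sC,w}$ the divisor~$\widetilde{c_1}$ contributes nothing, since any of its components is a horizontal prime divisor lying in $\Supp(\widetilde{c_1})$ and hence cannot pass through $w\in\Mtilde$; and~$V$ is effective near~$w$, since the only special fibre through~$w$ lies over the place of~$k$ below~$w$, which is neither~$v_0$ nor in~$S$, so that~$h$ has no pole along it. Hence $\div_{\sC}(h)$ is effective in a neighbourhood of~$w$, and because $w\in\Supp(\ctilde)$ some prime divisor through~$w$ occurs in it with positive multiplicity; thus~$h$ is a non-unit of~$\sO_{\sC,w}$, i.e.\ $h\in\mathfrak m_w$. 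Reducing modulo~$\pi$ gives $t_m=\bar h\in\mathfrak m_{\mtilde,w}$, so~$w$ is a zero of~$t_m$. By the construction of~$t_m$ via Lemma~\ref{lem:strongapprox}, the only places of~$k(m)$ at which~$t_m$ has nonzero valuation are those lying above~$S$ and those that split completely in~$L_m$; as~$w$ is not above~$S$, it splits completely in~$L_m$, as claimed.

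There is no genuine difficulty here; the point requiring care is that the identification $\bar h=t_m$ and the analysis of $\div_{\sC}(h)$ take place at the arithmetic (codimension~$2$) point~$w$, which is exactly why one needs the disjointness of $\Supp(\widetilde{c_1})$ from~$\Mtilde$ and the choice of~$v_0$ arranged in~\textsection\textsection\ref{subsec:constrc1v0}--\ref{subsec:constructionc}, together with the regularity of~$\sC$.
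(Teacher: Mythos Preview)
Your proof is correct and follows essentially the same approach as the paper's: dispose of the case $w\mid v_0$ directly, then for $w\nmid v_0$ use the regularity of~$h$ at~$w$ (from the pole constraints on~$h$) together with $w\in\Supp(\ctilde)$ to deduce that $h(m)=t_m$ has positive $w$\nobreakdash-valuation, and conclude via the splitting property of~$t_m$ built into Lemma~\ref{lem:strongapprox}. The paper's version is simply more terse, compressing your divisor analysis into the single observation that a regular function whose zero divisor passes through~$w$ must vanish there.
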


\begin{proof}
If~$w$ divides~$v_0$, then~$w$ splits completely in~$L_m$ since~$v_0$ does.
Assume now that~$w$ does not divide~$v_0$.
Then the rational function~$h$ on~$\sC$ is regular
in a neighbourhood of~$w$, since $\mtilde \cap \Supp(\widetilde{c_1})=\emptyset$
and~$h$ has no pole outside~$\Supp(\widetilde{c_1}) \cup (\sC \otimes_{\sOint_S} \Fv{v_0})$.
As~$w \in \Supp(\ctilde)$, this regular function moreover vanishes at~$w$.
Thus, the function obtained by restricting~$h$
to~$\mtilde$ vanishes at~$w$.
In other words~$h(m)$ has positive valuation at the place~$w$.
It follows that~$w$ splits completely in~$L_m$,
since $h(m)=t_m$.
\end{proof}

According to Lemma~\ref{lem:cmsplits} and to the assumptions made at the end of~\textsection\ref{subsec:constrc1v0},
we can choose, for each $m \in M$, each $i \in I_m$ and each $w \in \mtilde \cap \Supp(\ctilde)$, a rational point of the fiber of~$\sE_{\mtilde,i}\to\mtilde$ above~$w$.
Let $\xi_{w,m,i}$ denote its image in~$\sY_{\mtilde,i}$.

As the fibers of $f:\sX \to \sC$ above the closed points of~$\sC^0$ are smooth and contain rational points,
we can lift such rational points by Hensel's lemma and find,
for each~$v \in \Omega_f \setminus S$,
an effective zero-cycle $z'_{v,0} \in \Zcyc_0(X^0\otimes_kk_v)$ such that $f_*z'_{v,0}=c_{v,0}$
in $\Div(C^0\otimes_k k_v)$.
By the next lemma, we can also find,
for each $v \in \Omega_f \setminus S$,
each $m \in M$, each $i\in I_m$ and each $w \in \mtilde \cap \Supp(\ctilde)$ which divides~$v$,
an effective zero-cycle $z'_{w,m,i} \in \Zcyc_0(X^0\otimes_kk_v)$ such that $f_*z'_{w,m,i}=e_{m,i}c_{w,m}$
and such that the Zariski closure of $\Supp(z'_{w,m,i})$ in $\sX \otimes_{\sOint_S}\sOint_v$
meets $\sX\otimes_{\sOint_S}\Fv{v}$ only at the point~$\xi_{w,m,i}$.

\begin{lem}
Let $v \in \Omega_f\setminus S$,
let $m \in M$, let $i \in I_m$, let $w \in \mtilde$ be a closed point which divides~$v$
and let~$\xi$ be a rational point of the fiber of $\sY_{\mtilde,i} \to \mtilde$
above~$w$.
Let $y_v \in C^0\otimes_kk_v$ be a closed point whose Zariski closure
in $\sC\otimes_{\sOint_S}\sOint_v$ contains~$w$.
There exists an effective zero-cycle $x_v \in \Zcyc_0(X^0\otimes_kk_v)$
such that $f_*x_v=e_{m,i}y_v$ in $\Div(C^0\otimes_kk_v)$
and such that the Zariski closure of $\Supp(x_v)$ in $\sX\otimes_{\sOint_S}\sOint_v$
meets $\sX\otimes_{\sOint_S}\Fv{v}$ only at the point~$\xi$.
\end{lem}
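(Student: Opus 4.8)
The plan is to reduce the assertion, using that~$\sX$ is proper over~$\sOint_v$, to a computation in the completed local ring of~$\sX$ at~$\xi$, and there to produce~$x_v$ as a single closed point of $X^0\otimes_kk_v$ obtained from~$\xi$ by Hensel's lemma; the whole difficulty is to keep the ramification indices under control so that $f_*x_v=e_{m,i}y_v$ exactly.

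I would first pass to completed local rings. We may assume that the Zariski closure of~$y_v$ in~$\sC\otimes\sOint_v$ is the spectrum of a discrete valuation ring (this is the only case relevant to the application). Writing~$W$ for the completed local ring of~$\mtilde$ at~$w$ — a complete discrete valuation ring, unramified over~$\widehat{\sOint}_v$, with residue field~$k(w)$, since~$\Mtilde$ is \'etale over~$\sOint_S$ — and using that~$\sC$ is smooth over~$\sOint_S$ with~$\mtilde$ passing through~$w$, one has $\widehat{\sOint}_{\sC\otimes\sOint_v,\,w}\simeq W[[u]]$ with~$\mtilde=\{u=0\}$, and the closure of~$y_v$ is $\Spec W[u]/(P)$ for an Eisenstein polynomial~$P$ over $k_w:=\operatorname{Frac}(W)$ of degree~$e_0$; thus $k_v(y_v)$ is the totally ramified extension of~$k_w$ of degree~$e_0$ obtained by adjoining a root~$\varpi$ of~$P$, and~$\varpi$ is a uniformiser of~$k_v(y_v)$. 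On the other hand~$\sX$ is smooth over~$\sOint_S$, hence regular; the scheme $(\sX_\mtilde)_\red$ is smooth over~$\mtilde$ near~$\xi$ and $k(\xi)=k(w)$; and the fibre divisor $\sX_\mtilde$ equals $\sum_{j\in I_m}e_{m,j}\sX_{\mtilde,j}$, so that only~$\sX_{\mtilde,i}$ passes through~$\xi$. It follows that $\widehat{\sOint}_{\sX,\xi}\simeq W[[\tau,s_1,\dots,s_r]]$ with $r=\dim X-1$, in such a way that~$\xi$ is the point $\tau=s_1=\dots=s_r=0$, the ideal of~$\sX_{\mtilde,i}$ is~$(\tau)$, and the pull-back of~$u$ along~$f$ is $\varepsilon\,\tau^{e_{m,i}}$ for some unit~$\varepsilon\in W[[\tau,s_1,\dots,s_r]]^{*}$.

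Now fix a totally ramified extension $L/k_v(y_v)$ of degree~$e_{m,i}$. In the chart above, the closed points of $X^0\otimes_kk_v$ lying over~$y_v$ and close to~$\xi$ are cut out in $\Spec W[[\tau,s_1,\dots,s_r]]$ by the condition that $\varepsilon\,\tau^{e_{m,i}}$ be a root of~$P$; imposing in addition $s_1=\dots=s_r=0$, this becomes $\varepsilon(\tau,0)\,\tau^{e_{m,i}}=\varpi$. Since $\operatorname{char}(k)=0$, the exponent~$e_{m,i}$ is invertible, so Hensel's lemma over~$\sOint_L$ furnishes a solution~$\tau_0\in\sOint_L$ with $v(\tau_0)=1/e_{m,i}$. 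As $v(\tau_0)=1/e_{m,i}$ forces the subfield $k_v(y_v)(\tau_0)\subseteq L$ to be totally ramified of degree~$\geq e_{m,i}$ over~$k_v(y_v)$, one gets $k_v(y_v)(\tau_0)=L$; hence the point $(\tau_0,0,\dots,0)$ defines a closed point~$\bar P$ of $X^0\otimes_kk_v$, with residue field~$L$, lying over~$y_v$. I would set $x_v=\bar P$. Then~$x_v$ is effective, $f_*x_v=[k_v(\bar P):k_v(y_v)]\,y_v=e_{m,i}\,y_v$, and — $v(\tau_0)$ being positive — the map $\Spec\sOint_L\to\sX\otimes\sOint_v$ extending~$\bar P$ (valuative criterion, $\sX$ proper) sends the closed point of~$\sOint_L$ to $\tau=s_1=\dots=s_r=0$, i.e.\ to~$\xi$; so the Zariski closure of $\Supp(x_v)=\{\bar P\}$ meets $\sX\otimes\Fv v$ only at~$\xi$, as wanted.

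The crux is the valuation bookkeeping of the third paragraph: one must simultaneously guarantee that the chosen root~$\varpi$ of~$P$ really is a uniformiser of~$k_v(y_v)$ (this is exactly what the reduction to~$P$ Eisenstein, equivalently to~$\overline{\{y_v\}}$ being the spectrum of a discrete valuation ring, secures), that the solution~$\tau_0$ of $\varepsilon\tau^{e_{m,i}}=\varpi$ generates an extension of~$k_v(y_v)$ of degree \emph{exactly}~$e_{m,i}$ (so that $f_*x_v$ equals $e_{m,i}y_v$ and not a proper ``subdivisor'' of it), and that~$\tau_0$ has strictly positive valuation (so that~$\bar P$ specialises precisely to~$\xi$, and to no other point of $\sX\otimes\Fv v$). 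All three follow by tracking valuations along the tower $k_v\subseteq k_w\subseteq k_v(y_v)\subseteq L$, in which $k_w/k_v$ is unramified and the two remaining steps are totally ramified of degrees~$e_0$ and~$e_{m,i}$ respectively.
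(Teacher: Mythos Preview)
Your overall strategy coincides with the paper's: pass to the completed local rings at~$w$ and~$\xi$, use the fact that the local equation of~$\mtilde$ pulls back to $\varepsilon\tau^{e_{m,i}}$ for a unit~$\varepsilon$, and cut down by setting the transverse coordinates to zero. The paper carries this out by building a finite flat $R$\nobreakdash-scheme $T=\Spec(A/(f_1,\dots,f_n))$ of degree~$e_{m,i}$ over the completed local ring $R=\widehat{\sO}_{\sC,w}$, then pulling back along $\widetilde{y_v}\to\Spec(R)$; this produces an effective zero-cycle which need not be a single closed point, and which works without any regularity assumption on~$\widetilde{y_v}$.

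Your execution, however, has two genuine gaps.

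First, the ``DVR assumption'' on~$\widetilde{y_v}$ is not part of the lemma and does not hold in general: the Zariski closure of~$y_v$ in~$\sC\otimes\sOint_v$ is $\Spec$ of an $\sOint_v$\nobreakdash-order in~$k_v(y_v)$ which can be non-maximal (equivalently, the Weierstrass polynomial cutting out~$\widetilde{y_v}$ in~$W[[u]]$ need not be Eisenstein, only distinguished). You assert that this is ``the only case relevant to the application'', but you do not justify this and I do not believe it is true: the points~$y_v$ appearing in~\textsection\ref{subsec:constructionzpv} are arbitrary closed points of $\Supp(c\otimes_kk_v)$ whose closure passes through~$w$, and nothing forces that closure to be regular. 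Without the assumption, the image $\alpha$ of~$u$ in~$k_v(y_v)$ can have valuation $d>1$, and then the equation $\varepsilon\tau^{e_{m,i}}=\alpha$ may well factor over~$k_v(y_v)$, so the zero-cycle cannot be realised as a single closed point.

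Second, even granting the DVR assumption, the step ``fix a totally ramified extension $L/k_v(y_v)$ of degree~$e_{m,i}$; since $\operatorname{char}(k)=0$, Hensel's lemma over~$\sOint_L$ furnishes a solution~$\tau_0$'' is not right. Invertibility of~$e_{m,i}$ in~$k$ is irrelevant: Hensel over~$\sOint_L$ requires the derivative to be controlled over the residue field, and $e_{m,i}$ may well be divisible by the residue characteristic of~$v$. More to the point, two totally ramified extensions of degree~$e_{m,i}$ of a local field need not be isomorphic, and the equation $\varepsilon(\tau,0)\tau^{e_{m,i}}=\varpi$ has a solution in~$\sOint_L$ for exactly \emph{one} such~$L$, not for an arbitrary one fixed in advance. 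The correct argument is Weierstrass preparation: $\varepsilon(\tau,0)\tau^{e_{m,i}}-\varpi$ equals a unit times a Weierstrass polynomial~$Q$ of degree~$e_{m,i}$ over~$\sOint_{k_v(y_v)}$; under the DVR assumption~$Q$ is Eisenstein, hence irreducible, and one must \emph{take} $L=k_v(y_v)[\tau]/(Q)$.

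The cleanest fix for both issues is to abandon the insistence on a single closed point and, as the paper does, take~$x_v$ to be the effective zero-cycle associated with the finite flat $\widetilde{y_v}$\nobreakdash-scheme obtained by pulling back the locus $\{s_1=\dots=s_r=0\}$ (equivalently, $\Spec$ of $\sOint_{k_v(y_v)}[[\tau]]$ modulo the Weierstrass polynomial of $\varepsilon(\tau,0)\tau^{e_{m,i}}-\alpha$). This is automatically of degree~$e_{m,i}$ over~$y_v$, and your specialisation argument via the valuative criterion (which is correct) shows its support reduces to~$\xi$.
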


\begin{proof}
Let~$R$ denote the completion of~$\sO_{\sC\mkern-3mu,\mkern2muw}$.
Let $p \in \sOint_v$ be a uniformiser and $s \in R$ be a generator of the ideal of $\mtilde \times_{\sC} \Spec(R) \subset \Spec(R)$.
Let $U \subset \sX\times_\sC\Spec(R)$ be an affine open
neighbourhood of~$\xi$.  Letting $U=\Spec(A)$, we choose~$U$ small enough that
the ideal of~$A$ defined by the (reduced) closed subscheme $\sX_{\mtilde,i}\times_\sC\Spec(R) \subset U$ is principal.
Let $t \in A$ be a generator of this ideal.
As~$(\sX_{\mtilde})_\red$ is smooth over~$\mtilde$ at~$\xi$
and~$\mtilde$ is \'etale over~$\sOint_S$, the ring~$\sO_{U,\xi}/(p,t)$ is regular,
hence there exist $n \geq 0$ and $f_1,\dots,f_n \in \mathfrak{m}_{U,\xi}$
such that $(p,t,f_1,\dots,f_n)$ is a regular system of parameters
for~$\sO_{U,\xi}$
(see~\cite[Proposition~17.1.7]{ega41}).
The image of~$s$ in $\sO_{U,\xi}$ can be written
as $ut^{e_{m,i}}$ for some $u \in \sO_{U,\xi}^*$.
By shrinking~$U$, we may assume that $f_1,\dots,f_n\in A$ and $u \in A^*$.
On the other hand, as~$\mtilde$ is \'etale over~$\sOint_S$,
the sequence~$(p,s)$ is a regular system of parameters for~$R$;
hence
the fiber above the closed point
of~$\Spec(R)$ of the finite type $R$\nobreakdash-scheme $T=\Spec(A/(f_1,\dots,f_n))$
contains~$\xi$ as an isolated point, with multiplicity~$e_{m,i}$.
Thus we may assume, after further shrinking~$U$,
that~$T$ is a closed subscheme of $\sX\times_\sC\Spec(R)$, finite and
flat over~$R$
of degree~$e_{m,i}$
(see~\cite[Proposition~6.2.5]{ega2}, \cite[Proposition~15.1.21]{ega41}).
Pulling it back by the natural map~$\widetilde{y_v}\to \Spec(R)$,
where~$\widetilde{y_v}$ denotes the Zariski closure of~$y_v$ in $\sC \otimes_{\sOint_S}\sOint_v$,
yields a closed subscheme of~$\sX \times_{\sC} \widetilde{y_v}$,
finite and flat
of degree~$e_{m,i}$
over~$\widetilde{y_v}$,
whose special fiber
consists of the point~$\xi$ with multiplicity~$e_{m,i}$.
\end{proof}

For each $m \in M$, we choose integers $(f_{m,i})_{i\in I_m}$
such that $\sum_{i\in I_m}e_{m,i}f_{m,i}=1$
and we set~$z'_{w,m,i}=0$ for any $i \in I_m$ and any closed point $w \in \mtilde$
which does not belong to~$\Supp(\ctilde)$.
Finally, for each $v \in \Omega_f \setminus S$, we let
\begin{align*}
z'_v=z'_{v,0} + \sum_{m\in M} \sum_{i\in I_m}
\sum_{\genfrac{}{}{0pt}{}{w \in \mtilde}{w|v}} f_{m,i}z'_{w,m,i}\rlap{\text{.}}
\end{align*}
We have now defined a family $(z'_v)_{v \in \Omega} \in \prod_{v \in \Omega} \Zcyc_0(X^0\otimes_kk_v)$.
When~$X_m$ contains an irreducible component of multiplicity~$1$ for each~$m\in M$,
we can choose $f_{m,i} \in \{0,1\}$ for all~$m$ and all~$i$, so that~$z'_v$
is effective for all~$v$.
In any case, the family~$(z'_v)_{v\in\Omega}$ satisfies~(1); as a consequence, it belongs to $\Zcyc_{0,\A}(X^0)$.
Let us check that~(2) is also satisfied.

\begin{lem}
\label{lem:evalvanishes}
Let $v \in \Omega_f\setminus S$.
For any $\beta\in B$,
any $m\in M$, any $i\in I_m$ and any $w \in \mtilde$ which divides~$v$,
we have $\inv_v \mylangle \beta,z'_{v,0}\myrangle=0$
and $\inv_v \mylangle \beta,z'_{w,m,i}\myrangle=0$.
\end{lem}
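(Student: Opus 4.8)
The first equality should fall out of the familiar good‑reduction argument, so I would dispose of it quickly. By its construction in~\textsection\ref{subsec:constructionzpv}, $z'_{v,0}$ is, over each closed point of $\Supp(c_{v,0})$, a Hensel lift of a rational point of the smooth fibre of $f\colon\sX\to\sC$; as the Zariski closure of $\Supp(c_{v,0})$ in $\sC^0\otimes_{\sOint_S}\sOint_v$ is finite over $\sOint_v$, so is the Zariski closure of $\Supp(z'_{v,0})$ in $\sX^0\otimes_{\sOint_S}\sOint_v$. Since $\beta\in\Br(\sX^0)$ (see~\textsection\ref{subsec:constrc1v0}), each $\beta(P)$ for $P\in\Supp(z'_{v,0})$ extends to a class in $\Br(\sOint_{k_v(P)})=0$, whence $\mylangle\beta,z'_{v,0}\myrangle=0$; this is the computation already made at the end of the proof of Lemma~\ref{lem:evaluationbeta}.

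For the second equality I would first reduce, as follows. We may assume $w\in\Supp(\ctilde)$, since otherwise $z'_{w,m,i}=0$. Writing $z'_{w,m,i}=\sum_P n_P P$ over closed points of $X^0\otimes_kk_v$, the Zariski closure $\widetilde P$ of each $P$ in $\sX\otimes_{\sOint_S}\sOint_v$ is finite over $\sOint_v$ with closed point mapping to $\xi:=\xi_{w,m,i}$. Since $\mylangle\beta,z'_{w,m,i}\myrangle=\sum_P n_P\Cores_{k_v(P)/k_v}\beta(P)$, since $\inv_v\circ\Cores_{k_v(P)/k_v}=\inv_{k_v(P)}$, and since $\inv_{k_v(P)}$ is injective, the task reduces to proving $\beta(P)=0$ in $\Br(k_v(P))$ for each such $P$. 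The obstacle is that $\xi$ lies on $\sX_{\mtilde,i}$, hence outside $\sX^0$, so $\beta$ is not \emph{a priori} regular along $\widetilde P$; the plan is to circumvent this by studying $\beta$ \'etale‑locally at $\xi$.

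Here are the steps I would carry out. First, using that $\Mtilde$ is \'etale over $\sOint_S$ — so it is the disjoint union of the curves $\mtilde'$ ($m'\in M$) — and that $(\sX_\mtilde)_\red$ is smooth over $\mtilde$ at the points of $\sY_{\mtilde,i}$, I would note that the only component of $(\sX_\Mtilde)_\red$ through $\xi$ is $\sX_{\mtilde,i}$, a regular prime divisor there, cut out by some $t\in\sO_{\sX,\xi}$; so locally at $\xi$ one has $\sX^0=\sX\setminus\sX_{\mtilde,i}$, and $\beta$ restricts to a class in $\Br(\sO^h_{\sX,\xi}[1/t])$, where $\sO^h_{\sX,\xi}$ is the henselisation of the regular local ring $\sO_{\sX,\xi}$ ($\sX$ being smooth over $\sOint_S$). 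Next I would invoke purity in codimension~$1$,
\[
0\longrightarrow\Br(\sO^h_{\sX,\xi})\longrightarrow\Br(\sO^h_{\sX,\xi}[1/t])\xrightarrow{\,\partial\,}H^1\bigl(\mathrm{Frac}(\sO^h_{\sX,\xi}/(t)),\Q/\Z\bigr),
\]
under which (by functoriality of residues) $\partial\beta$ is the image of $\alpha:=\partial_{\sX_{\mtilde,i}}(\beta)\in H^1(k(X_{m,i}),\Q/\Z)$. The crux is to show $\partial\beta=0$: by the choice of $E_{m,i}$, the class $\alpha$ dies in $H^1(E_{m,i},\Q/\Z)$; since $\sE_{\mtilde,i}\to\sY_{\mtilde,i}$ is finite \'etale and $\xi\in\sY_{\mtilde,i}$, this makes $\alpha$ unramified over $\sY_{\mtilde,i}$ (its kernel contains $\pi_1$ of the Galois closure of $\sE_{\mtilde,i}/\sY_{\mtilde,i}$, finite \'etale after shrinking $\sY_{\mtilde,i}$ once more), so $\alpha$ extends to $H^1(\sO_{\sX_{\mtilde,i},\xi},\Q/\Z)$ and its image in $H^1(\mathrm{Frac}(\sO^h_{\sX,\xi}/(t)),\Q/\Z)$ factors through the specialisation $\alpha(\xi)\in H^1(\sO^h_{\sX_{\mtilde,i},\xi},\Q/\Z)=H^1(\kappa(\xi),\Q/\Z)$; and $\alpha(\xi)=0$ because, $w$ being in $\Supp(\ctilde)$, Lemma~\ref{lem:cmsplits} shows $w$ splits completely in $L_m$, hence in $L_{m,i}$, so by the Lang--Weil--Nisnevich reduction of~\textsection\ref{subsec:constrc1v0} the fibre of $\sE_{\mtilde,i}\to\mtilde$ over $w$ carries a $\kappa(\xi)$‑point above $\xi$, which splits $\alpha(\xi)$. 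Finally, $\beta$ then extends to a class in $\Br(\sO^h_{\sX,\xi})$; its image under the local homomorphism $\sO^h_{\sX,\xi}\to\sOint_{k_v(P)}$ induced by $\widetilde P\to\sX$ lies in $\Br(\sOint_{k_v(P)})=0$, and since $t$ has nonzero valuation in $\sOint_{k_v(P)}$ this image is precisely $\beta(P)$, which therefore vanishes.

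The main obstacle I anticipate is exactly the vanishing $\alpha(\xi)=0$: this is the one place where the whole apparatus of the auxiliary extensions $E_{m,i}$, the \'etale covers $\sE_{\mtilde,i}$ and the splitting conclusion of Lemma~\ref{lem:cmsplits} is put to use, and also where the geometric normalisations of~\textsection\textsection\ref{subsec:constrc1v0}--\ref{subsec:constructionc} ($\Mtilde$ \'etale, $(\sX_\mtilde)_\red$ smooth at $\xi$, $\sX$ regular) pay off. Granting purity in codimension~$1$ — or, if one prefers to avoid it on the high‑dimensional ring $\sO^h_{\sX,\xi}$, replacing it by residue functoriality after cutting down, along the flat family $T$ constructed above, to a two‑dimensional regular local ring — the remaining bookkeeping is routine.
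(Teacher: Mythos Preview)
Your argument is correct and follows essentially the same route as the paper's proof: localise at~$\xi_{w,m,i}$, show that the residue of~$\beta$ along the divisor~$\sX_{\mtilde,i}$ vanishes there because the \'etale cover~$\sE_{\mtilde,i}\to\sY_{\mtilde,i}$ has a rational point in the fibre over~$\xi_{w,m,i}$, conclude by purity that~$\beta$ extends across the divisor, and finish using~$\Br$ of a henselian (or complete) regular local ring is zero. The only cosmetic differences are that the paper uses the completion of~$\sO_{\sX,\xi}$ rather than the henselisation, frames the vanishing as $\sigma^*\beta=0$ in~$\Br(V^0)$ rather than pointwise as~$\beta(P)=0$, and simply cites the \emph{definition} of~$\xi_{w,m,i}$ (it was chosen as the image of a rational point of~$\sE_{\mtilde,i}$) rather than re-deriving the existence of that rational point via Lemma~\ref{lem:cmsplits}.
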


\begin{proof}
As $B\subset\Br(\sX^0)$,
as
the Zariski closure of $\Supp(z'_{v,0})$ in $\sX\otimes_{\sOint_S}\sOint_v$ is contained in $\sX^0 \otimes_{\sOint_S}\sOint_v$
and as
the Brauer group of any finite $\sOint_v$\nobreakdash-algebra
vanishes (see~\cite[III, Proposition~1.5, Th\'eor\`eme~11.7~(2)]{grbr}),
we have $\mylangle \beta,z'_{v,0}\myrangle=0$.
Let us now fix~$m$, $i$ and~$w$ as in the statement.
If $w \notin \Supp(\ctilde)$, then $z'_{w,m,i}=0$. Otherwise, let~$R'$ denote the completion of $\sO_{\sX\mkern-3mu,\mkern2mu\xi_{w,m,i}}$
and set $V=\Spec(R')$
and $V^0=\sX^0\times_{\sX}V$.
The natural morphism $\Supp(z'_{w,m,i}) \to \sX^0$
factors through the projection $\sigma:V^0\to \sX^0$
since the Zariski closure of $\Supp(z'_{w,m,i})$ in $\sX\otimes_{\sOint_S}\sOint_v$
meets~$\sX\otimes_{\sOint_S}\Fv{v}$ only at~$\xi_{w,m,i}$.
To prove that $\mylangle \beta,z'_{w,m,i}\myrangle=0$ for $\beta\in B$, it therefore suffices
to check that~$B$ is contained in the kernel of $\sigma^*:\Br(\sX^0)\to\Br(V^0)$.
As the order~$n$ of~$B$ is invertible in~$\sOint_S$,
the class $\beta\in \Br(\sX^0)$ has a well-defined residue $\partial_\beta \in H^1(\sY_{\mtilde,i},\Z/n\Z)$, represented by a cyclic \'etale cover of~$\sY_{\mtilde,i}$
through which $\sE_{\mtilde,i}\to \sY_{\mtilde,i}$ factors.
Recall that the fiber of $\sE_{\mtilde,i}\to \sY_{\mtilde,i}$ above~$\xi_{w,m,i}$ possesses a rational point, by the definition of~$\xi_{w,m,i}$.
Hence $\partial_\beta(\xi_{w,m,i})=0$ in $H^1(\xi_{w,m,i},\Z/n\Z)$.
As $H^1(\sY_{\mtilde,i}\times_{\sX}V,\Z/n\Z)=H^1(\xi_{w,m,i},\Z/n\Z)$, it follows that
the residue of $\sigma^*\beta \in \Br(V^0)$ along $\sY_{\mtilde,i}\times_\sX V$ vanishes as well,
so that $\sigma^*\beta \in \Br(V) \subset \Br(V^0)$.
Finally, we have $\Br(V)=0$ (see~\emph{loc.\ cit.}), hence the lemma.
\end{proof}

Let $\beta\in B$.
We have
$\mylangle\beta,z'_v\myrangle=\mylangle\beta,z_v\myrangle$
for all $v \in S \cup \Omega_{\infty}$
in view of~(4) and of the fact that $(z'_v)_{v \in S} \in \sU$
(see~\textsection\ref{subsec:constrc1v0}).
On the other hand, we have $\mylangle \beta,z_v\myrangle=0$
and $\mylangle \beta,z'_v\myrangle=0$ for all $v \in
\Omega_f\setminus S$,
by the assumptions made in~\textsection\ref{subsec:constrc1v0} and
by Lemma~\ref{lem:evalvanishes} respectively.
As the family $(z_v)_{v \in\Omega}$ is orthogonal to~$B$ with respect to~\eqref{eq:globalpairing}
(see~\textsection\ref{subsec:reduction}),
we conclude that
$\sum_{v \in \Omega} \inv_v\mylangle\beta,z'_v\myrangle=0$, as desired.

\section{Hilbert subsets}
\label{sec:hilbertsubsets}

The following lemma was noted by Swinnerton-Dyer in the case where~$h$ is a rational point
(see~\cite[Proposition~6.1]{smeets}).

\begin{lem}
\label{lem:hilbertopen}
Let~$V$ be a normal quasi-projective variety over a number field~$k$.
Let~$H \subseteq V$ be a Hilbert subset.  Let $h \in H$ be a closed point.
For any finite subset $S \subset \Omega$,
there exist a finite subset~$T \subset \Omega$ disjoint from~$S$
and a neighbourhood $\sV \subset \prod_{v \in T} \Sym_{V/k}(k_v)$ of~$h$
such that any effective zero-cycle on~$V$
corresponding to a point of $\Sym_{V/k}(k) \cap \sV$
is in fact a closed point of~$V$ and belongs to~$H$.
\end{lem}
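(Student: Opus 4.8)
Let me describe how I would prove Lemma~\ref{lem:hilbertopen}. The Hilbert subset $H\subseteq V$ comes with a dense open $V^0\subseteq V$, an integer $n\geq 1$, and irreducible finite \'etale $V^0$-schemes $W_1,\dots,W_n$, such that $H$ consists of the points of $V^0$ over which every $W_i$ remains irreducible. The closed point $h\in H$ has a residue field $k(h)$, finite over $k$; fix for each $i$ a point $w_i\in W_i$ above $h$, so that $k(w_i)=k(h)$ because $h\in H$ forces the fiber $(W_i)_h=\Spec(k(w_i))$ to be a single point with residue field of degree $[k(h):k]$ over $k$. The plan is to spread everything out over a ring of $S$-integers, enlarge $S$ as needed, and then use Hensel's lemma at a carefully chosen auxiliary place together with a Chebotarev-type argument (or simply a direct existence argument) to produce, for an appropriate finite set $T$ of places disjoint from $S$, a local constraint at the places of $T$ forcing any rational point of $\Sym_{V/k}$ nearby to be a genuine closed point lying in $H$.

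\textbf{Main steps.} First I would choose a model: a normal scheme $\sV$ over $\sOint_S$ with generic fiber $V$, a dense open $\sV^0$ with generic fiber $V^0$, and finite \'etale morphisms $\sW_i\to\sV^0$ extending $W_i\to V^0$, after enlarging $S$. I would also arrange that the Zariski closure $\tilde h$ of $h$ in $\sV$ lies inside $\sV^0$ and is \'etale over $\sOint_S$, and that for each $i$, the closed point $\tilde w_i$ of $\sW_i$ lying over $\tilde h$ has residue extension unramified over $\tilde h$. The second step is the key one: by the Lang--Weil--Nisnevich estimates applied to $\sW_i\to\sV^0$ and to the connectedness of the fibers, or more directly by an application of the irreducibility of $W_i$ together with a Chebotarev argument on the Galois closure of $k(W_i)/k(V^0)$, there is a finite set $T\subset\Omega$, disjoint from $S$, with the following property: I may pick a single place $v\in T$ (one place suffices, so $T=\{v\}$) together with a closed point $\bar h_v$ in the special fiber $\tilde h\otimes_{\sOint_S}\F_v$ such that over $\bar h_v$, each $\sW_i$ has \emph{irreducible} fiber with residue field of degree equal to $[k(h):k]\cdot[\F_w:\F_v]$ for the place $w$ of $k(h)$ below $\bar h_v$ --- in other words, the Frobenius at $v$ generates the relevant decomposition group. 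By Hensel's lemma this $\bar h_v$ lifts to a $k_v$-point of $\Sym^{d}_{V/k}$ with $d=[k(h):k]$, namely to a closed point $h_v$ of $V^0\otimes_k k_v$ whose residue field is the unramified extension of $k_v$ of degree $d$ and over which each $W_i\otimes_k k_v$ stays irreducible.

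\textbf{Finishing.} Take $\sV\subset\prod_{v\in T}\Sym_{V/k}(k_v)$ to be a small neighbourhood of the point $h_v$ just constructed (note $h$ itself maps into this product via $V\otimes_k k_v$, and $h_v$ is chosen to agree with the reduction of $h$ modulo $v$, so after shrinking $S$ or $\sV$ one sees $h$ lies in the closure of the chosen neighbourhood; in fact it is cleaner to just take $\sV$ to be the set of $d$-cycles whose support reduces to $\bar h_v$ in $\sV\otimes_{\sOint_S}\F_v$, which is an open neighbourhood of both $h$ and $h_v$). Now suppose $z\in\Sym_{V/k}(k)\cap\sV$ is an effective rational zero-cycle in this neighbourhood. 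Because its support reduces, modulo $v$, to the single point $\bar h_v$ whose residue field has degree $d$ over $\F_v$, and because $z$ has degree $d$, the closure of $\Supp(z)$ in $\sV\otimes\sOint_v$ is finite over $\sOint_v$ and meets the special fiber in one point of residue degree $d$; hence $\Supp(z)$ is a single closed point $P$ of $V$ with $[k(P):k]=d$ and $P$ lies in $V^0$. Finally, to see $P\in H$: the fiber $(W_i)_P$ is a finite \'etale $k(P)$-scheme whose reduction is the fiber $(\sW_i)_{\bar h_v}$, which we arranged to be irreducible of residue degree $d\cdot[\F_w:\F_v]$; by Hensel/rigidity of \'etale morphisms this forces $(W_i)_P$ to be irreducible, so $P\in H$. \textbf{The main obstacle} is the second step: arranging a place $v$ and a point $\bar h_v$ over it so that all the covers $W_i$ remain \emph{simultaneously} irreducible in the special fiber with the correct residue degree. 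This is exactly a Chebotarev statement --- one needs the Frobenius conjugacy class at $v$ to generate a cyclic subgroup surjecting onto each of the $n$ Galois groups $\Gal(\widetilde{k(W_i)}/k(V^0))$ restricted appropriately --- and the cleanest route is to observe that $h\in H$ already gives an honest point of the tower at the generic point, and then invoke the Lang--Weil--Nisnevich bound (as in the proof of Lemma~\ref{lem:lifty}) to guarantee that for all but finitely many $v$ the reduction $\tilde h\otimes\F_v$ contains a closed point with the desired splitting behaviour, since the relevant incidence variety is geometrically irreducible over $\tilde h$.
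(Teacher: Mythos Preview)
Your overall strategy---spread out to integral models and impose congruence conditions at well-chosen places---is reasonable, but there are two problems.

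First, a slip at the outset: you write $k(w_i)=k(h)$, but irreducibility of $(W_i)_h$ only says $(W_i)_h=\Spec(k(w_i))$ for a field $k(w_i)$ with $[k(w_i):k(h)]=\deg(W_i/V^0)$. This propagates into the confused degree count ``$[k(h):k]\cdot[\F_w:\F_v]$'' later.

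Second, and this is the real gap: your key step needs a place $v$ and a closed point $\bar h_v$ of $\tilde h\otimes\F_v$ such that $(\sW_i)_{\bar h_v}$ is irreducible for every~$i$. In terms of the place $w$ of $k(h)$ at $\bar h_v$, this asks that $w$ be inert in $k(w_i)$. Such a $w$ need not exist. For instance, if $k(w_i)/k(h)$ has degree~$4$ and Galois closure with group $A_4$, then no element of $A_4$ acts transitively on the four embeddings, so no place of $k(h)$ is inert in $k(w_i)$; the reduction $(\sW_i)_{\bar h_w}$ has at least two components for \emph{every} $w$. Your proposed remedy via Lang--Weil (``the relevant incidence variety is geometrically irreducible over $\tilde h$'') does not help: the locus of inert places is not Zariski-generic---it is governed by Chebotarev and can be empty. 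The correct direct argument (as in Ekedahl's effective Hilbert irreducibility, which underlies the rational-point case you are implicitly reproving) uses \emph{several} places whose Frobenii jointly generate a subgroup acting transitively on the fiber; this always succeeds because $\Gal(\overline{k(h)}/k(h))$ acts transitively (since $h\in H$), but carrying it out for closed points requires tracking how the monodromy representations attached to $P$ and to $h$ match up at each place of $T$, which is more than what you have written.

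The paper avoids all of this with a short reduction. It passes to the symmetric product: the closed point $h$ becomes a $k$-\emph{rational} point $[h]$ of an open $U\subset\Sym^d_{V^0/k}$; from the universal family $\pi:E\to U$ and the $W_i$ one forms finite \'etale covers $\pi,\pi_1,\dots,\pi_n$ of~$U$ whose fibers over $[h]$ are exactly $h,(W_1)_h,\dots,(W_n)_h$, hence irreducible; since $U$ is normal and irreducible, the total spaces $E,E_1,\dots,E_n$ are then irreducible; and one simply quotes the known rational-point case (Swinnerton-Dyer, recorded as Proposition~6.1 in Smeets) for this Hilbert subset of~$U$. This trick converts the delicate ``closed-point Chebotarev'' problem into the already-solved rational-point problem in one stroke.
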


\begin{proof}
Let $V^0 \subseteq V$ and $W_i\to V^0$, for $i \in \{1,\dots,n\}$,
be the open subset and the irreducible \'etale covers defining the Hilbert subset~$H$.
Let $\pi:E\to U$ denote the universal family of reduced effective zero-cycles
of degree~$\deg(h)$ on~$V^0$
(thus~$U$ is an open subscheme of~$\Sym_{V^0/k}$ and~$E$ is a reduced closed subscheme
of~$U \times_k V^0$; the morphism~$\pi$ is finite \'etale of degree~$\deg(h)$).
Let $E_i = E\times_{V^0} W_i$
and let $\pi_i:E_i\to U$ denote the projection to~$E$ composed with~$\pi$.
By assumption, the fibers of~$\pi$, $\pi_1,\dots,\pi_n$ above the $k$\nobreakdash-point
of~$U$ corresponding to~$h$ are irreducible.
As~$U$ is normal and irreducible and the morphisms~$\pi$, $\pi_1,\dots,\pi_n$
are finite and \'etale, it follows that~$E$, $E_1,\dots,E_n$ are themselves
irreducible.
Applying \cite[Proposition~6.1]{smeets} (whose statement is identical
to Lemma~\ref{lem:hilbertopen} with the additional assumption that~$h$ is a rational point)
to the Hilbert subset of~$U$
defined by these $n+1$ irreducible \'etale covers of~$U$ now concludes the proof.
\end{proof}

Building on Lemma~\ref{lem:hilbertopen}, we now strengthen the conclusion
of Theorem~\ref{th:existenceresult}.

\begin{thm}
\label{th:existenceresulthilb}
Let us keep the notation and assumptions of Theorem~\ref{th:existenceresult}.
For any Hilbert subset $H \subseteq C$,
the conclusion of Theorem~\ref{th:existenceresult} still holds
if the effective divisor $c \in \Div(C^0)$ is required,
in addition, to be a closed point of~$C$ belonging to~$H$.
\end{thm}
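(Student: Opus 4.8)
I would proceed by reducing the extra requirement on~$c$ to a condition at finitely many auxiliary places, via Lemma~\ref{lem:hilbertopen}, and then reapplying Theorem~\ref{th:existenceresult} itself. First I would produce a closed point $c_0\in C^0$ of degree~$\deg(y)$, of class~$y$ in $\Pic(C)$, and lying in~$H$. Since $\deg(y)\geq 2g+1$, the sheaf~$\sO_C(y)$ is very ample; embed $C\hookrightarrow\P^N_k$ by its complete linear system. The incidence variety $\{(\Lambda,p):p\in C\cap\Lambda\}$, fibered over~$C$ with irreducible fibres, is geometrically irreducible, so Hilbert's irreducibility theorem over the number field~$k$ — applied to this cover of an open subset of the dual projective space, together with the pullbacks of the \'etale covers defining~$H$, and combined with the open, nonempty condition that~$\Lambda$ avoid the finite set $M=C\setminus C^0$ — produces a $k$\nobreakdash-rational hyperplane~$\Lambda$ with $c_0:=C\cap\Lambda$ a single closed point of~$C^0$ lying in~$H$. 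Applying Lemma~\ref{lem:hilbertopen} with $V=C$, the Hilbert subset $H\cap C^0$, the point~$c_0$, and the finite set $S\cup\Omega_\infty$, I obtain a finite subset $T\subset\Omega_f$, disjoint from~$S$ and which, after enlarging it, I may assume consists of places of arbitrarily large residue characteristic, together with a neighbourhood $\sV\subset\prod_{v\in T}\Sym_{C/k}(k_v)$ of~$c_0$ such that every effective zero-cycle on~$C$ corresponding to a $k$\nobreakdash-point of~$\sV$ is a closed point of~$C$ belonging to~$H$.

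Next I would modify the given adelic zero-cycle at the places of~$T$. Fix a flat, proper model of the morphism $X^0\to C^0$ with smooth, geometrically irreducible fibres, over some ring of $S$\nobreakdash-integers. For each $v\in T$, the residue characteristic being large, the Lang--Weil bounds furnish a rational point, over the residue field in question, in the fibre of this model above any closed point of the reduction of~$C^0$; choosing a reduced divisor~$D_v$ of class~$y$ lying in the $v$\nobreakdash-component of~$\sV$ whose support has good reduction, and lifting these rational points by Hensel's lemma, I obtain an effective zero-cycle $\bar z_v\in\Zcyc_0^{\effred,y}(X^0\otimes_kk_v)$ with $f_*\bar z_v=D_v$ whose support has finite, integral closure. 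Let $(\bar z_v)_{v\in\Omega}\in\Zcyc_{0,\A}^{\effred,y}(X^0)$ be the family equal to~$\bar z_v$ on~$T$ and to~$z_v$ off~$T$. Replacing~$z_v$ by~$\bar z_v$ at the places of~$T$ does not disturb hypothesis~(i) of Theorem~\ref{th:existenceresult}: for $\beta\in(B+f_\eta^*\mkern.5mu\Br(\eta))\cap\Br(X)$ the pairing $\mylangle\beta,z_v\myrangle$ is unchanged, since by Lemma~\ref{lem:inclusionfiniteness} such~$\beta$ differs modulo~$f^*\Br(C)$ from an element of a fixed finite subgroup of $\Br(X^0)$, which evaluates trivially on integral cycles at the large places of~$T$, whereas a class in $f^*\Br(C)$ pairs with~$z_v$, by the projection formula, only through the class $y\otimes_kk_v$ of $f_*z_v$ in $\Pic(C\otimes_kk_v)$, which is not altered. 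Hypotheses~(ii) and~(iii), which do not involve the places of~$T$, are unaffected.

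Then I would apply Theorem~\ref{th:existenceresult} to $(\bar z_v)_{v\in\Omega}$ with the finite set $S\cup T$ in place of~$S$ and a neighbourhood $\sU'\subseteq\prod_{v\in S\cup T}\Sym_{X^0/k}(k_v)$ of $(\bar z_v)_{v\in S\cup T}$ chosen small enough that its projection to the $S$\nobreakdash-factors lies in~$\sU$ and that~$f_*$ carries its projection to the $T$\nobreakdash-factors into~$\sV$. The resulting effective divisor~$c$ and family $(z'_v)_{v\in\Omega}$ then satisfy $f_*z'_v=c$ for every~$v$, so~$c$, viewed as a $k$\nobreakdash-point of $\Sym_{C/k}$, lies in~$\sV$ and is therefore, by the first paragraph, a closed point of~$C$ belonging to~$H$; and conclusions~(1)--(4) relative to the original data~$S$ and~$\sU$, together with the last effectivity assertion, are inherited from their counterparts relative to $S\cup T$ and~$\sU'$.

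The delicate point is the construction in the second paragraph: one must steer the pushforwards of the new local zero-cycles into a prescribed neighbourhood of~$c_0$ while preserving Brauer--Manin orthogonality. This is precisely why the auxiliary places supplied by Lemma~\ref{lem:hilbertopen} must be taken of large residue characteristic — so that the fibres of~$f$ acquire the rational points required for the Hensel lifting and the finitely many Brauer classes that intervene, being unramified there, pair trivially with integral cycles — and for this one should check that the proof of Lemma~\ref{lem:hilbertopen} (through~\cite[Proposition~6.1]{smeets}) does allow its auxiliary places to be chosen of arbitrarily large norm.
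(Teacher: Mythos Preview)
Your argument is correct and follows the same strategy as the paper: produce a closed point of~$C^0$ in~$H$ of class~$y$, use Lemma~\ref{lem:hilbertopen} to obtain auxiliary places~$T$ and a neighbourhood~$\sV$, modify the adelic family at the places of~$T$ so that its pushforward lands in~$\sV$, check that Brauer--Manin orthogonality is preserved via Lemma~\ref{lem:inclusionfiniteness}, and reapply Theorem~\ref{th:existenceresult} with~$S$ enlarged to~$S\cup T$.

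Two small points of comparison. First, your organisational worry in the last paragraph is real but admits a cleaner fix than ``enlarging~$T$'': the paper instead enlarges~$S$ to a finite set~$S'$ \emph{before} invoking Lemma~\ref{lem:hilbertopen}, chosen so that~$X_h$ has local points above every place outside~$S'$ and so that the finite subgroup~$B_0\subset\Br(X)$ supplied by Lemma~\ref{lem:inclusionfiniteness} evaluates trivially outside~$S'$; then Lemma~\ref{lem:hilbertopen} gives~$T$ disjoint from~$S'$, and no further enlargement is needed. Second, you allow an arbitrary reduced divisor~$D_v\in\sV$ at each~$v\in T$, whereas the paper simply takes~$D_v=c_0$ (their~$h$): since~$c_0$ itself lies in~$\sV$, and since~$X_{c_0}(k(c_0)_w)\neq\emptyset$ for~$w$ above places outside~$S'$, one may lift~$c_0$ directly to an effective zero-cycle~$z^1_v$ with~$f_*z^1_v=c_0$. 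This avoids the extra Lang--Weil and Hensel step for a generic~$D_v$.
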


\begin{proof}
As $\deg(y)\geq 2g$, the complete linear system~$|y|$ has no base point.
Composing the morphism to projective space defined by~$|y|$ with
projection from a general codimension~$2$ linear subspace
yields a finite map $\pi:C \to \P^1_k$
the linear equivalence class of whose fibers is~$y$.
According to Hilbert's irreducibility theorem applied to the irreducible covers of~$\P^1_k$ obtained by composing~$\pi$ with the covers of~$C$
which define~$H$,
we can find a rational point of~$\P^1_k$ whose inverse image by~$\pi$ is a closed point~$h$ of~$C^0$ belonging to~$H$.

By Lemma~\ref{lem:inclusionfiniteness}, there exists
a finite subgroup $B_0 \subset \Br(X)$ such that
\begin{align*}
(B+f_\eta^*\mkern.5mu\Br(\eta))\cap\Br(X) = B_0 + f^*\Br(C)\rlap{\text{.}}
\end{align*}
Let $S' \subset \Omega$ be a finite subset containing $S \cup \Omega_\infty$, large enough
that $X_h(k(h)_w)\neq\emptyset$ for any place~$w$ of~$k(h)$ which does not divide a place of~$S'$
(see~\cite{langweil}, \cite{nisnevic})
and large enough that $\mylangle \beta, u_v \myrangle=0$ for any $v \in \Omega\setminus S'$, any $\beta \in B_0$ and
any $u_v \in \Zcyc_0(X \otimes_k k_v)$.

Let $T \subset \Omega \setminus S'$ and $\sV \subset \prod_{v \in T} \Sym_{C/k}(k_v)$
be the finite subset and the neighbourhood of~$h$ given by Lemma~\ref{lem:hilbertopen}.
For each $v \in T$,
as $v \notin S'$,
 we can choose
an effective zero-cycle $z^1_v \in \Zcyc_0(X^0\otimes_kk_v)$ such that $f_*z^1_v=h$
in $\Div(C^0\otimes_k k_v)$.
For~$v \in \Omega\setminus T$, we let $z^1_v=z_v$.
As the class of~$h$ in~$\Pic(C)$ is~$y$, this defines a family~$(z^1_v)_{v \in \Omega} \in \Zcyc_{0,\A}^{\effred,y}(X^0)$.
This family is orthogonal to~$B_0$ with respect to the pairing~\eqref{eq:globalpairing} since
 $S' \cap T=\emptyset$
and~$(z_v)_{v \in \Omega}$ is orthogonal to~$B_0$.
As any element of $\Zcyc_{0,\A}^{\effred,y}(X^0)$ is orthogonal to $f^*\Br(C)$,
we deduce that $(z^1_v)_{v \in \Omega}$
is orthogonal to $(B+f_\eta^*\mkern.5mu\Br(\eta))\cap\Br(X)$.
We can therefore apply Theorem~\ref{th:existenceresult} to the family~$(z^1_v)_{v\in\Omega}$.
This yields $c \in \Div(C^0)$ and $(z'_v)_{v\in\Omega} \in \Zcyc_{0,\A}(X^0)$ satisfying~(1)--(4),
with~$c$ effective, and~$z'_v$ effective and arbitrarily close to~$z^1_v$
for $v \in T$.  By choosing~$z'_v$ close enough to~$z^1_v$ for $v \in T$,
we can ensure that $c\in\sV$, so that~$c$ must be a closed point belonging to~$H$.
\end{proof}

\section{From completed Chow groups to effective cycles}
\label{sec:reductions}

Theorem~\ref{th:existenceresult} takes, as input, a collection of local effective zero-cycles.
On the other hand, the statement of Conjecture~\ref{conj:cycles} involves elements
of~$\CHzAhat(X)$.
Theorem~\ref{th:hatsoff}
will allow us to bridge the gap between~$\CHzAhat(X)$ and~$\CHzA(X)$
and to reduce to effective~cycles.

\begin{thm}
\label{th:hatsoff}
Let~$C$ be a smooth, projective and geometrically irreducible curve over a number field~$k$.
Let~$X$ be a smooth, projective, irreducible variety over~$k$
and $f:X \to C$ be a morphism
whose geometric generic fiber~$X_\etabar$
is irreducible
and
satisfies $H^1(X_{\etabar},\Q/\Z)=0$ and $\Azero(X_\etabar \otimes K)=0$,
where~$K$ denotes an algebraic closure of the function field of~$X_\etabar$.
Fix a dense open subset $X^0 \subseteq X$,
a map $\Phi:\N^2\to \N$
and an element $\zhat_\A \in \CHzAhat(X)$
such that~$f_*\zhat_\A$ belongs to the image of
the natural map $\Pichat(C)\to\PicAhat(C)$.
There exist
classes $y\in \Pic(C)$, $\zhat\in\CHzhat(X)$, $z_\A^\eff=(z_v)_{v \in \Omega} \in \Zcyc_{0,\A}^{\effred,y}(X^0)$,
a finite subset $S \subset \Omega_f$
and a neighbourhood $\sU \subseteq \prod_{v \in S} \Sym_{X^0/k}(k_v)$
of $(z_v)_{v \in S}$
such that:
\begin{itemize}
\item $\zhat_\A=z_\A^\eff+\zhat$ in $\CHzAhat(X)$;
\smallskip
\item for all $v \in \Omega_\infty$,
the support of~$z_v$
is contained in a smooth, proper, geometrically irreducible curve $Z_v \subset X\otimes_kk_v$
which dominates~$C$ and
satisfies $\deg(y)\geq \Phi(g_v,d_v)$, where~$g_v$ denotes the genus of~$Z_v$ and $d_v=[k_v(Z_v):k_v(C)]$;
\smallskip
\item for any $z'_\A=(z'_v)_{v\in\Omega} \in \Zcyc_{0,\A}(X^0)$
such that the images of $f_*z'_\A$ and of~$y$ in~$\PicA(C)$ coincide
and such that properties~(3) and~(4) of Theorem~\ref{th:existenceresult} are satisfied, the equality
$z_\A^\eff=z'_\A$ holds in $\CHzA(X)$.
\end{itemize}
\end{thm}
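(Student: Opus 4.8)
The plan is to construct $y$, $z_\A^\eff$, $\zhat$, $S$ and $\sU$ in three steps: first fix the base class $y\in\Pic(C)$ and peel off a global completed class so that the remainder has push-forward exactly $y$; then trade this remainder against an honest effective adelic zero-cycle over $y$, supported in $X^0$ and with the prescribed archimedean supports; finally enlarge $S$ and shrink $\sU$ so that the last bullet becomes automatic. For the first step, write $f_*\zhat_\A$ as the image of a class $\bar y\in\Pichat(C)$. The cokernel $Q$ of $f_*\colon\CH_0(X)\to\Pic(C)$ is finite (a finite multisection of $f$ kills it up to $n$\nobreakdash-torsion), so completion is exact on $0\to\Im f_*\to\Pic(C)\to Q\to 0$ and $\Im(\widehat{f_*}\colon\CHzhat(X)\to\Pichat(C))=\Ker(\Pichat(C)\to Q)$. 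Choosing $y\in\Pic(C)$ with the same image as $\bar y$ in $Q$ and $\deg(y)$ as large as we please (its class modulo $\Im f_*$ contains divisors of unbounded degree), we find $\zhat^{(0)}\in\CHzhat(X)$ with $\widehat{f_*}\zhat^{(0)}=\bar y-y$, so that $f_*(\zhat_\A-\zhat^{(0)})$ is the image of $y\in\PicA(C)$; we then replace $\zhat_\A$ by $\zhat_\A-\zhat^{(0)}$.

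The second step is the main point. We first choose, at each $v\in\Omega_\infty$, a smooth proper geometrically irreducible curve $Z_v\subset X\otimes_kk_v$ dominating $C$ (a general complete-intersection curve in a projective embedding of $X\otimes_kk_v$ will do) and record $g_v$ and $d_v=[k_v(Z_v):k_v(C)]$; we then further enlarge $\deg(y)$ (keeping its class in $Q$) so that $\deg(y)\geq\Phi(g_v,d_v)$ for all $v\in\Omega_\infty$, that $\deg(y)\geq\deg(M)+2g+2$, and that assumption~(iii) of Theorem~\ref{th:existenceresult} holds for the $Z_v$. Riemann--Roch on $Z_v$ then yields an effective zero-cycle $z_v$ supported on $Z_v$ whose push-forward is a reduced divisor of class $y$ up to a norm from $\kbar_v$, and at the finitely many remaining places we use the moving lemma \cite[Lemme~4.2]{wittdmj} to represent the components of $\zhat_\A$, modulo $\CHzhat(X)$, by effective zero-cycles on $X^0$ with reduced push-forward of class $y$ that are integral off a finite set. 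Here the vanishing hypotheses on $X_\etabar$ are essential: for $v\in\Omega_f$ the fibration exact sequence over $C\otimes_kk_v$ shows that the kernel of $f_*$ on the Chow group of the smooth locus is generated by the groups $\Azero$ of the fibres, which are finite because those fibres have trivial $\Azero$ over algebraically closed fields and $H^1(\cdot,\Q/\Z)=0$; combined with the restricted-product structure of $\CHzA(X)$ and the hypothesis that $f_*\zhat_\A$ is global, this lets one absorb the completion-theoretic discrepancy into a global completed class $\zhat^{(1)}\in\CHzhat(X)$. We set $z_\A^\eff=(z_v)_{v\in\Omega}$ and $\zhat=\zhat^{(0)}+\zhat^{(1)}$, so that $\zhat_\A=z_\A^\eff+\zhat$ and the archimedean support condition holds.

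For the last step, enlarge $S\subset\Omega_f$ so that $f$ extends to a model $\sX\to\sC$ smooth over the smooth locus $\sC^0$, so that each $z_v$ with $v\notin S$ is integral and supported over that locus, and so that a decomposition of the diagonal of $X_\etabar$ (available since $\Azero(X_\etabar\otimes K)=0$) spreads out over $\sOint_S$. Then for $v\notin S$ every fibre of $\sX^0\otimes_{\sOint_S}\sOint_v\to\sC^0\otimes_{\sOint_S}\sOint_v$ has good reduction, hence $\Azero(X_u)=0$ for every closed point $u$ of $C^0\otimes_kk_v$ (using $H^1(X_\etabar,\Q/\Z)=0$, triviality of $\Azero$ of the special fibre over a finite field, and injectivity of the good-reduction specialisation map), so $f_*\colon\CH_0(X^0\otimes_kk_v)\to\Pic(C^0\otimes_kk_v)$ is injective; since any $z'_\A$ as in the statement has $f_*z'_v$ linearly equivalent to $f_*z_v$ on $C\otimes_kk_v$ and both cycles are supported on $X^0\otimes_kk_v$, it follows that $z'_v\sim z_v$ on $X\otimes_kk_v$ for all $v\notin S$. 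For $v\in S$ finite we shrink $\sU$: as $f_*z_v$ is reduced and the fibres have trivial $\Azero$ over algebraically closed fields, the class in $\CH_0(X\otimes_kk_v)$ of a nearby effective zero-cycle with reduced push-forward depends only on the linear equivalence class of its push-forward, so condition~(3) forces $z'_v\sim z_v$. At the real places, condition~(4) and the description of $\CH_0$ modulo $N_{\kbar_v/k_v}$ give $z'_v=z_v$ in $\CH_0(X\otimes_kk_v)/N_{\kbar_v/k_v}(\CH_0(X\otimes_k\kbar_v))$. Assembling, $z_\A^\eff=z'_\A$ in $\CHzA(X)$.

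The hard part will be the second step: converting ``the fibre direction of $\CH_0$ is finite at every place'' together with ``$f_*\zhat_\A$ is global'' into an honest decomposition $\zhat_\A=z_\A^\eff+\zhat$ requires controlling the interaction of the completion functor with the restricted-product adelic structure of $\CHzA(X)$. The other point demanding care is the vanishing of $\Azero$ for the smooth good-reduction fibres over $p$\nobreakdash-adic fields, used in the last step, which rests on the triviality of $\CH_0$ of the special fibre over a finite field and on injectivity of the specialisation map in the good-reduction case.
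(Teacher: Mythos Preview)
Your outline has the right three-part shape, and steps~1 and~3 are close to what the paper does. The substantive gap is in step~2, which you yourself flag as the hard part but do not actually carry out.

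Two concrete problems there. First, you assert that for $v\in\Omega_f$ the kernel of $f_*$ ``is generated by the groups $\Azero$ of the fibres, which are finite''. Under the stated hypotheses (trivial geometric $\Azero$ and $H^1(X_\etabar,\Q/\Z)=0$) one only knows that $\Ker\!\big(f_*\colon\CH_0(X\otimes_kk_v)\to\Pic(C\otimes_kk_v)\big)$ has \emph{finite exponent} (Theorem~\ref{th:recollectionwittdmj}~(ii)), not that it is finite; and even if each $\Azero(X_u)$ were finite, summing over the infinitely many closed points of~$C\otimes_kk_v$ would not give a finite group. Second, and more seriously, the sentence ``this lets one absorb the completion-theoretic discrepancy into a global completed class $\zhat^{(1)}\in\CHzhat(X)$'' is exactly the content of the theorem and is left unexplained. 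After your step~1 you have an element of $\CHzAhat(X)$; after constructing $z_\A^\eff\in\Zcyc_{0,\A}(X)$ the difference $\zhat_\A-z_\A^\eff$ is still only an element of $\CHzAhat(X)$, and nothing you have said forces it to lie in the image of $\CHzhat(X)$.

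The paper handles this by a two-stage reduction with a carefully pre-chosen integer~$n$. One first fixes~$n$ (Lemma~\ref{lem:injectivityhatchzA}) so that the map~\eqref{eq:mapchzArel} is injective and so that $\Ker\!\big(\CHzAhat(X)\to\PicAhat(C)\big)$ is killed by~$n$. Proposition~\ref{prop:reductionnohat} then produces a decomposition $\zhat_\A=z_\A+n(\zhat+\xihat_\A)$ with $z_\A\in\CHzA(X)$ honest (uncompleted), $\zhat\in\CHzhat(X)$ global, and $\xihat_\A$ a completed adelic error term with $f_*\xihat_\A=0$; the choice of~$n$ forces $n\xihat_\A=0$. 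Proposition~\ref{prop:reductioneffective} then writes $z_\A=z_\A^\eff+z+\xi_\A$ with~$z$ global and $\xi_\A\in\CHzA(X)$ an error with $f_*\xi_\A=0$ and vanishing components at~$S\cup\Omega_\infty$; since $f_*$ is an isomorphism away from $S\cup\Omega_\infty$ (Theorem~\ref{th:recollectionwittdmj}~(iii)), this forces $\xi_\A=0$. The upshot is $\zhat_\A=z_\A^\eff+(z+n\zhat)$ with both error terms gone. This bookkeeping with a fixed~$n$ is what your step~2 is missing.

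Two smaller points in step~3: the isomorphism used for $v\notin S$ is $f_*\colon\CH_0(X\otimes_kk_v)\to\Pic(C\otimes_kk_v)$ on the \emph{proper} varieties, not an injectivity on the open subvarieties. And for $v\in S$, your claim that the class of a nearby effective cycle ``depends only on the linear equivalence class of its push-forward'' is false as stated (the kernel of $f_*$ at such~$v$ is nonzero); what is used is that nearby cycles agree modulo the chosen~$n$ (cf.\ \cite[Lemme~1.8]{wittdmj}) together with the injectivity of~\eqref{eq:mapchzArel}.
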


We shall prove Theorem~\ref{th:hatsoff} in~\textsection\ref{subsec:proofhatsoff},
by combining
preliminary reduction steps for general fibrations established in~\textsection\ref{subsec:generalfibrations}
with the results of~\cite[\textsection2]{wittdmj}
on the kernel and cokernel of the natural map $f_*:\CH_0(X\otimes_kk_v)\to\Pic(C\otimes_kk_v)$
for fibrations whose generic fiber is subject to the hypotheses of Theorem~\ref{th:hatsoff}.
For ease of reference, we summarise these results in the next theorem.
They rely on a variant of the ``decomposition of the diagonal'' argument for families of varieties and on theorems
of Kato, Saito and Sato about Chow groups of zero-cycles over finite and local fields
(see \emph{loc.\ cit.}\ for more details).

\newcommand{\citesectiondwittdmj}{\cite[Th\'eor\`eme~2.1, Lemme~2.3, Lemme~2.4]{wittdmj}}
\begin{thm}[\citesectiondwittdmj]
\label{th:recollectionwittdmj}
Let $f:X \to Y$ be a proper dominant morphism between smooth, irreducible
varieties over a field~$k$.
\begin{enumerate}
\item[(i)] There exists an integer $n>0$ such that for any field extension~$k'/k$, the
cokernel of $f_*:\CH_0(X\otimes_k k') \to \CH_0(Y \otimes_kk')$ is killed by~$n$.
\smallskip
\item[(ii)] Assume that the geometric generic fiber~$X_\etabar$ of~$f$
is smooth and irreducible and
satisfies $\Azero(X_\etabar \otimes K)=0$,
where~$K$ denotes an algebraic closure of the function field of~$X_\etabar$.
Then there exists an integer $n>0$ such that for any field extension~$k'/k$,
the kernel of
$f_*:\CH_0(X\otimes_k k') \to \CH_0(Y \otimes_kk')$ is killed by~$n$.
\smallskip
\item[(iii)] Assume, in addition,
that
$H^1(X_{\etabar},\Q/\Z)=0$,
that~$Y$ is a proper curve and that~$k$ is a number field.
Then the map
$f_*:\CH_0(X\otimes_kk_v)\to \CH_0(Y\otimes_kk_v)$
is an isomorphism for all but finitely many places~$v$ of~$k$.
\end{enumerate}
\end{thm}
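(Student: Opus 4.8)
This being a compilation of \cite[Th\'eor\`eme~2.1, Lemme~2.3, Lemme~2.4]{wittdmj}, the plan is to indicate how the proof of each of the three assertions proceeds. For~(i), one would run the classical index argument. Let~$\delta$ denote the index of the generic fiber~$X_\eta$ over~$k(Y)$, that is, the gcd of the degrees~$[k(P):k(Y)]$ of the closed points~$P$ of~$X_\eta$; this gcd is attained by finitely many such points~$P_1,\dots,P_r$. Taking the Zariski closures $Z_i\subseteq X$ of the~$P_i$, each~$Z_i$ dominates~$Y$ and is generically finite of degree $d_i=[k(P_i):k(Y)]$, hence becomes finite and flat of degree~$d_i$ over a common dense open $Y^0\subseteq Y$. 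Then, for any field extension~$k'/k$ and any closed point~$y$ of $Y^0\otimes_kk'$, the fibers $(Z_i)_y$ are zero-cycles on $X\otimes_kk'$ with $f_*[(Z_i)_y]=d_i\mkern1mu y$ in $\CH_0(Y\otimes_kk')$, so that $\delta\mkern1mu y=\gcd_i(d_i)\mkern1mu y$ lies in the image of~$f_*$. One concludes by descending along a stratification of~$Y$: by the right-exact localisation sequence $\CH_0((Y\setminus Y^0)\otimes_kk')\to\CH_0(Y\otimes_kk')\to\CH_0(Y^0\otimes_kk')\to 0$, every zero-cycle on~$Y\otimes_kk'$ is, modulo cycles supported on $Y\setminus Y^0$, represented on~$Y^0\otimes_kk'$, and cycles supported on $Y\setminus Y^0$ are handled by Noetherian induction on~$\dim Y$ after desingularising the closed stratum and pulling~$f$ back. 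Since~$\delta$ depends only on~$f$ over~$k$, the integer $n=\delta$ works uniformly in~$k'$.

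Part~(ii) is the ``decomposition of the diagonal'' argument of Bloch and Srinivas in relative form. The hypothesis $\Azero(X_\etabar\otimes K)=0$ forces, by the usual spreading-out argument, the existence of an integer $N>0$ and a decomposition of $N\Delta_{X_\eta}$ in the Chow group of $X_\eta\times_{k(Y)}X_\eta$ as the sum of a cycle supported on $W\times_{k(Y)}X_\eta$, for~$W$ a closed point of~$X_\eta$, and a cycle supported on $X_\eta\times_{k(Y)}D$ for~$D$ a divisor on~$X_\eta$. The plan is to spread this out over a dense open $Y^0\subseteq Y$ and to let the resulting relative correspondence act on a zero-cycle~$z$ on~$X\otimes_kk'$ supported over~$Y^0\otimes_kk'$: the first piece factors through~$f_*$, hence annihilates~$z$ when $f_*z=0$, while the second piece produces a zero-cycle supported on~$D$, whose relative dimension over~$Y$ has dropped, so that an induction on that relative dimension applies, the base case reducing to~(i). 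The points of~$Y$ outside~$Y^0$ are dealt with by the same d\'evissage along a stratification as in~(i), and all the integers produced are again determined over~$k$, so the bound is uniform in~$k'$.

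For~(iii), one would combine~(i) and~(ii) --- which bound the kernel and cokernel of $f_*:\CH_0(X\otimes_kk_v)\to\CH_0(C\otimes_kk_v)$ by a fixed integer $n>0$ --- with the arithmetic of zero-cycles at places of good reduction. For all but finitely many~$v$ one may assume $v\nmid n$ and that~$f$, $X$ and~$C$ extend to smooth proper schemes over~$\sOint_v$; then $X\otimes_kk_v$ and $C\otimes_kk_v$ have $k_v$-points by the Lang--Weil bounds, so both Chow groups split off a copy of~$\Z$ compatibly with~$f_*$, reducing the claim to the assertion that $\Azero(X\otimes_kk_v)\to\Azero(C\otimes_kk_v)$ is an isomorphism. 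The specialisation maps to the special fibers are surjective on $\CH_0$ by Hensel's lemma, with kernels that are uniquely $n$-divisible because of good reduction and $v\nmid n$, so a diagram chase reduces the statement to the analogous one for the reduction $\bar f_*:\CH_0(\bar X)\to\CH_0(\bar C)$ over~$\Fv{v}$. By smooth and proper base change the geometric generic fiber of~$\bar f$ still satisfies $H^1(-,\Q/\Z)=0$ and the vanishing of~$\Azero$ over an algebraic closure of its function field, and one concludes by the higher-dimensional unramified class field theory of Kato and Saito together with the results of Sato, which over a finite field identify these Chow groups with abelianised fundamental groups compatibly with push-forward, so that the triviality of the abelianised fundamental group of the fibers forces $\bar f_*$ to be an isomorphism.

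The step I~expect to be the main obstacle is part~(ii): making Bloch--Srinivas's decomposition work over~$Y^0$ in families, organising the induction on the relative dimension of the fibers, and keeping track of the uniformity of all the integers in~$k'$. Part~(iii) moreover rests on the deep input of Kato--Saito and Sato over finite and local fields, which in this paper is best quoted as a black box; indeed, the most economical course of action is simply to cite \cite{wittdmj} for all three statements.
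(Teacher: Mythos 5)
Your proposal matches the paper's treatment: the paper gives no proof of this theorem at all, but simply quotes it from \cite{wittdmj}, remarking only that the results rest on a variant of the decomposition of the diagonal for families of varieties and on theorems of Kato, Saito and Sato about Chow groups of zero-cycles over finite and local fields --- precisely the ingredients your sketches of (i)--(iii) invoke and to which you ultimately defer. The only caution is that in (iii) a few steps you phrase as routine (e.g.\ the unique $n$-divisibility of the kernel of the specialisation map at places of good reduction) are themselves part of the deep Saito--Sato input rather than consequences of good reduction and $v\nmid n$, but since you explicitly flag that input as a black box and conclude by citing \cite{wittdmj}, this is exactly what the paper itself does.
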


We shall also need a few results on abelian groups essentially contained in~\cite{wittdmj}.

\begin{lem}
\label{lem:abgroup}
Let $h:A\to B$ be a homomorphism of abelian groups whose cokernel has
finite exponent.  Letting $\widehat{h}:\widehat{A}\to\widehat{B}$ denote
the map induced by~$h$, we have:
\begin{enumerate}
\item[(i)] The natural map $\Coker(h) \to \Coker\mkern.5mu(\mkern.5mu\widehat{h})$ is injective.
\item[(ii)]  If the $N$\nobreakdash-torsion subgroup of~$B$ is finite for all $N>0$,
the natural map $$\widehat{\Ker(h)} \to \Ker\Big(\widehat{h}\Big)$$ is surjective
and the abelian group $\Coker\smash[t]{\left(\Ker(h) \to \Ker\Big(\widehat{h}\Big)\!\right)}$ is divisible.
\end{enumerate}
\end{lem}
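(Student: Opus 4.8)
Throughout, write $K=\Ker(h)$, $I=\Im(h)$, $C=\Coker(h)$, fix an integer $m\geq 1$ with $mC=0$, and, for an abelian group $M$ and an integer $n\geq 1$, write $M/n=M/nM$ and $M[n]=\{x\in M:nx=0\}=\mathrm{Tor}_1^{\Z}(M,\Z/n\Z)$; all inverse limits below are over the positive integers ordered by divisibility, and $h_n\colon A/n\to B/n$ denotes the map induced by $h$. For part~(i), the starting observation is that the canonical map $C\to\widehat C$ is an isomorphism, because $C/nC=C$ whenever $m\mid n$ and such $n$ are cofinal. Since the composite $A\to B\to C$ is zero, so is its completion $\widehat A\xrightarrow{\widehat h}\widehat B\to\widehat C$, so $\widehat B\to\widehat C$ factors through a homomorphism $\Coker(\widehat h)\to\widehat C$; by functoriality of completion, the composite of the natural map $\Coker(h)\to\Coker(\widehat h)$ with $\Coker(\widehat h)\to\widehat C$ is the canonical isomorphism $\Coker(h)=C\isoto\widehat C$, whence $\Coker(h)\to\Coker(\widehat h)$ is split injective.

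For part~(ii): left exactness of $\varprojlim$ gives $\Ker(\widehat h)=\varprojlim_n\Ker(h_n)$, and I would compute this limit by interpolating between the short exact sequences $0\to K\to A\xrightarrow{p}I\to 0$ and $0\to I\to B\to C\to 0$. Since $[a]\in\Ker(h_n)$ precisely when $p(a)\in I\cap nB$, a short diagram chase yields an exact sequence, functorial in $n$,
\[0\longrightarrow L_n\longrightarrow\Ker(h_n)\longrightarrow K_n\longrightarrow 0,\]
with $L_n=\Im\bigl(K/n\to A/n\bigr)\cong K/(K\cap nA)$ and $K_n=\Ker\bigl(I/n\to B/n\bigr)=(I\cap nB)/nI$. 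Two facts then suffice. First, $\varprojlim_n K_n=0$: any $x\in I\cap mnB$ already lies in $nI$ --- writing $x=mnb$ gives $x=n(mb)$ with $mb\in I$, as $mB\subseteq I$ --- so the transition map $K_{mn}\to K_n$ vanishes; by left exactness of $\varprojlim$ it follows that $\varprojlim_n\Ker(h_n)=\varprojlim_n L_n$, i.e.\ $\Ker(\widehat h)=\varprojlim_n L_n$. Second, the kernel $Q_n$ of the surjection $K/n\twoheadrightarrow L_n$ is the image of the connecting homomorphism $\mathrm{Tor}_1^{\Z}(I,\Z/n\Z)=I[n]\to K/n$, hence a quotient of $I[n]\subseteq B[n]$, which is finite by hypothesis; therefore $(Q_n)_n$ satisfies the Mittag--Leffler condition, $\varprojlim^1 Q_n=0$, and the exact sequences $0\to Q_n\to K/n\to L_n\to 0$ show that $\widehat K=\varprojlim_n K/n\to\varprojlim_n L_n$ is surjective. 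Since this last map is identified with the natural map $\widehat{\Ker(h)}\to\Ker(\widehat h)$, the latter is surjective.

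For the remaining assertion of (ii), the natural map $\Ker(h)\to\Ker(\widehat h)$ factors through $\widehat{\Ker(h)}$, so its cokernel is a quotient of the cokernel of the canonical map $\Ker(h)\to\widehat{\Ker(h)}$; the latter is divisible --- the cokernel of $M\to\widehat M$ is a divisible abelian group for every $M$ (cf.\ the results recalled from~\cite{wittdmj} above) --- and a quotient of a divisible group is divisible.

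The step I expect to demand the most care is the pair of limit computations in~(ii): setting up the exact sequence $0\to L_n\to\Ker(h_n)\to K_n\to 0$ and identifying its transition maps accurately enough to obtain both the vanishing of $\varprojlim_n K_n$ --- which is exactly where the finite exponent of $C$ is used --- and the vanishing of $\varprojlim^1 Q_n$ --- which is exactly where the hypothesis that every $B[n]$ is finite enters, through the finiteness of the groups $Q_n$. Part~(i) and the divisibility statement are then formal.
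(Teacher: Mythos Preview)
Your proof is correct. The paper's own proof simply cites three lemmas from~\cite{wittdmj} (Lemmas~1.10, 1.11, and the proof of Lemme~1.12) without giving any details, so your argument is essentially a self-contained reproof of those cited statements; the underlying strategy---filtering $\Ker(h_n)$ by the image of $K/n$ and the obstruction group $(I\cap nB)/nI$, then using finite exponent of $C$ to kill the latter in the limit and finiteness of $B[n]$ to get Mittag--Leffler for the former---is the same as in the reference.
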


\begin{proof}
The first assertion is \cite[Lemma~1.11]{wittdmj}.
The proof of \cite[Lemme~1.12]{wittdmj} is in fact a proof of the first half of~(ii).
The second half of~(ii) follows from the first half and from \cite[Lemme~1.10]{wittdmj}.
\end{proof}

\subsection{General reductions}
\label{subsec:generalfibrations}

For the whole of~\textsection\ref{subsec:generalfibrations},
we fix a smooth, projective, irreducible variety~$X$ over a number field~$k$,
a smooth, projective and geometrically irreducible curve~$C$ over~$k$ and
a morphism $f:X\to C$ with geometrically irreducible generic fiber.
We shall not make any other assumption on the generic fiber of~$f$
until~\textsection\ref{subsec:proofhatsoff}.
The goal of~\textsection\ref{subsec:generalfibrations} is to perform two general reduction steps towards
Theorem~\ref{th:hatsoff},
stated as
Proposition~\ref{prop:reductionnohat}
(from completed Chow groups to Chow groups)
and
Proposition~\ref{prop:reductioneffective}
(from cycles to effective
cycles).

\begin{prop}
\label{prop:reductionnohat}
Let $\zhat_\A \in \CHzAhat(X)$
be such that $f_*\zhat_\A \in \PicAhat(C)$ belongs to the image of
the natural map $\Pichat(C)\to\PicAhat(C)$.
For any $n>0$,
there exist
$z_\A \in \CHzA(X)$,
$\zhat \in \CHzhat(X)$,
$\xihat_\A\in\CHzAhat(X)$
such that
the equality
\begin{align}
\zhat_\A = z_\A + n(\zhat + \xihat_\A)
\end{align}
holds in $\CHzAhat(X)$ and such that
\begin{enumerate}
\item $f_*\xihat_\A=0$ in $\PicAhat(C)$;
\item $f_*z_\A \in \PicA(C)$ belongs to the image of the natural map $\Pic(C)\to\PicA(C)$.
\end{enumerate}
\end{prop}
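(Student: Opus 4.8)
The plan is to remove the completion from $\zhat_\A$ in three stages: one first splits off a genuine adelic zero-cycle $z_\A$ whose push-forward is a genuine \emph{global} divisor class, then a completed \emph{global} zero-cycle $\zhat$ carrying the remaining Picard information, and what is left is $n$ times a completed adelic zero-cycle killed by~$f_*$.

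The two ingredients are Theorem~\ref{th:recollectionwittdmj}(i) and Lemma~\ref{lem:abgroup}. By Theorem~\ref{th:recollectionwittdmj}(i), applied over~$k$ and over every completion~$k_v$ and every~$\kbar_v$, there is a single integer $N>0$ killing all of the cokernels of $f_*\colon\CH_0(X\otimes_kk')\to\CH_0(C\otimes_kk')$. Since, for proper $X$ and~$C$, the groups $\CHzA(X)$ and $\PicA(C)$ are built from the $\CH_0(X\otimes_kk_v)$, resp.\ the $\Pic(C\otimes_kk_v)$, over the finite places together with the norm quotients at the infinite places, it follows that $\Coker\bigl(f_*\colon\CHzA(X)\to\PicA(C)\bigr)$ and $\Coker\bigl(f_*\colon\CH_0(X)\to\Pic(C)\bigr)$ are killed by~$N$, and, since completion preserves bounded cokernels, so are the cokernels of the maps these induce on completions. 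As moreover $\Pic(C)[m]$ is finite for all $m>0$, Lemma~\ref{lem:abgroup} applies to each of these maps.

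Now for the steps. Write the class $f_*\zhat_\A\in\PicAhat(C)$ as the image of some $\widehat b\in\Pichat(C)$. Since $\Pic(C)[nN]$ is finite, the natural map $\Pic(C)/nN\Pic(C)\to\Pichat(C)/nN\Pichat(C)$ is an isomorphism, so we may choose $b\in\Pic(C)$ with $\widehat b-b=nN\widehat\beta$ for some $\widehat\beta\in\Pichat(C)$; let $b_\A\in\PicA(C)$ and $\widehat\beta_\A\in\PicAhat(C)$ be the induced classes. In $\Coker\bigl(f_*\colon\CHzAhat(X)\to\PicAhat(C)\bigr)$, which is killed by~$N$, the class of $b_\A$ coincides with that of $f_*\zhat_\A-nN\widehat\beta_\A=-nN\widehat\beta_\A$, hence vanishes; by Lemma~\ref{lem:abgroup}(i) the class $b_\A$ therefore already lies in the image of $f_*\colon\CHzA(X)\to\PicA(C)$, so there is $z_\A\in\CHzA(X)$ with $f_*z_\A=b_\A$, and this $z_\A$ satisfies~(2). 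Next, as $\Coker\bigl(f_*\colon\CH_0(X)\to\Pic(C)\bigr)$ is killed by~$N$, the element $N\widehat\beta\in\Pichat(C)$ equals $f_*\zhat$ for some $\zhat\in\CHzhat(X)$; then $f_*(n\zhat)=nN\widehat\beta_\A=f_*(\zhat_\A-z_\A)$ in $\PicAhat(C)$, so $\rho:=\zhat_\A-z_\A-n\zhat$ lies in $\Ker\bigl(f_*\colon\CHzAhat(X)\to\PicAhat(C)\bigr)$ and $\zhat_\A=z_\A+n\zhat+\rho$.

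What remains is to write $\rho=n\xihat_\A$ with $\xihat_\A$ again killed by~$f_*$, that is, to see that $\rho$ is divisible by~$n$ \emph{inside} $\Ker(f_*)$ on $\CHzAhat(X)$, after possibly adjusting $z_\A$ by an element of $\Ker\bigl(f_*\colon\CHzA(X)\to\PicA(C)\bigr)$ and $\zhat$ by an element of $\Ker\bigl(f_*\colon\CHzhat(X)\to\Pichat(C)\bigr)$. This is the crux, and I expect it to be the main obstacle: it amounts to comparing the kernel of the completed push-forward with the completion of the kernel and with its global counterpart, via Lemma~\ref{lem:abgroup}(ii) and the divisibility of the cokernels of the canonical maps $M\to\widehat M$, the delicate point being the $n$-torsion of $\PicAhat(C)$ and its interaction with the image of~$f_*$. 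Finally, one must check — routinely — that the place-by-place application of Theorem~\ref{th:recollectionwittdmj}(i) is compatible with the norm quotients at the real places appearing in $\CHzA$ and $\PicA$.
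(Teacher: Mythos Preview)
Your overall strategy matches the paper's, and your construction of $z_\A$ and $\zhat$ is correct (the paper obtains the analogue of your $b$ and $\zhat$ by a single surjectivity argument for the map $\Pic(C)\times\CHzhat(X)\to\Pichat(C)$, $(y,\zhat)\mapsto y+nf_*\zhat$, rather than in two steps, but this is cosmetic). The gap is that you explicitly leave the third stage unfinished, calling it ``the crux'' and ``the main obstacle''. In fact it is not an obstacle at all, and the tool you name---Lemma~\ref{lem:abgroup}(ii)---dispatches it directly once applied placewise.

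Write $\phi=f_*\colon\CHzA(X)\to\PicA(C)$ and $\phi_v$ for its $v$-component. For each $v$, Lemma~\ref{lem:abgroup}(ii) applies to $\phi_v$: the cokernel has bounded exponent by Theorem~\ref{th:recollectionwittdmj}(i), and the $m$-torsion of the target is finite for every $m$ (for finite $v$ because $\Pic(C\otimes_kk_v)[m]$ is finite; for infinite $v$ because the target group is itself finite). Hence $\Coker\bigl(\Ker(\phi_v)\to\Ker(\widehat{\phi_v})\bigr)$ is divisible for every $v$. Since $\widehat\phi=\prod_v\widehat{\phi_v}$, the same holds for $\Coker\bigl(\Ker(\phi)\to\Ker(\widehat\phi)\bigr)$. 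Your residue $\rho\in\Ker(\widehat\phi)$ can therefore be written as $n\xihat_\A+z_{1,\A}$ with $\xihat_\A\in\Ker(\widehat\phi)$ and $z_{1,\A}\in\Ker(\phi)$; absorb $z_{1,\A}$ into $z_\A$ and you are done. No adjustment of $\zhat$ is required, and the $n$-torsion of $\PicAhat(C)$ that you flag as ``delicate'' plays no role.
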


\begin{proof}
Let $Z \subset X$ be a proper irreducible curve
dominating~$C$ and let $d=[k(Z):k(C)]$.
As a consequence of the projection formula $f_*(f^*y \cdot [Z])= y \cdot f_*[Z] = dy$
(see~\cite[Example~8.1.7]{fulton}),
the cokernels of the maps $\CHzhat(X) \to \Pichat(C)$
and $\CHzA(X) \to \PicA(C)$ induced by~$f$ are killed by~$d$.
On the other hand,
the natural map $\Pic(C)\to \Pichat(C)$ has a divisible cokernel (apply Lemma~\ref{lem:abgroup}~(ii)
with $A=\Pic(C)$ and $B=0$).
Thus, the cokernel of the map
\begin{align*}
\Pic(C) \times \CHzhat(X) \to \Pichat(C), \;\;(y,\zhat) \mapsto y+nf_*\zhat
\end{align*}
is at the same time divisible and killed by~$d$, hence it vanishes.
As~$f_*\zhat_\A$ belongs to the image
of~$\Pichat(C)$,
we conclude that there exist
$y_0 \in \Pic(C)$ and $\zhat \in \CHzhat(X)$
such that $f_*\zhat_\A=y_0+nf_*\zhat$ in $\PicAhat(C)$.

Let~$\phi$ denote the map $\CHzA(X) \to \PicA(C)$ induced by~$f$.
As $\Coker(\phi)$ is killed by~$d$,
the natural map $\Coker(\phi)\to\Coker(\widehat{\phi})$
is injective
(see Lemma~\ref{lem:abgroup}~(i)).
As the image of~$y_0$ in~$\Coker(\widehat{\phi})$ vanishes,
we deduce the existence
of $z_{0,\A} \in \CHzA(X)$ such that $f_*z_{0,\A}=y_0$ in $\PicA(C)$.
We then have
\begin{align}
\zhat_\A=z_{0,\A} + n\zhat + \xihat_{0,\A}
\end{align}
for some $\xihat_{0,\A} \in \Ker(\widehat{\phi})$.
As the groups $\Coker(N_{\kbar_v/k_v}:\Pic(C\otimes_k\kbar_v)\to\Pic(C\otimes_kk_v))$ for real places~$v$ of~$k$
and the $N$\nobreakdash-torsion subgroup of $\Pic(C\otimes_kk_v)$
for $v \in \Omega_f$ and~$N>0$
are all finite and the group
$\Coker(f_*:\CH_0(X \otimes_k k_v) \to \Pic(C \otimes_k k_v))$
is killed by~$d$ for all~$v \in \Omega$,
Lemma~\ref{lem:abgroup}~(ii) applied to the $v$\nobreakdash-adic component~$\phi_v$ of~$\phi$ for each~$v$
implies that
$\Coker(\Ker(\phi)\to\Ker(\widehat{\phi}))$ is divisible,
since $\widehat{\phi}=\prod_{v\in\Omega}\widehat{\phi_v}$.
Thus
there exist $\xihat_\A \in \Ker(\widehat{\phi})$ and $z_{1,\A} \in \Ker(\phi)$
such that
$\xihat_{0,\A}=n\xihat_\A+z_{1,\A}$.
Letting $z_\A=z_{0,\A}+z_{1,\A}$, we now have $\zhat_\A=z_\A+n(\zhat+\xihat_\A)$
and $f_*z_\A=y_0$.
\end{proof}

\begin{prop}
\label{prop:reductioneffective}
Let $z_\A \in \CHzA(X)$ be such that
$f_*z_\A \in \PicA(C)$ belongs to the image of the natural map $\Pic(C)\to\PicA(C)$.
For any dense open subset $X^0\subseteq X$, any finite subset $S \subset \Omega$,
any map~$\Phi:\N^2\to\N$ and any $n>0$,
there exist $y \in \Pic(C)$,
$z_\A^\eff = (z_v)_{v\in\Omega}\in \Zcyc_{0,\A}^{\effred,y}(X^0)$, $z\in \CH_0(X)$, $\xi_\A \in \CHzA(X)$ such that the equality
\begin{align}
\label{eq:zdecompbis}
z_\A = z_\A^\eff + z + \xi_\A
\end{align}
holds in $\CHzA(X)$ and such that
\begin{enumerate}
\item $f_*\xi_\A=0$ in $\PicA(C)$;
\item the $v$\nobreakdash-adic component of the family~$\xi_\A$ is zero for all $v\in S$;
\item for all $v \in S$,
the support of~$z_v$
is contained in a smooth, proper, geometrically irreducible curve $Z_v \subset X\otimes_kk_v$
which dominates~$C$ and
satisfies $\deg(y)\geq \Phi(g_v,d_v)$, where~$g_v$ denotes the genus of~$Z_v$ and $d_v=[k_v(Z_v):k_v(C)]$.
\end{enumerate}
\end{prop}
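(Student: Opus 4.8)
The plan is to first fix the target linear equivalence class $y\in\Pic(C)$, then to split off a \emph{global} term $z$, and only afterwards to realise the remaining local classes by effective zero-cycles supported on suitably chosen curves; the pieces will then assemble by a componentwise identity in $\CHzA(X)$.

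First I would fix a class $y_0\in\Pic(C)$ mapping to $f_*z_\A$ under $\Pic(C)\to\PicA(C)$, a very ample class $y_1\in\Pic(C)$, and an integer $n_0>0$ annihilating $\Coker(f_*:\CH_0(X)\to\Pic(C))$ (Theorem~\ref{th:recollectionwittdmj}~(i)). For each $v\in S$ I would choose $Z_v\subset X\otimes_kk_v$ to be a sufficiently ample general curve section: such a $Z_v$ is smooth, proper and geometrically irreducible, it dominates~$C$ (being ample it cannot lie in a fibre of~$f$), its trace $Z_v^0=Z_v\cap(X^0\otimes_kk_v)$ is a dense open subset, and, the ample class being taken large enough, the moving lemma \cite[Lemme~4.2]{wittdmj} ensures that $\CH_0(Z_v^0)\to\CH_0(X\otimes_kk_v)$ is surjective. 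The genus $g_v$ of $Z_v$ and the degree $d_v=[k_v(Z_v):k_v(C)]$ are then fixed integers, depending on $X$, $f$, $X^0$ and $v$ but \emph{not} on~$y$. I would then pick $N\geq 1$ divisible by $nn_0$ and large enough that $\deg(y_0)+N\deg(y_1)$ exceeds $\Phi(g_v,d_v)$ for every $v\in S$, exceeds the (fixed) bounds required below for the Riemann--Roch and moving arguments on each $Z_v$, and exceeds $2g$, the genus of $C$; set $y=y_0+Ny_1$, so that in particular $\deg(y)\geq\Phi(g_v,d_v)$ for all $v\in S$. Since $nn_0\mid N$ we have $(N/n)y_1\in n_0\Pic(C)\subseteq f_*\CH_0(X)$, so there is $z'\in\CH_0(X)$ with $f_*z'=-(N/n)y_1$; put $z=nz'\in n\,\CH_0(X)$, so $f_*z=-Ny_1$ in $\Pic(C)$. (This divisibility is the only role played by the integer~$n$.)

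Next I would construct $z_\A^\eff=(z_v)_{v\in\Omega}$ and $\xi_\A=(\xi_v)_{v\in\Omega}$. For $v\in S$ the surjectivity above produces a divisor $D_v$ on $Z_v^0$ with $[D_v]=z_\A|_v-z|_v$ in $\CH_0(X\otimes_kk_v)$; its degree as a zero-cycle equals $\deg(y_0)+N\deg(y_1)=\deg(y)$, and $f_*D_v$ has class $y\otimes_kk_v$ because $f_*(z_\A|_v-z|_v)=(y_0+Ny_1)\otimes_kk_v$. As $\deg(y)$ is large, the complete linear system on $Z_v$ attached to $D_v$ is very ample, so a general member $z_v$ of it is a reduced effective zero-cycle supported on $Z_v^0$, with $[z_v]=[D_v]=z_\A|_v-z|_v$, of class $y\otimes_kk_v$, and with $f_*z_v$ reduced on $C^0\otimes_kk_v$ (a general member meets each fibre of $Z_v\to C$ at most once); I then set $\xi_v=0$. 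For $v\notin S$ I would pick a reduced effective divisor in the class $y\otimes_kk_v$ on $C^0\otimes_kk_v$ avoiding $M$, integral with respect to a fixed model of $X\to C$ for all but finitely many~$v$, lift it to a reduced effective zero-cycle $z_v$ on $X^0\otimes_kk_v$ (using smoothness of $f$ over $C^0$ together with the Lang--Weil--Nisnevich estimates and Hensel's lemma for almost all $v$, and the geometric irreducibility of the generic fibre for the remaining finitely many), and set $\xi_v=z_\A|_v-z|_v-z_v$; then $f_*\xi_v=(y_0+Ny_1-y)\otimes_kk_v=0$ in $\Pic(C\otimes_kk_v)$. At the archimedean places one argues in the same way, but modulo norms from $\kbar_v$ and with the usual attention to the signs of rational functions at real points of $M$, exactly as in the proof of Theorem~\ref{th:existenceresult}.

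Finally, $z_\A^\eff$ lies in $\Zcyc_{0,\A}^{\effred,y}(X^0)$ by construction, $\xi_\A\in\CHzA(X)$ satisfies $f_*\xi_\A=0$ (property~(1)) and vanishes at the places of $S$ (property~(2)), the curves $Z_v$ witness property~(3), and $z_\A=z_\A^\eff+z+\xi_\A$ holds componentwise, hence in $\CHzA(X)$. I expect the main obstacle to be the input from \cite[Lemme~4.2]{wittdmj} used in the third paragraph: over each local field $k_v$ one must move an arbitrary zero-cycle onto a \emph{fixed} general curve section while simultaneously prescribing the reducedness and the linear equivalence class on $C$ of its image; the crucial point that makes the whole argument non-circular is that the genus $g_v$ and the degree $d_v$ of that curve can be bounded independently of $\deg(y)$, so that $\Phi(g_v,d_v)$ is a fixed quantity that a large enough multiple $N$ of $y_1$ dominates. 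The rest is bookkeeping: coherently arranging the local choices for $v\notin S$ to be integral at all but finitely many places, and the customary adjustments at the archimedean places.
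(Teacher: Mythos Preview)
Your approach is essentially the paper's: push the degree up by a large global term, then on each local curve $Z_v$ pass to a general member of a very ample linear system to get an effective cycle with reduced push-forward. The paper organises the construction a bit differently. Rather than taking $Z_v$ to be a general ample curve section and then moving $z_\A|_v-z|_v$ onto it, the paper fixes a closed point $P\in X^0$, picks (for each $v$ in an enlarged set $S'$) a curve $Z_v$ that already contains both~$P$ and the support of a representative $z_v^0$ of $z_\A|_v$ (this is \cite[Lemmes~3.1 et~3.2]{ctreglees}, also \cite[Lemme~4.4]{wittdmj}), and sets $z=-nP$, $y=y_0+nf_*P$, $z_v\in|z_v^0+nP|_{Z_v}$ general. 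This makes the separate ``move onto $Z_v^0$'' step unnecessary and keeps $z$ as simple as possible.

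There is one genuine (though easily repaired) gap in your argument. For the finitely many places $v\notin S$ at which the Lang--Weil--Nisnevich\,+\,Hensel argument does not yet apply, ``geometric irreducibility of the generic fibre'' is not enough: over a local field, a smooth proper geometrically irreducible fibre $X_c$ need not have a $k_v(c)$-point, so you cannot in general lift a chosen reduced divisor on $C^0\otimes_kk_v$ to an effective zero-cycle on $X^0\otimes_kk_v$ whose push-forward is still reduced. The paper's remedy is to enlarge $S$ at the outset to a finite set $S'\supseteq S\cup\Omega_\infty$ containing these bad places (this only strengthens conditions~(2) and~(3)), and to run the curve-$Z_v$ argument at every place of~$S'$; then Lemma~\ref{lem:lifty} handles all $v\notin S'$. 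Your archimedean aside about ``signs at real points of~$M$'' is not needed here; those places are simply absorbed into~$S'$.

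One minor point: the parameter $n$ in the statement is not used in the conclusion, and in the paper's proof the symbol~$n$ is immediately recycled to denote the large integer controlling $\deg(y)$. There is no requirement that $z\in n\,\CH_0(X)$, so your detour through $z'\in\CH_0(X)$ with $f_*z'=-(N/n)y_1$ is unnecessary; the paper just takes $z=-nP$.
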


\begin{proof}
Let~$\sX^0$ be a model of~$X^0$ over~$\sOint_S$.
Let $S'\subset \Omega$ be the finite subset containing~$S\cup\Omega_\infty$ given by Lemma~\ref{lem:lifty}.
Let $P \in X^0$ be a closed point. For $v \in S'$, let $z^0_v \in \Zcyc_0(X\otimes_kk_v)$ be a representative
of the $v$\nobreakdash-adic component of~$z_\A$.
According to~\cite[Lemmes~3.1 et~3.2]{ctreglees}
(see also~\cite[Lemme~4.4]{wittdmj}),
there exist an integer~$n_0$ and,
for each $v \in S'$,
a smooth, proper and geometrically irreducible curve $Z_v \subset X\otimes_kk_v$
which dominates~$C$ and contains both~$P$ and the support of~$z^0_v$,
such that for any $n \geq n_0$ and any~$v \in S'$, there exists an effective divisor $z_v \in \Div(Z_v)$
satisfying the following condition:
\begin{enumerate}
\item[($\ast$)] $z_v$ is an effective divisor linearly equivalent to $z^0_v + nP$ on~$Z_v$,
its support is contained
in $X^0\otimes_kk_v$ and~$f_*z_v$ is a reduced divisor on~$C \otimes_kk_v$.
\end{enumerate}
We fix $y_0 \in \Pic(C)$ such that $f_*z_\A=y_0$
and choose an integer~$n \geq n_0$
such that
\begin{align}
\deg(y_0)+n\deg(P)>2g\text{,}\mkern10mu
\deg(y_0)+n\deg(P)\geq\Phi(g_v,d_v)
\end{align}
for all $v \in S'$,
where
$d_v=[k_v(Z_v):k_v(C)]$
and where~$g$ and~$g_v$ denote the genera of~$C$ and of~$Z_v$ respectively.
Let $y \in \Pic(C)$ be the class of~$y_0+nf_*P$.
For each $v \in S'$,
let us fix a divisor~$z_v$ on~$Z_v$ satisfying~($\ast$).
For each $v\in \Omega\setminus S'$, we fix an effective $z_v\in \Zcyc_0(X^0\otimes_kk_v)$
such that $f_*z_v$ is reduced, such that the class of~$f_*z_v$ in $\Pic(C\otimes_kk_v)$
is equal to the image of~$y$ and such that the Zariski closure of~$\Supp(z_v)$
in $\sX^0\otimes_{\sOint_S}\sOint_v$ is finite over~$\sOint_v$
(see the conclusion of Lemma~\ref{lem:lifty}).
The family $z_\A^\eff=(z_v)_{v \in \Omega} \in \prod_{v \in \Omega}\Zcyc_0(X^0\otimes_kk_v)$
then belongs to $\Zcyc_{0,\A}^{\effred,y}(X^0)$ and
the desired equality~\eqref{eq:zdecompbis} holds
if we let $z=-nP$
and $\xi_\A=(z^0_v-z_v+nP)_{v\in\Omega}\in\CHzA(X)$.
\end{proof}

\subsection{Proof of Theorem~\ref{th:hatsoff}}
\label{subsec:proofhatsoff}

Theorem~\ref{th:hatsoff} will be proved by combining Proposition~\ref{prop:reductionnohat},
Proposition~\ref{prop:reductioneffective} and Theorem~\ref{th:recollectionwittdmj}.
We henceforth assume, as in the statement of Theorem~\ref{th:hatsoff},
that $H^1(X_{\etabar},\Q/\Z)=0$ and $\Azero(X_\etabar \otimes K)=0$.
According to Theorem~\ref{th:recollectionwittdmj},
there exists a finite subset $S \subset \Omega_f$
such that the map
\begin{align}
\label{eq:fsloc}
f_*:\CH_0(X\otimes_kk_v)\to\Pic(C\otimes_kk_v)
\end{align}
is an isomorphism
for all $v \in \Omega \setminus (S\cup \Omega_\infty)$.

\begin{lem}
\label{lem:injectivityhatchzA}
There exists an integer $n>0$ such that the natural map
\begin{align}
\label{eq:mapchzArel}
\Ker\left(\CHzA(X)\xrightarrow{f_*}\PicA(C)\right) \to \CHzA(X)/n\CHzA(X)
\end{align}
is injective.
Moreover, the kernel of $f_*:\CHzAhat(X)\to\PicAhat(C)$
has finite exponent.
\end{lem}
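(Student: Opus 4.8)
The plan is to argue place by place, isolating the finitely many places where the local pushforward map fails to be an isomorphism and invoking there the structural finiteness of Chow groups of zero-cycles over local fields. Write $\phi\colon\CHzA(X)\to\PicA(C)$ for~$f_*$. First I would enlarge~$S$ so that, by Theorem~\ref{th:recollectionwittdmj}~(iii) together with the identification $\CH_0(C\otimes_kk_v)=\Pic(C\otimes_kk_v)$ valid for the smooth proper curve~$C$, the $v$\nobreakdash-adic component $\phi_v$ of~$\phi$ is an isomorphism for every $v\notin S\cup\Omega_\infty$. Both $\CHzA(X)$ and $\PicA(C)$ are the products of their $v$\nobreakdash-adic components, say $M_v$ and $P_v$ (so $M_v=\CH_0(X\otimes_kk_v)$ and $P_v=\Pic(C\otimes_kk_v)$ at finite places, the relevant norm quotients at real places, and $M_v=P_v=0$ at complex places). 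Since completion commutes with products, $\CHzAhat(X)=\prod_v\widehat{M_v}$, $\PicAhat(C)=\prod_v\widehat{P_v}$ and $\widehat\phi=\prod_v\widehat{\phi_v}$; as $\widehat{\phi_v}$ is an isomorphism for $v\notin S\cup\Omega_\infty$, both $\Ker\phi$ and $\Ker\widehat\phi$ are concentrated on the \emph{finite} set $S\cup\Omega_\infty$. Finally, using Theorem~\ref{th:recollectionwittdmj}~(i) and~(ii) I would fix integers $d,n_0>0$ such that $\Coker(\CH_0(X\otimes_kk')\to\CH_0(C\otimes_kk'))$ is killed by~$d$ and $\Ker(\CH_0(X\otimes_kk')\to\CH_0(C\otimes_kk'))$ is killed by~$n_0$ for every extension $k'/k$; a short diagram chase at the real places — lifting a norm from $\CH_0(C\otimes_k\kbar_v)$ through~$f_*$ by means of~$d$ — then shows that $\Ker\phi_v$ is killed by $m:=n_0d$ for \emph{every}~$v$.

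For the second assertion it then suffices to bound $\Ker\widehat{\phi_v}$ for the finitely many $v\in S\cup\Omega_\infty$. For such a~$v$ the group $P_v$ has finite $N$\nobreakdash-torsion for every $N>0$ (by Mattuck's theorem for $\Pic^0(C\otimes_kk_v)$ at finite places, and because $P_v$ is finite at archimedean places), and $\Coker\phi_v$ is killed by~$d$; so Lemma~\ref{lem:abgroup}~(ii) provides a surjection $\widehat{\Ker\phi_v}\surj\Ker\widehat{\phi_v}$. As $\Ker\phi_v$ is killed by~$m$, the canonical map $\Ker\phi_v\to\widehat{\Ker\phi_v}$ is an isomorphism, whence $\Ker\widehat{\phi_v}$ is killed by~$m$, and therefore so is $\Ker\widehat\phi=\prod_{v\in S\cup\Omega_\infty}\Ker\widehat{\phi_v}$.

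For the first assertion, observe that $\deg\circ\phi_v=\deg$, so that $\Ker\phi_v$, being killed by~$m$, is contained in the torsion subgroup $(M_v)_{\mathrm{tors}}$. The decisive input — and the step I expect to be the main obstacle — is that $(M_v)_{\mathrm{tors}}$ has \emph{finite exponent} for each $v\in S\cup\Omega_\infty$: at a finite place this is the finiteness of the torsion subgroup of the Chow group of zero-cycles of a smooth projective variety over a $p$\nobreakdash-adic field, a statement belonging to the circle of theorems of Kato, Saito and Sato that underlies Theorem~\ref{th:recollectionwittdmj}, while at an archimedean place it follows from the finiteness of the set of connected components of~$X(k_v)$ together with classical facts on zero-cycles of real varieties. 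Granting this, let $n$ be a common multiple of the exponents of the finitely many groups $(M_v)_{\mathrm{tors}}$, $v\in S\cup\Omega_\infty$. If $x\in(M_v)_{\mathrm{tors}}$ and $x=ny$ for some $y\in M_v$, then $nx=0$, hence $n^2y=0$, hence $y\in(M_v)_{\mathrm{tors}}$, hence $ny=0$ and $x=0$; thus $nM_v\cap\Ker\phi_v=0$ for all $v\in S\cup\Omega_\infty$, and since $\Ker\phi$ is concentrated on these places the natural map $\Ker\phi\to\CHzA(X)/n\CHzA(X)=\prod_v M_v/nM_v$ is injective. Apart from this one structural input, the argument is just bookkeeping with products, completions, Theorem~\ref{th:recollectionwittdmj} and Lemma~\ref{lem:abgroup}.
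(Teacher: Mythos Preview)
Your treatment of the second assertion is correct and is exactly what the paper does: reduce to the finitely many places in $S\cup\Omega_\infty$ and apply Lemma~\ref{lem:abgroup}~(ii) there, using Mattuck's theorem to control the torsion of~$P_v$.

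For the first assertion, however, there is a genuine gap in your justification. You invoke, as the ``decisive input'', that the torsion subgroup of $\CH_0(X\otimes_kk_v)$ has finite exponent for an arbitrary smooth projective variety over a $p$\nobreakdash-adic field. This is \emph{not} a theorem of Kato, Saito, or Sato, and is not known in general; it is certainly not among the statements underlying Theorem~\ref{th:recollectionwittdmj}. What is true, and what suffices here, is the much weaker assertion that $(M_v)_{\mathrm{tors}}$ has finite exponent \emph{for this particular~$X$}: indeed, for $v\in S$ one knows that $\Ker\phi_v$ has finite exponent (Theorem~\ref{th:recollectionwittdmj}~(ii)) and that $\Pic(C\otimes_kk_v)_{\mathrm{tors}}$ has finite exponent (Mattuck), and any $x\in(M_v)_{\mathrm{tors}}$ maps to torsion in~$P_v$, hence into $\Ker\phi_v$ after multiplication by the second exponent, hence to~$0$ after multiplication by the first. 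Once you replace your black box by this observation, your argument goes through.

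This is also precisely what the paper does, only more directly: it takes an even integer~$N$ killing both $\Ker\phi_v$ and $\Pic(C\otimes_kk_v)_{\mathrm{tors}}$ for all $v\in S$, and shows that $n=N^2$ works by the same two-step chase (if $x\in\Ker\phi_v$ and $x=N^2y$, then $\phi_v(y)$ is torsion, hence $N\phi_v(y)=0$, hence $Ny\in\Ker\phi_v$, hence $N^2y=0$). The evenness of~$N$ disposes of the real places, where $M_v$ is $2$\nobreakdash-torsion because $2x=N_{\kbar_v/k_v}(x)$; your appeal to connected components is unnecessary for this. So after the fix your argument is essentially the paper's, with the intermediate step ``$(M_v)_{\mathrm{tors}}$ has finite exponent'' made explicit rather than bypassed.
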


\begin{proof}
For $v \in S$,
the kernel of~\eqref{eq:fsloc}
and the torsion
subgroup of $\Pic(C\otimes_kk_v)$ have finite exponent,
by Theorem~\ref{th:recollectionwittdmj}~(ii) and~\cite{mattuck} respectively.
Let~$N>0$ be an even integer which kills these two groups for all $v \in S$.
The map~\eqref{eq:mapchzArel}
is then injective for $n=N^2$.
The second assertion follows from the definition of~$S$
and from Lemma~\ref{lem:abgroup}~(ii)
applied to the map~\eqref{eq:fsloc} for~$v \in S$.
\end{proof}

Using Lemma~\ref{lem:injectivityhatchzA}, we choose an integer~$n>0$
such that~\eqref{eq:mapchzArel} is injective
and such that $\Ker(f_*:\CHzAhat(X)\to\PicAhat(C))$ is killed by~$n$.
Let us apply Proposition~\ref{prop:reductionnohat} to~$\zhat_\A$ and~$n$
and Proposition~\ref{prop:reductioneffective} to the resulting class~$z_\A$ and to $X^0$, $S\cup\Omega_\infty$, $\Phi$, $n$.
This yields~$y$, $z_\A^\eff=(z_v)_{v\in\Omega}$, $z$, $\zhat$, $\xi_\A$ and~$\smash[t]{\xihat_\A}$ satisfying the conclusions
of Propositions~\ref{prop:reductionnohat} and~\ref{prop:reductioneffective}.
By~properties~(1) and~(2) of Proposition~\ref{prop:reductioneffective}
and by the definition of~$S$, we have $\xi_\A=0$.
By property~(1) of Proposition~\ref{prop:reductionnohat}
and by the definition of~$n$,
we also have~$n\xihat_\A=0$.
We thus obtain the decomposition
$\zhat_\A=z_\A^\eff+(z+n\zhat)$
and it only remains to check that the classes of~$z_\A^\eff$ and~$z'_\A$
in~$\CHzA(X)$ are equal for any~$z'_\A$ as in the
statement of Theorem~\ref{th:hatsoff}, if the neighbourhood~$\sU$ is chosen small enough.
At the places $v\notin S\cup\Omega_\infty$, this holds by the definition of~$S$.
At the places $v \in S$,
this follows from \cite[Lemme~1.8]{wittdmj},
in view of the injectivity of~\eqref{eq:mapchzArel}.
Finally, for $v\in\Omega_\infty$, condition~(4) of Theorem~\ref{th:existenceresult}
implies that the class of $z_v-z_v'$ in $\CH_0(X \otimes_k k_v)$ is a norm from $\CH_0(X\otimes_k\kbar_v)$
(see~\cite[Proposition~3.2~(ii)]{ctischebeck} or~\cite[Lemme~1.8]{wittdmj}).

\section{Main theorems on zero-cycles}
\label{sec:maintheorems}

We now have at our disposal all of the required tools to prove that if~$X$
is a smooth, proper, irreducible variety over a number field~$k$
and if $f:X\to\P^1_k$ is a dominant morphism with rationally connected geometric
generic fiber whose smooth fibers satisfy~$\cE$,
then~$X$ satisfies~$\cE$ (Theorem~\ref{th:introcycles}).  As discussed in~\textsection\ref{sec:intro}, the results we obtain refine this statement in several ways:
$\P^1_k$ is replaced with a curve of arbitrary genus satisfying~$\cE$,
the hypothesis on the smooth fibers is restricted to the fibers over a Hilbert subset, etc.
Our main theorem on zero-cycles is Theorem~\ref{th:main} below.
We give its precise statement and compare its corollaries with the existing literature in~\textsection\ref{subsec:statements}
before proceeding to the proofs in~\textsection\textsection\ref{subsec:preliminaries}--\ref{subsec:proofcorollary}.

\subsection{Statements}
\label{subsec:statements}

To state Theorem~\ref{th:main}, we shall need to consider the following condition, which depends on the choice of a smooth and proper variety~$V$ over a number field~$k$.

\begin{cond}
\label{cond:intermediate}
The image of $\CHzhat(V)$ in $\CHzAhat(V)$ contains the image
of~$V(\A_k)^{\Br(V)}$ in the same group.
\end{cond}

Condition~\ref{cond:intermediate} clearly holds if~$V$ satisfies~$\cE$.
The following lemma records another interesting condition which implies it.

\begin{lem}
\label{lem:conditionintermediate}
Condition~\ref{cond:intermediate} holds
if~$V(k)$ is dense in $V(\A_k)^{\Br(V)}$ and~$V$ is rationally connected.
\end{lem}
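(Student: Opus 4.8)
The plan is to run a closure argument in $\CHzAhat(V)$, exploiting the density hypothesis together with the compactness that rational connectedness makes available. We may assume $V(\A_k)^{\Br(V)}\neq\emptyset$, so that $V(k)\neq\emptyset$ by hypothesis; fix $P_0\in V(k)$. What Condition~\ref{cond:intermediate} asserts is that the image of $V(\A_k)^{\Br(V)}$ under the natural composite $\lambda\colon V(\A_k)\to\CHzA(V)\to\CHzAhat(V)$ lies in the image of $\theta\colon\CHzhat(V)\to\CHzAhat(V)$. Since a rational point defines a zero-cycle, $\lambda(V(k))\subseteq\Im(\theta)$; it therefore suffices to check (a) that $\lambda$ is continuous, where $V(\A_k)$ carries its usual topology and $\CHzAhat(V)=\varprojlim_n\CHzA(V)/n$ the inverse-limit topology, and (b) that $\Im(\theta)$ is closed in $\CHzAhat(V)$. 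Indeed, $V$ being proper, $V(\A_k)$ is compact and $V(\A_k)^{\Br(V)}$ is a closed, hence compact, subset; by (a), $\lambda(V(\A_k)^{\Br(V)})$ is compact and, by density of $V(k)$, equals the closure of $\lambda(V(k))$, which lies in the closed set $\Im(\theta)$ by (b).

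Both points rest on two finiteness properties of rationally connected varieties. First, applying Theorem~\ref{th:recollectionwittdmj}~(ii) to the structure morphism $V\to\Spec(k)$ — whose geometric generic fiber $V\otimes_k\kbar$ satisfies $\Azero(V\otimes_kK)=0$ for $K$ an algebraic closure of $k(V)$, being rationally connected — produces an integer $n_0\geq1$ with $n_0\Azero(V\otimes_kk')=0$ for \emph{every} field extension $k'/k$; in particular $n_0\Azero(V)=0$ and $n_0\Azero(V\otimes_kk_v)=0$ for all $v$. Secondly, it is known that $\Azero(V\otimes_kk_v)$ is finite for every place $v$ and vanishes for all but finitely many $v$. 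Consequently $\CH_0(V\otimes_kk_v)/n$ is finite for all $n$ and $v$; as the map $V(k_v)\to\CH_0(V\otimes_kk_v)/n$ is locally constant for each $n$ (cf.\ \cite{wittdmj}), the group $\CHzA(V)/n=\prod_{v\in\Omega_f}\CH_0(V\otimes_kk_v)/n\times\prod_{v\in\Omega_\infty}(\cdots)$ is a product of finite discrete groups, $\CHzAhat(V)$ is compact and Hausdorff, and $\lambda$ is continuous: this gives (a).

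For (b), the point $P_0$ yields an isomorphism $\CH_0(V)\cong\Z\oplus\Azero(V)$, whence $\CHzhat(V)\cong\Zhat\oplus\Azero(V)$ using $n_0\Azero(V)=0$. Under $\theta$, the summand $\Zhat$ maps into the compact subgroup $\Zhat\cdot\theta([P_0])$, while $\Azero(V)$ maps into $\prod_{v\in\Omega_f}\Azero(V\otimes_kk_v)\times\prod_{v\in\Omega_\infty}(\cdots)$, which is a finite group since all but finitely many factors vanish and the remaining ones are finite. Thus $\Im(\theta)$ is the sum of a compact subgroup and a finite subgroup, hence compact, hence closed in the Hausdorff group $\CHzAhat(V)$, which is (b).

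The main obstacle is the second finiteness input — that $\Azero(V\otimes_kk_v)$ is finite at every place and trivial at almost every place — which is what makes the local Chow groups profinite enough for the closure argument to bite; over $p$-adic fields this rests on the known finiteness theorems for zero-cycles on rationally connected varieties, over $\R$ on the structure of $\Azero$ via real cycle classes combined with the bound $n_0\Azero=0$, and the almost-everywhere vanishing on good reduction together with the triviality of $\Azero$ for rationally connected varieties over finite fields. Everything else is elementary manipulation of completed abelian groups and of products of compact spaces.
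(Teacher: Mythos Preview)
Your argument is correct; the key inputs---density of $V(k)$, local constancy of $V(k_v)\to\CH_0(V\otimes_kk_v)/n$, the uniform exponent bound $n_0$ on $\Azero$, and Koll\'ar--Szab\'o's finiteness and almost-everywhere vanishing of $\Azero(V\otimes_kk_v)$---are exactly those the paper uses.

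The packaging differs, however. The paper argues directly: given $(x_v)_{v\in\Omega}\in V(\A_k)^{\Br(V)}$, one chooses the finite set $S_0$ of places where $\Azero(V\otimes_kk_v)\neq 0$, picks $P\in V(k)$ close enough to~$x_v$ at each $v\in S_0$ so that $[P]\equiv[x_v]$ in $\CH_0(V\otimes_kk_v)/n_0$, and observes that this forces $[P]=[x_v]$ in $\CH_0(V\otimes_kk_v)$ for \emph{every}~$v$ (degree~$1$ classes agreeing modulo~$n_0$ must agree, and outside~$S_0$ there is nothing to check). So a single rational point already does the job, with no topology on $\CHzAhat(V)$ and no compactness argument. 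Your route---putting the profinite topology on $\CHzAhat(V)$, checking $\lambda$ is continuous and $\Im(\theta)$ compact, then invoking closure---is more conceptual and would generalise more readily, but is heavier than needed here. (Incidentally, you do not actually need the compactness of $\lambda(V(\A_k)^{\Br(V)})$: continuity of~$\lambda$ and closedness of $\Im(\theta)$ already give $\lambda(\overline{V(k)})\subseteq\overline{\lambda(V(k))}\subseteq\Im(\theta)$.)
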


\begin{proof}
For any $n\geq 1$ and any finite set~$S$ of places of~$k$,
the image
of the natural map
\begin{align*}
\CH_0(V)
\longrightarrow
\prod_{v\in S} \CH_0(V\otimes_kk_v)/n\CH_0(V\otimes_kk_v)
\end{align*}
contains
the image of $\smash[t]{V(\A_k)^{\Br(V)}}$
as~$V(k)$ is dense in $V(\A_k)^{\Br(V)}$
(see~\cite[Lemme~1.8]{wittdmj}).
On the other hand,
as~$V$ is rationally connected, the group
$\Azero(V\otimes_kk_v)$ vanishes for all but finitely many places~$v$
and has finite exponent for every~$v$ (see~\cite{kollarszabo} or
Theorem~\ref{th:recollectionwittdmj}).
The lemma follows from these two facts (see~\cite[Remarques~1.1~(ii)]{wittdmj}).
\end{proof}

We are now in a position to state the main result on zero-cycles obtained
in this paper.

\begin{thm}
\label{th:main}
Let~$C$ be a smooth, proper and geometrically irreducible curve over a number field~$k$.
Let~$X$ be a smooth, proper and irreducible variety over~$k$.
Let $f:X \to C$
be a morphism
whose geometric generic fiber~$X_\etabar$ is irreducible
and satisfies
$H^1(X_\etabar,\Q/\Z)=0$
and $\Azero(X_\etabar\otimes K)=0$,
where~$K$ denotes an algebraic closure
of the function field of~$X_\etabar$.

\begin{enumerate}
\item If~$C$ satisfies~$\cE$
and the smooth fibers of~$f$ above the closed points of a Hilbert subset of~$C$ satisfy~$\cE$, then~$X$ satisfies~$\cE$.
\smallskip
\item If the smooth fibers of~$f$ above the closed points of a Hilbert subset of~$C$ satisfy~$\cE$,
the complex
\begin{align*}
\xymatrix{
\CH_0(X/C) \ar[r] & \CHzA(X/C) \ar[r] & \Hom(\Br(X)/f^*\Br(C),\Q/\Z)
}
\end{align*}
is an exact sequence, where $\CH_0(X/C)$ and~$\CHzA(X/C)$ denote the kernels
of $f_*:\CH_0(X)\to\CH_0(C)$ and of $f_*:\CHzA(X)\to \CHzA(C)$ respectively.
\smallskip
\item If every fiber of~$f$ possesses an irreducible component of multiplicity~$1$,
assertions~(1) and~(2) above remain valid when the smooth fibers of~$f$ above the closed points of a Hilbert subset of~$C$ are
only assumed to satisfy Condition~\ref{cond:intermediate} instead of~$\cE$.
\smallskip
\item If there exists a family $(z_v)_{v \in \Omega} \in \Zcyc_{0,\A}(X)$
orthogonal to~$\Br(X)$ with respect to~\eqref{eq:globalpairing},
such that $\deg(z_v)=1$ for all $v \in \Omega$
and such that the class of $(f_*z_v)_{v \in \Omega}$
in~$\PicAhat(C)$ belongs to the image of~$\Pichat(C)$,
and if the smooth fibers of~$f$ above the closed points of a Hilbert subset of~$C$ satisfy~$\cEu$,
then~$X$ possesses a zero-cycle of degree~$1$.
\end{enumerate}
\end{thm}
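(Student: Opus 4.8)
The plan is to route all four assertions through the existence theorem for effective zero‑cycles of \textsection\ref{sec:hilbertsubsets} (Theorem~\ref{th:existenceresulthilb}), after the reduction from completed Chow groups of \textsection\ref{sec:reductions} (Theorem~\ref{th:hatsoff}) and an application of the Brauer‑group specialisation theorem (Proposition~\ref{prop:specialisation}); the projective–proper discrepancy between Theorem~\ref{th:main} and those statements is disposed of by Chow's lemma together with resolution of singularities and the birational invariance of everything in sight. \emph{Preliminary set‑up.} Shrink $C$ so that $C^0$ is the smooth locus of $f$ and, further, so that a finite set of classes generating $\Br(X_\eta)/f_\eta^*\mkern.5mu\Br(\eta)$ — a finite group, since $\Azero(X_\etabar\otimes K)=0$ forces, by the decomposition of the diagonal, $H^1(X_\etabar,\sO_{X_\etabar})=H^2(X_\etabar,\sO_{X_\etabar})=0$, whence $\Br(X_\etabar)$ is finite and $\Pic(X_\etabar)$ finitely generated — lifts to a finite subgroup $B\subset\Br(X^0)$, where $X^0=f^{-1}(C^0)$; put $M=C\setminus C^0$, $\mu=\deg(M)$, and let $g$ be the genus of~$C$. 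Since $H^1(X_\etabar,\Q/\Z)=0$, the morphism~$f$ has no multiple fibre, so the hypotheses of Theorem~\ref{th:existenceresult} are available. Fix $\Phi\colon\N^2\to\N$, $\Phi(a,b)=b\mu+2a+\mu+2g+2$, so that the inequalities $\deg(y)\geq\Phi(g_v,d_v)$ that Theorem~\ref{th:hatsoff} will grant at every infinite place force at once assumption~(ii) of Theorem~\ref{th:existenceresult} and, via the curves~$Z_v$ produced by Theorem~\ref{th:hatsoff}, assumption~(iii). Finally let $H=H_0\cap H_1$, where $H_0\subseteq C$ is the Hilbert subset over whose closed points the smooth fibres satisfy the relevant local condition and $H_1\subseteq C^0$ is the Hilbert subset furnished by Proposition~\ref{prop:specialisation} (so that $B\to\Br(X_h)/f_h^*\mkern.5mu\Br(h)$ is onto for $h\in H_1$); $H$ is again a Hilbert subset.

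\emph{The pipeline.} In cases~(1), (2), (4) one starts from a class $\zhat_\A\in\CHzAhat(X)$ orthogonal to~$\Br(X)$: for~(1) an arbitrary such class; for~(2) the image of the given $z_\A\in\CHzA(X/C)$, which is orthogonal to~$\Br(X)$ because it is orthogonal to $\Br(X)/f^*\Br(C)$ and, automatically, to~$f^*\Br(C)$; for~(4) the image of the given family of degree‑one local cycles. In all three cases $f_*\zhat_\A$ lies in the image of $\Pichat(C)\to\PicAhat(C)$ — trivially for~(2) ($f_*\zhat_\A=0$), by hypothesis for~(4), and for~(1) because $f_*\zhat_\A$ is then orthogonal to~$\Br(C)$ and $C$ satisfies~$\cE$. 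Applying Theorem~\ref{th:hatsoff} with $X^0$ and~$\Phi$ yields $y\in\Pic(C)$, $\zhat\in\CHzhat(X)$, $z_\A^\eff=(z_v)_{v\in\Omega}\in\Zcyc_{0,\A}^{\effred,y}(X^0)$, a finite set $S\subset\Omega_f$ and a neighbourhood~$\sU$ with $\zhat_\A=z_\A^\eff+\zhat$ in $\CHzAhat(X)$. For every $\alpha\in\Br(X)$ one has $\mylangle\alpha,z_\A^\eff\myrangle=\mylangle\alpha,\zhat_\A\myrangle-\mylangle\alpha,\zhat\myrangle=0$, the first term by orthogonality of~$\zhat_\A$ and the second by the global reciprocity law applied to $\zhat\in\CHzhat(X)$; thus $z_\A^\eff$ fulfils hypothesis~(i) of Theorem~\ref{th:existenceresult}. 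Theorem~\ref{th:existenceresulthilb} (with $B$, $y$, $z_\A^\eff$, $S$, $\sU$ and~$H$) then produces a closed point $c\in H\cap C^0$ with $[c]=y$ in $\Pic(C)$ and a family $(z'_v)_{v\in\Omega}\in\Zcyc_{0,\A}(X^0)$ with $f_*z'_v=c$ for all~$v$, orthogonal to~$B$, satisfying properties~(3) and~(4) of Theorem~\ref{th:existenceresult}, and effective at every place whenever every fibre of~$f$ has an irreducible component of multiplicity~$1$. By the last clause of Theorem~\ref{th:hatsoff}, $z_\A^\eff=z'_\A$ in $\CHzA(X)$.

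\emph{Descent to the fibre and conclusion.} As $c\in H\cap C^0$, the fibre~$X_c$ is smooth, proper and geometrically irreducible over~$k(c)$, satisfies the relevant local hypothesis, and $B\to\Br(X_c)/f_c^*\mkern.5mu\Br(c)$ is onto. Because $f_*z'_v=c$ is a reduced point, $z'_v$ decomposes, over the places~$w$ of~$k(c)$ above~$v$, into zero‑cycles of degree~$1$ on $X_c\otimes_{k(c)}k(c)_w$; assembling these gives a family $(z'_w)_w\in\Zcyc_{0,\A}(X_c)$ whose image under the natural map $\CHzA(X_c)\to\CHzA(X)$ is~$z'_\A$. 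Using $\inv_v\circ\Cores_{k(c)_w/k_v}=\inv_w$, orthogonality of $(z'_v)_v$ to~$B$ over~$k$ becomes orthogonality of $(z'_w)_w$ to the image of~$B$ in $\Br(X_c)$ over~$k(c)$; together with orthogonality to~$f_c^*\mkern.5mu\Br(c)$ (automatic by global reciprocity over~$k(c)$, as each~$z'_w$ has degree~$1$) and the surjectivity just recalled, $(z'_w)_w$ is orthogonal to~$\Br(X_c)$. For~(1) and~(2): since $X_c$ satisfies~$\cE$, the class of $(z'_w)_w$ in $\CHzAhat(X_c)$ lies in the image of $\CHzhat(X_c)$, hence $\widehat{z_\A^\eff}=\widehat{z'_\A}$ lies in the image of $\CHzhat(X)$, and so does $\zhat_\A=z_\A^\eff+\zhat$; this proves~(1), while for~(2) one observes in addition that $\CH_0(X/C)$ and $\CHzA(X/C)$ have finite exponent (Theorem~\ref{th:recollectionwittdmj}~(ii)--(iii)), so that completion is harmless and the cycle obtained can be taken relative, yielding the asserted exactness. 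For~(3), the multiplicity‑one hypothesis makes each~$z'_w$ an effective zero‑cycle of degree~$1$, i.e.\ a rational point, so $(z'_w)_w\in X_c(\A_{k(c)})^{\Br(X_c)}$ and Condition~\ref{cond:intermediate} for~$X_c$ again puts the class of $(z'_w)_w$ in the image of $\CHzhat(X_c)$; conclude as before. For~(4), $\cEu$ for~$X_c$ applied to the degree‑one orthogonal family $(z'_w)_w$ gives a zero‑cycle of degree~$1$ on~$X_c$, whose push‑forward to~$X$ has degree $\deg(c)=\deg(y)$; comparing degrees at a finite place in $\zhat_\A=z_\A^\eff+\zhat$ shows that $\zhat\in\CHzhat(X)$ has degree $1-\deg(y)$, so the index of~$X$ divides $1-\deg(y)$, and being a divisor of $\deg(y)$ as well it equals~$1$, whence a zero‑cycle of degree~$1$ on~$X$.

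\emph{Main obstacle.} The delicate part is the Brauer‑group bookkeeping that makes the three tools interlock: one may choose~$B$ finite only because $\Br(X_\eta)/f_\eta^*\mkern.5mu\Br(\eta)$ is finite (whence the role of $H^2(X_\etabar,\sO_{X_\etabar})=0$, itself extracted from the hypothesis on $\Azero$); to ensure that this single~$B$ still detects the Brauer--Manin obstruction on the \emph{a priori unknown} smooth fibre~$X_c$ that Theorem~\ref{th:existenceresulthilb} outputs, one is forced to pass through Proposition~\ref{prop:specialisation} and to intersect Hilbert subsets; and one must verify that, after the completed‑to‑effective reduction of Theorem~\ref{th:hatsoff}, the effective adelic zero‑cycle actually fed into Theorem~\ref{th:existenceresult} still satisfies orthogonality hypothesis~(i). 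For~(4) there is the additional twist that a degree‑one zero‑cycle on the residue‑field fibre only yields a zero‑cycle of degree $\deg(y)$ on~$X$, which has to be played off against the residual class~$\zhat$ to bring the index down to~$1$.
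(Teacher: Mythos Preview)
Your approach is essentially the paper's: reduce to projective~$X$ via Chow's lemma, feed the completed adelic class through Theorem~\ref{th:hatsoff}, then Theorem~\ref{th:existenceresulthilb}, specialise Brauer classes via Proposition~\ref{prop:specialisation}, and finish in the fibre. Two places deserve tightening.

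For~(2), ``completion is harmless'' is not a proof: finite exponent of $\CHzA(X/C)$ does not by itself give injectivity of $\CHzA(X/C)\to\CHzAhat(X)$. The paper uses precisely this injectivity (Lemma~\ref{lem:injectivityhatchzA}), together with injectivity of $\Pichat(C)\to\PicAhat(C)$ and exactness of $\CH_0(X/C)\to\CHzhat(X)\to\Pichat(C)$ (the latter via Lemma~\ref{lem:abgroup}~(ii) and Mordell--Weil), to pull the element of~$\CHzhat(X)$ produced by the pipeline back into~$\CH_0(X/C)$. For~(4), your gcd argument claims that $\zhat\in\CHzhat(X)$ having degree $1-\deg(y)$ forces the index of~$X$ to divide $1-\deg(y)$, but~$\zhat$ is not an honest zero-cycle, so this needs justification. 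The clean fix (and what the paper does) is to observe that $z'+\zhat\in\CHzhat(X)$ has degree~$1$ in~$\Zhat$, so $\deg:\CHzhat(X)\to\Zhat$ is onto, whence $\deg:\CH_0(X)\to\Z$ is onto by Lemma~\ref{lem:abgroup}~(i). (Minor: the absence of multiple fibres follows from $H^i(X_\etabar,\sO_{X_\etabar})=0$ via Lemma~\ref{lem:implications}, not directly from $H^1(X_\etabar,\Q/\Z)=0$.)
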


We recall that~$C$ satisfies~$\cE$
if $C=\P^1_k$ or more generally
if the divisible subgroup of the Tate--Shafarevich group of the Jacobian of~$C$ is trivial,
by a theorem of Saito~\cite{saito} (see~\cite[Remarques~1.1~(iv)]{wittdmj}).
In this case,
the condition, in~(4),
that the class of $(f_*z_v)_{v\in\Omega}$ in~$\PicAhat(C)$ should belong to the image of~$\Pichat(C)$ automatically holds.

An easy induction argument will allow us to deduce a version of Theorem~\ref{th:main}
over some simple higher-dimensional bases.

\begin{cor}
\label{cor:zerocycleshigherdimbase}
Let~$X$ be a smooth, proper, irreducible variety over a number field~$k$.
Let~$Y$ be an irreducible variety over~$k$,
birationally equivalent to either~$\P^n_k$, or~$C$, or $\P^n_k\times C$, for some $n\geq 1$ and some
smooth, proper, geometrically irreducible curve~$C$ over~$k$
which satisfies~$\cE$.
Let $f:X \to Y$ be a morphism
whose geometric generic fiber~$X_\etabar$ is irreducible and satisfies
$H^1(X_\etabar,\Q/\Z)=0$
and $\Azero(X_\etabar\otimes K)=0$,
where~$K$ denotes an algebraic closure
of the function field of~$X_\etabar$.
\begin{enumerate}
\item If the smooth fibers of~$f$ above the closed points of a Hilbert subset of~$Y$ satisfy~$\cE$,
then~$X$ satisfies~$\cE$.
\smallskip
\item If the smooth fibers of~$f$ above the closed points of a Hilbert subset of~$Y$ satisfy Condition~\ref{cond:intermediate} and the fibers of~$f$ above the codimension~$1$
points of~$Y$
possess an irreducible component of multiplicity~$1$, then~$X$ satisfies~$\cE$.
\end{enumerate}
\end{cor}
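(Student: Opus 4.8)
The plan is to argue by descending induction on $\dim Y$. The properties~$\cE$ and Condition~\ref{cond:intermediate}, as well as the hypotheses bearing on the geometric generic fibre, depend only on the birational class of a smooth proper variety, so we may replace~$Y$ by~$\P^n_k$, by~$C$, or by~$\P^n_k\times C$, and, resolving the resulting rational map out of~$X$, assume that~$f$ is a morphism from a smooth, proper, irreducible variety~$X$ to one of these three varieties; the geometric generic fibre of this new~$f$ is birationally equivalent to~$X_\etabar$, and we keep denoting its birational class by~$X_\etabar$. If $\dim Y=1$, then~$Y$ is a curve satisfying~$\cE$ — recall $\P^1_k$ satisfies~$\cE$ — so assertion~(1) is Theorem~\ref{th:main}~(1) and assertion~(2) follows from Theorem~\ref{th:main}~(1) and~(3). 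This is the base of the induction.

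For the inductive step, after a further birational modification of~$Y$ if needed (for instance replacing~$\P^n_k$ by $\mathrm{Bl}_{\mathrm{pt}}\P^n_k$), I would choose a morphism $\pi\colon Y\to Y'$ presenting~$Y$ as a $\P^1$\nobreakdash-bundle over a variety~$Y'$ of the same shape and strictly smaller dimension, resolve the induced rational map $X\dashrightarrow Y$ into a morphism, and compose with~$\pi$ to get a morphism $h\colon X'\to Y'$ with~$X'$ smooth, proper, irreducible and birationally equivalent to~$X$. It then suffices to verify the hypotheses of the corollary over the base~$Y'$ for~$h$ and to invoke the induction hypothesis. The geometric generic fibre~$W$ of~$h$ is a smooth projective variety over $\Omega=\overline{k(Y')}$ with a morphism to $\P^1_\Omega$ whose geometric generic fibre is birationally equivalent to~$X_\etabar$. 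One first checks that~$W$ again satisfies $\Azero(W\otimes K_W)=0$ and $H^1(W,\Q/\Z)=0$. For the first, Theorem~\ref{th:recollectionwittdmj}~(ii) applied to $W\to\P^1_\Omega$ shows that $\Azero(W\otimes K')$ is killed by a fixed integer for every algebraically closed field~$K'\supseteq\Omega$, while this group is the torsion subgroup of an abelian variety by Roitman's theorem, hence divisible; a divisible group of finite exponent vanishes. For the second, once one knows — as in the reduction of Theorem~\ref{th:main} to Theorem~\ref{th:existenceresult}, again using the triviality of~$\Azero$ on the geometric generic fibre — that $W\to\P^1_\Omega$ has no multiple fibre, the homotopy exact sequence for étale fundamental groups, together with $\pi_1(\P^1_\Omega)=1$ and $H^1(X_\etabar,\Q/\Z)=0$, yields $H^1(W,\Q/\Z)=0$.

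Next one transfers the arithmetic input. By Hilbert's irreducibility theorem in families, a Hilbert subset of~$Y$ restricts, over the closed points~$z$ of a suitable Hilbert subset of~$Y'$, to a Hilbert subset of the fibre $\pi^{-1}(z)\cong\P^1_{k(z)}$; hence the smooth fibre $X'_z$ of~$h$ above such a~$z$ is a variety over the number field~$k(z)$, fibred over $\P^1_{k(z)}$ with geometric generic fibre~$X_\etabar$, whose smooth fibres above a Hilbert subset of $\P^1_{k(z)}$ satisfy~$\cE$ (for assertion~(1)), resp.\ Condition~\ref{cond:intermediate} (for assertion~(2)). As $\P^1_{k(z)}$ satisfies~$\cE$, Theorem~\ref{th:main}~(1), resp.\ Theorem~\ref{th:main}~(3), then shows that~$X'_z$ satisfies~$\cE$. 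For assertion~(2) one also needs the multiplicity\nobreakdash-one condition for~$h$ above codimension\nobreakdash-$1$ points of~$Y'$, which holds because those fibres of~$h$ arise as fibres of~$f$ above codimension\nobreakdash-$1$ points of~$Y$; and, in order to apply Theorem~\ref{th:main}~(3) to the individual fibrations $X'_z\to\P^1_{k(z)}$, one uses that the locus of points of~$Y$ above which~$f$ has no multiplicity\nobreakdash-one component is a closed subset of codimension~$\geq 2$, so that a general fibre $\pi^{-1}(z)$ avoids it. Invoking the induction hypothesis for $h\colon X'\to Y'$ then shows that~$X'$, hence~$X$, satisfies~$\cE$.

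The step I expect to be the crux is this propagation of the conditions on the geometric generic fibre through the recursion, and above all the vanishing of $H^1(\cdot,\Q/\Z)$, which forces one to control multiple fibres exactly as in the proof of Theorem~\ref{th:main}. The bookkeeping for assertion~(2) — ensuring that the multiplicity\nobreakdash-one condition over codimension\nobreakdash-$1$ points and Condition~\ref{cond:intermediate} over a Hilbert subset both survive the passage to~$Y'$ — also requires care; in particular the case $\dim Y=2$, where the recursion lands directly on a curve base, should be treated separately, since there ``every fibre'' in Theorem~\ref{th:main}~(3) already amounts to the fibres over codimension\nobreakdash-$1$ points.
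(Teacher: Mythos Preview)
Your induction runs in a different direction from the paper's but is a sound strategy: you peel off one~$\P^1$ factor at a time and carry the induction hypothesis on $h:X'\to Y'$, whereas the paper writes $Y=\P^n_k\times C$ (setting $C=\P^1_k$ if need be), projects to the curve factor, applies Theorem~\ref{th:main} to $g:X\to C$, and invokes the induction hypothesis on the fibres $f_c:X_c\to\P^n_{k(c)}$. Either way one must propagate the condition~$(\ast)$ --- meaning $H^1(\cdot,\Q/\Z)=0$ and $\Azero(\cdot\otimes K)=0$ --- in two places: to the geometric generic fibre of the outer fibration (your~$W$, the paper's Lemma~\ref{lem:geomgenfiberg}), and to the geometric generic fibres of the inner fibrations over the closed points of the base.

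You handle the first, but there is a gap in the second. Your assertion that $X'_z\to\P^1_{k(z)}$ has ``geometric generic fibre~$X_\etabar$'' is false for a closed point~$z$ of~$Y'$: that geometric generic fibre is the fibre of~$f'$ above a geometric point of~$Y$ lying over the generic point of~$\pi^{-1}(z)$, which is a genuine specialisation of~$X_\etabar$, not~$X_\etabar$ itself. To apply Theorem~\ref{th:main} to $X'_z\to\P^1_{k(z)}$ you must know that this specialised fibre still satisfies~$(\ast)$. This does hold for~$z$ in a dense open subset of~$Y'$, but it requires a spreading-out argument, supplied in the paper as Lemma~\ref{lem:propertyast}: the vanishing of~$H^1$ propagates by smooth proper base change for~$R^1f_*\Lambda$, and the vanishing of~$\Azero$ propagates because a decomposition of the diagonal on~$X_\etabar$ (which characterises this vanishing, see Lemma~\ref{lem:decompositiondiagonal}) extends and specialises to nearby fibres. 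Once you insert this step your argument goes through. (A smaller imprecision: the ``homotopy exact sequence'' does not directly yield $H^1(W,\Q/\Z)=0$ since $W\to\P^1_\Omega$ need not be smooth; the paper's Lemma~\ref{lem:geomgenfiberg} argues instead by bounding the support of~$R^1(f_\xibar)_*\Lambda$ in codimension~$\geq 2$ and using the Leray spectral sequence.)
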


By the generalised Bloch conjecture,
the vanishing of $\Azero(X_\etabar\otimes K)$ should be equivalent
to that of $H^i(X_\etabar,\sO_{X_\etabar})$ for all~$i>0$.
We refer the reader to~\cite[\textsection4]{voisinlectures}
for a discussion of this condition and for some nontrivial examples
of simply connected surfaces of general type which satisfy it
(Barlow surfaces).

When the geometric generic fiber of~$f$ is rationally connected,
both of the assumptions $H^1(X_\etabar,\Q/\Z)=0$ and $\Azero(X_\etabar\otimes K)=0$
hold
(see~\cite[Corollary~4.18~(b)]{debarrehigherdim} for the first).
In particular, Theorem~\ref{th:introcycles} follows from
Theorem~\ref{th:main}~(1).
Moreover, in this case, every fiber of~$f$ above
a codimension~$1$ point of~$Y$ contains an irreducible component of multiplicity~$1$
(see~\cite{ghs}).
Combining Corollary~\ref{cor:zerocycleshigherdimbase}
with Lemma~\ref{lem:conditionintermediate}
and with Borovoi's theorem on the rational points of homogeneous spaces of linear groups~\cite[Corollary~2.5]{borovoi},
we thus obtain the following corollary.

\begin{cor}
\label{cor:cycleshomspace}
Let~$X$ be a smooth, proper, irreducible variety over a number field~$k$.
Let~$Y$ be an irreducible variety over~$k$,
birationally equivalent to either~$\P^n_k$, or~$C$, or $\P^n_k\times C$, for some $n\geq 1$ and some
smooth, proper, geometrically irreducible curve~$C$ over~$k$
which satisfies~$\cE$.
Let $f:X \to Y$ be a dominant morphism whose generic fiber is birationally
equivalent to a homogeneous space of a connected linear algebraic group, with
connected geometric stabilisers.
Then~$X$ satisfies~$\cE$.
\end{cor}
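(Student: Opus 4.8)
The plan is to deduce Corollary~\ref{cor:cycleshomspace} from Corollary~\ref{cor:zerocycleshigherdimbase}~(2) by verifying its hypotheses. First I would check that the geometric generic fibre~$X_\etabar$ of~$f$ is irreducible and rationally connected. The generic fibre~$X_\eta$ is birationally equivalent, over~$k(Y)$, to a homogeneous space~$G/H$ of a connected linear algebraic group~$G$ with connected geometric stabilisers; since a connected linear algebraic group over an algebraically closed field of characteristic~$0$ is a rational variety and $G\to G/H$ is surjective, the variety $(G/H)_{\overline{k(Y)}}$ is unirational, hence so is~$X_\etabar$, which is therefore rationally connected; it is also irreducible because $G/H$ is geometrically irreducible. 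Consequently $H^1(X_\etabar,\Q/\Z)=0$ by~\cite[Corollary~4.18~(b)]{debarrehigherdim} and $\Azero(X_\etabar\otimes K)=0$ by~\cite{kollarszabo}, where~$K$ denotes an algebraic closure of the function field of~$X_\etabar$; thus the hypotheses of Corollary~\ref{cor:zerocycleshigherdimbase} bearing on the generic fibre hold.

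It then remains to establish the two extra hypotheses of Corollary~\ref{cor:zerocycleshigherdimbase}~(2). For the fibres above codimension~$1$ points: localising~$f$ at such a point realises it as a proper family over the spectrum of a discrete valuation ring whose generic fibre is rationally connected, so by~\cite{ghs} its special fibre contains an irreducible component of multiplicity~$1$. For the smooth fibres: a spreading-out argument produces a dense open $Y^0\subseteq Y$, contained in the smooth locus of~$f$, such that for every closed point $c\in Y^0$ the fibre~$X_c$ is a smooth, proper variety over the number field~$k(c)$ birationally equivalent to a homogeneous space of a connected linear algebraic group over~$k(c)$ with connected geometric stabilisers. By Borovoi's theorem~\cite[Corollary~2.5]{borovoi}, $X_c(k(c))$ is then dense in $X_c(\A_{k(c)})^{\Br(X_c)}$, and since~$X_c$ is rationally connected, Lemma~\ref{lem:conditionintermediate} shows that~$X_c$ satisfies Condition~\ref{cond:intermediate}. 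As~$Y^0$ is itself a Hilbert subset of~$Y$, the smooth fibres of~$f$ above the closed points of a Hilbert subset of~$Y$ satisfy Condition~\ref{cond:intermediate}.

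Applying Corollary~\ref{cor:zerocycleshigherdimbase}~(2) now gives that~$X$ satisfies~$\cE$. All of the real work is carried by Theorem~\ref{th:main}, by Corollary~\ref{cor:zerocycleshigherdimbase}, and by the external inputs of~\cite{ghs} and Borovoi's theorem, so the argument above is essentially bookkeeping; the only points that require genuine care are the birational-invariance and spreading-out steps that transport rational connectedness, the cohomological vanishings, and the homogeneous-space description from the generic fibre to the relevant special fibres, together with invoking Borovoi's theorem in the form of density in the Brauer--Manin set for smooth and proper models over the varying residue fields~$k(c)$.
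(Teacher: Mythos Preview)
Your proposal is correct and follows essentially the same route as the paper: the discussion preceding Corollary~\ref{cor:cycleshomspace} deduces it from Corollary~\ref{cor:zerocycleshigherdimbase}~(2) by noting that the rationally connected geometric generic fibre forces $H^1(X_\etabar,\Q/\Z)=0$ and $\Azero(X_\etabar\otimes K)=0$, that~\cite{ghs} gives the multiplicity~$1$ component in codimension~$1$, and that Borovoi's theorem combined with Lemma~\ref{lem:conditionintermediate} verifies Condition~\ref{cond:intermediate} for the smooth fibres. Your only addition is making the spreading-out step explicit, which the paper leaves implicit.
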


Corollary~\ref{cor:cycleshomspace} applies to arbitrary families of toric varieties.
Even this particular case of the corollary suffices to cover all of the examples
of varieties satisfying~$\cE$ or~$\cEu$
dealt with in
\cite[\textsection3]{lianglocalglobal},
\cite{weioneq}, \cite{caoliang}, \cite{liangtowards}.
Corollary~\ref{cor:cycleshomspace}
also applies to fibrations into Ch\^atelet surfaces over~$Y$, or, more generally, into Ch\^atelet $p$\nobreakdash-folds
in the sense of~\cite{varillyviraychatelet},
since the total space of such a fibration is birationally equivalent to the total space of a fibration
into torsors under tori over~$Y \times_k \P^1_k$.
Such fibrations were considered in~\cite{pooneninsufficiency}.
This answers the question raised at the very end of \cite[Remarque~3.3]{ctsolides}.
(The case of fibrations into Ch\^atelet surfaces
all of whose fibers are geometrically integral was previously
treated in~\cite{liangcourbe}; see also \cite[Proposition~7.1]{liangastuce}.)
In the situation of a fibration into Ch\^atelet surfaces over a curve of positive genus,
Theorem~\ref{th:main}~(4) immediately implies the unconditional result~\cite[Th\'eor\`eme~3.1]{ctsolides}.
A similar application of Corollary~\ref{cor:cycleshomspace} is the validity of~$\cE$
for the total space of any fibration
into del Pezzo surfaces of degree~$6$ over~$Y$ (recall that degree~$6$ del Pezzo surfaces are toric~\cite{maninrational}).

By applying Corollary~\ref{cor:cycleshomspace} to the trivial
fibration $V \times \P^1_k\to\P^1_k$
(a trick first used in~\cite{liangarithmetic}), we recover Liang's theorem
(proved in~\emph{op.\ cit.})
that
varieties~$V$ birationally equivalent to a homogeneous space
of a connected linear algebraic group, with connected geometric stabilisers,
satisfy~$\cE$.

We note, finally, that
Theorem~\ref{th:main} and Corollary~\ref{cor:zerocycleshigherdimbase} also subsume
\cite[\textsection\textsection5--6]{ctsd94},
\cite[\textsection4]{ctsksd98},
\cite{ctreglees},
\cite{frossard},
\cite{vanhamel},
\cite{wittdmj}, 
\cite{liangcourbe},
\cite{liangprojectif},
\cite[\textsection5]{smeets},
\cite{liangastuce}.
In these papers, one can find versions of Corollary~\ref{cor:zerocycleshigherdimbase} in which every fiber of~$f$ is assumed
to contain an irreducible component of multiplicity~$1$ split by an abelian extension of the field over which the fiber is defined,
and in which the Brauer group of the generic fiber is assumed to be generated by classes which
are unramified along the singular fibers of~$f$.  Both of these assumptions turn out to be
quite restrictive for applications.

\subsection{Preliminaries}
\label{subsec:preliminaries}

Let us first recall some well-known implications between the geometric conditions appearing
in the statements of Theorem~\ref{th:main}, Theorem~\ref{th:existenceresulthilb},
Theorem~\ref{th:existenceresult} and of Proposition~\ref{prop:specialisation}.

\begin{lem}
\label{lem:implications}
The properties
\begin{itemize}
\item[(a)]
$X_\etabar$ is rationally connected;
\item[(b)]
$\Azero(X_\etabar\otimes K)=0$, where~$K$ denotes an algebraic closure
of the function field of~$X_\etabar$;
\item[(c)]
$H^i(X_\eta,\sO_{X_\eta})=0$ for all $i>0$;
\item[(d)]
$H^2(X_\eta,\sO_{X_\eta})=0$;
\item[(e)]
if $H^1(X_\etabar,\Q/\Z)=0$,
the quotient $\Br(X_\eta)/f_\eta^*\mkern.5mu\Br(\eta)$ is finite;
\item[(f)]
the gcd of the multiplicities of the irreducible components of each fiber of~$f$ is~$1$
\end{itemize}
satisfy the implications $(\mathrm{a})\Rightarrow(\mathrm{b})\Rightarrow(\mathrm{c})\Rightarrow(\mathrm{d})\Rightarrow(\mathrm{e})$ and $(\mathrm{c})\Rightarrow(\mathrm{f})$.
\end{lem}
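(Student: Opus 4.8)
The plan is to verify the six implications separately; all but $(\mathrm d)\Rightarrow(\mathrm e)$ are short appeals to standard results. For $(\mathrm a)\Rightarrow(\mathrm b)$ I would use that rational connectedness is a geometric property stable under extension of the base field, so that $X_\etabar\otimes K$ is again rationally connected, together with the fact that a smooth proper rationally connected variety over an algebraically closed field has Chow group of zero-cycles equal to~$\Z$ (see~\cite[Chapter~IV]{kollarbook}); hence $\Azero(X_\etabar\otimes K)=0$. For $(\mathrm b)\Rightarrow(\mathrm c)$, since $H^i(X_\etabar,\sO_{X_\etabar})=H^i(X_\eta,\sO_{X_\eta})\otimes_{k(C)}\overline{k(C)}$ by flat base change, it is enough to prove $H^i(X_\etabar,\sO_{X_\etabar})=0$ for $i>0$. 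The hypothesis $\Azero(X_\etabar\otimes K)=0$ provides, over a finitely generated subextension of~$K$, a rational equivalence on $X_\etabar$ between the class of its generic point and the class of a closed point; spreading this out yields the Bloch--Srinivas decomposition of the diagonal $N\Delta_{X_\etabar}=\Gamma_1+\Gamma_2$ for some integer $N>0$, with $\Gamma_1$ supported on $D\times X_\etabar$ for some divisor $D\subsetneq X_\etabar$ and $\Gamma_2$ supported on $X_\etabar\times\{\mathrm{pt}\}$. Acting on $H^0(X_\etabar,\Omega^p_{X_\etabar})$ with $p\geq 1$, the correspondence $\Gamma_2$ acts by zero and $\Gamma_1^*\omega$ vanishes on the dense open set $X_\etabar\setminus D$ hence is zero, so $N\cdot\mathrm{id}=0$ on this $\Q$-vector space, which therefore vanishes; Hodge symmetry in characteristic~$0$ then gives $H^i(X_\etabar,\sO_{X_\etabar})=0$ for $i>0$ (this is the direction of the expected equivalence which does not require the Bloch conjecture). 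The implication $(\mathrm c)\Rightarrow(\mathrm d)$ is the case $i=2$.

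For $(\mathrm d)\Rightarrow(\mathrm e)$ I would argue as follows, assuming in addition $H^1(X_\etabar,\Q/\Z)=0$. Flat base change turns $(\mathrm d)$ into $H^2(X_\etabar,\sO_{X_\etabar})=0$, so $\Br(X_\etabar)$ is finite by \cite[Lemma~1.3]{ctskogoodreduction} (already used in the proof of Lemma~\ref{lem:galfinitequotient}). The Kummer sequence shows that $H^1(X_\etabar,\Z/\ell\Z)$ surjects onto $\Pic(X_\etabar)[\ell]$ for every prime~$\ell$, so $H^1(X_\etabar,\Q/\Z)=0$ forces $\Pic(X_\etabar)$ to be torsion-free; as $\Pic^0(X_\etabar)$ would otherwise carry $\ell$-torsion, it follows that $\Pic(X_\etabar)=\NS(X_\etabar)$ is a finitely generated torsion-free group on which $\Gal(\overline{k(C)}/k(C))$ acts through a finite quotient~$G$. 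The low-degree exact sequence of the Hochschild--Serre spectral sequence for $X_\eta\to\eta$ with coefficients in~$\Gm$ then produces an injection $\ker(\Br(X_\eta)\to\Br(X_\etabar))/f_\eta^*\mkern.5mu\Br(\eta)\hookrightarrow H^1(k(C),\Pic(X_\etabar))$. Since $\NS(X_\etabar)$ is finitely generated and torsion-free, inflation--restriction identifies $H^1(k(C),\NS(X_\etabar))$ with the finite group $H^1(G,\NS(X_\etabar))$, the restriction term being a group of continuous homomorphisms from a profinite group to a torsion-free discrete group, hence zero. As $\Br(X_\eta)/\ker(\Br(X_\eta)\to\Br(X_\etabar))$ embeds into the finite group $\Br(X_\etabar)$, the quotient $\Br(X_\eta)/f_\eta^*\mkern.5mu\Br(\eta)$ is finite.

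Finally, for $(\mathrm c)\Rightarrow(\mathrm f)$: the morphism~$f$ is flat (its structure sheaf is torsion-free over the smooth curve~$C$), so $\chi(X_c,\sO_{X_c})$ is independent of the closed point $c\in C$, and over a~$c$ with smooth fiber it equals $\chi(X_\etabar,\sO_{X_\etabar})=1$ by~$(\mathrm c)$. If some fiber satisfied $f^*(c)=\sum_i m_i X_{c,i}$ with $d:=\gcd_i m_i>1$, I would write $f^*(c)=dD$ with $D=\sum_i(m_i/d)X_{c,i}$, filter $\sO_{X_c}=\sO_X/\sO_X(-dD)$ by the subsheaves $\sO_X(-jD)$ for $0\leq j\leq d$, whose successive quotients are $(\sL^{\otimes-j})|_D$ with $\sL=\sO_X(D)$, and obtain $\chi(X_c,\sO_{X_c})=\sum_{j=0}^{d-1}\chi(D,(\sL^{\otimes-j})|_D)$. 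Here $\sL^{\otimes d}|_D=\sO_X(f^*(c))|_D=(f|_D)^*(\sO_C(c)|_c)=\sO_D$ because $\sO_C(c)|_c$ is a line bundle on the point~$c$; so $\sL|_D$ is a torsion, hence numerically trivial, element of $\Pic(D)$ and $\chi(D,(\sL^{\otimes-j})|_D)=\chi(D,\sO_D)$ for every~$j$, whence $1=\chi(X_c,\sO_{X_c})=d\,\chi(D,\sO_D)$, forcing $d=1$ --- a contradiction. I expect $(\mathrm d)\Rightarrow(\mathrm e)$ to be the only step needing genuine care, since there the finiteness of $\Br(X_\eta)/f_\eta^*\mkern.5mu\Br(\eta)$ has to be assembled from the finiteness of $\Br(X_\etabar)$ (coming from $(\mathrm d)$) and that of $H^1(k(C),\NS(X_\etabar))$, the latter relying on $H^1(X_\etabar,\Q/\Z)=0$ to make $\Pic(X_\etabar)$ torsion-free; the other implications are immediate or are direct citations.
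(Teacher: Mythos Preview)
Your proof is correct and follows the same route as the paper, which simply cites the relevant sources: for $(\mathrm{b})\Rightarrow(\mathrm{c})$ the paper points to the decomposition of the diagonal in Hodge theory (Chambert-Loir, and Lemma~\ref{lem:decompositiondiagonal} later in the paper), for $(\mathrm{d})\Rightarrow(\mathrm{e})$ to \cite[Lemma~1.3~(i)]{ctskogoodreduction}, and for $(\mathrm{c})\Rightarrow(\mathrm{f})$ to \cite[Proposition~7.3~(iii)]{ctvoisin} or \cite[Proposition~2.4]{elw}. You have essentially written out the content of these references; in particular your Hochschild--Serre argument for $(\mathrm{d})\Rightarrow(\mathrm{e})$ and your Euler-characteristic filtration argument for $(\mathrm{c})\Rightarrow(\mathrm{f})$ are the standard proofs behind those citations.
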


\begin{proof}
The implication $(\mathrm{b})\Rightarrow(\mathrm{c})$ may be checked
by decomposing the diagonal in Hodge theory, see~\cite[p.~141]{chambertloir}
(see also Lemma~\ref{lem:decompositiondiagonal} below).
See \cite[Lemma~1.3~(i)]{ctskogoodreduction}
for the implication~$(\mathrm{d})\Rightarrow(\mathrm{e})$
and~\cite[Proposition~7.3~(iii)]{ctvoisin} or~\cite[Proposition~2.4]{elw}
for~$(\mathrm{c})\Rightarrow(\mathrm{f})$.
\end{proof}

The next proposition, which combines the contents of~\textsection\textsection2--7, is the core result
from which the four assertions of Theorem~\ref{th:main} will be deduced.

\begin{prop}
\label{prop:commonstatement}
Let us keep the notation and assumptions of Theorem~\ref{th:main}.
Fix a Hilbert subset $H \subseteq C$
and an element $\zhat_\A \in \CHzAhat(X)$
orthogonal to~$\Br(X)$
with respect to~\eqref{eq:globalpairinghat},
such that~$f_*\zhat_\A$ belongs to the image of
the natural map $\Pichat(C)\to\PicAhat(C)$.
There exist an element $\zhat\in\CHzhat(X)$, a closed point~$c$ of~$H$
and a family $z'_\A \in \Zcyc_{0,\A}(X_c)$
orthogonal to~$\Br(X_c)$ with respect to the pairing~\eqref{eq:globalpairing}
relative to~$X_c$, such that~$z'_\A$, when considered as an element of~$\Zcyc_{0,\A}(X)$,
satisfies $f_*z'_\A=c$ in $\Zcyc_{0,\A}(C)$
and $\zhat_\A=z'_\A+\zhat$ in $\CHzAhat(X)$.
Furthermore, if every fiber of~$f$ possesses an irreducible component
of multiplicity~$1$, one can ensure that~$z'_\A$ is a family of effective zero-cycles.
\end{prop}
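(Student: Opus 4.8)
The argument chains together Theorem~\ref{th:hatsoff}, Theorem~\ref{th:existenceresulthilb} and Proposition~\ref{prop:specialisation}; the point is to set up the auxiliary data in the right order, after which the verifications are routine. First I would prepare the data (one may assume $X$ projective). By Lemma~\ref{lem:implications}, the standing hypotheses $H^1(X_\etabar,\Q/\Z)=0$ and $\Azero(X_\etabar\otimes K)=0$ imply $H^2(X_\eta,\sO_{X_\eta})=0$, the finiteness of $\Br(X_\eta)/f_\eta^*\mkern.5mu\Br(\eta)$, and the absence of multiple fibres. Choose a dense open $C^0\subseteq C$ over which $f$ is smooth, set $X^0=f^{-1}(C^0)$, and pick a \emph{finite} subgroup $B\subset\Br(X^0)$ whose image generates $\Br(X_\eta)/f_\eta^*\mkern.5mu\Br(\eta)$ (possible since $\Br(X_\eta)=\varinjlim_U\Br(f^{-1}(U))$, this quotient is finite, and $\Br$ of a smooth $k$\nobreakdash-variety is torsion). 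Write $M=C\setminus C^0$, let $g$ be the genus of $C$, and fix $\Phi:\N^2\to\N$ with $\Phi(a,b)\geq b\deg(M)+2a+2$ and $\Phi(a,b)\geq\deg(M)+2g+2$ for all $a,b$, so that $\Phi$ dominates the bounds in hypotheses~(ii) and~(iii) of Theorem~\ref{th:existenceresult}. Finally, Proposition~\ref{prop:specialisation}, applied to~$B$ (whose image generates $\Br(X_\eta)/f_\eta^*\mkern.5mu\Br(\eta)$, and using $H^1(X_\etabar,\Q/\Z)=0$ together with $H^2(X_\etabar,\sO_{X_\etabar})=0$), furnishes a Hilbert subset $H'\subseteq C^0$ on which $B\to\Br(X_h)/f_h^*\mkern.5mu\Br(h)$ is surjective; after replacing $H$ by $H\cap H'$ we may assume this surjectivity holds for every $h\in H$.

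Next I would apply Theorem~\ref{th:hatsoff} to $f$, to $X^0$, to $\Phi$ and to $\zhat_\A$ (whose image in $\PicAhat(C)$ comes from $\Pichat(C)$ by assumption). This yields $y\in\Pic(C)$, $\zhat\in\CHzhat(X)$, a family $z_\A^\eff=(z_v)_{v\in\Omega}\in\Zcyc_{0,\A}^{\effred,y}(X^0)$, a finite set $S\subset\Omega_f$ and a neighbourhood $\sU\subseteq\prod_{v\in S}\Sym_{X^0/k}(k_v)$ of $(z_v)_{v\in S}$ with $\zhat_\A=z_\A^\eff+\zhat$ in $\CHzAhat(X)$, with the support of each $z_v$ ($v\in\Omega_\infty$) lying on a smooth, proper, geometrically irreducible curve $Z_v\subset X\otimes_kk_v$ dominating $C$ and satisfying $\deg(y)\geq\Phi(g_v,d_v)$, and with the rigidity property of the last bullet of Theorem~\ref{th:hatsoff}. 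Since~\eqref{eq:globalpairing} factors through~\eqref{eq:globalpairinghat} and the image of $\zhat\in\CHzhat(X)$ in $\CHzAhat(X)$ is orthogonal to $\Br(X)$ by global reciprocity, the identity $\zhat_\A=z_\A^\eff+\zhat$ shows that $z_\A^\eff$ is orthogonal to the whole of $\Br(X)$, hence \emph{a fortiori} to $(B+f_\eta^*\mkern.5mu\Br(\eta))\cap\Br(X)$: this is hypothesis~(i) of Theorem~\ref{th:existenceresult}. Hypotheses~(ii) and~(iii) hold by the choice of $\Phi$ (there being at least one archimedean place, $\deg(y)\geq\deg(M)+2g+2$).

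Now I would apply Theorem~\ref{th:existenceresulthilb} to $f$, $C^0$, $B$, $y$, $z_\A^\eff$, $S$, $\sU$ and the Hilbert subset $H$. It produces a closed point $c\in C^0$ belonging to $H$ with $[c]=y$ in $\Pic(C)$, together with $z'_\A=(z'_v)_{v\in\Omega}\in\Zcyc_{0,\A}(X^0)$ such that $f_*z'_v=c$ for every $v$, such that $z'_\A$ is orthogonal to $B$ for~\eqref{eq:globalpairing}, such that $(z'_v)_{v\in S}\in\sU$, such that property~(4) of Theorem~\ref{th:existenceresult} holds at the real places, and such that each $z'_v$ is effective when every fibre of $f$ has a multiplicity\nobreakdash-one component. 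As $f_*z'_v=c$, the support of $z'_v$ lies in $X_c\otimes_kk_v$; thus $z'_\A\in\Zcyc_{0,\A}(X_c)$, it consists of effective zero-cycles in the multiplicity\nobreakdash-one case, and $f_*z'_\A=c$ in $\Zcyc_{0,\A}(C)$. Because $[c]=y$, the images of $f_*z'_\A$ and $y$ in $\PicA(C)$ agree and properties~(3)--(4) of Theorem~\ref{th:existenceresult} are met, so the last bullet of Theorem~\ref{th:hatsoff} gives $z_\A^\eff=z'_\A$ in $\CHzA(X)$, and hence $\zhat_\A=z'_\A+\zhat$ in $\CHzAhat(X)$. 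It remains to check that $z'_\A$ is orthogonal to $\Br(X_c)$ for the pairing relative to $X_c$. By the choice of $H$ and Proposition~\ref{prop:specialisation}, every $\alpha\in\Br(X_c)$ is of the form $\beta|_{X_c}+f_c^*\gamma$ with $\beta\in B$ and $\gamma\in\Br(k(c))$; since $\Supp(z'_v)\subseteq X_c$ one has $\sum_{v\in\Omega}\inv_v\mylangle\alpha,z'_v\myrangle=\sum_{v\in\Omega}\inv_v\mylangle\beta,z'_v\myrangle+\sum_{v\in\Omega}\inv_v\mylangle\gamma,f_*z'_v\myrangle$, where the first sum vanishes by orthogonality to $B$ and the second vanishes by the reciprocity law for the number field $k(c)$ (note $f_*z'_v=c\otimes_kk_v$). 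This produces the required $z'_\A$, effective in the multiplicity\nobreakdash-one case.

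The only delicate aspect is the order of construction: $C^0$ and $B$, and then $\Phi$ (which depends on $M$ and the genus of $C$), must be chosen before Theorem~\ref{th:hatsoff} is invoked, precisely so that the orthogonality it supplies for $z_\A^\eff$ is hypothesis~(i) of Theorem~\ref{th:existenceresult} and the built\nobreakdash-in bounds of $\Phi$ become hypotheses~(ii) and~(iii); conversely, it is the rigidity clause of Theorem~\ref{th:hatsoff} that carries the final equality back up to $\CHzAhat(X)$. Everything else --- identifying $z'_\A$ as supported over $c$, and the class field theory computation that upgrades orthogonality to $B$ into orthogonality to $\Br(X_c)$ --- is bookkeeping.
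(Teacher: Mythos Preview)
Your argument is correct and follows the same route as the paper's proof: reduce to $X$ projective, choose a finite $B\subset\Br(X^0)$ surjecting onto $\Br(X_\eta)/f_\eta^*\Br(\eta)$, use Proposition~\ref{prop:specialisation} to find the auxiliary Hilbert set, then feed the output of Theorem~\ref{th:hatsoff} into Theorem~\ref{th:existenceresulthilb} and conclude via the rigidity clause of Theorem~\ref{th:hatsoff}. The only point you leave implicit is the reduction to projective~$X$, which the paper handles via Chow's lemma, Hironaka, and the birational invariance of $H^1(-,\Q/\Z)$, $\Br$, $\CH_0$, $\Azero$, and of the presence of a multiplicity\nobreakdash-one component in each fibre; your more cautious choice of~$\Phi$ (dominating both bounds) is harmless, since the paper's simpler $\Phi(g,d)=d\deg(M)+2g+2$ already implies hypothesis~(ii) because $g_v\ge g$ by Riemann--Hurwitz.
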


\begin{proof}
By Chow's lemma and Hironaka's theorem, there exist an irreducible, smooth projective variety~$X'$ and a birational morphism $\pi:X'\to X$.
The induced morphism $\pi_c:X'_c\to X_c$
is a birational equivalence for all but finitely many $c\in C$.
As the groups~$H^1(V,\Q/\Z)$, $\Br(V)$, $\CH_0(V)$ and~$\Azero(V)$ are birational invariants
of smooth and proper varieties~$V$ over a field of characteristic~$0$ (see
\cite[Corollaire~3.4]{sga1expX},
\cite[III, \textsection7]{grbr},
\cite[Example~16.1.11]{fulton})
and as the existence of an irreducible component of multiplicity~$1$ in every fiber of~$f$ is equivalent to
the existence of an irreducible component of multiplicity~$1$ in every fiber of~$f\circ\pi$
(see~\cite[Corollary~1.2]{skorodescent} and~\cite{nishimura}),
we may replace~$X$ with~$X'$ 
to prove Proposition~\ref{prop:commonstatement}.
Thus, from now on, we assume that~$X$ is projective,
which allows us to use Theorem~\ref{th:hatsoff}.

Thanks to Lemma~\ref{lem:implications},
we can choose a finite subgroup
$B \subset \Br(X_\eta)$ which surjects onto
the quotient $\Br(X_\eta)/f_\eta^*\mkern.5mu\Br(\eta)$.
Let~$C^0 \subseteq C$ be a dense open subset above which~$f$ is smooth,
small enough that $B \subset \Br(X^0)$, where $X^0=f^{-1}(C^0)$.
By Proposition~\ref{prop:specialisation},
there exists a
Hilbert subset $H' \subseteq C^0$ such that
the natural map
\begin{align}
\label{eq:natmapspec}
B \to \Br(X_c)/f_c^*\mkern.5mu\Br(c)
\end{align}
is surjective for all $c \in H'$.
We define a map $\Phi:\N^2\to \N$ by $\Phi(g,d)=\deg(M)d+2g+2$, where $M=C\setminus C^0$,
and apply Theorem~\ref{th:hatsoff}.
This yields classes
$y\in \Pic(C)$, $\zhat \in \CHzhat(X)$, $z_\A^\eff=(z_v)_{v\in\Omega}\in \Zcyc_{0,\A}^{\effred,y}(X^0)$
and subsets
$S\subset \Omega_f$ and $\sU\subseteq \prod_{v\in S}\Sym_{X^0/k}(k_v)$
satisfying the conclusion of Theorem~\ref{th:hatsoff}.
Note that $z_\A^\eff$ is orthogonal to~$\Br(X)$ with respect to~\eqref{eq:globalpairing}
since $\zhat_\A=z_\A^\eff+\zhat$ in~$\CHzAhat(X)$
and~$\zhat_\A$ and~$\zhat$ are both orthogonal to~$\Br(X)$.
We can therefore apply Theorem~\ref{th:existenceresulthilb}
and obtain a closed point~$c$ of $H\cap H'$ and a family $z'_\A =(z'_v)_{v\in\Omega}\in \Zcyc_{0,\A}(X^0)$
satisfying conditions~(1)--(4) of Theorem~\ref{th:existenceresult}.
By condition~(1), we may regard~$z'_\A$ as an element of $\Zcyc_{0,\A}(X_c)$.
By condition~(2) and the surjectivity of~\eqref{eq:natmapspec},
this element is orthogonal to~$\Br(X_c)$ with respect to the pairing~\eqref{eq:globalpairing}.
By conditions~(3) and~(4) and the conclusion of Theorem~\ref{th:hatsoff},
we have $\zhat_\A=z'_\A+\zhat$ in~$\CHzAhat(X)$.
\end{proof}

\subsection{Proof of Theorem~\ref{th:main}}

\subsubsection{Assertion~(1)}
\label{proof:assertion1}

Let $\zhat_\A \in \CHzAhat(X)$ be orthogonal to~$\Br(X)$ with
respect to~\eqref{eq:globalpairinghat}.
By functoriality, its image $f_*\zhat_\A \in \PicAhat(C)$ is orthogonal to~$\Br(C)$.
As~$C$ satisfies~$\cE$, it follows that~$f_*\zhat_\A$ belongs to the image
of the natural map $\Pichat(C)\to\PicAhat(C)$.
We may therefore apply Proposition~\ref{prop:commonstatement}
to~$\zhat_\A$ and to a Hilbert subset $H \subseteq C$ such that
for every closed point~$c$ of~$H$, the fiber $X_c=f^{-1}(c)$ is smooth
and satisfies~$\cE$.
The class in~$\CHzAhat(X_c)$ of the resulting family~$z'_\A$ is then the image
of an element of~$\CHzhat(X_c)$.  As a consequence, its image in~$\CHzAhat(X)$ is the image
of an element of~$\CHzhat(X)$.

\subsubsection{Assertion~(2)}
\label{proof:assertion2}

By the Mordell--Weil theorem, the group $\Pic(C)$ is finitey generated.
Moreover, by Theorem~\ref{th:recollectionwittdmj},
the kernel and the cokernel of the map $f_*:\CH_0(X)\to\Pic(C)$ have finite exponent.
By Lemma~\ref{lem:abgroup}~(ii),
we deduce that the natural complex
\begin{align}
\label{se:relativech}
\xymatrix{
\CH_0(X/C) \ar[r] & \CHzhat(X) \ar[r] & \Pichat(C)
}
\end{align}
is exact.
Let $z_\A \in \CHzA(X/C)$ be orthogonal to $\Br(X)/f^*\Br(C)$
with respect to~\eqref{eq:globalpairing}.
The argument of~\textsection\ref{proof:assertion1} shows that the image
of~$z_\A$ in~$\CHzAhat(X)$ comes from~$\CHzhat(X)$.
As the natural maps $\Pichat(C) \to \PicAhat(C)$
and $\CHzA(X/C)\to \CHzAhat(X)$
are injective
(see~\cite[Remarques~1.1~(v)]{wittdmj} and Lemma~\ref{lem:injectivityhatchzA})
and the complex~\eqref{se:relativech} is
exact,
we conclude that~$z_\A$ is the image of an element of~$\CH_0(X/C)$.

\subsubsection{Assertion~(3)}

According to the last sentence of Proposition~\ref{prop:commonstatement},
one can ensure,
in~\textsection\ref{proof:assertion1} and~\textsection\ref{proof:assertion2}, 
that~$z'_\A$ is a family
of effective zero-cycles.  As $f_*z_\A'=c$ in $\Zcyc_{0,\A}(C)$,
it follows that $z_\A'\in X_c(\A_{k(c)})$; hence Condition~\ref{cond:intermediate}
can be applied.

\subsubsection{Assertion~(4)}

By assumption there exists a Hilbert subset $H \subseteq C$ such that
for every closed point~$c$ of~$H$, the fiber~$X_c$ is smooth
and satisfies~$\cEu$.
Applying Proposition~\ref{prop:commonstatement} to $z_\A=(z_v)_{v \in\Omega}$,
we find a closed point~$c$ of~$H$,
a family $z'_\A \in \Zcyc_{0,\A}(X_c)$
orthogonal to~$\Br(X_c)$
and a class $\zhat \in \CHzhat(X)$
such that $f_*z'_\A=c$ in~$\Zcyc_{0,\A}(C)$ and $z_\A=z'_\A+\zhat$ in~$\CHzAhat(X)$.
Considering~$z'_\A$ as a collection of local zero-cycles of degree~$1$
on the variety~$X_c$ over~$k(c)$,
we deduce from~$\cEu$ that~$X_c$ possesses a zero-cycle~$z'$ of degree~$1$ over~$k(c)$.
Regarding~$z'$ as a zero-cycle on~$X$,
we have $\deg(z'+\zhat\mkern.7mu)=1$ in~$\Zhat$,
hence the degree map $\CHzhat(X)\to\Zhat$ is surjective.
By Lemma~\ref{lem:abgroup}~(i),
the degree map $\CH_0(X)\to\Z$ is surjective as well.

\subsection{Proof of Corollary~\ref{cor:zerocycleshigherdimbase}}
\label{subsec:proofcorollary}

When~$Y$ is a curve,
this is Theorem~\ref{th:main}~(1) and~(3).
In general, we argue by induction on the dimension of~$Y$.
As the hypotheses and the conclusion of Corollary~\ref{cor:zerocycleshigherdimbase} are invariant under birational equivalence over~$k$
(see \cite[Remarques~1.1~(vi)]{wittdmj}), we may assume, when $\dim(Y)>1$, that $Y=\P^n_k\times C$
for some $n\geq 1$ (setting $C=\P^1_k$ in the first of the three cases considered
in the statement).
Let $g:X\to C$ denote the composition of~$f$ with the second projection.
For $c \in C$, let $X_c=g^{-1}(c)$.
For the sake of brevity, we introduce the following definition.

\begin{defn}
We shall say that
a proper variety~$V$ over an algebraically closed field \emph{satisfies~$(\ast)$}
if it is integral,
if $H^1(V,\Q/\Z)=0$ and if $\Azero(V \otimes K)=0$ for an algebraic closure~$K$ of the function field of~$V$.
\end{defn}

\begin{lem}
\label{lem:geomgenfiberg}
The geometric generic fiber of~$g$ satisfies~$(\ast)$.
\end{lem}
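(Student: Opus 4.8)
The plan is to realise $X_{\bar\eta_C}$ — the geometric generic fibre of $g$, where $\bar\eta_C$ denotes a geometric point over the generic point of $C$ — as the total space of a fibration over a projective space whose geometric generic fibre is $X_\etabar$, and then to verify the three conditions defining $(\ast)$. Put $\bar L=\overline{k(C)}$. Since $k(Y)=k(\P^n_k\times_kC)=k(C)(t_1,\dots,t_n)$, any algebraic closure of $\bar L(t_1,\dots,t_n)$ is also one of $k(Y)$; hence base-changing $f$ along the morphism $\P^n_{\bar L}=\P^n_k\times_k\bar L\to Y$ induced by $\bar\eta_C$ identifies $X_{\bar\eta_C}=X\times_C\bar\eta_C$ with the total space of a morphism $\bar f\colon X_{\bar\eta_C}\to\P^n_{\bar L}$ whose geometric generic fibre is canonically $X_\etabar$. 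By the hypothesis of Corollary~\ref{cor:zerocycleshigherdimbase}, $X_\etabar$ is irreducible and satisfies $H^1(X_\etabar,\Q/\Z)=0$ and $\Azero(X_\etabar\otimes K)=0$. Moreover $g$ is proper (being $f$ followed by the projection $Y\to C$) and, as a morphism out of a $k$\nobreakdash-smooth variety, is smooth over a dense open of $C$, so $X_{\bar\eta_C}$ is smooth and proper over $\bar L$; and since $H^1(\cdot,\Q/\Z)$, $\Azero$ and the conditions in $(\ast)$ are birational invariants of smooth proper varieties, I may replace $X$ by a smooth projective model (as in the proof of Proposition~\ref{prop:commonstatement}) and assume $X_{\bar\eta_C}$ projective. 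Finally $X_{\bar\eta_C}$ is integral: its generic point is that of $X$, so it suffices that $k(C)$ be algebraically closed in $k(X)$; but any $\alpha\in k(X)$ algebraic over $k(C)$ is \emph{a fortiori} algebraic over $k(Y)=k(C)(t_1,\dots,t_n)$, hence lies in $k(Y)$ because $k(Y)$ is algebraically closed in $k(X)$ by the irreducibility of $X_\etabar$, and then $\alpha$ lies in $k(C)$ since $k(C)(t_1,\dots,t_n)/k(C)$ is purely transcendental.

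To show $H^1(X_{\bar\eta_C},\Q/\Z)=0$, equivalently that the étale fundamental group $\pi_1(X_{\bar\eta_C})$ has trivial abelianisation, let $U\subseteq\P^n_{\bar L}$ be a dense open over which $\bar f$ is smooth and proper with geometrically connected fibres, and $D=\bigcup_kE_k$ the union of the codimension~$1$ components of its complement. The homotopy exact sequence of $\bar f^{-1}(U)\to U$, combined with $\pi_1(X_\etabar)^{\mathrm{ab}}=0$, shows that $\bar f$ induces an \emph{isomorphism} $\pi_1(\bar f^{-1}(U))^{\mathrm{ab}}\xrightarrow{\sim}\pi_1(U)^{\mathrm{ab}}$; the target is topologically generated by the meridian classes $\delta_k$ of the $E_k$, and $\pi_1(\bar f^{-1}(U))^{\mathrm{ab}}$ surjects onto $\pi_1(X_{\bar\eta_C})^{\mathrm{ab}}$. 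Now every meridian of an irreducible component of $\bar f^{-1}(E_k)$ maps to $0$ in $\pi_1(X_{\bar\eta_C})^{\mathrm{ab}}$ (it is a meridian of a divisor contained in $X_{\bar\eta_C}\setminus\bar f^{-1}(U)$); writing $\bar f^*E_k=\sum_jm_{k,j}F_{k,j}$, the pushforward of the meridian of $F_{k,j}$ is $m_{k,j}\delta_k$, and since the multiplicities of the components of the fibre of $\bar f$ over the generic point of $E_k$ — a base change of a fibre of $f$ — have gcd~$1$ by Lemma~\ref{lem:implications} (applicable as $\Azero(X_\etabar\otimes K)=0$), there are $a_j\in\Z$ with $\sum_ja_jm_{k,j}=1$. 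Injectivity of the map induced by $\bar f$ on abelianised $\pi_1$ then identifies the preimage of $\delta_k$ with $\sum_ja_j[\text{meridian of }F_{k,j}]$, which is $0$ in $\pi_1(X_{\bar\eta_C})^{\mathrm{ab}}$. As the $\delta_k$ generate, $\pi_1(X_{\bar\eta_C})^{\mathrm{ab}}=0$. This monodromy argument, and the use of the no\nobreakdash-multiple\nobreakdash-fibre property, is the one substantial step.

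It remains to prove $\Azero(X_{\bar\eta_C}\otimes K')=0$, where $K'$ is an algebraic closure of the function field of $X_{\bar\eta_C}$. Theorem~\ref{th:recollectionwittdmj}~(ii), applied to $\bar f$ (whose geometric generic fibre $X_\etabar$ is smooth, irreducible, with $\Azero(X_\etabar\otimes K)=0$), yields an integer $N>0$ killing $\Ker\!\big(\bar f_*\colon\CH_0(X_{\bar\eta_C}\otimes K')\to\CH_0(\P^n_{K'})\big)$; under $\CH_0(\P^n_{K'})\cong\Z$ the map $\bar f_*$ is the degree, so this kernel is $\Azero(X_{\bar\eta_C}\otimes K')$, which is thus of finite exponent. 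On the other hand $H^1(X_{\bar\eta_C},\Q/\Z)=0$ forces the Picard variety of $X_{\bar\eta_C}$, hence its dual the Albanese variety, to vanish, so $\Azero(X_{\bar\eta_C}\otimes K')$ is torsion-free by Roitman's theorem. Being of finite exponent and torsion-free, it is trivial, and $X_{\bar\eta_C}$ satisfies $(\ast)$.
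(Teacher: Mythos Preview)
Your proof is correct, but it diverges from the paper's in both substantive steps.

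For $\Azero(X_{\bar\eta_C}\otimes K')=0$: both you and the paper first obtain finite exponent from Theorem~\ref{th:recollectionwittdmj}~(ii). The paper then concludes immediately from the fact that $\Azero$ over an algebraically closed field is \emph{divisible} (see~\cite[Lemma~1.3]{blochlectures}), so finite exponent forces vanishing. Your route via Roitman's theorem and the triviality of the Albanese is valid but heavier, and it makes the $\Azero$ step depend on the $H^1$ step, whereas the paper's two steps are independent.

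For $H^1(X_{\bar\eta_C},\Q/\Z)=0$: the paper argues with the Leray spectral sequence for $f_\xibar$. Using the gcd-$1$ property of multiplicities (Lemma~\ref{lem:implications}) together with Raynaud's specialisation of the fundamental group, it shows that $R^1(f_\xibar)_*\Lambda$ is supported in codimension~$\geq 2$; since $H^1(U,\Lambda)=0$ for $U$ the complement of a codimension~$\geq 2$ closed subset of~$\P^n$, the spectral sequence gives $H^1(f_\xibar^{-1}(U),\Lambda)=0$ and hence $H^1(X_\xibar,\Lambda)=0$. Your meridian argument in $\pi_1^{\ab}$ is a legitimate hands-on alternative that ultimately uses the same gcd-$1$ input; it trades the citation of Raynaud's theorem for an explicit lift of each boundary generator. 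One small remark: your parenthetical ``a base change of a fibre of~$f$'' is literally true but not what does the work, since the corresponding fibre of~$f$ lies over a codimension~$2$ point of~$Y$; what you are really using is Lemma~\ref{lem:implications} applied directly to~$\bar f$, exactly as the paper does.

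Your detailed verification that $X_{\bar\eta_C}$ is integral (via $k(C)$ being algebraically closed in $k(X)$) is more explicit than the paper, which simply asserts it.
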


\begin{proof}
Let~$\xibar$ denote the geometric generic point of~$C$
and $f_\xibar:X_\xibar\to \P^n_{k(\xibar)}$ the morphism induced by~$f$.
The geometric generic fiber~$X_\xibar$ of~$g$ is smooth and irreducible.
The geometric generic fiber~$V$ of~$f_\xibar$ satisfies~$(\ast)$
since it is isomorphic to the geometric generic fiber of~$f$.

Letting~$K$ denote an algebraic closure of the function field of~$X_\xibar$,
we deduce from Theorem~\ref{th:recollectionwittdmj}~(ii) applied to~$f_\xibar$ that the abelian group $\Azero(X_\xibar \otimes K)$ has
finite exponent.  It is also divisible, as~$K$ is
algebraically closed
(see~\cite[Lemma~1.3]{blochlectures}); hence $\Azero(X_\xibar\otimes K)=0$.

It remains to check
that $H^1(X_\xibar,\Lambda)=0$ for any finite abelian group~$\Lambda$.
The morphism~$f_\xibar$ has no multiple fiber in codimension~$1$
since~$V$ satisfies~$(\ast)$
(see Lemma~\ref{lem:implications}, (b)$\Rightarrow$(f)).
As a consequence,
the fundamental group of~$V$ surjects onto the fundamental group
of the geometric fiber of~$f_\xibar$ above any codimension~$1$ point of~$\P^n_{k(\xibar)}$
(see~\cite[Proposition~6.3.5]{raynaudspecialisation}).
The constructible
sheaf $R^1(f_{\xibar})_*\Lambda$
must therefore be
supported on a closed subset $F \subset \P^n_{k(\xibar)}$ of codimension~$\geq 2$.
Letting $U= \P^n_{k(\xibar)}\setminus F$,
we have
 $H^1(U, (f_{\xibar})_*\Lambda)=H^1(U,\Lambda)=0$
since $(f_{\xibar})_*\Lambda=\Lambda$
(see~\cite[Proposition~15.5.3]{ega43}).
The Leray spectral sequence for~$f_\xibar$ now shows that
$H^1(f_\xibar^{-1}(U),\Lambda)=0$.
As $H^1(X_\xibar,\Lambda)\subseteq H^1(f_\xibar^{-1}(U),\Lambda)$,
the proof is complete.
\end{proof}

\begin{lem}
\label{lem:propertyast}
There exists a dense open subset $C^0 \subseteq C$ such that for all $c \in C^0$,
the geometric generic fiber
of the morphism $f_c:X_c\to \P^n_{k(c)}$
induced by~$f$ satisfies~$(\ast)$.
\end{lem}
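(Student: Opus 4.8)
The plan is to show that each of the three conditions defining $(\ast)$ propagates, in the family $f\colon X\to\P^n_k\times C$ (to which we have reduced), from the geometric generic fibre to the geometric fibres over a dense open subset of the base, and then to transfer this to the fibres $\P^n_{k(c)}$. First I would use generic smoothness in characteristic zero, together with $f$ being proper and dominant, to find a dense open $\mathcal P\subseteq \P^n_k\times C$ over which $f$ is smooth and surjective; after shrinking $\mathcal P$ (using the Stein factorisation and the fact that the geometric generic fibre $V$ of $f$ is integral — it satisfies $(\ast)$ by the hypothesis of the corollary) I may assume that $\bar f\colon \bar X:=f^{-1}(\mathcal P)\to\mathcal P$ is smooth and proper with geometrically integral fibres, and that its geometric generic fibre is still $V$. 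The elementary observation that makes everything work is that, for all but finitely many closed points $c\in C$, the generic point $\gamma_c$ of $\P^n_{k(c)}$ lies in $\mathcal P$: indeed $(\P^n_k\times C)\setminus\mathcal P$ is closed of dimension $\le n$, whereas $\P^n_{k(c)}$ has dimension $n$, so the latter can be contained in that complement for only finitely many $c$. Since $X_c=X\times_{\P^n_k\times C}\P^n_{k(c)}$, for such $c$ the geometric generic fibre of $f_c\colon X_c\to\P^n_{k(c)}$ is exactly the geometric fibre of $\bar f$ above $\gamma_c$; so it suffices to produce a dense open $\mathcal P'\subseteq\mathcal P$ over all of whose points the geometric fibres of $\bar f$ satisfy $(\ast)$.

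Geometric integrality of the fibres of $\bar f$ holds on all of $\mathcal P$ by construction. For the vanishing of $H^1(-,\Q/\Z)$ of the geometric fibres, I would use smooth and proper base change: each sheaf $R^1\bar f_*(\Z/m\Z)$ is then lisse on $\mathcal P$, with stalk $H^1(V,\Z/m\Z)$ at a geometric generic point, and this vanishes because $H^1(V,\Q/\Z)=0$. A lisse sheaf with a vanishing stalk on a connected base is zero, so $R^1\bar f_*(\Z/m\Z)=0$ on $\mathcal P$ for every $m$ simultaneously, whence $H^1(-,\Z/m\Z)=0$ for every geometric fibre and every $m$, hence $H^1(-,\Q/\Z)=0$ after passing to the direct limit.

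The hard part is the vanishing of $\Azero(-\otimes K)$ for the geometric fibres, which is not visibly a constructible condition. Here I would invoke the decomposition of the diagonal underlying Theorem~\ref{th:recollectionwittdmj}~(ii): applied to $\bar f$, whose geometric generic fibre $V$ satisfies $\Azero(V\otimes K)=0$, the Bloch--Srinivas argument of \cite[\textsection2]{wittdmj} produces an integer $n>0$ together with a relative decomposition of the diagonal of $\bar f$ valid over a dense open $\mathcal P'\subseteq\mathcal P$, whose restriction to the fibre above any point $q\in\mathcal P'$ yields $n\cdot\Azero(\bar X_q\otimes_{\kappa(q)}F)=0$ for every field $F\supseteq\kappa(q)$ (using Chow's moving lemma on the smooth fibre $\bar X_q$, or the birational invariance of $(\ast)$ for smooth proper varieties to reduce to a projective model). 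Applying this with $q=\gamma_c$ and with $F$ an algebraic closure of the function field of the geometric fibre, and recalling that $\Azero$ of a variety over an algebraically closed field is divisible (\cite[Lemma~1.3]{blochlectures}, exactly as in the proof of Lemma~\ref{lem:geomgenfiberg}), I conclude that $\Azero=0$ for the geometric fibre of $\bar f$ above $\gamma_c$.

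It then remains to take $C^0$ to be the complement in $C$ of the finitely many closed points $c$ for which $\gamma_c\notin\mathcal P'$; for $c\in C^0$ the geometric generic fibre of $f_c$ satisfies all three conditions of $(\ast)$. I expect the third condition to be the only real obstacle: naive constructibility does not produce $\CH_0$-triviality, so one is forced to pass through a relative decomposition of the diagonal in order to make the bound uniform in $c$, and then through the divisibility of $\Azero$ over algebraically closed fields in order to remove that bound.
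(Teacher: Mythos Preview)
Your proof is correct and follows essentially the same approach as the paper: reduce to showing that the geometric fibres of~$f$ over a dense open of~$Y=\P^n_k\times C$ satisfy~$(\ast)$, handle the vanishing of~$H^1(-,\Q/\Z)$ via smooth and proper base change, and handle the vanishing of~$\Azero(-\otimes K)$ by spreading out a Bloch--Srinivas decomposition of the diagonal from the generic fibre and then using divisibility of~$\Azero$ over algebraically closed fields. The paper makes the last step slightly more self-contained by stating the diagonal decomposition as a separate lemma (citing \cite{blochsrinivas}) rather than appealing to \cite[\textsection2]{wittdmj}, and it leaves your reduction ``$\gamma_c\in\mathcal P'$ for all but finitely many~$c$'' implicit, but the substance is the same.
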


\begin{proof}
It suffices to prove the existence of a dense open subset $Y^0\subseteq Y$ such that for any geometric point~$y$ of~$Y^0$,
the variety $V=f^{-1}(y)$ satisfies~$(\ast)$.

As~$f$ is proper, the restriction of the \'etale sheaf $R^1f_*\Q/\Z$
to any open subset of~$Y$ over which~$f$ is smooth
is a direct limit of locally constant sheaves (see \cite[Ch.~VI, Corollary~4.2]{milneet}).
Its geometric generic stalk vanishes;
thus~$H^1(V,\Q/\Z)$ vanishes
for any geometric fiber~$V$ of~$f$ along which~$f$ is smooth.

To check that the condition $\Azero(V\otimes K)=0$ for the generic fiber of~$f$ implies the same condition
for the fibers of~$f$ over a dense open subset of~$Y$,
we use an argument known as decomposition of the diagonal (see \cite[Appendix to Lecture~1]{blochlectures}),
summarised in the next lemma.

\begin{lem}
\label{lem:decompositiondiagonal}
Let~$V$ be a geometrically integral variety of dimension~$d$ over an arbitrary field.
Let $v \in V$ be a closed point.
Let~$K$ be an algebraic closure of the function field of~$V$.
Let $[\Delta]\in \CH_d(V\times V)$ denote the class of the diagonal.
The group $\Azero(V \otimes K)$ vanishes if and only if for some $N\geq 1$,
one has a decomposition
\begin{align}
\label{eq:decompositiondiagonal}
N[\Delta]=[\Gamma_1]+[\Gamma_2]
\end{align}
in $\CH_d(V\times V)$ for
some cycle~$\Gamma_1$ supported on $V \times \{v\}$
and
some cycle~$\Gamma_2$ supported on~$D \times V$
for a closed subset $D \subset V$ of codimension~$\geq 1$.
\end{lem}

\begin{proof}
Letting~$N[\Delta]$ act on $\Azero(V \otimes K)$ as a correspondence,
we see that $\Azero(V \otimes K)$ is killed by~$N$ if the diagonal can be decomposed as above.
On the other hand, the group $\Azero(V\otimes K)$ is divisible
since~$K$ is
algebraically closed (see~\cite[Lemma~1.3]{blochlectures}).
This proves a half of the desired equivalence.
The other half is established in \cite[Proposition~1]{blochsrinivas}.
\end{proof}

Any rational equivalence which gives rise to the decomposition~\eqref{eq:decompositiondiagonal} on the generic fiber of~$f$
extends and specialises to the fibers of~$f$ over a dense open subset of~$Y$.
Thus, applying Lemma~\ref{lem:decompositiondiagonal} twice
concludes the proof of Lemma~\ref{lem:propertyast}.
\end{proof}

We can now complete the proof of Corollary~\ref{cor:zerocycleshigherdimbase}.
Let $C^0 \subseteq C$ be a dense open subset satisfying the conclusion of
Lemma~\ref{lem:propertyast}, small enough that~$X_c$ is smooth and irreducible for all $c \in C^0$
and that if the codimension~$1$ fibers of~$f$ contain an irreducible component of multiplicity~$1$,
then so do the codimension~$1$ fibers of
$f_c:X_c\to \P^n_{k(c)}$ for $c \in C^0$.

According to the next lemma, which we state with independent notation, there exists a Hilbert subset $H \subseteq C^0$
such that for every closed point~$c$ of~$H$,
the smooth fibers of~$f_c$
above the closed points of a Hilbert subset of~$\P^n_{k(c)}$
satisfy~$\cE$ in case~(1), satisfy Condition~\ref{cond:intermediate} in case~(2).

\begin{lem}
\label{lem:hilbertproduct}
Let~$V_1$ and~$V_2$ be geometrically irreducible varieties over a number field~$k$.
Let $H \subseteq V_1 \times_k V_2$ be a Hilbert subset.
There exists a Hilbert subset $H_2 \subseteq V_2$ such that for every $h\in H_2$,
the set $H \cap (V_1 \times_k \{h\})$ contains a Hilbert subset
of $V_1\times_k \{h\}$ (where~$\{h\}$ denotes $\Spec(k(h))$).
\end{lem}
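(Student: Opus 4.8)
The plan is to transport the Hilbert-subset data defining $H$ across the second projection $p\colon V_1\times_k V_2\to V_2$: one spreads out, over a dense open of $V_2$, the Hilbert subset cut out on the generic fibre of $p$, while recording, for each of the covers involved, its ``field of constants'' as an auxiliary finite \'etale cover of the base. Concretely, I would fix a dense open $W^0\subseteq V_1\times_k V_2$, an integer $n\geq 1$ and irreducible finite \'etale covers $W_1,\dots,W_n\to W^0$ defining $H$; as shrinking $W^0$ only shrinks $H$ and so only strengthens the conclusion, I may further assume $W^0$ smooth over $k$. Let $\eta$ be the generic point of $V_2$. Since $W^0$ is dense in the geometrically irreducible variety $V_1\times_k V_2$, the morphism $p$ is dominant and its generic fibre $W^0_\eta$ is a smooth dense open subscheme of the geometrically irreducible $k(\eta)$-variety $V_1\otimes_k k(\eta)$. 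For each $i$ the composite $W_i\to W^0\to V_2$ is dominant (a finite \'etale cover of an irreducible scheme is surjective onto it), so its fibre $W_{i,\eta}$ over $\eta$ is irreducible and finite \'etale over $W^0_\eta$; being \'etale over the smooth scheme $W^0_\eta$ it is normal, whence the algebraic closure $\kappa_i$ of $k(\eta)$ in its function field lies in $\Gamma(W_{i,\eta},\mathcal{O})$. Thus $W_{i,\eta}\to\Spec k(\eta)$ factors through $\Spec\kappa_i$, and $W_{i,\eta}$ is geometrically irreducible over $\kappa_i$, since $\kappa_i$ is algebraically closed in its function field and the characteristic is $0$.

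Next I would choose a dense open $V_2'\subseteq V_2$, contained in the normal locus of $V_2$, over which the following hold simultaneously for $i=1,\dots,n$, each being a standard consequence of constructibility and spreading-out results for morphisms of finite presentation (cf.~\cite[\textsection\textsection8--9]{ega43}): the restriction of $p$ to $p^{-1}(V_2')$ is smooth and surjective with geometrically irreducible fibres; the restriction of $W_i\to W^0$ to $p^{-1}(V_2')$ is finite \'etale; the normalisation $\widetilde{V}_{2,i}$ of $V_2'$ in $\kappa_i$ is irreducible and finite \'etale over $V_2'$, with function field $\kappa_i$; and the factorisation $W_{i,\eta}\to\Spec\kappa_i$ spreads out to a $V_2'$-morphism $W_i\times_{V_2}V_2'\to\widetilde{V}_{2,i}$ which is flat of finite type with geometrically irreducible fibres. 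I would then take $H_2\subseteq V_2$ to be the Hilbert subset defined by the dense open $V_2'$ and the irreducible finite \'etale covers $\widetilde{V}_{2,1},\dots,\widetilde{V}_{2,n}$.

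To conclude, fix $h\in H_2$; I would then check that $W^0_h:=p^{-1}(h)$ is a nonempty smooth geometrically irreducible $k(h)$-variety, hence a dense open subscheme of $V_1\otimes_k k(h)=V_1\times_k\{h\}$, and that for each $i$ the scheme $W_{i,h}:=W_i\times_{W^0}W^0_h$ is finite \'etale over $W^0_h$. Since $h\in H_2$, the scheme $\widetilde{V}_{2,i}\times_{V_2'}\{h\}$ is the spectrum of a field, namely the residue field of the unique point $w_i$ of $\widetilde{V}_{2,i}$ above $h$, so $W_{i,h}$ is the fibre of $W_i\times_{V_2}V_2'\to\widetilde{V}_{2,i}$ over $w_i$ and is therefore geometrically irreducible, in particular irreducible. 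Hence $W_{1,h},\dots,W_{n,h}\to W^0_h$ cut out a Hilbert subset $H_h$ of $V_1\times_k\{h\}$, and for a point $x\in H_h$, viewed in $W^0_h\subseteq W^0$, the fibre of $W_i$ over $x$ --- which equals the fibre of $W_{i,h}$ over $x$ --- is irreducible for every $i$, so $x\in H$; thus $H_h\subseteq H\cap(V_1\times_k\{h\})$, as required.

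The step I expect to be the main obstacle is the construction of $V_2'$ in the second paragraph: one must verify that a single dense open serves all the listed properties at once and, above all, correctly track for each $i$ the ``field of constants'' $\kappa_i$ --- passing from the bare irreducibility of $W_{i,\eta}$ to its geometric irreducibility over $\kappa_i$, and spreading $\kappa_i$ out to a finite \'etale cover $\widetilde{V}_{2,i}\to V_2'$ --- since it is precisely this datum that distinguishes the specialisations $W_{i,h}$ which remain irreducible from those which split into several components.
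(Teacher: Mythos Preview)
Your proposal is correct and follows essentially the same approach as the paper: take the covers $W_1,\dots,W_n$ defining~$H$, and define~$H_2$ so that for $h\in H_2$ the fibre $W_i\times_{V_2}\{h\}$ stays irreducible, after which these fibres cut out the required Hilbert subset of $V_1\times_k\{h\}$. The paper's proof simply cites \cite[Lemme~3.12]{wittlnm} for the existence of such an~$H_2$, whereas you unpack that lemma explicitly via the field-of-constants covers $\widetilde{V}_{2,i}\to V_2'$; your sketch of the spreading-out step is accurate.
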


\begin{proof}
Let~$W_1,\dots,W_n$ denote irreducible \'etale covers of a dense open subset of $V_1 \times_k V_2$ which define~$H$.
By~\cite[Lemme~3.12]{wittlnm}, there exists a Hilbert subset $H_2 \subseteq V_2$
such that for any~$h \in H_2$, the scheme $W_i \times_{V_2} \{h\}$ is irreducible for all $i\in\{1,\dots,n\}$.
The projections $W_i \times_{V_2} \{h\} \to V_1 \times_k \{h\}$ then
define a Hilbert subset
of $V_1 \times_k \{h\}$ contained in~$H$.
\end{proof}

We deduce from the induction hypothesis applied to~$f_c$ that~$X_c$ satisfies~$\cE$
for every closed point~$c$ of~$H$.
In view of Lemma~\ref{lem:geomgenfiberg}, and noting that
the fibers of~$g$ contain an irreducible component of multiplicity~$1$
if the codimension~$1$ fibers of~$f$ do,
we may now apply Theorem~\ref{th:main}
to~$g$ and conclude that~$X$ satisfies~$\cE$.

\section{Rational points in fibrations}
\label{sec:rationalpoints}

The goal of this last part of the paper
is to explore the counterpart,
in the context of rational points,
of the ideas which enter into the proof of the main theorems of~\textsection\ref{sec:maintheorems}.
The main unconditional result here is Theorem~\ref{th:consequenceofmatthiesen},
stated as Theorem~\ref{th:matthintro} in the introduction.

\subsection{A conjecture on locally split values of polynomials over number fields}
\label{subsec:conj}

We propose the following conjectural
variant of
Lemma~\ref{lem:strongapprox}.
Its statement parallels the consequence of Schinzel's hypothesis~($\mathrm{H}$)
first formulated by
Serre in~\cite{serrecollege}
and referred to as Hypothesis~$(\mathrm{H}_1)$
in~\cite{ctsd94}.
Lemma~\ref{lem:strongapprox} corresponds to
the case $n=1$, $\deg(P_1)=1$, $b_1=1$ of Conjecture~\ref{conj:mainstrong},
the only difference between the two statements being that the element~$t$ output by Lemma~\ref{lem:strongapprox}
is required to be integral outside $S\cup\{v_0\}$, while no integrality condition on~$t_0$
appears in Conjecture~\ref{conj:mainstrong} (see also Remark~\ref{rk:geomcrit}).
In fact, the absence of any integrality condition
makes it more accurate to compare
Conjecture~\ref{conj:mainstrong} 
with the homogeneous variant~$(\mathrm{HH}_1)$
of Schinzel's hypothesis,
first considered by Swinnerton-Dyer~\cite[\textsection2]{sdsomeapplications},
than with~$(\mathrm{H}_1)$ itself
(see especially \cite[Proposition~2.1]{hsw}).
We refer the reader to~\textsection\ref{subsubsec:relationschinzel} for more
details on~$(\mathrm{HH}_1)$.

\begin{conj}
\label{conj:mainstrong}
Let~$k$ be a number field.
Let $n \geq 1$ be an integer
and $P_1,\dots,P_n \in k[t]$ denote pairwise distinct irreducible monic polynomials.
Let $k_i=k[t]/(P_i(t))$ and let $a_i \in k_i$ denote the class of~$t$.
For each $i \in \{1,\dots,n\}$,
suppose given a finite extension~$L_i/k_i$
and an element $b_i\in k_i^*$.
Let~$S$ be a finite set of places of~$k$
containing the real places of~$k$ and the finite places above which,
for some~$i$,
either~$b_i$
is not a unit or $L_i/k_i$ is ramified.
Finally, for each $v \in S$,
fix an element $t_v \in k_v$.
Assume that
for every~$i \in \{1,\dots,n\}$ and every~$v \in S$,
there exists $x_{i,v} \in (L_i \otimes_kk_v)^*$
such that
\begin{align*}
b_i(t_v-a_i) = N_{L_i\otimes_k k_v/k_i\otimes_kk_v}(x_{i,v})
\end{align*}
in $k_i\otimes_k k_v$.
Then there exists $t_0 \in k$ satisfying the following conditions:
\begin{enumerate}
\item $t_0$ is arbitrarily close to~$t_v$ for $v \in S$;
\item for every~$i \in \{1,\dots,n\}$ and every finite place~$w$ of~$k_i$
such that $w(t_0-a_i)>0$,
either~$w$ lies above a place of~$S$ or
the field~$L_i$ possesses a place of degree~$1$ over~$w$.
\end{enumerate}
\end{conj}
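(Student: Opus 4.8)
Since this is a conjecture rather than a theorem, what follows is a plan of attack together with an indication of the cases in which it succeeds (these are examined in~\textsection\ref{subsec:knowncases}).

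The natural framework is geometric. The plan is to introduce the affine $k$\nobreakdash-variety~$W$ of tuples $(t,x_1,\dots,x_n)$, with $t\in\A^1_k$ and $x_i\in R_{L_i/k}(\A^1_{L_i})$ for each~$i$, cut out by the equations $N_{L_i/k_i}(x_i)=b_i(t-a_i)$. Projecting to the $t$\nobreakdash-coordinate exhibits~$W$ as a fibration over~$\A^1_k$ whose fibre over a point~$c$ is a product over~$i$ of torsors under the quasi-trivial tori $R_{k_i/k}R^1_{L_i/k_i}(\Gm)$; in particular its generic fibre is $k(t)$\nobreakdash-rational, so~$W$ is $k$\nobreakdash-rational. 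After removing the bad locus $\bigcup_i\{b_i(t-a_i)=0\}$ and, if needed, passing to a smooth model, the conjecture becomes the assertion that~$W$ satisfies strong approximation off one place, possibly modulo the Brauer--Manin obstruction: the local points at the places of~$S$ are furnished by the hypothesis on the~$x_{i,v}$, one supplements them with suitable integral points elsewhere, and the $t$\nobreakdash-coordinate of a resulting global point is the desired~$t_0$. Condition~(2) would then be extracted exactly as in the proof of Lemma~\ref{lem:strongapprox}: at a finite place~$w$ of~$k_i$ outside~$S$ at which $t_0-a_i$ has positive valuation, the local integral point of the $i$\nobreakdash-th factor reduces modulo~$w$ into the normal-crossings boundary divisor of $R_{L_i/k_i}(\A^1_{L_i})$, and since that divisor maps to $\Spec(k_i)$ through $\Spec(L_i)$, Hensel's lemma forces~$L_i$ to acquire a place of degree~$1$ over~$w$. (See Proposition~\ref{prop:geomcrit} below for a precise geometric criterion along these lines.)

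When $n=1$, $\deg(P_1)=1$ and $b_1=1$, one has $k_1=k$ and $W\cong R_{L_1/k}(\A^1_{L_1})$, so~$W$ minus its codimension~$2$ singular locus satisfies strong approximation off a place by Lemma~\ref{lem:strongapproxaffine}; in fact this case follows at once from Lemma~\ref{lem:strongapprox} itself, and is the one instance that is entirely unconditional. For several polynomials that are all linear and for $k=\Q$, the plan instead is to feed in analytic input: quantitative estimates from additive combinatorics for the joint distribution of values of linear forms that are norms, in the circle of ideas surrounding the work of Green, Tao and Ziegler, as developed by Browning and Matthiesen~\cite{browningmatthiesen} and by Matthiesen~\cite{matthiesen}; this yields the conjecture in that range with no abelianness restriction on the~$L_i$. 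For polynomials of higher degree the plan is to call upon Schinzel's hypothesis in the homogeneous form~$(\mathrm{HH}_1)$ to produce a~$t_0$, arbitrarily close to~$t_v$ at~$S$, for which each $b_i(t_0-a_i)$ generates up to units at~$S$ a single prime ideal~$\mathfrak{p}_i$ of~$\sOint_{k_i}$: outside $S\cup\{\mathfrak{p}_i\}$ the element $b_i(t_0-a_i)$ is then automatically a local norm from~$L_i$, being a unit at an unramified place, and at the places of~$S$ it is a local norm by the hypothesis and continuity, whence for \emph{abelian} $L_i/k_i$ the reciprocity law forces it to be a local norm at~$\mathfrak{p}_i$ too, i.e.\ (since $\mathfrak{p}_i$ is unramified) forces~$L_i$ to possess a place of degree~$1$ over~$\mathfrak{p}_i$. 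Sieve-theoretic replacements for~$(\mathrm{HH}_1)$, in the spirit of Irving~\cite{irving}, would give additional unconditional instances.

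The hard part, and the reason the statement is only a conjecture, is the complementary case: several polynomials of degree~$\geq 2$ with the $L_i/k_i$ non-abelian. There the reciprocity argument breaks down --- an element that is a local norm everywhere but at one prime of a non-abelian extension need not be a local norm at that prime --- and the additive-combinatorial methods, which at present demand linearity, are unavailable. Geometrically, this mirrors the fact that strong approximation, even modulo the Brauer--Manin obstruction, for fibrations over~$\A^1_k$ into torsors under tori is itself open in general. Corollary~\ref{cor:strongapproxconj} records what the conjecture delivers once granted.
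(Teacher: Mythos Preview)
Your proposal correctly recognises that this is a conjecture and not a theorem, and the plan you sketch is essentially the one the paper pursues in~\textsection\ref{subsec:knowncases}: the geometric reformulation via the variety~$W$ (Proposition~\ref{prop:geomcrit} and Corollary~\ref{cor:strongapproxconj}), the reduction to Schinzel's hypothesis~$(\mathrm{HH}_1)$ for almost abelian extensions (Theorem~\ref{th:schinzelconjmain}), Matthiesen's additive-combinatorial input for linear polynomials over~$\Q$ (Theorem~\ref{thm:matthiesen}), and Irving's sieve result (Theorem~\ref{th:irving}). One small discrepancy worth noting: the paper's~$W$ is defined with homogeneous coordinates~$(\lambda,\mu)$ on~$\A^2_k\setminus\{0\}$ and with the singular loci~$F_i$ removed, whereas your~$W$ lives over~$\A^1_k$; this homogenisation is what makes the strong approximation argument of Corollary~\ref{cor:strongapproxconj} go through cleanly, since one needs the extra place~$v_0$ to absorb the defect, and the paper's~$W$ is precisely the complement of a codimension~$2$ locus in affine space in the small cases of Theorem~\ref{th:algebraiccases}.
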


Conjecture~\ref{conj:mainstrong} will be applied
in~\textsection\ref{subsec:consequencesrationalpoints} to the study of weak
approximation on the total space of a fibration over~$\P^1_k$.  When one is
only interested in the existence of a rational point or in weak
approximation ``up to connected components at the archimedean places'' (as
considered in~\cite[\textsection2, p.~351]{stoll}), the following weaker
variant of Conjecture~\ref{conj:mainstrong} suffices.

\begin{conj}
\label{conj:mainweak}
Same as Conjecture~\ref{conj:mainstrong}, except that~{\rm{(1)}} is weakened to:
\begin{enumerate}
\item[(1')] $t_0$ is arbitrarily close to~$t_v$ for finite $v \in S$;
in addition, for
every real place~$v$ of~$k$,
if~$i$ denotes an element of $\{1,\dots,n\}$
and~$w$ denotes a real place of~$k_i$ dividing~$v$,
the sign of $(t_0-a_i)(t_v-a_i)$ at~$w$
is independent of~$w$ and of~$i$.
\end{enumerate}
\end{conj}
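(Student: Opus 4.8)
The starting point is that Conjecture~\ref{conj:mainweak} is a formal consequence of Conjecture~\ref{conj:mainstrong}. Indeed, at a real place~$v$ and a real place~$w$ of~$k_i$ above~$v$ the value $t_v-a_i$ is nonzero at~$w$, being there a local norm of a unit; so once~$t_0$ is close enough to~$t_v$ at~$v$, the element $t_0-a_i$ has the same sign as $t_v-a_i$ at~$w$, whence condition~(1) forces each product $(t_0-a_i)(t_v-a_i)$ to be positive at~$w$, and in particular~(1') holds. Thus every case of Conjecture~\ref{conj:mainstrong} that is known unconditionally yields the same case of Conjecture~\ref{conj:mainweak}: the case $n=1$, $\deg P_1=1$ (from Lemma~\ref{lem:strongapprox}: take $k_1=k$, $P_1=t-a$, apply the lemma to $(b_1(t_v-a))_{v\in S}$ and~$L_1/k$, and use that~$b_1$ is a unit outside~$S$ to transport the ``unit or split'' dichotomy from the output to $t_0-a$), and, over $k=\Q$ with all~$P_i$ linear, Matthiesen's theorem. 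A first part of the plan is simply to record this reduction and these consequences.

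To prove Conjecture~\ref{conj:mainweak} in cases not already covered by Conjecture~\ref{conj:mainstrong}, one should exploit the fact that~(1') only constrains \emph{signs} at the archimedean places --- precisely the datum left unconstrained at real places by strong approximation on the complement of a closed subset of codimension~$\geq 2$ in an affine space (Lemma~\ref{lem:strongapproxaffine}). I would imitate the geometric argument behind Lemma~\ref{lem:strongapprox}: for each~$i$, build from the norm equation $N_{L_i/k_i}(x)=b_i(t-a_i)$ a~$k$-variety~$W_i$ fibred over the affine line~$\A^1_t$, exactly as the variety of the proof of Lemma~\ref{lem:strongapprox} is built from a single norm form; glue the~$W_i$ over~$\A^1_t$ into one~$k$-variety~$W$; and seek $t_0\in\A^1(k)$ together with a rational point of~$W$ above it, by strong approximation off a well-chosen auxiliary place on an open subset of~$W$, just as in the proofs of Lemmas~\ref{lem:strongapproxaffine} and~\ref{lem:strongapprox}. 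Making this work requires the~$P_i$ to be \emph{linear}, so that the fibres of $W\to\A^1_t$ behave like complements of closed subsets of codimension~$\geq 2$ in an affine space, together with a geometric-irreducibility statement guaranteeing that those fibres are geometrically integral --- the role played by the criterion recorded as Proposition~\ref{prop:geomcrit}.

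The decisive obstacle is the case $\deg P_i\geq 2$ for some~$i$. For monic~$P_i$ one has $P_i(t_0)=N_{k_i/k}(t_0-a_i)$, so condition~(2) then demands a single $t_0\in k$ at which the \emph{value} $P_i(t_0)$ has all of its prime factors outside~$S$ split suitably in~$L_i$. Now the fibres of the associated family $W\to\A^1_t$ are no longer rational, as they reflect the factorization type of~$P_i$; strong approximation gives nothing; and producing such a~$t_0$ lies strictly beyond Schinzel's hypothesis~$(\mathrm{H})$ --- equivalently, beyond~$(\mathrm{HH}_1)$ --- with no algebro-geometric, sieve-theoretic or additive-combinatorial tool presently in sight. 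Even the all-linear case over a number field $k\neq\Q$ is out of reach, the additive combinatorics of Matthiesen's proof being for now confined to~$\Q$. So, as matters stand, Conjecture~\ref{conj:mainweak} is provable exactly where Conjecture~\ref{conj:mainstrong} is, the weakening~(1') only serving to simplify the archimedean bookkeeping in the fibration applications of~\textsection\ref{subsec:consequencesrationalpoints}.
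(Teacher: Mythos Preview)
The statement you were asked to address is Conjecture~\ref{conj:mainweak}, and the paper does not prove it: it is a genuine open conjecture, introduced precisely because it is expected to be strictly more accessible than Conjecture~\ref{conj:mainstrong} while still sufficing for many applications in~\textsection\ref{subsec:consequencesrationalpoints}. There is therefore no ``paper's own proof'' to compare your proposal against, and your text is not a proof of the conjecture but a survey of its status. Your opening observation, that Conjecture~\ref{conj:mainstrong} implies Conjecture~\ref{conj:mainweak} because sufficiently good $v$\nobreakdash-adic approximation at a real~$v$ forces equality of signs, is correct and immediate; but this is not a proof of Conjecture~\ref{conj:mainweak}, only the trivial remark that it is weaker than Conjecture~\ref{conj:mainstrong}.

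Your concluding assessment, that ``Conjecture~\ref{conj:mainweak} is provable exactly where Conjecture~\ref{conj:mainstrong} is,'' is moreover factually wrong and is directly contradicted by the paper. Theorem~\ref{th:irving}, based on Irving's sieve~\cite{irving}, establishes Conjecture~\ref{conj:mainweak} (and not Conjecture~\ref{conj:mainstrong}) in a case with $n=2$, $k=\Q$, $[k_1:\Q]=3$, so with a genuinely cubic~$P_1$ and a non-abelian $L_1/k_1$. This is precisely a situation you declared out of reach (``no \dots\ sieve-theoretic \dots\ tool presently in sight''), and it is the paper's main illustration that the weaker archimedean condition~(1') is not merely cosmetic: the sign flexibility at the real place is exactly what Lemma~\ref{lem:signchangeirving} exploits to put the problem into the shape required by Irving's sieve, and the output of that sieve controls signs but not archimedean proximity, so only~(1') and not~(1) can be verified. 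Your discussion thus misses both the point of formulating Conjecture~\ref{conj:mainweak} separately and the one concrete instance in the paper where it is established beyond the known range of Conjecture~\ref{conj:mainstrong}.
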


\begin{rmks}
\label{rk:conj}

(i)
Suppose all of the fields~$L_i$ are isomorphic, over~$k$, to the same Galois extension~$L/k$.
This condition can be assumed to hold at no cost
in all of the applications
that we shall consider in this paper.
Assertion~(2) of Conjecture~\ref{conj:mainstrong} may then be reformulated
as follows: for every $i \in \{1,\dots,n\}$, the value $P_i(t_0) \in k$ has nonpositive valuation
at all places $v\notin S$, except perhaps at places~$v$ which split completely in~$L$.
Thus, Conjecture~\ref{conj:mainstrong} may be regarded as a conjecture about
locally split values of polynomials.

(ii)
In order to prove Conjectures~\ref{conj:mainstrong} and~\ref{conj:mainweak}, one may assume, without loss of generality, that $t_v=0$ for all $v \in S$
and~$a_i \neq 0$ for all~$i\in\{1,\dots,n\}$.
Indeed, one can choose $\tau \in k$
arbitrarily close to~$t_v$ for $v \in S$ and perform
the change of variables $t \mapsto t+\tau$.
Thanks to this remark, we see that Conjectures~\ref{conj:mainstrong} and~\ref{conj:mainweak} for~$k=\Q$ imply the same conjectures for arbitrary~$k$ (compare~\cite[Proposition~4.1]{ctsd94}).

(iii)
In order to prove Conjectures~\ref{conj:mainstrong} and~\ref{conj:mainweak},
one is free to adjoin any finite number of places to~$S$.
To see this, it is enough to check that for any finite~$v \notin S$,
there exists $t_v \in k_v$ such that~$b_i(t_v-a_i)$ is a norm from~$(L_i\otimes_k k_v)^*$ for every~$i$
and such that $w(t_v-a_i)\leq 0$ for every~$i$ and every place~$w$ of~$k_i$ dividing~$v$.
As~$b_i$ is a unit above~$v$ and~$L_i/k_i$ is unramified above~$v$, the first condition is satisfied
as soon as~$w(t_v-a_i)$ is a multiple of~$[L_i:k_i]$ for every place~$w$ of~$k_i$ dividing~$v$.
Thus, for finite~$v \notin S$, any $t_v\in k_v$ with valuation~$-N\prod_{i=1}^n [L_i:k_i]$
will satisfy the desired properties if~$N$ is a large enough positive integer.

(iv)
The reader interested in a more geometric (albeit less elementary) formulation
of Conjecture~\ref{conj:mainstrong}
will find one in Proposition~\ref{prop:geomcrit} below.
\end{rmks}

\subsection{Known cases and relation with Schinzel's hypothesis}
\label{subsec:knowncases}

In this paragraph, we collect some results which lend support to Conjectures~\ref{conj:mainstrong} and~\ref{conj:mainweak}
(see Theorem~\ref{th:schinzelconjmain}, Theorem~\ref{th:algebraiccases},
Theorem~\ref{thm:matthiesen},
Theorem~\ref{th:irving}).

\subsubsection{Relation with Schinzel's hypothesis}
\label{subsubsec:relationschinzel}

We prove that when the extensions~$L_i/k_i$ are abelian,
Conjecture~\ref{conj:mainstrong} is a consequence of Schinzel's
hypothesis~$(\mathrm{H})$.  Extensive numerical evidence for~$(\mathrm{H})$ can be found
in~\cite{batemanhornmathcomp}, \cite{batemanhornpspm}.
Exploiting a trick first used by
Colliot-Th\'el\`ene in the context of cubic extensions
(see~\cite[Theorem~3.5]{weioneq}, \cite[Theorem~4.6]{hsw}),
we deal, slightly more
generally, with extensions which are ``almost abelian'' in the
following sense.

\begin{defn}
\label{def:almostabelian}
Let~$k$ be a number field, with algebraic closure~$\kbar$.
A finite extension~$L/k$ is \emph{almost abelian} if it is abelian or
if there exist a prime number~$p$ and
a bijection $\Spec(L \otimes_k \kbar)\simeq \Fp$
such that the natural action of~$\Gal(\kbar/k)$
on $\Spec(L\otimes_k \kbar)$ gives rise,
by transport of structure, to an action on~$\Fp$
by affine transformations, \emph{i.e.}, transformations of the form
$x\mapsto ax+b$ for $a\in \Fp^*$, $b\in \Fp$.
\end{defn}

\begin{examples}
Cubic extensions are almost abelian.  For any $c \in k$ and any prime~$p$,
the extension $k(c^{1/p})/k$ is almost abelian.
\end{examples}

\begin{thm}
\label{th:schinzelconjmain}
Assume Schinzel's hypothesis~$(\mathrm{H})$.
If the extensions $L_1/k_1, \dots, L_n/k_n$ are almost abelian,
the statement of Conjecture~\ref{conj:mainstrong} holds true.
\end{thm}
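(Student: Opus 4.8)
The plan is to reduce Conjecture~\ref{conj:mainstrong}, under the hypothesis that each $L_i/k_i$ is almost abelian, to an application of Schinzel's hypothesis~$(\mathrm{H})$ in the form of Hypothesis~$(\mathrm{H}_1)$ as used in~\cite{ctsd94}. By Remark~\ref{rk:conj}~(ii) I may assume $t_v=0$ for all $v\in S$ and $a_i\neq 0$ for all $i$, and by Remark~\ref{rk:conj}~(iii) I am free to enlarge~$S$ as convenient. The first step is to treat the genuinely abelian case: if $L_i/k_i$ is abelian, then the norm condition ``$b_i(t_0-a_i)\in N_{L_i\otimes_kk_w/k_i\otimes_kk_w}((L_i\otimes_kk_w)^*)$ for a place $w\mid v$'' is governed, via local class field theory, by whether $v$ splits in a suitable sub-extension; concretely, after enlarging~$S$ one wants the value $P_i(t_0)$ (up to the fixed unit $b_i$ and bounded contributions at $S$) to be, at each place $w\nmid S$ of $k_i$ with $w(t_0-a_i)>0$, either automatically a local norm (which happens as soon as the residue extension degree is compatible) or to force $w$ to split completely in $L_i$. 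This is exactly the situation in which~$(\mathrm{H}_1)$, applied to the polynomials $P_1,\dots,P_n$ after the translation $t\mapsto t+\tau$ with $\tau\in k$ close to the $t_v$, produces a $t_0\in k$ such that each $P_i(t_0)$ is, outside a controlled finite set, a unit times a prime of $k_i$ of the desired splitting type; the abelianness of $L_i/k_i$ is what lets one conclude from ``$w$ inert of degree $1$ over $v$'' plus the reciprocity law that the local norm conditions at the remaining place are satisfied. This is the classical Hasse-style argument, and I would follow the presentation in~\cite[\textsection1.2]{ctsd94} and~\cite[Proposition~2.1]{hsw} closely.

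The second step is the ``almost abelian'' upgrade, which is where the trick of Colliot-Th\'el\`ene (used for cubic fields in~\cite[Theorem~3.5]{weioneq} and~\cite[Theorem~4.6]{hsw}) enters. Suppose $L_i/k_i$ is not abelian; by Definition~\ref{def:almostabelian} there is a prime $p$ and an identification $\Spec(L_i\otimes_{k_i}\kbar)\simeq\Fp$ on which $\Gal$ acts through affine transformations $x\mapsto ax+b$. The key observation is that the subgroup of translations $x\mapsto x+b$ is normal with quotient cyclic (of order dividing $p-1$), so $L_i$ contains a cyclic sub-extension $L_i'/k_i$, and the compositum picture lets one replace the single norm condition for $L_i/k_i$ by a pair of conditions: a norm-type condition for the \emph{abelian} (in fact cyclic) extension $L_i'/k_i$, together with the observation that a place $w$ of $k_i$ which splits completely in $L_i'$ and has a degree-$1$ place above it in $L_i$ is controlled. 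More precisely, a prime $w$ of $k_i$ splits completely in $L_i$ if and only if it splits completely in $L_i'$ \emph{and} the Frobenius (which then lies in the translation subgroup $\Fp$) is trivial; since the translation subgroup has prime order $p$, non-triviality of that Frobenius is detected by a single further congruence/splitting condition which one can fold into the polynomial data. Thus one enlarges the collection $\{(P_i,k_i,L_i,b_i)\}$ to a new collection in which every field extension appearing is abelian (cyclic), applies the abelian case already established, and then checks that a $t_0$ working for the enlarged abelian problem works for the original almost abelian one. I would organise this as a lemma reducing the almost-abelian case to the abelian case, applied one index $i$ at a time.

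The main obstacle I expect is in the bookkeeping of the second step: making precise how a non-abelian almost abelian $L_i/k_i$ is ``spread out'' into abelian pieces in a way that is both faithful to the splitting condition in~(2) of Conjecture~\ref{conj:mainstrong} and compatible with the local norm hypotheses at $v\in S$. The subtlety is that the affine group $\Fp\rtimes\Fp^*$ is not abelian, so one cannot simply replace $L_i$ by its maximal abelian sub-extension without losing information; one genuinely needs to exploit that the obstruction to being abelian is concentrated in the \emph{prime-order} translation part, which is why the hypothesis is ``almost abelian'' and not merely ``solvable''. Verifying that the norm condition $b_i(t_v-a_i)=N_{L_i\otimes k_v/k_i\otimes k_v}(x_{i,v})$ at the places of $S$ transfers correctly to the auxiliary cyclic extension --- and that no spurious obstruction is introduced at the archimedean or the newly-added finite places --- is the delicate point; once that is done, the reciprocity argument for cyclic extensions (the statement that an element which is a local norm everywhere except at most one place is a local norm there too, \cite[Ch.~VII]{artintate}) closes the argument exactly as in the abelian case. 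Everything else --- the translation normalisation, the enlargement of $S$, the invocation of $(\mathrm{H}_1)$ --- is routine given the references already cited in the excerpt.
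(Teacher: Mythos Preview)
Your overall plan is on the right track—the paper also pivots on the group theory of $\Fp\rtimes\Fp^*$ and on a cyclic subextension $E_j/k_j$—but there is a genuine gap in the non-abelian step, and a small but telling slip that signals it.

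The slip: when $L_i/k_i$ is almost abelian but not abelian, it has prime degree~$p$ and hence \emph{no} nontrivial subextension. The cyclic extension you need lives in the \emph{Galois closure} $L_i'$ of $L_i/k_i$, as the fixed field of $G_i\cap\Fp$; it is not a subfield of~$L_i$.

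The gap: you propose to force the bad place~$w_i$ to \emph{split completely} in~$L_i$, decomposing this as ``$w_i$ splits in the cyclic~$E_i$'' plus ``the residual $\Fp$-translation part of Frobenius is trivial'', and then to fold the second condition into the polynomial data so as to reduce to the abelian case. But that second condition concerns the degree-$p$ extension $L_i'/E_i$; it is not an abelian condition over~$k_i$, and there is no evident way to encode it in the format of Conjecture~\ref{conj:mainstrong} (which only takes extensions of the residue fields~$k_i=k[t]/(P_i)$, not of~$E_i$). The abelian-case machinery gives you control at a single place of~$k_i$, not at a single place of~$E_i$. Moreover, complete splitting is more than condition~(2) requires: one only needs \emph{one} place of~$L_i$ of degree~$1$ over~$w_i$.

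The paper exploits precisely this weaker requirement, and in the \emph{opposite} direction. The group-theoretic lemma is that every affine transformation of~$\Fp$ which is not a pure translation has a fixed point; hence, for unramified~$w$, either~$w$ splits in~$E_j$ or~$L_j$ already has a degree-$1$ place over~$w$. So one wants the place~$w_j$ produced by~$(\mathrm{HH}_1)$ to \emph{not} split in~$E_j$. This is arranged by a reversal of the Hasse reciprocity trick: before invoking Schinzel, one adjoins to~$S$, for each~$j\in J$, an auxiliary place of~$k_j$ (of degree~$1$ over~$k$) chosen via Chebotarev to be inert in~$E_j$, and takes~$t_v$ there so that $t_v-a_j$ is a uniformiser at that place and a unit elsewhere. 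This forces
\[
\sum_{v\in S}\sum_{w\mid v}\inv_w\bigl(E_j/k_j,\,t_v-a_j\bigr)\neq 0.
\]
After applying~$(\mathrm{HH}_1)$, global reciprocity then gives $\inv_{w_j}(E_j/k_j,\,\lambda_0-a_j\mu_0)\neq 0$, so~$w_j$ does not split in~$E_j$, and the lemma finishes. In short: for abelian~$L_i$ one uses reciprocity to show the local invariant at~$w_i$ \emph{vanishes}; for non-abelian almost abelian~$L_j$ one first rigs the data so that reciprocity shows it \emph{does not vanish}. Your proposal misses this inversion.
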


In the proof of Theorem~\ref{th:schinzelconjmain}, we use~$(\mathrm{H})$ \emph{via} the following homogeneous variant
of Schinzel's hypothesis, which makes sense over an arbitrary number field.

\begin{hypHH}
Let~$k$ be a number field, $n\geq 1$ be an integer and $P_1,\dots,P_n\in k[\lambda,\mu]$ be irreducible homogeneous polynomials.
Let~$S$ be a finite set of places of~$k$ containing the archimedean places,
large enough that
for any $v \notin S$, there exists $(\lambda_v,\mu_v) \in \sOint_v\times \sOint_v$ such that $v(P_1(\lambda_v,\mu_v))=\dots=v(P_n(\lambda_v,\mu_v))=0$.
Suppose given
 $(\lambda_v,\mu_v)\in k_v \times k_v$
for $v \in S$,
with $(\lambda_v,\mu_v)\neq (0,0)$ when~$v$ is archimedean.
There exists $(\lambda_0,\mu_0)\in k \times k$ such that
\begin{enumerate}
\item $(\lambda_0,\mu_0)$ is arbitrarily close to $(\lambda_v,\mu_v) \in k_v \times k_v$ for each finite place $v \in S$;
\item $[\lambda_0:\mu_0]$ is arbitrarily close to $[\lambda_v:\mu_v] \in \P^1(k_v)$ for each archimedean place $v \in S$;
\item $\lambda_0\lambda_v + \mu_0\mu_v>0$ for each real place $v \in S$;
\item $\lambda_0$ and $\mu_0$ are integral outside~$S$;
\item for each $i\in\{1,\dots,n\}$, the element $P_i(\lambda_0,\mu_0)\in k$ is a unit outside~$S$ except at one place, at which it is a uniformiser.
\end{enumerate}
\end{hypHH}

We refer to~\cite[Lemma~7.1]{swdtopics}
for a proof that Schinzel's hypothesis~$(\mathrm{H})$
implies~$(\mathrm{HH}_1)$
(noting that~$\gamma$, in the proof of \emph{loc.\ cit.}, can be chosen totally positive,
so that~(3) holds).

\begin{rmk}
From the work of Heath-Brown and Moroz~\cite{hbm} on primes represented by binary cubic forms,
it is easy to deduce the validity of~$(\mathrm{HH}_1)$ when $k=\Q$, $n=1$ and $\deg(P_1)=3$.
Thus Conjecture~\ref{conj:mainstrong} holds when $n=1$, $k=\Q$, $[k_1:\Q]=3$ and~$L_1/k_1$ is almost abelian.
\end{rmk}

\begin{proof}[Proof of Theorem~\ref{th:schinzelconjmain}]
Let $P_i(\lambda,\mu)=\mu^{\deg(P_i)}P_i(\lambda/\mu)$.
By Remark~\ref{rk:conj}~(iii), we may assume that~$S$ is large enough for~$(\mathrm{HH}_1)$ to be applicable and that~$a_i$ is integral outside~$S$ for all~$i$.
Let $I = \{i \in \{1,\dots,n\}; L_i/k_i \text{ is abelian}\}$ and $J = \{1,\dots,n\}\setminus I$.  Thus $J=\emptyset$
if the extensions $L_1/k_1, \dots, L_n/k_n$ are all abelian;
the reader interested in this case only may simply skip the next lemma and the
paragraph which follows it.

\begin{lem}
For every $j \in J$, there exists a nontrivial cyclic subextension $E_j/k_j$
of the Galois closure of~$L_j/k_j$
such that for any place~$w$ of~$k_j$
which is unramified in~$L_j$,
either~$w$ splits completely in~$E_j$ or~$L_j$ possesses a place of degree~$1$ over~$w$.
\end{lem}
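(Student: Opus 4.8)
The plan is to let $E_j$ be the fixed field of the ``linear part'' of the affine action attached to $L_j/k_j$. Since $j\in J$, the extension $L_j/k_j$ is almost abelian but not abelian, so there are a prime $p$ and an identification of $\Spec(L_j\otimes_{k_j}\kbar_j)$ with $\Fp$ (for a fixed algebraic closure $\kbar_j$ of $k_j$) under which $\Gamma:=\Gal(\kbar_j/k_j)$ acts through a homomorphism $\phi\colon\Gamma\to\mathrm{Aff}(\Fp)$, where $\mathrm{Aff}(\Fp)=\Fp\rtimes\Fp^*$ is the group of transformations $x\mapsto ax+b$ of $\Fp$ with $a\in\Fp^*$ and $b\in\Fp$. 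As $L_j$ is a field, $\Gamma$ acts transitively on $\Fp$; the stabiliser of a point is $\Gal(\kbar_j/L_j)$, so the Galois closure $\widetilde L_j$ of $L_j/k_j$ is $\kbar_j^{\ker\phi}$ and $\Gal(\widetilde L_j/k_j)=\phi(\Gamma)$. Let $\psi\colon\Gamma\to\Fp^*$ be $\phi$ composed with the quotient map $\mathrm{Aff}(\Fp)\to\Fp^*$, $(x\mapsto ax+b)\mapsto a$, and set $E_j=\kbar_j^{\ker\psi}$.

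First I would record the basic properties of $E_j/k_j$. As $\ker\psi\supseteq\ker\phi$, we have $E_j\subseteq\widetilde L_j$; as $\psi(\Gamma)$ is a subgroup of the cyclic group $\Fp^*$, the extension $E_j/k_j$ is cyclic. It is also nontrivial: if $\psi$ were trivial, then $\phi(\Gamma)$ would consist of translations only, which act simply transitively on $\Fp$, so $\Gal(\kbar_j/L_j)=\ker\phi$ would be normal of index $p$ in $\Gamma$, whence $L_j=\widetilde L_j$ would be Galois of prime degree over $k_j$, in particular abelian, contradicting $j\in J$.

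Then I would verify the splitting dichotomy. Let $w$ be a place of $k_j$ unramified in $L_j$; then $w$ is unramified in $\widetilde L_j$ and in $E_j$, and there is a Frobenius element $\Frob_w\in\phi(\Gamma)$, well defined up to conjugacy; write $\Frob_w\colon x\mapsto ax+b$. If $a=1$, then $\psi(\Frob_w)=1$, so the decomposition group of $w$ in $\Gal(E_j/k_j)$ is trivial, i.e.\ $w$ splits completely in $E_j$. If $a\neq 1$, then $x\mapsto ax+b$ fixes the point $b(1-a)^{-1}\in\Fp$; since the places of $L_j$ above $w$ correspond to the orbits of $\langle\Frob_w\rangle$ on $\Fp=\Spec(L_j\otimes_{k_j}\kbar_j)$ with residue degree equal to the orbit size, such a fixed point yields a place of $L_j$ of degree $1$ over $w$. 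This proves the lemma.

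The only substantive point is the implication ``$\psi$ trivial $\Rightarrow$ $L_j/k_j$ abelian'', which is precisely where the hypothesis that $L_j/k_j$ is almost abelian but not abelian enters and which guarantees $E_j\neq k_j$; the remaining steps are routine translations between the splitting behaviour of $w$ and the action of $\Frob_w$ on the $p$-element set $\Spec(L_j\otimes_{k_j}\kbar_j)$.
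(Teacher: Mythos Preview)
Your proof is correct and follows essentially the same route as the paper: you take $E_j$ to be the fixed field of $\ker\psi$ (equivalently, of $\phi(\Gamma)\cap\Fp$), argue that $E_j/k_j$ is nontrivial because otherwise $L_j/k_j$ would be abelian, and then use that a non-translation in $\mathrm{Aff}(\Fp)$ has a fixed point to produce a degree~$1$ place of $L_j$ over $w$ when $\Frob_w$ does not lie in the translation subgroup. The only cosmetic difference is that the paper phrases the nontriviality of $E_j$ via ``$G_j$ non-abelian $\Rightarrow$ $G_j\not\subseteq\Fp$'' rather than via your transitivity argument, but the content is identical.
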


\begin{proof}
As $L_j/k_j$ is almost abelian but is not abelian, its degree is a prime
number~$p$
and
the Galois group~$G_j$ of its Galois closure $L'_j/k_j$ embeds into the group
$G=\Fp\rtimes \Fp^*$ of affine transformations of~$\Fp$.
Let~$E_j$ be the subfield of~$L_j'$ fixed by $H_j=G_j \cap \Fp$.
As~$G_j$ is not abelian, it is not contained in~$\Fp$, hence its image in~$\Fp^*$
is nontrivial.  Thus~$G_j$ is an extension of a nontrivial subgroup of~$\Fp^*$ by~$H_j$.
As a consequence, the extension $E_j/k_j$ is cyclic and nontrivial.
Let~$w$ be a place of~$k_j$ which is unramified in~$L_j$
and let $F \in G_j$ denote the Frobenius automorphism at~$w$ (well-defined up to conjugacy).
If the image of~$F$ in $\Fp^*$ is trivial, then~$w$ splits completely in~$E_j$.
Otherwise,
the action of~$F$ on $\Spec(L_j \otimes_{k_j} L'_j)$ admits a fixed point
(any affine transformation of~$\Fp$ which is not a translation admits a fixed point).
Such a fixed point corresponds to a place of~$L_j$ of degree~$1$ over~$w$.
\end{proof}

By Chebotarev's density theorem, for every $j \in J$, there exist infinitely many places~$w$ of~$k_j$
of degree~$1$ over~$k$ which are inert in~$E_j$
(recall that the set of places of~$k_j$ of degree~$1$ over~$k$ has Dirichlet density~$1$;
see, \emph{e.g.}, \cite[Ch.~VIII, \textsection4]{langant}).
If~$v$ denotes the trace on~$k$ of any such place~$w$,
there exists $t_v \in k_v$ satisfying the following properties:
\begin{enumerate}
\item[(i)] $t_v-a_j$ is a uniformiser at~$w$ and is a unit at all places of~$k_j$ above~$v$ different from~$w$;
\item[(ii)]$t_v-a_i$ is a unit at all places of~$k_i$ above~$v$,
for all $i \in \{1,\dots,n\}\setminus\{j\}$.
\end{enumerate}
By adjoining to~$S$ finitely many places~$v$ as above
and choosing~$t_v$ subject to~(i) and~(ii) at these places,
we may thus assume that
\begin{align}
\sum_{v \in S} \sum_{w|v} \inv_w (E_j/k_j, t_v-a_j) \neq 0
\end{align}
for all $j \in J$, where the second sum ranges over the places~$w$ of~$k_j$ dividing~$v$
and the symbol $(E_j/k_j,t_v-a_j)$ denotes a cyclic algebra over~$(k_j)_w$.

For $v \in S$,
let $\lambda_v=t_v$ and $\mu_v=1$.
Let $(\lambda_0,\mu_0)\in k\times k$ be given by~$(\mathrm{HH}_1)$.
By choosing~$(\lambda_0,\mu_0)$ close enough to~$(\lambda_v,\mu_v)$ at the finite places $v \in S$
and~$[\lambda_0:\mu_0]$ close enough to $[\lambda_v:\mu_v] \in \P^1(k_v)$
at the real places~$v \in S$,
we may assume,
in view of the fact that $\lambda_0\lambda_v+\mu_0\mu_v>0$ for real~$v$,
 that
$b_i(\lambda_0-a_i\mu_0) \in (k_i \otimes_kk_v)^*$ is a norm from $(L_i\otimes_kk_v)^*$ for all $i \in \{1,\dots,n\}$ and all $v\in S$
and that
\begin{align}
\label{eq:sumEj}
\sum_{v \in S} \sum_{w|v} \inv_w (E_j/k_j, \lambda_0-a_j\mu_0) \neq 0
\end{align}
for all $j \in J$.
For $i \in \{1,\dots,n\}$, let~$v_i$ denote the unique finite place of~$k$ outside~$S$ at which $P_i(\lambda_0,\mu_0)$ is not a unit.
As~$\lambda_0$, $\mu_0$ and~$a_i$ are integral outside~$S$,
as $v_i(P_i(\lambda_0,\mu_0))=1$
and as $P_i(\lambda_0,\mu_0)=N_{k_i/k}(\lambda_0-a_i\mu_0)$, there exists a unique place~$w_i$ of~$k_i$ above~$v_i$
such that $w_i(\lambda_0-a_i\mu_0)>0$.  Moreover, we have $w_i(\lambda_0-a_i\mu_0)=1$
and $w(\lambda_0-a_i\mu_0)=0$ for any place~$w$ of~$k_i$ which does not lie above~$S\cup\{v_i\}$.

Let $t_0=\lambda_0/\mu_0$.
If~$w$ is a place of~$k_i$ which does not lie
above~$S$ and if $w(t_0-a_i)>0$,
then $w(\lambda_0-a_i\mu_0)>0$, since $w(\mu_0)\geq 0$;
hence $w=w_i$.
Therefore it only remains to be shown that~$L_i$ possesses a place of degree~$1$
over~$w_i$ for every $i \in \{1,\dots,n\}$.

Suppose first $i \in I$.
As~$b_i$ is a unit outside~$S$
and~$L_i/k_i$ is unramified outside~$S$, the previous paragraphs imply that $b_i(\lambda_0-a_i\mu_0) \in (k_i)_w^*$ is a
norm
from $(L_i \otimes_{k_i} (k_i)_w)^*$ for every place~$w$ of~$k_i$ different from~$w_i$.
As $L_i/k_i$ is abelian, it follows,
by the reciprocity law of global class field theory, that $b_i(\lambda_0-a_i\mu_0)$ is a local
norm from $(L_i \otimes_{k_i} (k_i)_{w_i})^*$ as well
(see~\cite[Ch.~VII, \textsection3, Cor.~1, p.~51]{artintate}).
As $w_i(b_i(\lambda_0-a_i\mu_0))=1$ and $L_i/k_i$ is Galois, we conclude that~$w_i$ must split completely in~$L_i$.

Suppose now $i\in J$.  The reciprocity law of global class field theory,
together with~\eqref{eq:sumEj},
 implies that $\inv_{w_i} (E_i/k_i,\lambda_0-a_i\mu_0)\neq 0$.
It follows that~$w_i$ does not split completely
in~$E_i$, so that~$L_i$ must possess a place of degree~$1$ over~$w_i$.
\end{proof}

\subsubsection{A geometric criterion and cases of small degree}
\label{subsubsec:geomcrit}

Fix~$k$, $n$, and $k_i/k$, $L_i/k_i$, $a_i$, $b_i$, for~$i \in \{1,\dots,n\}$, as in the statement of Conjecture~\ref{conj:mainstrong}.
(We do not fix~$S$ or the~$t_v$'s.)
To this data we associate an irreducible quasi-affine variety~$W$ over~$k$ endowed with a smooth
morphism $p:W\to\P^1_k$ with geometrically irreducible generic fiber, as follows.

For $i\in\{1,\dots,n\}$, let~$F_i$ denote the singular locus of
$R_{L_i/k}(\A^1_{L_i}) \setminus R_{L_i/k}(\mathbf{G}_{\mathrm{m},{L_i}})$.
This is a codimension~$2$ closed subset of the affine space $R_{L_i/k}(\A^1_{L_i})$.
Let~$W$ denote the fiber, above~$0$, of the morphism
\begin{align*}
\left(\A^2_k \setminus \{(0,0)\}\right) \times \prod_{i=1}^n \left(R_{L_i/k}(\A^1_{L_i}) \setminus F_i\right) \to \prod_{i=1}^n R_{k_i/k}(\A^1_{k_i})
\end{align*}
defined by $(\lambda,\mu,x_1,\dots,x_n)\mapsto (b_i(\lambda-a_i\mu)-N_{L_i/k_i}(x_i))_{1\leq i\leq n}$, where~$\lambda,\mu$
are the coordinates of~$\A^2_k$
and~$x_i$ denotes a point of $R_{L_i/k}(\A^1_{L_i})$.
Finally, denote by $p:W\to \P^1_k$ the composition of projection to the first factor with the natural map $\A^2_k \setminus \{(0,0)\} \to \P^1_k$.

Varieties closely related to~$W$ (in fact, partial compactifications of~$W$) first appeared in the context of descent
in~\cite[\textsection3.3]{skorodescent}, see also~\cite[p.~391]{ctskodescent}, \cite[\textsection4.4]{skobook}.
We now state an equivalent, more geometric form of Conjecture~\ref{conj:mainstrong}, in terms
of~$W$.
We shall use it as a criterion to prove a few cases of Conjecture~\ref{conj:mainstrong}
in Theorem~\ref{th:algebraiccases} below.

\begin{prop}
\label{prop:geomcrit}
For $c \in \P^1(k)$, let $W_c=p^{-1}(c)$.
The statement of Conjecture~\ref{conj:mainstrong} holds true for all~$S$ and all $(t_v)_{v \in S}$ satisfying its hypotheses
if and only if $\mkern2mu\bigcup_{c\in \P^1(k)} W_{c}(\A_k)$
is dense in $W(\A_k)$.
\end{prop}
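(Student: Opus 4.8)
The plan is to unwind both statements into one assertion about the data $(k_i,L_i,a_i,b_i)$, so that the proposition becomes a matter of translation. The first step is to set up a dictionary. Since $F_i$ is the singular locus of the complement of the torus $R_{L_i/k}(\mathbf{G}_{\mathrm{m},L_i})$, i.e.\ the locus where at least two of the $[L_i{:}k]$ coordinate functions on $R_{L_i/k}(\A^1_{L_i})$ vanish, a point of $R_{L_i/k}(\A^1_{L_i})\setminus F_i$ with values in a local or residue field has at most one vanishing coordinate, and if a coordinate vanishes then the corresponding place of $L_i$ has degree~$1$ over the relevant place of~$k$. Consequently, for $c\in\A^1(k_v)$ not a conjugate root of any $a_i$, a $k_v$-point of $W_c$ is the same as a scalar $s\in k_v^*$ and units $x_i\in(L_i\otimes_kk_v)^*$ with $s\,b_i(c-a_i)=N_{L_i\otimes k_v/k_i\otimes k_v}(x_i)$; in particular $W_c(k_v)\neq\emptyset$ iff the class of $(b_i(c-a_i))_i$ lies in the image of the diagonal $k_v^*\to\prod_i(k_i\otimes_kk_v)^*/N_i$, with $N_i=N((L_i\otimes_kk_v)^*)$. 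Fixing an $\sOint_S$-model $\mathcal W$ of $W$, with $S$ containing $\Omega_\infty$ and the finitely many places where some $b_i$ is a non-unit or some $L_i/k_i$ ramifies, an $\sOint_v$-point of $\mathcal W$ over $c$ for $v\notin S$ forces $s$ to be a unit and forces, at every place $w$ of $k_i$ over $v$ with $w(c-a_i)>0$, that $L_i$ has a place of degree~$1$ over $w$ --- which is precisely condition~(2) of Conjecture~\ref{conj:mainstrong}. Read backwards, condition~(2) together with the absence of poles of $c$ outside $S$ gives $b_i(c-a_i)\in N_i$ at every $v\notin S$, hence (together with solubility at $v\in S$, automatic once $c$ is close to the given $t_v$, the norm subgroups being open of finite index at finite places and of index $\leq 2$ at real places) that $W_c$ is everywhere locally soluble.

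For the implication ``density $\Rightarrow$ Conjecture~\ref{conj:mainstrong}'', let $S$ and $(t_v)_{v\in S}$ be as in the hypotheses; enlarge $S$ to the above form and choose $\mathcal W$ so that the good-reduction behaviour applies at all $v\notin S$ (this is possible because the data is unramified there). Build an adelic point of $W$ from the points $(t_v,1,x_{1,v},\dots,x_{n,v})\in W(k_v)$ for $v\in S$ and, for $v\notin S$, an $\sOint_v$-integral point lying over some value $c_v$ with $c_v-a_i$ a unit at all places over $v$. Applying the density hypothesis yields $c\in\P^1(k)$ and an adelic point of $W_c$ arbitrarily close to this one at the places of $S$; closeness at $v\in S$ forces $c\neq\infty$, so $c=t_0\in k$, and gives conclusion~(1) (and, at real places, the sign statement of~(1$'$)). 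Conclusion~(2) then follows from the dictionary applied to the integral components, outside $S$, of the resulting $W_{t_0}$-adelic point --- observing that wherever $t_0-a_i$ has positive valuation the relevant component is necessarily $\sOint_v$-integral, and that no exceptional places arise outside $S$.

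For the converse, assume Conjecture~\ref{conj:mainstrong} for all $S$. Given $(P_v)_v\in W(\A_k)$ and a finite set $S_0$, after a harmless perturbation (using smoothness of $W$, and perturbing off the fibre over $\infty$) we may assume for $v\in S_0$ that $\tau_v:=p(P_v)\in\A^1(k_v)$ is not a conjugate root of any $a_i$; writing $P_v=(\mu_v\tau_v,\mu_v,x_{i,v})$ the coordinates $x_{i,v}$ are units and $\mu_v\,b_i(\tau_v-a_i)=N(x_{i,v})\in N_i$. The key point is that, for any $t_0\in k$ close to $\tau_v$ at the places of $S_0$, the element $\mu_v\,b_i(t_0-a_i)$ is close to $N(x_{i,v})$ hence again lies in $N_i$, so $W_{t_0}(k_v)\neq\emptyset$ and in fact contains a point close to $P_v$ (solve $N(x)=\mu_v\,b_i(t_0-a_i)$ near the unit $x_{i,v}$); thus at the places of $S_0$ one does not need $b_i(\tau_v-a_i)$ to be a norm on the nose, only the scalar witness $\mu_v$. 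It therefore suffices to produce $t_0\in k$ close to $\tau_v$ at $v\in S_0$ and with $W_{t_0}$ everywhere locally soluble, and then to fill in $\sOint_v$-integral points of $\mathcal W$ over $t_0$ at the cofinitely many places of good reduction to get the desired point of $\bigcup_cW_c(\A_k)$. Such $t_0$ is supplied by Conjecture~\ref{conj:mainstrong} applied with a set $S\supseteq\Omega_\infty\cup S_0\cup\{\text{ramification and non-unit places}\}$ and local parameters matching $\tau_v$ at $v\in S_0$ and chosen with $b_i(t_v-a_i)\in N_i$ at the remaining places (Remark~\ref{rk:conj}~(iii) at the unramified ones, and solubility of $W$ over $k_v$, which holds because $(P_v)_v$ is an adelic point, at the finitely many ramified ones): the conjecture then gives $t_0$ close to $\tau_v$ for $v\in S_0$ and satisfying condition~(2) relative to $S$, whence by the dictionary $W_{t_0}$ is everywhere locally soluble.

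I expect the main obstacle to lie in this last direction, precisely at the interface between the rigid norm conditions ``$b_i(t_v-a_i)\in N_i$'' appearing in the hypotheses of Conjecture~\ref{conj:mainstrong} and the $\mathbf{G}_{\mathrm{m}}$-worth of rescaling $(\lambda,\mu)\mapsto(s\lambda,s\mu)$ that is present in each fibre $W_c$: a $k_v$-point of $W_c$ only witnesses that $b_i(c-a_i)$ is a norm \emph{after multiplication by the scalar $\mu_v$}, and one must be careful to organise the choice of local parameters, of the model $\mathcal W$, and of the finitely many ``bad'' places (including potential poles of $t_0$) so that the scalars $\mu_v$ are controlled --- notably that they may be taken to be units at all but finitely many places and that the remaining ones can be absorbed into $S$. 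Verifying that these bookkeeping choices are consistent, and that the passage between $\A^1$ and $\P^1$ introduces no further difficulty, is where the real work of the proof resides.
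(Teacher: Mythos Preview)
Your argument for the direction ``density $\Rightarrow$ Conjecture~\ref{conj:mainstrong}'' is essentially the paper's: build an adelic point of~$W$ from the local data, approximate by a fibral adelic point, and read off~$t_0$ together with condition~(2) from the integral structure of $R_{L_i/k}(\A^1_{L_i})\setminus F_i$.

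For the converse, however, your direct approach has the gap you yourself anticipate, and it is a real one. To apply Conjecture~\ref{conj:mainstrong} you must verify its hypothesis at \emph{every} $v\in S$: that $b_i(t_v-a_i)$ is a norm from $L_i\otimes_k k_v$. At $v\in S_0$ you only know $\mu_v\,b_i(\tau_v-a_i)$ is a norm, and there is no reason the scalar $\mu_v\in k_v^*$ should lie in the norm subgroup for each~$i$. You cannot drop $S_0$ from~$S$ (you would lose the approximation at those places), and you cannot absorb the local scalars~$\mu_v$ into a global rescaling (they vary with~$v$, and rescaling $(\lambda,\mu)$ does not act on~$W$ through the torus variables in a way that helps). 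So as written, the hypotheses of the Conjecture are not available and the argument does not close.

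The paper resolves this by a completely different route: it does \emph{not} attempt a direct construction of~$t_0$, but instead invokes Theorem~\ref{th:ratpointsmain} (the general fibration theorem for rational points, applied with $B=0$ and $M''=\emptyset$) to the morphism $p:W\to\P^1_k$. The only thing to check is that the relevant Brauer--Manin obstruction is vacuous, i.e.\ that the vertical Brauer group $\Br_\vert(W/\P^1_k)$ reduces to constants; the paper verifies this via an explicit computation with corestrictions of cyclic algebras, using the defining equations of~$W$ to show that each $(b_i(t-a_i),\chi_i)$ becomes $-(\mu,\chi_i)$ on~$W$, whence the sum collapses. It is precisely this Brauer group calculation that encodes the compatibility of the local scalars~$\mu_v$ which your direct argument is missing; once it is in hand, Theorem~\ref{th:ratpointsmain} supplies the desired~$c$ and fibral adelic point without ever having to feed the norm hypotheses back into Conjecture~\ref{conj:mainstrong} at the places of~$S_0$.
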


\begin{proof}
Let us first verify that the second condition is sufficient for Conjecture~\ref{conj:mainstrong} to hold.
Let~$S$ and $(t_v)_{v \in S}$ be as in the statement of Conjecture~\ref{conj:mainstrong}.
We assume that $\Omega_\infty\subseteq S$.
For all but finitely many finite places~$v$ of~$k$,
there exists $t_v \in k_v$ such that $w(t_v-a_i)=0$ for every $i\in\{1,\dots,n\}$ and every place~$w$
of~$k_i$ dividing~$v$.
By Remark~\ref{rk:conj}~(iii),
we may assume, after enlarging~$S$, that such~$t_v$'s exist
for $v\notin S$; let us fix them.
For each $i \in \{1,\dots,n\}$ and each $v \in \Omega$, let us then
fix $x_{i,v} \in (L_i \otimes_k k_v)^*$
such that $b_i(t_v-a_i)=N_{L_i\otimes_kk_v/k_i\otimes_kk_v}(x_{i,v})$.
When~$v \notin S$,
we may ensure that~$x_{i,v}$ is a unit at every place of~$L_i$ dividing~$v$.

Let $Q_v \in W(k_v)$
denote
the point with coordinates $(t_v,1,x_{1,v},\dots,x_{n,v})$.
We have defined
an adelic point $(Q_v)_{v \in \Omega} \in W(\A_k)$.
By assumption, there exist $c \in \A^1(k)$ and an adelic point $(Q'_v)_{v\in\Omega}\in W_c(\A_k)$ arbitrarily close to~$(Q_v)_{v\in\Omega}$.
Let $(\lambda'_v,\mu'_v,x'_{1,v},\dots,x'_{n,v})$ denote the coordinates of~$Q'_v$.
By choosing $(Q'_v)_{v \in \Omega}$ close enough to~$(Q_v)_{v \in \Omega}$
for the adelic topology, we may assume
that $v(\mu'_v)\geq 0$
for every $v\in\Omega\setminus S$
and that
the projection of~$Q'_v$
to $R_{L_i/k}(\A^1_{L_i}) \setminus F_i$ extends to an $\sOint_v$\nobreakdash-point
of $R_{\sOint_{L_i}/\sOint_k}(\A^1_{\sOint_{L_i}}) \setminus \sF_i$
for every $i \in \{1,\dots,n\}$ and every $v\in\Omega\setminus S$,
where~$\sF_i$ denotes the Zariski closure of~$F_i$.
As we have seen in the proof of Lemma~\ref{lem:strongapprox},
this implies that~$x'_{i,v}$ is a unit at every place of~$L_i$ dividing~$v$ except at most
at one place, which must then have degree~$1$ over~$v$.

We now check that the coordinate $t_0\in k$
of~$c$ satisfies the conclusion
of Conjecture~\ref{conj:mainstrong}.
We~have $t_0=\lambda'_v/\mu'_v$, hence~$t_0$ is arbitrarily close to~$t_v$ for~$v\in S$.
Fix $i \in \{1,\dots,n\}$ and a place~$w$ of~$k_i$ whose trace~$v$ on~$k$ does not belong to~$S$.
As $v(\mu'_v)\geq 0$ and $w(b_i)=0$,
the condition $w(t_0-a_i)>0$ implies $w(b_i(\lambda'_v-a_i\mu'_v))>0$ and hence $w(N_{L_i \otimes_kk_v/k_i\otimes_kk_v}(x'_{i,v}))>0$,
so that~$L_i$ possesses a place of degree~$1$ over~$w$ according to the
previous paragraph.

Conversely,
if one assumes Conjecture~\ref{conj:mainstrong},
the density of
$\mkern2mu\bigcup_{c\in \P^1(k)} W_{c}(\A_k)$
in $W(\A_k)$
is an immediate consequence of Theorem~\ref{th:ratpointsmain} below
applied with $B=0$ and~$M''=\emptyset$,
in view of the remark
that
the inverse image map $\Br(k)\to \Br_\vert(W/\P^1_k)$ is onto.
(Recall that the vertical Brauer group is defined as $\Br_\vert(W/\P^1_k)=\Br(W)\cap p_\eta^*\mkern.5mu\Br(\eta)$,
if~$\eta$ denotes the generic point of~$\P^1_k$.)
To prove this remark, we first note that
the morphism $p:W \to \P^1_k$ is smooth and that its fibers are irreducible.
Let $m_i\in\P^1_k$ be the closed point
defined by~$P_i(t)=0$.
The fibers of~$p$ over
$\P^1_k\setminus\{m_1,\dots,m_n\}$
are geometrically irreducible,
while for each $i \in \{1,\dots,n\}$, the algebraic closure of $k_i=k(m_i)$ in the
function field of $p^{-1}(m_i)$ is~$L_i$.
These observations on the fibers of~$p$ imply that
any element of $\Br_\vert(W/\P^1_k)$
can be written as $p_\eta^*\gamma$, where $\gamma \in \Br(\eta)$ has the shape
\begin{align*}
\gamma=\sum_{i=1}^n \Cores_{k_i(t)/k(t)}(b_i(t-a_i),\chi_i)+\delta
\end{align*}
for some $\delta \in \Br(k)$ and some $\chi_i \in \Ker\big(H^1(k_i,\Q/\Z) \to H^1(L_i,\Q/\Z)\big)$, $i\in\{1,\dots,n\}$ satisfying $\sum_{i=1}^n \Cores_{k_i/k}(\chi_i)=0$ (see~\cite[Proposition~1.1.1, Proposition~1.2.1]{ctsd94}).
Now we have $b_i(t-a_i)=b_i(\lambda-a_i\mu)/\mu$ in~$k_i(W)$,
and $b_i(\lambda-a_i\mu)$ is a norm from~$L_i(W)$ according to the defining equations of~$W$.
Thus $(b_i(t-a_i),\chi_i)=-(\mu,\chi_i)$ in $\Br(k_i(W))$ and hence $p_\eta^*\gamma=-\big(\mu,\sum_{i=1}^n \Cores_{k_i/k}(\chi_i)\big)+\delta=\delta$,
as required.
\end{proof}

\begin{cor}
\label{cor:strongapproxconj}
Suppose~$W$ satisfies strong approximation off~$v_0$ for any finite place~$v_0$ of~$k$.
Then the statement of Conjecture~\ref{conj:mainstrong} holds true for all~$S$ and
all $(t_v)_{v\in S}$ satisfying its hypotheses.
\end{cor}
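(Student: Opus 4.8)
The plan is to reduce immediately to Proposition~\ref{prop:geomcrit}, which tells us that Conjecture~\ref{conj:mainstrong} holds for all $S$ and all $(t_v)_{v\in S}$ as soon as $\bigcup_{c\in\P^1(k)}W_c(\A_k)$ is dense in $W(\A_k)$. So the only thing to prove is this density statement, and the hypothesis of strong approximation off $v_0$ is tailored for exactly that.

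Concretely, I would start from an arbitrary adelic point $(Q_v)_{v\in\Omega}\in W(\A_k)$, a finite set $T_0\subset\Omega$ of places, and prescribed neighbourhoods of $Q_v$ in $W(k_v)$ for $v\in T_0$; the goal is to find $c\in\P^1(k)$ together with an adelic point of $W_c$ lying in these neighbourhoods at the places of $T_0$. The crucial freedom is that the hypothesis lets us take $v_0$ to be \emph{any} finite place, so I would choose a finite place $v_0\notin T_0$ (possible since $\Omega_f$ is infinite), enlarge $T_0$ to a finite set $T\supseteq\Omega_\infty\cup\{v_0\}$, fix a model $\sW$ of $W$ over $\sOint_T$, and --- enlarging $T$ once more if necessary --- arrange that $Q_v\in\sW(\sOint_v)$ for all $v\notin T$, which is what it means for $(Q_v)_v$ to be an adelic point of the quasi-affine variety $W$. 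Since $Q_{v_0}\in W(k_{v_0})$, strong approximation off $v_0$ is non-vacuous and produces a point $Q\in\sW(\sOint_T)\subseteq W(k)$ which is arbitrarily close to $Q_v$ for every $v\in T\setminus\{v_0\}$, in particular for every $v\in T_0$.

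Setting $c=p(Q)\in\P^1(k)$, the point $Q$ lies in $W_c(k)$, so the constant family $(Q)_{v\in\Omega}$ is an adelic point of $W_c$; it belongs to the prescribed neighbourhood at every place of $T_0$ once $Q$ is chosen close enough. This gives the density of $\bigcup_{c\in\P^1(k)}W_c(\A_k)$ in $W(\A_k)$, and Proposition~\ref{prop:geomcrit} then yields the corollary.

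There is no real obstacle here: essentially all of the content has been pushed into Proposition~\ref{prop:geomcrit}. The single point that genuinely uses the full hypothesis --- strong approximation off \emph{every} finite place, not just one --- is that strong approximation off $v_0$ provides no control at $v_0$ itself, so one must be able to move $v_0$ outside the finite set $T_0$ of places where approximation is demanded. Everything else is routine: that a rational point of $W_c$ defines an adelic point of $W_c$ is automatic, and enlarging $T_0$ causes no harm because the statement of Proposition~\ref{prop:geomcrit} imposes no integrality condition on $(t_v)_{v\in S}$.
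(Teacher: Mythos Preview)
Your reduction to Proposition~\ref{prop:geomcrit} is correct, and the overall shape of the argument is right, but there is a genuine gap concerning the place~$v_0$. The density required in Proposition~\ref{prop:geomcrit} is density for the \emph{adelic} (restricted product) topology on~$W(\A_k)$, not for the product topology. A basic adelic open set has the form $\prod_{v\in T_0}U_v\times\prod_{v\notin T_0}\sW(\sOint_v)$, and the proof of Proposition~\ref{prop:geomcrit} uses precisely this: it needs the approximating point~$Q'_v$ to be \emph{integral} at every~$v\notin S$, not just at almost all of them, in order to verify condition~(2) of Conjecture~\ref{conj:mainstrong} at every place outside~$S$. Your constant family $(Q)_{v\in\Omega}$ is indeed an adelic point of~$W_c$, but strong approximation off~$v_0$ gives no control whatsoever over~$Q$ at~$v_0$: you only know $Q\in\sW(\sOint_T)$, and since~$v_0\in T$ this says nothing about integrality of~$Q$ at~$v_0$. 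So~$(Q)_{v\in\Omega}$ may fail to lie in the prescribed adelic neighbourhood, and moving~$v_0$ outside~$T_0$ does not help, because at~$v_0\notin T_0$ the neighbourhood still demands integrality.

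The paper repairs this with one extra observation: one should not choose~$v_0$ arbitrarily, but rather pick a finite place~$v_0\notin T_0$ at which the map $p:\sW(\sOint_{v_0})\to\P^1(\sOint_{v_0})$ is surjective. A glance at the equations defining~$W$ shows this holds for all but finitely many places that split completely in every~$L_i$, so such~$v_0$ exist. Then, having found~$Q$ and set $c=p(Q)$, one replaces the $v_0$\nobreakdash-component of the constant family by any $Q'_{v_0}\in\sW(\sOint_{v_0})$ with $p(Q'_{v_0})=c$. The resulting family lies in~$W_c(\A_k)$ and is now integral at~$v_0$ as well, hence belongs to the prescribed adelic open set. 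This is the only missing ingredient in your argument.
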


\begin{proof}
Fix an integral model~$\sW$ of~$W$.
A glance at the definition of~$W$ shows that
the map $p:\sW(\sOint_v)\to\P^1(\sOint_v)$ is onto for all but finitely many of the places~$v$ of~$k$ which split completely in~$L_i$ for all~$i$.
Let~$(Q_v)_{v\in\Omega} \in W(\A_k)$.
Let~$S$ be a finite set of places of~$k$.  We must prove that there exist $c \in \P^1(k)$ and $(Q'_v)_{v\in\Omega} \in W_c(\A_k)$
with~$Q'_v$ arbitrarily close to~$Q_v$ for $v \in S$ and~$Q'_v$ integral with respect to~$\sW$ for $v\notin S$.
Fix a place $v_0\notin S$ for which
$p:\sW(\sOint_{v_0})\to\P^1(\sOint_{v_0})$
is onto.
By assumption, we can find a point $Q \in W(k)$ arbitrarily close to~$Q_v$ for $v \in S$ and integral with respect to~$\sW$ for $v\notin S\cup\{v_0\}$.
We then let $c=p(Q)$, choose an arbitrary $Q'_{v_0} \in \sW(\sOint_{v_0})$ such that $p(Q'_{v_0})=c$
and let $Q'_v=Q$ for $v \in \Omega\setminus\{v_0\}$.
\end{proof}

Thanks to Corollary~\ref{cor:strongapproxconj}, we can prove Conjecture~\ref{conj:mainstrong}
for non-abelian
extensions~$L_i/k_i$
when $\sum_{i=1}^n [k_i:k]$ is small.  The underlying geometric arguments run parallel
to those used in the proofs of~\cite[Theorems~A and~B]{ctskodescent} (which themselves
originate from~\cite{skoquadrics}).

\begin{thm}
\label{th:algebraiccases}
Conjecture~\ref{conj:mainstrong} holds under each of the following sets of assumptions:
\begin{itemize}
\item[(i)] $\sum_{i=1}^n [k_i:k] \leq 2$;
\item[(ii)] $\sum_{i=1}^n [k_i:k]=3$ and $[L_i:k_i]=2$ for every~$i$.
\end{itemize}
\end{thm}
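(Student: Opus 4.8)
The plan is to deduce Theorem~\ref{th:algebraiccases} from Corollary~\ref{cor:strongapproxconj}, reducing everything to the assertion that the variety $W$ attached to the data $(k_i/k,L_i/k_i,a_i,b_i)$ satisfies strong approximation off $v_0$ for \emph{every} finite place $v_0$ of $k$. First I would record a harmless reduction: if $W(k_v)=\emptyset$ for some place $v$, then $W(\A_k)=\emptyset$ and, for any $v_0$, both the source $\smash{\sX(\sOint_S)}$ and the target $\prod_{w\in S\setminus v_0}W(k_w)\times\prod_{w\notin S}\sX(\sOint_w)$ of the strong approximation map are empty (the latter because of the factor indexed by $v$), so strong approximation off $v_0$ holds trivially; hence we may assume $W(k_v)\neq\emptyset$ for all $v$.

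The heart of the matter is a concrete description of $W$ obtained by eliminating the two variables $\lambda,\mu$. The $k$\nobreakdash-linear map $\psi\colon k^2\to\prod_i k_i$, $(\lambda,\mu)\mapsto(b_i(\lambda-a_i\mu))_i$, is injective as soon as some $a_i\notin k$ (i.e.\ some $[k_i:k]\geq 2$), and if all $[k_i:k]=1$ it is either an isomorphism (the case $n=2$, using $a_1\neq a_2$) or surjective with one\nobreakdash-dimensional kernel (the case $n=1$). Using the norm equations $N_{L_i/k_i}(x_i)=b_i(\lambda-a_i\mu)$ to solve for $\lambda,\mu$ — keeping one of them as a free coordinate when a kernel is present — one identifies $W$ with the complement of a closed subset $Z$ either in an affine space $\A^r_k\times\prod_{i=1}^n R_{L_i/k}(\A^1_{L_i})$ with $r\in\{0,1\}$, when $\sum_i[k_i:k]\leq 2$, or in the affine quadric cone $\{Q=0\}\subset\A^6_k$ cut out by the single \emph{leftover} scalar equation, when $\sum_i[k_i:k]=3$. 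Here $Z$ is the union of the preimages of the $F_i$ (of codimension $2$ by construction) and of the locus where $(\lambda,\mu)=(0,0)$, i.e.\ where all the $N_{L_i/k_i}(x_i)$ vanish; since the zero locus of $N_{L_i/k_i}$ is, over $\bar k$, a union of $[k_i:k]$ pairwise ``independent'' hyperplanes grouped according to the embeddings of $k_i$, a short inspection shows in every sub-case that $Z$ has codimension $\geq 2$ in the relevant ambient variety. Finally, a computation with the distinct roots of the $P_i$ shows that when $\sum_i[k_i:k]=3$ the leftover form $Q$ is, over $\bar k$, a linear combination with \emph{all} coefficients nonzero of the split forms $N_{L_i/k_i}\otimes\bar k$; as each $[L_i:k_i]=2$, the form $Q$ has rank exactly $6$.

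In case~(i) we are done: $W$ is the complement of a closed subset of codimension $\geq 2$ in an affine space, so Lemma~\ref{lem:strongapproxaffine} yields strong approximation off every place. In case~(ii), since $\operatorname{rk}Q=6>4$ the form $Q$ is isotropic over every non\nobreakdash-archimedean completion of $k$; as $W$ maps to $\{Q=0\}\setminus\{0\}$ and $W(k_v)\neq\emptyset$ for all $v$ by the reduction above, $Q$ is isotropic over every $k_v$, hence over $k$ by the Hasse--Minkowski theorem. Therefore the smooth variety $\{Q=0\}\setminus\{0\}$ is a homogeneous space of the semisimple simply connected group $\mathrm{Spin}(Q)$ with connected stabiliser — the stabiliser of an isotropic vector is an extension of the $\mathrm{Spin}$\nobreakdash-group of a rank\nobreakdash-$4$ form by a vector group. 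Its geometric Picard group vanishes (it is a $\Gm$\nobreakdash-torsor over a smooth quadric fourfold, whose Picard group $\Z$ is generated by the class of the torsor) and so does its geometric Brauer group; hence every Brauer class on $\{Q=0\}\setminus\{0\}$ comes from $\Br(k)$, there is no Brauer--Manin obstruction, and the strong approximation theorem for homogeneous spaces of semisimple simply connected groups with connected stabilisers gives that $\{Q=0\}\setminus\{0\}$ satisfies strong approximation off any place where it has a rational point, in particular off every finite $v_0$. It then remains to check that excising the codimension\nobreakdash-$\geq 2$ closed subset $Z$ preserves this property; this I would do by transcribing the proof of Lemma~\ref{lem:strongapproxaffine}, lines in affine space being replaced by lines contained in the cone $\{Q=0\}$: through any rational point $x_0\in W(k)$ there passes a two\nobreakdash-dimensional family of such lines (parametrised by the isotropic quadric surface of $Q$ restricted to $x_0^{\perp}/kx_0$), for each component of $Z$ the lines through $x_0$ meeting it away from $x_0$ form a proper closed subfamily, so a general line through $x_0$ lies in $W$, and an affine line satisfies strong approximation off $v_0$.

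The points I expect to require the most care are the last two: the bookkeeping of ranks and codimensions in the elimination of $\lambda,\mu$, and — more substantially — the passage from $\{Q=0\}\setminus\{0\}$ to $W$ by removing a codimension\nobreakdash-$\geq 2$ subset. The latter is not a formal consequence of strong approximation for $\{Q=0\}\setminus\{0\}$, since, unlike in affine space, two general points of the quadric cone are \emph{not} joined by a line contained in the cone; one must instead run the approximation argument place by place, sliding along a suitable line through each of the successive rational points it produces, and it is precisely here that the geometry of the cone (rather than abstract strong approximation) is needed.
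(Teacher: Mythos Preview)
In case~(i), your argument coincides with the paper's: eliminating $(\lambda,\mu)$ identifies~$W$ with the complement of a codimension~$\geq 2$ closed subset in an affine space, and Lemma~\ref{lem:strongapproxaffine} applies.

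In case~(ii), your approach diverges from the paper's, and the final step has a genuine gap. Your argument that the punctured cone $\{Q=0\}\setminus\{0\}$ satisfies strong approximation off any place --- as a homogeneous space of $\mathrm{Spin}(Q)$ with connected stabiliser and with trivial geometric Picard and Brauer groups --- is fine. The problem is the removal of~$Z$ by sliding along lines contained in the cone. The lines in $\{Q=0\}$ through a fixed rational isotropic vector~$x_0$ are parametrised by the projective quadric attached to the induced rank~$4$ form on $x_0^\perp/kx_0$; for there to exist even a single such line defined over~$k$ (other than $kx_0$, which passes through the excluded origin), this rank~$4$ form must be isotropic over~$k$, equivalently~$Q$ must have Witt index~$\geq 2$ everywhere locally. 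This fails whenever~$Q$ has signature $(1,5)$ or~$(5,1)$ at a real place. Concretely, take $k=\Q$, $n=3$, $k_i=\Q$, $L_1=\Q(\sqrt{2})$, $L_2=L_3=\Q(i)$, $(a_1,a_2,a_3)=(\tfrac12,0,1)$, $(b_1,b_2,b_3)=(2,1,1)$: the leftover form is $-N_{L_1/\Q}+N_{L_2/\Q}+N_{L_3/\Q}$, of signature~$(5,1)$ over~$\R$, yet $W(\R)\neq\emptyset$ (for instance $(\lambda,\mu)=(2,1)$ lifts to a real point). In this example the cone contains \emph{no} line through~$x_0$ other than~$kx_0$, so neither the direct transcription of Lemma~\ref{lem:strongapproxaffine} nor any ``place by place'' variant has a curve to slide along.

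The paper sidesteps this by working projectively first. It identifies~$W$ with the punctured affine cone over the complement of a codimension~$2$ closed subset in the smooth projective quadric $\{Q=0\}\subset\P^5$; the argument of Lemma~\ref{lem:strongapproxaffine} goes through on this open subset of the quadric once affine lines are replaced by smooth projective \emph{conics}, which join any two points of a smooth quadric of dimension~$\geq 2$ regardless of Witt index; and a result of Colliot-Th\'el\`ene and Xu \cite[Proposition~3.1]{ctx11} then transfers strong approximation from the base to the punctured cone over it.
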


\begin{proof}
When $\sum_{i=1}^n [k_i:k]\leq 2$, one checks that~$W$ is isomorphic to the complement
of a codimension~$2$ closed subset in an affine
space.  (Specifically, when $\sum_{i=1}^n [k_i:k]=2$, the projection
$W \to \prod_{i=1}^n R_{L_i/k}(\A^1_{L_i})$ is an open immersion and the complement of its
image has codimension~$2$.)
As a consequence, the variety~$W$ satisfies strong approximation off any place of~$k$
(see Lemma~\ref{lem:strongapproxaffine}), so that Corollary~\ref{cor:strongapproxconj} applies.
Suppose now $\sum_{i=1}^n [k_i:k]=3$ and $[L_i:k_i]=2$ for every~$i$.
In this case, just as in \cite[p.~392]{ctskodescent}, the variety~$W$ is the punctured affine cone
over the complement of a closed subset of codimension~$2$ in a smooth projective
quadric of dimension~$4$.  (The smooth projective quadric in question is the
Zariski closure of the image of~$W$ in the projective space of lines of the $6$\nobreakdash-dimensional vector
space $\prod_{i=1}^n R_{L_i/k}(\A^1_{L_i})$.)  The same argument as in the proof of
Lemma~\ref{lem:strongapproxaffine},
with smooth projective conics replacing affine lines, shows
that the complement of a codimension~$2$ closed subset in a smooth projective quadric satisfies strong approximation.
We may therefore apply \cite[Proposition~3.1]{ctx11} and deduce that~$W$ satisfies strong approximation off
any place of~$k$. Applying Corollary~\ref{cor:strongapproxconj} concludes the proof.
\end{proof}

\begin{rmk}
\label{rk:geomcrit}
In hindsight, one may reinterpret the arguments of~\textsection\ref{subsec:constructionc} as follows.
Let~$\A(V)$ denote the affine space whose underlying vector space is $V = H^0(C,\sO_C(c_1))$
and let $\P(V)$ denote the projective space of lines in~$\A(V)$.
Let~$W$ be the fiber, above~$0$, of the morphism
\begin{align*}
\left(\A(V) \setminus \{0\}\right) \times \prod_{m \in M} \left(R_{L_m/k}(\A^1_{L_m}) \setminus F_m\right) \to \prod_{m\in M} R_{k(m)/k}(\A^1_{k(m)})
\end{align*}
defined by $(h,(x_m)_{m\in M}) \mapsto (h(m)-N_{L_m/k(m)}(x_m))_{m\in M}$,
where~$F_m$ stands for the singular locus of
$R_{L_m/k}(\A^1_{L_m}) \setminus R_{L_m/k}(\mathbf{G}_{\mathrm{m},{L_m}})$.
Let $p:W\to \P(V)$ denote the map induced by the first projection.
Let $w_\A \in W(\A_k)$ be an adelic point whose image in $(\A(V) \setminus \{0\})(k_v)$,
for each $v \in S$, is the
function~$h_v$ of~\textsection\ref{subsec:constructionc}.
The choice of a splitting of~\eqref{eq:apprr} determines an isomorphism between~$W$ and the complement
of a codimension~$2$ closed subset in an affine space; therefore~$W$ satisfies strong approximation off any given place.
In particular, by the same argument as in
Corollary~\ref{cor:strongapproxconj}, one can approximate~$w_\A$ by an adelic point
of $p^{-1}(c)$ for some rational point~$c$ of~$\P(V)$.
This~$c$, viewed as an effective divisor on~$C$, is the divisor constructed in~\textsection\ref{subsec:constructionc}.
\end{rmk}

\subsubsection{Invariance under change of coordinates}

As a test for Conjectures~\ref{conj:mainstrong} and~\ref{conj:mainweak},
it is natural to ask whether
they are compatible
with linear changes of coordinates on~$\P^1_k$.  The following lemma
expresses such an invariance
property.
It will be used in the proofs of Theorems~\ref{thm:matthiesen} and~\ref{th:irving} below.
Its statement should be read as follows: given any $\alpha,\beta,\gamma,\delta$ which satisfy (i)--(iii),
Conjectures~\ref{conj:mainstrong} and~\ref{conj:mainweak} for~$k$, $n$, $k_i$, $L_i$, $a_i$, $b_i$, $S$ and $(t_v)_{v \in S}$
reduce to the same conjectures for~$k$, $n$, $k_i$, $L_i$, $a_i'$, $b_i'$, $S'$ and $(t_v')_{v \in S'}$.

\begin{lem}
\label{lem:changeofvariables}
Let~$k$, $n$, $k_i$, $L_i$, $a_i$, $b_i$, $S$ and $(t_v)_{v \in S}$ satisfy the hypotheses of Conjecture~\ref{conj:mainstrong}.
Let $\alpha,\beta,\gamma,\delta \in k$ be such that $\alpha\delta-\beta\gamma\neq 0$.
We make the following assumptions:
\begin{enumerate}
\item[(i)]
 $\gamma a_i +\delta\neq 0$
 for all~$i \in \{1,\dots,n\}$ and $\gamma t_v +\delta\neq 0$ for all $v \in S$;
\item[(ii)] $\alpha$ and~$\alpha a_i+\beta$ for $i \in \{1,\dots,n\}$ are units outside of~$S$;
\item[(iii)]
 for every $i \in \{1,\dots,n\}$
and every $v \in S$,
there exists $y_{i,v} \in (L_i\otimes_k k_v)^*$
such that
$(\alpha\delta-\beta\gamma)/(\gamma t_v+\delta)=
N_{L_i\otimes_kk_v/k_i\otimes_kk_v}(y_{i,v})$.
\end{enumerate}
Set $a_i'=(\alpha a_i+\beta)/(\gamma a_i + \delta)$
and $b_i'=b_i(\gamma a_i + \delta)$ for $i \in \{1,\dots,n\}$.
Let~$S'_0$ be the union of~$S$ with the set of finite places of~$k$
above which one of~$\alpha$, $\gamma$, $a_1',\dots,a_n'$
is not integral
or one of~$b_1',\dots,b_n'$ is not a unit.
Let~$S'$ be a finite set of places of~$k$ containing~$S'_0$.
Set~$t_v'=0$ for $v \in S' \setminus S$
and $t_v'=(\alpha t_v+\beta)/(\gamma t_v+\delta)$ for $v \in S$.
Finally, when given $t_0'\in k$ such that $\alpha\neq\gamma t_0'$,
we denote by~$t_0 \in k$
the unique solution to the equation $t_0'=(\alpha t_0 + \beta)/(\gamma t_0+\delta)$.
We then have:
\begin{enumerate}
\item for every $i \in \{1,\dots,n\}$ and every $v \in S'$, there exists $x_{i,v}'\in (L_i\otimes_k k_v)^*$
such that
$$b_i'(t_v'-a_i')=N_{L_i\otimes_k k_v/k_i\otimes_kk_v}(x_{i,v}')$$
in $k_i\otimes_k k_v$.
\item for $v \in S$, if~$t'_0$ is arbitrarily close to~$t'_v$,
then~$t_0$ is arbitrarily close to~$t_v$;
\item for $v \in S$,
if~$i$ denotes an element of $\{1,\dots,n\}$
and~$w$ denotes a real place of~$k_i$ dividing~$v$,
the sign of $(t'_0-a_i')(t'_v-a_i')(t_0-a_i)(t_v-a_i)$
at~$w$
is independent of~$w$ and of~$i$;
\item if $t'_0$ is arbitrarily close to $t'_v$ for $v \in S' \setminus S$,
then for every $i \in \{1,\dots,n\}$ and every finite place~$w$ of~$k_i$
such that $w(t_0-a_i)>0$,
either~$w$ lies above a place of~$S$, or~$w$ does not lie above a place of~$S'$
and $w(t_0'-a_i')>0$.
\end{enumerate}
\end{lem}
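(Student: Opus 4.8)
The plan is to derive everything from one elementary identity for Möbius transformations, the remaining content being careful valuation bookkeeping. Treating $t$ as an indeterminate, I would first record the computation
\[
(\alpha t+\beta)(\gamma a_i+\delta)-(\alpha a_i+\beta)(\gamma t+\delta)=(\alpha\delta-\beta\gamma)(t-a_i),
\]
which yields $t'-a_i'=(\alpha\delta-\beta\gamma)(t-a_i)/\bigl((\gamma t+\delta)(\gamma a_i+\delta)\bigr)$ and hence, using $b_i'=b_i(\gamma a_i+\delta)$, the master identity
\[
b_i'(t'-a_i')=b_i(t-a_i)\cdot\frac{\alpha\delta-\beta\gamma}{\gamma t+\delta}.
\]
Along the way I would also record: $\gamma a_i+\delta=(\alpha\delta-\beta\gamma)/(\alpha-\gamma a_i')$ (whence $b_i'=b_i(\alpha\delta-\beta\gamma)/(\alpha-\gamma a_i')$), the decomposition $\alpha\delta-\beta\gamma=\alpha(\gamma a_i+\delta)-\gamma(\alpha a_i+\beta)$, the fact that $a_i'\neq 0$ since $\alpha a_i+\beta$ is a unit by~(ii), and the inverse change of variables $t_0=(\delta t_0'-\beta)/(\alpha-\gamma t_0')$, which by symmetry satisfies $t_0-a_i=(\alpha\delta-\beta\gamma)(t_0'-a_i')/\bigl((\alpha-\gamma t_0')(\alpha-\gamma a_i')\bigr)$.

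For assertion~(1): when $v\in S$, substituting $t=t_v$ in the master identity, the scalar $(\alpha\delta-\beta\gamma)/(\gamma t_v+\delta)\in k_v^*$ is the norm of the element $y_{i,v}$ furnished by~(iii), while $b_i(t_v-a_i)$ is the norm of some $x_{i,v}$ by the standing hypothesis of Conjecture~\ref{conj:mainstrong} for the original data, so $x_{i,v}'=x_{i,v}y_{i,v}$ works. When $v\in S'\setminus S$ one has $t_v'=0$, so $b_i'(t_v'-a_i')=-b_i'a_i'=-b_i(\alpha a_i+\beta)$; since $v\notin S$, both $b_i$ and $\alpha a_i+\beta$ (the latter by~(ii)) are units at every place $w$ of $k_i$ above $v$, and $L_i/k_i$ is unramified above $v$, so $-b_i(\alpha a_i+\beta)$ is a unit of an unramified local extension at each such $w$, hence a norm. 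Assertions~(2) and~(3) are then continuity and sign computations. For~(2), the inverse map $t_0'\mapsto(\delta t_0'-\beta)/(\alpha-\gamma t_0')$ is continuous near $t_v'$ because $\alpha-\gamma t_v'=(\alpha\delta-\beta\gamma)/(\gamma t_v+\delta)\neq 0$ by~(i), so $t_0'$ close to $t_v'$ forces $t_0$ close to $t_v$. For~(3), I would evaluate the identity $t'-a_i'=(\alpha\delta-\beta\gamma)(t-a_i)/\bigl((\gamma t+\delta)(\gamma a_i+\delta)\bigr)$ at $t=t_0$ and at $t=t_v$ and multiply the two, together with $(t_0-a_i)(t_v-a_i)$: the product in question equals $(\alpha\delta-\beta\gamma)^2(t_0-a_i)^2(t_v-a_i)^2/\bigl((\gamma t_0+\delta)(\gamma t_v+\delta)(\gamma a_i+\delta)^2\bigr)$. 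At a real place $w$ of $k_i$ dividing $v$, the squared factors are nonnegative and $(\gamma a_i+\delta)^2>0$ by~(i), so the sign of the product equals that of the real number $(\gamma t_0+\delta)(\gamma t_v+\delta)$, which depends only on $v$.

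Assertion~(4) is the one genuinely delicate point and the main obstacle. Fix $i$ and a finite place $w$ of $k_i$ with $w(t_0-a_i)>0$, and let $v$ be the place of $k$ below it. If $v\in S$ the first alternative holds. If $v\in S'\setminus S$, I would rule this out: via the inverse transformation, $t_0-a_i$ is a continuous function of $t_0'$ whose value at $t_0'=t_v'=0$ is $-(\alpha a_i+\beta)/\alpha$, a $w$-adic unit by~(ii) since $v\notin S$; so choosing $t_0'$ close enough to $0$ at $v$ forces $w(t_0-a_i)=0$, a contradiction. If $v\notin S'$, then $\gamma a_i+\delta=b_i'/b_i$ is a $w$-adic unit (both $b_i'$ and $b_i$ being units at $w$, as $v\notin S'\supseteq S'_0$ and $v\notin S$); consequently $\gamma t_0+\delta\equiv\gamma a_i+\delta\pmod{w}$ is a $w$-adic unit too (using $w(\gamma)\ge 0$, valid as $v\notin S'_0$, together with $w(t_0-a_i)>0$), and $\alpha\delta-\beta\gamma=\alpha(\gamma a_i+\delta)-\gamma(\alpha a_i+\beta)$ is $w$-integral by~(ii); the master identity then gives $w(t_0'-a_i')=w(\alpha\delta-\beta\gamma)+w(t_0-a_i)\ge w(t_0-a_i)>0$, which is the second alternative. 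Assembling~(1) (the hypotheses of Conjecture~\ref{conj:mainstrong} for the transformed data), (2) and~(3) (the approximation and sign conditions at places of $S$), and~(4) together with condition~(2) of Conjecture~\ref{conj:mainstrong}, resp. Conjecture~\ref{conj:mainweak}, applied to the output for the transformed data, then yields the stated reduction. The only real work is the valuation bookkeeping in~(4), which goes through precisely because hypotheses~(ii), (iii) and the definition of $S'_0$ are designed to keep $\gamma a_i+\delta$, $\gamma t_0+\delta$ and $\alpha\delta-\beta\gamma$ under control at the places outside $S$.
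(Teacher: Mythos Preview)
Your proof is correct and follows essentially the same approach as the paper's: both hinge on the identity $b_i'(t'-a_i')=b_i(t-a_i)\cdot(\alpha\delta-\beta\gamma)/(\gamma t+\delta)$, which the paper writes equivalently as $b_i'(t_0'-a_i')=b_i(t_0-a_i)(\alpha-\gamma t_0')$, and the arguments for (1), (2), (3) and the $v\in S'\setminus S$ case of (4) are identical. The only minor difference is in the $v\notin S'$ case of (4): you compute $w(t_0'-a_i')$ directly after checking that $\gamma t_0+\delta$ is a $w$-unit and $\alpha\delta-\beta\gamma$ is $w$-integral, whereas the paper argues by contradiction from $w(t_0'-a_i')\le 0$; both arguments use the same integrality hypotheses coming from~(ii) and the definition of~$S'_0$.
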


\begin{proof}
It is straightforward to check~(2) and~(3),
as well as~(1) for $v \in S$ by setting $x_{i,v}' = y_{i,v}x_{i,v}$.
To check~(1) for $v \in S' \setminus S$,
we note that above such~$v$, the element $b_i'(t'_v-a_i')=-a_i'b_i'=-(\alpha a_i+\beta)b_i$
is a unit, by~(ii), and is therefore a norm from $L_i \otimes_k k_v$ since~$L_i/k_i$
is unramified above~$v$.  We now turn to~(4) and fix~$i$ and a finite place~$w$
of~$k_i$ such that~$w(t_0-a_i)>0$.
It follows from~(ii) that~$w$ cannot lie
above a place of~$S'\setminus S$, as~$t_0-a_i$ would then have to be a unit at~$w$,
being arbitrarily close to $-\beta/\alpha-a_i=-(\alpha a_i+\beta)/\alpha$.
Thus we may assume that~$w$ lies above a place of~$S'$,
so that $w(b_i)=w(b_i')=0$ and $w(\alpha)\geq 0$, $w(\gamma)\geq 0$, $w(a_i')\geq 0$.
A simple computation shows that
\begin{align*}
b_i'(t_0'-a_i')=b_i(t_0-a_i)(\alpha-\gamma t_0')\rlap{\text{.}}
\end{align*}
As $w(t_0-a_i)>0$,
we deduce that if $w(t_0'-a_i')\leq 0$, then $w(\alpha-\gamma t_0')<0$, which implies that $w(t_0'-a_i')=w(t_0') \leq w(\gamma t_0')=w(\alpha-\gamma t_0')$, a contradiction.
\end{proof}

\subsubsection{Additive combinatorics}
\label{subsubsec:additivecomb}

The most significant result towards Conjecture~\ref{conj:mainstrong} is the following
theorem,
whose proof relies on the methods initiated by Green and Tao.

\begin{thm}[Matthiesen~\cite{matthiesen}]
\label{thm:matthiesen}
Conjecture~\ref{conj:mainstrong} holds
when $k=k_1=\dots=k_n=\Q$.
\end{thm}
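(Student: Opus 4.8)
The plan is to deduce the statement from Matthiesen's additive‑combinatorial work \cite{matthiesen}, after the elementary manipulations recorded in Remark~\ref{rk:conj}.

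\emph{Step 1: normalisation and reformulation.} By Remark~\ref{rk:conj}~(ii) I may assume $t_v=0$ for all $v\in S$ and $a_i\neq 0$ for all~$i$, and by Remark~\ref{rk:conj}~(iii) I may enlarge~$S$ freely; after a rational change of variable I may also assume the~$a_i$ are distinct $S$‑integers and the~$b_i$ are $S$‑units. Since $P_i(t)=t-a_i$ is linear when $k_i=\mathbf{Q}$, condition~(2) of Conjecture~\ref{conj:mainstrong} concerns only the prime factorisation of $t_0-a_i$: for a prime $p\notin S$ — hence unramified in~$L_i$ — the field~$L_i$ has a place of degree~$1$ over~$p$ exactly when $\mathrm{Frob}_p$ fixes a point of $\mathrm{Hom}_{\mathbf{Q}}(L_i,\overline{\mathbf{Q}})$, i.e. exactly when $p$ lies in a \emph{Frobenian} set $\mathcal{P}_i$ of primes of density $\delta_i>0$ (it contains the primes splitting completely in~$L_i$, whence $\delta_i\geq 1/[\widetilde{L_i}:\mathbf{Q}]$). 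Writing a prospective $t_0$ as $a/b$ with~$b$ a suitably chosen fixed integer, condition~(1) becomes: $a$ lies in a prescribed congruence class modulo an integer~$M$ supported on~$S$ and on~$b$, and $a/b$ lies in a fixed real interval around~$0$; condition~(2) becomes: every prime factor of $\ell_i(a):=a-a_ib$ not dividing~$M$ lies in~$\mathcal{P}_i$.

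\emph{Step 2: the count.} It suffices to exhibit, for~$X$ large, an integer~$a$ in a long interval (cut out by the archimedean condition), in the prescribed class modulo~$M$, such that each $\ell_i(a)$ is — away from~$M$ — a product of primes of~$\mathcal{P}_i$. One counts such~$a$ by a weighted sum $\sum_a\prod_{i=1}^n w_i(\ell_i(a))$, where $w_i$ detects integers all of whose prime factors outside~$M$ lie in~$\mathcal{P}_i$; the expected main term has the shape $\mathfrak{S}\,X(\log X)^{\sum_i\delta_i-n}$, where the singular series $\mathfrak{S}$ factors into local densities. The archimedean factor is positive since we work in a genuine interval; the factor at $p\in S$ is positive precisely because the hypothesis that $b_i(t_v-a_i)$ is a norm from $(L_i\otimes_{\mathbf{Q}}\mathbf{Q}_v)^*$, together with openness of the local norm subgroup, makes the local constraint satisfiable (indeed $a\equiv b\,t_v$ works locally); the factors at $p\notin S$ are positive because each~$\mathcal{P}_i$ is a non‑empty Frobenian set and the congruence constraints are otherwise generic. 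As the~$a_i$ are distinct, the forms $\ell_1,\dots,\ell_n$ in the variable~$a$ are pairwise non‑proportional, hence of finite complexity, so the Green--Tao--Ziegler circle of results on linear patterns in primes is applicable.

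\emph{Step 3: the analytic input, and the main obstacle.} The hard point is the asymptotic evaluation of the weighted sum of Step~2. Following the Green--Tao method, one introduces for each~$i$ a Selberg‑type majorant for the $\mathcal{P}_i$‑numbers, verifies that the product weight satisfies the linear‑forms and correlation conditions of a pseudorandom enveloping sieve, decomposes each~$w_i$ into a ``main'' component modelled on a von Mangoldt function restricted to~$\mathcal{P}_i$ plus a remainder of small Gowers norm, and controls the mixed terms via the inverse theorem for the Gowers norms and the equidistribution theory of nilsequences. Establishing the requisite Gowers‑uniformity of the sieved indicator of a Frobenian set of primes, and assembling the local densities into a positive singular series, is exactly the substance of~\cite{matthiesen}, which builds on the treatment of norm forms in one variable in Browning--Matthiesen~\cite{browningmatthiesen}. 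Granting that input, $\mathfrak{S}>0$ forces the count to be positive for large~$X$, yielding the desired~$t_0$, and the theorem follows.
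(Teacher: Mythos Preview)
Your outline identifies the correct analytic input and the correct preliminary reductions, but it diverges from the paper's argument in how it packages the appeal to~\cite{matthiesen}, and this divergence creates two soft spots.

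First, the paper does not reduce to a one-variable problem with a fixed denominator~$b$. Instead, after a scalar change of variables via Lemma~\ref{lem:changeofvariables} (with $(\alpha,\beta,\gamma,\delta)=(r^{2N},0,0,r^N)$ for $r=\prod_{p\in S}p$ and~$N$ a large multiple of $\prod_i[L_i:\Q]$) to force integrality, and a weak/strong approximation step to produce an integral starting vector $(\lambda_1,\mu_1)\in\Z^2$ close to $(t_v,1)$ at all places of~$S$, the paper applies Matthiesen's theorem directly to the \emph{binary} linear forms $f_i(\lambda,\mu)=b_i(\lambda-a_i\mu)$. This is the shape in which~\cite{matthiesen} is stated, and it avoids the awkwardness of your Step~2: with~$b$ genuinely fixed, the archimedean window $|a/b|<\varepsilon$ has bounded length, so there is no ``$X\to\infty$'' unless you let~$b$ grow---at which point~$b$ is no longer fixed and the constant terms~$a_ib$ in your forms~$\ell_i$ drift with the parameter.

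Second, and more substantively, the black box you invoke from~\cite{matthiesen} is not quite the one that is there. Matthiesen's theorem, as used in the paper, produces $(\lambda_0,\mu_0)\in\Z^2$ such that each $b_i(\lambda_0-a_i\mu_0)$ is \emph{squarefree outside~$S$ and the norm of an integral element of~$L_i$}. The degree-$1$-place conclusion~(2) of Conjecture~\ref{conj:mainstrong} is then deduced at the very end: if $p\notin S$ divides such a value, squarefreeness forces $v_p=1$, and a local integral norm of valuation~$1$ at an unramified prime forces a degree-$1$ place of~$L_i$ over~$p$. Your reformulation in terms of ``$\mathcal P_i$-friable'' values is a natural target, but it is not the statement Matthiesen proves; the norm-form formulation is precisely what makes the multiplicative-function machinery of~\cite{matthiesen} (built on~\cite{browningmatthiesen}) applicable. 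Your Step~3 sketch of the Green--Tao mechanism is reasonable as a heuristic, but pointing it at indicators of Frobenian-friable integers rather than at the representation function of a norm form is a separate theorem, not literally contained in the cited reference.
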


\begin{proof}
By Remarks~\ref{rk:conj}~(ii) and~(iii),
we may assume that~$a_i$ is a unit outside~$S$
for all~$i$.
For $N \in \N$,
let
$(\alpha,\beta,\gamma,\delta)=(r^{2N},0,0,r^{N})$,
where~$r$ denotes the product of the primes belonging to~$S$.
Letting~$N$ be a large enough multiple of~$\prod_{i=1}^n [L_i:\Q]$,
we may assume, by Lemma~\ref{lem:changeofvariables},
that $a_i,b_i\in \Z$ for all $i\in\{1,\dots,n\}$, that $v(t_v)\geq 0$ for every prime $v \in S$, that $b_i(t_v-a_i)$ is the norm of an integral element of $L_i \otimes_\Q \Q_v$
for every $i \in \{1,\dots,n\}$ and every prime $v \in S$
and that~$S$ is large enough for the main theorem of~\cite{matthiesen} to be applicable.
Let $t_2 \in \Q$ be arbitrarily close to~$t_v$ for~$v \in S$.
We write $t_2=\lambda_2/\mu_2$ with $\lambda_2,\mu_2 \in \Z$.
Using strong approximation, we choose
$\alpha \in \Q$
arbitrarily close to~$1/\mu_2$ at the finite places of~$S$,
integral outside~$S$ and such that $\alpha\mu_2>0$.

The pair
 $(\lambda_1,\mu_1)=(\alpha\lambda_2,\alpha\mu_2)$
 belongs to~$\Z^2$.
It lies arbitrarily close to $(t_v,1)$ at the finite places of~$S$.
Moreover~$\lambda_1/\mu_1$ is arbitrarily close to~$t_v$ at the real place~$v$
of~$\Q$, and $b_i(\lambda_1-a_i\mu_1)$ is a local integral norm from~$L_i$
at the finite places of~$S$ and is a local norm from~$L_i\otimes_\Q\R$, for all~$i$.
Matthiesen's theorem~\cite{matthiesen}
applied
to the linear forms $f_i(\lambda,\mu)=b_i(\lambda-a_i\mu) \in \Z[\lambda,\mu]$
for $i \in \{1,\dots,n\}$
and
to the vector $(\lambda_1,\mu_1) \in \Z^2$
therefore produces
a pair $(\lambda_0,\mu_0)\in \Z^2$ arbitrarily close to~$(\lambda_1,\mu_1)$
at the finite places of~$S$, such that~$\lambda_0/\mu_0$ is arbitrarily close
to~$\lambda_1/\mu_1$ at the real place
and such that $b_i(\lambda_0-a_i\mu_0)$ is squarefree outside~$S$
and is the norm of an integral element of~$L_i$, for all~$i$.
The rational number $t_0=\lambda_0/\mu_0$ then satisfies all of the desired conclusions.
Indeed, if~$v$ is a prime not in~$S$ such that $v(t_0-a_i)>0$ for some~$i$, then,
as $v(b_i)=0$ and $v(\mu_0)\geq 0$, we must have $v(b_i(\lambda_0-a_i\mu_0))=1$,
which implies that~$L_i$ possesses a place of degree~$1$ over~$v$
since $b_i(\lambda_0-a_i\mu_0)$ is a local integral norm from~$L_i$ at~$v$
and~$L_i/\Q$ is unramified at~$v$.
\end{proof}

\subsubsection{Sieve methods}

Reformulating the work of Irving~\cite{irving}, which rests on sieve methods,
yields the first known case of Conjecture~\ref{conj:mainweak} for which $\sum_{i=1}^n [k_i:k]\geq 4$
and~$k_i\neq k$ for some~$i$ (here the extensions $L_i/k_i$ are
once again almost abelian).

\begin{thm}
\label{th:irving}
Let~$K/\Q$ be a cubic extension.
Let~$q \geq 7$ be a prime number.
If~$K$ has a unique real place and does not embed into
the cyclotomic field~$\Q(\zeta_q)$, then
Conjecture~\ref{conj:mainweak} holds for $n=2$, $k=\Q$, $k_1=K$,
$L_1=K(2^{1/q})$,
$k_2=\Q$ and $L_2=\Q(2^{1/q})$.
\end{thm}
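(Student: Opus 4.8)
The plan is to deduce the statement from a reformulation of Irving's sieve-theoretic theorem, after translating the Brauer--Manin-free assertion of Conjecture~\ref{conj:mainweak} into a statement about the prime factorisation of the single polynomial $P(t)=P_1(t)(t-a_2)$, where $P_1\in\Q[t]$ is the monic cubic cutting out $K$ (with $a_1$ the image of $t$ in $K=\Q[t]/(P_1)$) and $P_2(t)=t-a_2$ with $a_2\in\Q$. First I would invoke Remark~\ref{rk:conj}~(ii)--(iii) and Lemma~\ref{lem:changeofvariables} to normalise the data: enlarge $S$ so that $q\in S$ and so that $L_1/K$ and $L_2/\Q$ are unramified and $a_1,a_2$, $b_1,b_2$ are $S$-integral (resp.\ $S$-units) outside $S$, and perform a change of coordinate $t\mapsto t_0/\alpha$ with $\alpha$ a large power of $\prod_{v\in S}v$, exactly as in the proof of Theorem~\ref{thm:matthiesen}, so that it suffices to produce an \emph{integer} $t_0$ (in the new coordinate) lying in a prescribed arithmetic progression modulo a fixed integer, lying in the real interval or half-line cut out by the archimedean sign condition~(1$'$) — whose length, after the rescaling by $\alpha$, we may take as large as we wish — and such that the factorisation of $P(t_0)$ away from $S$ is suitably constrained.

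The heart of the translation is an elementary local computation. Let $v\notin S$ be a prime and $w$ a place of $k_i$ above $v$ with $w(t_0-a_i)>0$; then $v\mid N_{k_i/\Q}(t_0-a_i)$, i.e.\ $v\mid P(t_0)$, and since $v\notin S$ we have $v\ne q$ and $v$ unramified in $L_i$, so $L_i\otimes_{k_i}(k_i)_w=(k_i)_w[x]/(x^q-2)$ has a factor of residue degree $1$ over $w$ if and only if $x^q-2$ has a root in the residue field $\F_{v^f}$ of $w$, where $f\in\{1,2,3\}$ is the residue degree of $w$ over $v$. This always holds when $q\nmid v-1$: indeed $2$ is then a $q$th power in $\F_v^*$, and if $q\nmid v^f-1$ every element of $\F_{v^f}^*$ is a $q$th power, while if $q\mid v^f-1$ one has $(v-1)\mid(v^f-1)/q$, so a $q$th power in $\F_v^*$ remains a $q$th power in $\F_{v^f}^*$. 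When $q\mid v-1$ it holds for every such $w$ if and only if $2$ is a $q$th power residue modulo $v$, because in $\F_{v^f}$ one has $2^{(v^f-1)/q}=\bigl(2^{(v-1)/q}\bigr)^{1+v+\dots+v^{f-1}}=\bigl(2^{(v-1)/q}\bigr)^{f}$ and $0<f<q$. Hence condition~(2) of Conjecture~\ref{conj:mainweak} for $t_0$ is equivalent to: every prime $v\notin S$ dividing $P(t_0)$ possesses a place of degree $1$ in $\Q(2^{1/q})$ — equivalently, no prime $v\notin S$ dividing $P(t_0)$ satisfies $v\equiv1\pmod q$ while failing to split completely in $\Q(\zeta_q,2^{1/q})$.

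Finally I would feed this into Irving's theorem. The hypotheses on the input data match up: the approximation conditions at the finite places of $S$ together with the local norm hypotheses $b_i(t_v-a_i)\in N_{L_i\otimes_kk_v/k_i\otimes_kk_v}\bigl((L_i\otimes_kk_v)^*\bigr)$ furnish the local solubility at the primes of $S$ that a sieve requires, and — as one checks — the archimedean instances of these norm hypotheses impose no constraint, so that~(1$'$) is achieved simply by choosing $t_0$ in the above interval, which is non-empty as it contains $t_v$; the hypothesis that $K$ has a unique real place makes $K$ a complex cubic field, and the hypothesis that $K$ does not embed into $\Q(\zeta_q)$ keeps the cubic factor $P_1$ Chebotarev-independent from the splitting data of $2^{1/q}$, these being exactly the conditions under which Irving's sieve applies; and $q\ge 7$ is precisely the range in which the density $1/q$ of the forbidden local behaviour is small enough for his estimates to deliver solutions. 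Irving's theorem then yields an integer $t_0$ in the prescribed progression and interval for which every prime factor of $P(t_0)=P_1(t_0)(t_0-a_2)$ outside $S$ has a degree-$1$ place in $\Q(2^{1/q})$, which by the previous paragraph is exactly what Conjecture~\ref{conj:mainweak} asks for. The main obstacle is not the sieve, which is used as a black box, but the bookkeeping of this local-to-global dictionary: verifying the uniformity of the degree-$1$-place criterion over the residue degrees $f\in\{1,2,3\}$ and over $i\in\{1,2\}$, matching the adelic hypotheses of the conjecture with the congruence, sign and local-solubility hypotheses of Irving's theorem, and arranging the rescaling so that the archimedean interval is long enough for the sieve to run.
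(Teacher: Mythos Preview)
Your local computation relating the degree-$1$-place condition for $L_i/k_i$ to the $q$-th power residue symbol of~$2$ modulo~$v$ is correct and cleanly done, and the overall plan of translating Conjecture~\ref{conj:mainweak} into a constraint on the prime factorisation of a degree-$4$ value is the right idea. But the proof has a real gap at the point where you invoke Irving.

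Irving's theorem is not a statement about integer values of a single-variable polynomial. It is a two-variable sieve result: one fixes a binary cubic form $f(x,y)$ satisfying certain real conditions and produces a pair of positive integers $(x_0,y_0)$, congruent to a prescribed pair at the primes of~$S$, such that $y_0\,f(x_0,y_0)$ has no prime factor $p\equiv 1\pmod q$ outside~$S$. The second variable is essential for the sieve to succeed; a single-parameter search over integers~$t_0$ with $P_1(t_0)(t_0-a_2)$ avoiding bad primes is a much harder problem and is not what Irving proved. Your appeal to the rescaling in the proof of Theorem~\ref{thm:matthiesen} does not help here: that rescaling only arranges integrality of the data, after which Matthiesen's theorem is still applied to a \emph{pair} $(\lambda_0,\mu_0)\in\Z^2$, not to a single integer. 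The paper's proof accordingly builds the binary cubic form
\[
f(x,y)=c^q\,N_{K/\Q}\!\Big(b_1\big((a_2-a_1)x+\tfrac{1}{b_2}y\big)\Big)
\]
and recovers the rational number~$t_0$ from the relation $y_0/x_0=b_2(t_0-a_2)$, so that $y_0$ carries the linear factor and $f(x_0,y_0)$ carries the cubic norm factor.

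There is a second, related omission at the archimedean place. Irving's sieve requires the real roots of $f(x,1)$ to be negative and outputs \emph{positive} $(x_0,y_0)$; matching this with the sign condition~(1$'$) of Conjecture~\ref{conj:mainweak} is not automatic. The paper handles this by a preliminary Möbius change of coordinates (Lemma~\ref{lem:signchangeirving}, via Lemma~\ref{lem:changeofvariables} with $\alpha=1$, $\gamma=-1$) which forces $b_2t_\infty<b_2a_1<b_2a_2$ at the unique real place of~$K$; this simultaneously ensures Irving's real hypothesis and, after the substitution $y_0/x_0=b_2(t_0-a_2)$ with $x_0,y_0>0$, the sign condition~(1$'$). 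Your claim that ``the archimedean interval is non-empty and can be made long by rescaling'' does not address either of these points.
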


\begin{proof}
Let~$a_1$, $a_2$, $b_1$, $b_2$, $S$ and $(t_v)_{v \in S}$ satisfy the hypotheses of Conjecture~\ref{conj:mainweak}.
When~$v$ is the real place of~$\Q$, we also denote~$t_v$ by~$t_\infty$.

\begin{lem}
\label{lem:signchangeirving}
By a change of coordinates on~$\P^1_k$, we may assume, in order to prove Theorem~\ref{th:irving}, that
\begin{align}
\label{eq:irvinequalities}
b_2 t_\infty<b_2 a_1<b_2a_2
\end{align}
at any real place of~$K$.
\end{lem}


\begin{proof}
We choose $\beta \in \Q$ such that $b_2(a_2+\beta)>0$.
After enlarging~$S$, we may assume,
by Remark~\ref{rk:conj}~(iii),
that~$a_1+\beta$ and~$a_2+\beta$
are units outside~$S$.
Then, using weak approximation, we let $\delta \in \Q$ be arbitrarily large at the finite places of~$S$,
arbitrarily close to (but distinct from)~$a_2$ at the real place of~$\Q$, and such that $\delta-a_2$
has the same sign as
$(t_\infty-a_1)(t_\infty-a_2)(a_2-a_1)$
at the unique real place of~$K$.
By choosing~$\delta$ large enough at the finite places of~$S$, we may assume
that
the image of $(\delta+\beta)/(\delta-t_v)$ in $k_i \otimes_\Q \Q_v$ is a norm from $(L_i\otimes_\Q \Q_v)^*$ for all~$i$ and all $v \in S$ (at the real place, this condition is empty).
Setting $\alpha=1$ and~$\gamma=-1$,
all of the hypotheses of Lemma~\ref{lem:changeofvariables} are now satisfied.
In the notation of that lemma, it is straightforward to check that $b_2'(t_\infty'-a_1')<0<b_2'(a_2'-a_1')$ at the
real place of~$K$.
We may therefore replace~$a_i$, $b_i$, $S$ and $(t_v)_{v \in S}$
with~$a_i'$, $b_i'$, $S'$ and~$(t'_v)_{v\in S'}$ and assume that~\eqref{eq:irvinequalities}
holds.
\end{proof}

Let~$c$ be a nonzero integer such that
the binary cubic form
\begin{align}
f(x,y)=c^q N_{K/\Q}\Bigg(\mkern-.5mub_1\mkern-.5mu\bigg((a_2-a_1)x+\frac{1}{b_2}y\bigg)\Bigg)
\end{align}
has integral coefficients and the coefficient of~$x^3$ is positive.
By Remark~\ref{rk:conj}~(iii), we may assume, after enlarging~$S$, that~$a_1$ is integral outside~$S$,
that the coefficients of~$x^3$ and of~$y^3$ and the discriminant of the polynomial~$f(x,1)$
are units outside~$S$ and that~$S$ contains all of the primes less than or equal to
the constant denoted by~$P_1$ in~\cite{irving} (see the end of~\textsection2).

Let $\tau \in \Q$ be arbitrarily close to~$t_v$ at the finite places of~$S$.  We choose
integers $\lambda,\mu$ such that
\begin{align}
\label{eq:irv2}
\frac{\mu}{\lambda}= b_2(\tau-a_2)
\end{align}
and such that~$\lambda$ is a $q$\nobreakdash-th power.  We then have
\begin{align}
\label{eq:irv1}
\frac{f(\lambda,\mu)}{\lambda^3} = c^q N_{K/\Q}(b_1(\tau-a_1))\rlap{\text{.}}
\end{align}
The hypothesis of Conjecture~\ref{conj:mainweak}
implies that $b_i(\tau-a_i)$ is a local norm in $L_i/k_i$ at the places of~$S$
for $i\in\{1,2\}$.
As~$\lambda$ is a $q$\nobreakdash-th power,
it follows, in view of~\eqref{eq:irv2} and \eqref{eq:irv1},
that~$\mu$ and~$f(\lambda,\mu)$
are local norms in $\Q(2^{1/q})/\Q$ at the places of~$S$.
Note, furthermore, that the real roots of the polynomial $f(x,1)$ are negative,
by the assumption made in Lemma~\ref{lem:signchangeirving}.
We are therefore in the situation considered in \cite[\textsection3--6]{irving}
(the integers $a_0$, $b_0$ of \emph{loc.\ cit.}\ being our $\lambda$, $\mu$).
Irving proves in~\cite{irving} the existence of positive integers $x_0$, $y_0$
with $(x_0,y_0)$ arbitrarily close to~$(\lambda,\mu)$ at the finite places of~$S$,
such that $y_0f(x_0,y_0)$ has no prime factor $p\notin S$ with $p\equiv 1 \text{ mod } q$.
In particular~$2$ is a $q$\nobreakdash-th power in~$\Fp$
for any prime factor~$p\notin S$ of $y_0f(x_0,y_0)$;
hence,
for any such~$p$ and any place~$w$ of~$K$ dividing~$p$,
the number field~$L_1$ (resp.~$L_2$)
possesses a place of degree~$1$ over~$w$
(resp.~over~$p$).

The unique solution~$t_0 \in \Q$ to the equation
\begin{align}
\label{eq:irvt2}
\frac{y_0}{x_0}=b_2(t_0-a_2)
\end{align}
is arbitrarily close to~$t_v$ for $v \in S \cap \Omega_f$ and satisfies
\begin{align}
\label{eq:irvt1}
\frac{f(x_0,y_0)}{x_0^3} = c^q N_{K/\Q}(b_1(t_0-a_1))\rlap{\text{.}}
\end{align}
As~$x_0$ and~$y_0$ are positive, we deduce from~\eqref{eq:irvinequalities}
and~\eqref{eq:irvt2} that
 $b_2t_\infty<b_2 a_1<b_2a_2<b_2t_0$.
Assertion~(1') of Conjecture~\ref{conj:mainweak} follows.
For any finite place $v\notin S$ such that $v(t_0-a_2)>0$, we have $v(y_0)>0$,
so that~$L_2$ possesses a place of degree~$1$ over~$v$.
Let now~$w$ be a finite place of~$K$ above a place $v \notin S$ of~$\Q$,
such that $w(t_0-a_1)>0$.
We remark that~$t_0$ must be integral at~$p$ since~$a_1$ is integral above~$p$.
As a consequence~$t_0-a_1$ is integral at every place of~$K$ dividing~$v$.
It follows that $v(N_{K/\Q}(b_1(t_0-a_1)))>0$ and hence $v(f(x_0,y_0))>0$,
so that~$L_1$ possesses a place of degree~$1$ over~$w$ according to the previous paragraph.
\end{proof}

\subsection{Consequences for rational points in fibrations}
\label{subsec:consequencesrationalpoints}

All of the results on rational points contained in this paper
will be deduced from the following theorem.
We adopt Poonen's notation
$V(\A_k)_{\smash{\bullet}}=\prod_{v \in \Omega_f} V(k_v) \times \prod_{v \in \Omega_\infty} \pi_0(V(k_v))$
for any proper variety~$V$ over a number field~$k$
(see~\cite[\textsection2]{stoll}).

\begin{thm}
\label{th:ratpointsmain}
Let~$X$ be a smooth, irreducible variety over a number field~$k$, endowed with a morphism
$f:X \to \P^1_k$ with geometrically irreducible generic fiber.  Assume that every fiber of~$f$ contains
an irreducible component of multiplicity~$1$.
Let $U \subseteq \P^1_k$ be a dense open subset over which the fibers of~$f$ are split,
with $\infty\in U$.
Let $B \subseteq \Br(f^{-1}(U))$ be a finite subgroup.
Let $(x_v)_{v \in \Omega} \in X(\A_k)$
be orthogonal to $(B+f_\eta^*\mkern.5mu\Br(\eta))\cap\Br(X)$ with respect to the Brauer--Manin pairing.

Let~$M'$ be a nonempty subset of~$\P^1_k\setminus U$ containing all of the points
above which the fiber of~$f$ is not split,
large enough that
for each $m \in M'' = (\P^1_k \setminus U) \setminus M'$,
the map
\begin{align}
\label{eq:mapresidue}
\Br\mkern-1.5mu\left(\P^1_k \setminus (M' \cup \{m\})\right) \longrightarrow H^1(k(m),\Q/\Z)
\end{align}
which sends a class to its residue at~$m$
is surjective.
Let $P_1,\dots,P_n\in k[t]$
denote the irreducible monic polynomials which vanish at the points of~$M'$.
Assume Conjecture~\ref{conj:mainstrong} for~$P_1,\dots,P_n$
(resp.~assume Conjecture~\ref{conj:mainweak}
for~$P_1,\dots,P_n$
and assume that either~$M''=\emptyset$ and~$f$ is proper or~$k$ is totally imaginary).

Then there exist $c \in U(k)$ and
$(x'_v)_{v \in \Omega} \in X_c(\A_k)$
such that~$X_c$ is smooth
and~$(x'_v)_{v \in \Omega}$
is orthogonal to~$B$ with respect to the Brauer--Manin pairing
and is arbitrarily
close to~$(x_v)_{v \in \Omega}$ in $X(\A_k)$ (resp.~in~$X(\A_k)_{\smash{\bullet}}$).
\end{thm}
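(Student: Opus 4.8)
The plan is to transpose the proof of Theorem~\ref{th:existenceresult} from \textsection\ref{sec:mainexistenceresult} to the setting of rational points over $\P^1_k$, substituting Conjecture~\ref{conj:mainstrong} (resp.\ Conjecture~\ref{conj:mainweak}) for the use of strong approximation made there through Lemma~\ref{lem:strongapprox}. First I would carry out the standard reductions: choose a model of $X$ and of $f$ over $\sOint_S$ for a large enough finite $S\supseteq\Omega_\infty$, enlarge $S$ so that the usual good-reduction and Lang--Weil statements hold, and — in the case where Conjecture~\ref{conj:mainstrong} is assumed and $M''\neq\emptyset$ — use the surjectivity of~\eqref{eq:mapresidue} to subtract from a finite generating set of $B$ suitable vertical classes pulled back from $\P^1_k\setminus(M'\cup\{m\})$ for $m\in M''$, thereby reducing to $U=\P^1_k\setminus M'$ with $B$ having no residues over $M''$. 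In the Conjecture~\ref{conj:mainweak} branch either $M''=\emptyset$ already or $k$ is totally imaginary, so that the archimedean places give no trouble in the reduction or in the final Brauer-class evaluation. Throughout, $\infty\in U$ is preserved, as it will be used for a normalisation.

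Next, for each $m\in M'$ I would run the vertical-Brauer-group construction of \textsection\ref{subsec:reduction}: writing $X_m=\sum_i e_{m,i}X_{m,i}$ with $\gcd_i e_{m,i}=1$, choosing finite extensions $E_{m,i}/k(X_{m,i})$ killing the residues of $B$ along $X_{m,i}$, and letting $L_m/k(m)$ be a finite Galois extension into which the algebraic closures $L_{m,i}$ of $k(m)$ in $E_{m,i}$ all embed. This produces the groups $\Br_+(\P^1_k)$ and $\Pic_+(\P^1_k)$ attached to $M'$ and the $L_m$, together with the inclusion $(B+f_\eta^*\mkern.5mu\Br(\eta))\cap\Br(X)\subseteq B+f^*\Br_+(\P^1_k)$ and the finiteness of this group modulo $f^*\Br(k)$, exactly as in Lemma~\ref{lem:inclusionfiniteness}. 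Picking a finite subgroup $\Lambda\subseteq\Br_+(\P^1_k)$ with $\Br_+(\P^1_k)=\Lambda+\Br(k)$ and applying Harari's formal lemma (\textsection\ref{sec:formallemma}) to $B+f^*\Lambda\subseteq\Br(f^{-1}(U))$, I obtain an adelic point $(x^1_v)_{v\in\Omega}\in f^{-1}(U)(\A_k)$, arbitrarily close to $(x_v)_{v\in\Omega}$ at the places of $S$, orthogonal to $B$ and to $f^*\Lambda$, hence — using global reciprocity for the $\Br(k)$\nobreakdash-part — orthogonal to $B+f^*\Br_+(\P^1_k)$.

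Now comes the arithmetic-duality step, which I expect to be the crux. Set $P_v=f(x^1_v)\in U(k_v)\subseteq(\P^1_k\setminus M')(k_v)$. By functoriality of the Brauer pairing, orthogonality of $(x^1_v)_v$ to $f^*\Br_+(\P^1_k)$ says that the element of $\PicplusA(\P^1_k)$ defined by $(P_v)_v$ is orthogonal to $\Brplus(\P^1_k)$ for~\eqref{eq:pairingbmplus}; its image in $\Pic_\A(\P^1_k)$ is the image of $1\in\Z=\Pic(\P^1_k)$, so Theorem~\ref{th:arithduality} produces a degree\nobreakdash-$1$ class $\bar c\in\Picplus(\P^1_k)$ mapping to it. Writing $\bar c=[c^0]+\delta((\gamma_m)_m)$ for a rational point $c^0\in U(k)\cap\A^1_k$ and $(\gamma_m)\in\bigoplus_m k(m)^*$ as in~\eqref{seq:picp}, and normalising the relevant rational functions on $\P^1_k$ so that they take the value $1$ at $\infty$ (this is where $\infty\in U$ is used, and it is what removes the $k_v^*$\nobreakdash-scalar ambiguity), one unwinds the equality of classes in $\PicplusA(\P^1_k)$ into the concrete assertion that the \emph{global} elements $b_m:=\bigl(\gamma_m(a_m-c^0)\bigr)^{-1}\in k(m)^*$ (for $m\in M'$, with $a_m$ the image of $t$ in $k(m)$) satisfy: $b_m(P_v-a_m)$ is a norm from $(L_m\otimes_k k_v)^*$ for every $v\in S$. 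The delicate point is precisely this translation from the abstract $\Picplus$\nobreakdash-statement to explicit norm conditions usable as input for Conjecture~\ref{conj:mainstrong}.

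Finally, I would apply Conjecture~\ref{conj:mainstrong} (resp.\ Conjecture~\ref{conj:mainweak}, possibly after enlarging $S$ as in Remark~\ref{rk:conj}\,(iii)) to the irreducible monic polynomials defining $M'$, the extensions $L_m/k(m)$, the elements $b_m$, the set $S$ and the points $t_v:=P_v$, obtaining $c\in k$ arbitrarily close to $P_v$ at $v\in S$ (resp.\ with the sign condition at real places) such that, for every $m$ and every finite place $w$ of $k(m)$ with $w(c-a_m)>0$, either $w$ divides a place of $S$ or $L_m$ possesses a place of degree $1$ over $w$. Then $c\in U(k)$ and $X_c$ is smooth. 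To build $(x'_v)_{v\in\Omega}\in X_c(\A_k)$: at $v\in S$, lift $x^1_v$ to a point of $X_c(k_v)$ close to $x^1_v$ by the implicit function theorem, using smoothness of $f$ over $U$; at $v\notin S$, use Hensel's lemma together with the Lang--Weil bounds on the fibres of $f$ and on the covers $\sE_{\mtilde,i}\to\mtilde$, exactly as in \textsection\ref{subsec:constructionzpv} — if $c$ reduces into the good-reduction (split) locus a $k_v$\nobreakdash-point is obtained directly, and if it reduces onto $X_m$ through a place $w$ with $w(c-a_m)>0$ the existence of a degree\nobreakdash-$1$ place of $L_m$ over $w$ again provides one. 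Orthogonality of $(x'_v)_v$ to $B$ follows by the argument of \textsection\ref{subsec:constructionzpv}: for $v\in S$ one has $\mylangle\beta,x'_v\myrangle=\mylangle\beta,x^1_v\myrangle$ for $\beta\in B$ by approximation (at real places via the $\pi_0$\nobreakdash-statement in the Conjecture~\ref{conj:mainweak} case, which is why the conclusion is then only in $X(\A_k)_\bullet$ and why the extra hypotheses on $M''$, on $f$ and on $k$ are imposed); for $v\notin S$ one has $\mylangle\beta,x'_v\myrangle=0$ by the analogue of Lemma~\ref{lem:evalvanishes} ($B\subseteq\Br(\sX^0)$, vanishing of the Brauer group of finite $\sOint_v$\nobreakdash-algebras, and triviality of the residue of $\beta$ at the chosen points, forced by the degree\nobreakdash-$1$ place of $L_m\hookrightarrow E_{m,i}$). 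Summing the local invariants and using that $(x^1_v)_v$ is orthogonal to $B$ yields $\sum_v\inv_v\mylangle\beta,x'_v\myrangle=0$, which completes the proof.
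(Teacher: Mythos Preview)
Your overall architecture---formal lemma to upgrade orthogonality to $B+f^*\Br_+(\P^1_k)$, then arithmetic duality via Theorem~\ref{th:arithduality}, then the conjecture, then Hensel lifting and the residue computation---is correct and, when $M''=\emptyset$, is essentially the paper's proof. (The paper includes~$\infty$ in the set~$M$ defining $\Pic_+$, with an extension~$L_\infty$ into which all the~$L_i$ embed; your normalisation $h_v(\infty)=1$ serves the same purpose, and the translation into the norm hypotheses of Conjecture~\ref{conj:mainstrong} is the paper's Lemma~\ref{lem:checkhypconj}, proved by Weil reciprocity.)

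There is, however, a genuine gap in your treatment of $M''\neq\emptyset$. You propose to subtract from the generators of~$B$ vertical classes $f^*\gamma$ with $\gamma\in\Br(\P^1_k\setminus(M'\cup\{m\}))$, so as to render~$B$ unramified over each $m\in M''$. But for such~$m$, the residue of $\beta\in B$ along the geometrically irreducible component $Y_m\subset f^{-1}(m)$ is an element of $H^1(k(Y_m),\Q/\Z)$, whereas the residue along~$Y_m$ of any class~$f^*\gamma$ lies in the image of the restriction map $H^1(k(m),\Q/\Z)\to H^1(k(Y_m),\Q/\Z)$. The surjectivity of~\eqref{eq:mapresidue} lets you produce any residue in $H^1(k(m),\Q/\Z)$, but not in the (generally much larger) group $H^1(k(Y_m),\Q/\Z)$; there is no way to cancel the residues of~$B$ by this device.

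The paper handles~$M''$ by an entirely different mechanism, going back to Harari. It keeps~$M''$, introduces for each $m_i\in M''$ a finite subgroup $\Gamma_i\subset\Br(\P^1_k\setminus(M'\cup\{m_i\}))$ surjecting via~\eqref{eq:mapresidue} onto $\Ker\big(H^1(k_i,\Q/\Z)\to H^1(E_i,\Q/\Z)\big)$, and adjoins auxiliary places $v_{n+1},\dots,v_N\notin S$ with~$v_i$ splitting completely in $L_i=K_i$, at which~$t_0$ is forced close to a prescribed~$t_{v_i}$. The classes~$\Gamma_i$ are \emph{not} used to modify~$B$; rather, global reciprocity for $\gamma\in\Gamma_i$ (Lemma~\ref{lem:isinhi}) shows that the partial Frobenius sum $\sigma_i=\sum_{v\in\Omega_i\setminus\{v_i\}} n_{w,i}\,\Frob_{\xi_{w,i}}$ already lies in the subgroup $H_i=\Gal(E_i/k(Y_i)K_i)$. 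Since~$v_i$ splits in~$K_i$, every element of~$H_i$ is realised as a Frobenius over~$w_i$, so one may \emph{choose}~$\xi_{w_i,i}$ with $\Frob_{\xi_{w_i,i}}=-\sigma_i$, forcing $\sum_{v\in\Omega_i} n_{w,i}\,\Frob_{\xi_{w,i}}=0$ and hence $\sum_{v\in\Omega_i}\inv_v\beta(x'_v)=0$ for every $\beta\in B$. This Frobenius-balancing at carefully chosen auxiliary places is the missing idea in your sketch.
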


\begin{rmks}
\label{rk:useofconj}
(i)
The reader who is willing to assume Conjectures~\ref{conj:mainstrong} and~\ref{conj:mainweak}
for the whole collection of polynomials vanishing on the points of $\P^1_k \setminus U$
may take $M'=\P^1_k\setminus U$.  This makes the hypothesis on~\eqref{eq:mapresidue} trivially satisfied
and also leads to simplifications
in the proof of Theorem~\ref{th:ratpointsmain}.

(ii)
The map~\eqref{eq:mapresidue}
is surjective for any~$m$ as soon as~$M'$ contains a rational point or~$k$ is totally imaginary.
This follows from
the Faddeev exact sequence
(see~\cite[\textsection1.2]{ctsd94}),
in view of
the surjectivity of the
corestriction map
$H^1(k',\Q/\Z)\to H^1(k,\Q/\Z)$ for any finite extension~$k'/k$
of totally imaginary number fields
(see~\cite[Note, p.~327]{gras} and note that $H^1(k,\Q/\Z)$ is Pontrjagin dual to the
group denoted $C_k/D_k$ in \emph{loc.\ cit.}).

(iii)
Write $M'=\{m_1,\dots,m_n\}$
and $k_i=k(m_i)$.
When $B=0$, the finite extension~$L_i/k_i$ to which Conjectures~\ref{conj:mainstrong}
and~\ref{conj:mainweak} are applied in the proof below
can be taken to be
the algebraic closure of~$k_i$ in the function field of an irreducible
component of multiplicity~$1$ of~$f^{-1}(m_i)$, for every $i\in\{1,\dots,n\}$.
\end{rmks}

\begin{proof}[Proof of Theorem~\ref{th:ratpointsmain}]
Let us write $M'=\{m_1,\dots,m_n\}$
and $M''=\{m_{n+1},\dots,m_N\}$, with~$N=n$ when $M''=\emptyset$.
For each~$i \in \{1,\dots,N\}$, let $k_i=k(m_i)$
and $X_i=f^{-1}(m_i)$,
choose an irreducible component~$Y_i\subseteq X_i$ of multiplicity~$1$,
with the requirement that~$Y_i$ should be geometrically irreducible over~$k_i$ whenever~$i>n$,
and choose a finite abelian extension $E_i/k(Y_i)$ such that the residue of any element of~$B$
at the generic point of~$Y_i$ belongs to the kernel of the restriction map
$H^1(k(Y_i),\Q/\Z)\to H^1(E_i,\Q/\Z)$.

Thanks to our assumption on~\eqref{eq:mapresidue},
we can choose,
for each $i \in \{n+1,\dots,N\}$,
a finite subgroup $\Gamma_i \subset \Br\left(\P^1_k \setminus (M' \cup\{m_i\})\right)$
which surjects, by the residue map at~$m_i$,
onto the kernel of the restriction map $H^1(k_i,\Q/\Z)\to H^1(E_i,\Q/\Z)$.
Put $\Gamma=\sum_{i=n+1}^N \Gamma_i \subset \Br(U)$.

For $i \in \{1,\dots,N\}$, let~$K_i$ be the algebraic closure of~$k_i$
in~$E_i$.
For $i\in\{n+1,\dots,N\}$, set $L_i=K'_i=K_i$
and note that this is an abelian extension of~$k_i$ as~$k_i$ is algebraically
closed in~$k(Y_i)$ for such values of~$i$.
For $i\in\{1,\dots,n\}$,
fix a finite abelian extension~$K'_i$ of~$k_i$ such that
the residue at~$m_i$ of any element of~$\Gamma$ belongs to the kernel of the restriction
map $H^1(k_i,\Q/\Z)\to H^1(K'_i,\Q/\Z)$
and
let~$L_i/k_i$ be a compositum of the extensions~$K_i/k_i$ and~$K'_i/k_i$.
Finally, choose a finite extension $L_\infty/k$ such that the fields~$L_1,\dots,L_N$ embed
$k$\nobreakdash-linearly into~$L_\infty$.

Let $C^0 = U \setminus\{\infty\}$.  Let $X^0=f^{-1}(C^0)$.
Let~$\Pic_+(\P^1_k)$ and~$\Br_+(\P^1_k)$ denote the groups associated in Definition~\ref{def:picpbrp} to the
curve~$\P^1_k$, to the finite set $M=\{m_1,\dots,m_N,\infty\}\subset \P^1_k$ and to the finite extensions~$L_1/k_1,\dots,L_N/k_N,L_\infty/k$.

By Remark~\ref{rks:picbr}~(ii), the subgroup $B+f^*\Br_+(\P^1_k) \subseteq \Br(X^0)$ is finite
modulo the
inverse image of $\Br(k)$.
Thus, by Harari's formal lemma,
there exists $(x''_v)_{v \in \Omega} \in X^0(\A_k)$
orthogonal to $B+f^*\Br_+(\P^1_k)$ with respect to~\eqref{eq:globalpairing}
and arbitrarily close to $(x_v)_{v \in \Omega}$ in~$X(\A_k)$
(see~\cite[Th\'eor\`eme~1.4]{ctbudapest}).
By the inverse function theorem, we may assume that~$x''_v$ belongs to a smooth fiber of~$f$ for each~$v$.
As $(x''_v)_{v \in \Omega}$ is orthogonal to~$f^*\Br_+(\P^1_k)$,
the class of $(f(x''_v))_{v \in \Omega}$ in $\PicplusA(\P^1_k)$
is orthogonal to~$\Br_+(\P^1_k)$ with respect to the
pairing~\eqref{eq:pairingbmplus}.
By Theorem~\ref{th:arithduality}, we deduce that there exists a divisor $c_1 \in \Div(C^0)$,
necessarily of degree~$1$,
whose class in $\PicplusA(\P^1_k)$ coincides with that of $(f(x''_v))_{v\in\Omega}$.
As $\Pic_+(\P^1_\C)=\Pic(\P^1_\C)=\Z$ and $\deg(c_1)=\deg(f(x''_v))$, the classes of~$c_1$ and of~$f(x''_v)$ in~$\Picplus(\P^1_{k_v})$ must in fact be equal
for all $v \in \Omega$, finite or infinite.

Let us write $\A^1_k=\Spec(k[t])$.
For~$v \in\Omega$,
let $t_v \in k_v$ denote the value of~$t$ at $f(x''_v) \in \A^1_{k_v}$.
For $i \in \{1,\dots,N\}$,
let~$a_i \in k_i$ denote the value
of~$t$ at~$m_i \in \A^1_k$.
Evaluating the invertible function $t-a_i \in \Gm(C^0 \otimes_k k_i)$
along the divisor $-c_1 \otimes_k k_i \in \Div(C^0 \otimes_k k_i)$
yields an element of~$k_i^*$ which we denote~$b_i$
(see~\cite[Ch.~III, \textsection1.1]{serregroupesalg}).

\begin{lem}
\label{lem:checkhypconj}
For every~$i \in \{1,\dots,N\}$ and every $v \in \Omega$,
there exists $x_{i,v} \in (L_i \otimes_kk_v)^*$
such that the equality
$b_i(t_v-a_i) = N_{L_i\otimes_k k_v/k_i\otimes_kk_v}(x_{i,v})$
holds in $k_i\otimes_k k_v$.
\end{lem}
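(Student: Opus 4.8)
The plan is to derive the identity from Weil reciprocity on $\P^1$, using the rational function produced by the coincidence of classes in $\Picplus(\P^1_{k_v})$. Fix $i\in\{1,\dots,N\}$ and $v\in\Omega$. Since the classes of $c_1\otimes_kk_v$ and of $f(x''_v)$ in $\Picplus(\P^1_{k_v})$ agree, there is a rational function $g_v\in k_v(\P^1)^*$ with $\div(g_v)=c_1\otimes_kk_v-f(x''_v)$ which is invertible at every point of $M=\{m_1,\dots,m_N,\infty\}$ and satisfies $g_v(m)\in N_{L_m\otimes_kk_v/k(m)\otimes_kk_v}((L_m\otimes_kk_v)^*)$ for every $m\in M$ (here $k(\infty)=k$ and $L_\infty$ is the extension fixed above). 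In particular $\div(g_v)$ is supported on $C^0$, hence disjoint from $M$.

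First I would apply Weil reciprocity --- in the shape of the formalism of evaluation of functions at divisors already invoked to define $b_i$ (see~\cite[Ch.~III, \textsection1]{serregroupesalg}) --- to the two functions $g_v$ and $t-a_i$ on $\P^1$ over the \'etale algebra $k_i\otimes_kk_v$. Their divisors $c_1-f(x''_v)$ and $m_i-\infty$ have disjoint supports --- one inside $C^0$, the other inside $M$ --- and at every point of the union of those supports one of the two functions has zero valuation, so no sign enters the tame symbols. The product formula for tame symbols then reads, in $(k_i\otimes_kk_v)^*$,
\begin{align*}
\bigl((t-a_i)\text{ evaluated at }f(x''_v)-c_1\bigr)\cdot g_v(m_i)\cdot g_v(\infty)^{-1}=1 ;
\end{align*}
since $(t-a_i)$ evaluated at $-c_1$ is $b_i$ by definition (read in $k_i\otimes_kk_v$) and $(t-a_i)$ evaluated at $f(x''_v)$ is $t_v-a_i$, this is exactly $b_i(t_v-a_i)=g_v(\infty)\,g_v(m_i)^{-1}$.

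It then remains to exhibit both factors on the right as norms from $(L_i\otimes_kk_v)^*$ down to $k_i\otimes_kk_v$. For $g_v(m_i)^{-1}$ this is the defining property of $g_v$ at $m_i$. For $g_v(\infty)\in k_v^*$ we know only that it is a norm from $(L_\infty\otimes_kk_v)^*$ down to $k_v$; to upgrade this I would use that $L_i$ embeds $k$-linearly into $L_\infty$, which --- as one may arrange --- I take to be Galois over $k$. Working place by place over $k_i$: given a place $u$ of $k_i$ above $v$, Galois transitivity makes all completions of $L_\infty$ above $v$ mutually $k_v$-isomorphic, so the $k_v$-norm group of $L_\infty\otimes_kk_v$ equals that of a single completion $(L_\infty)_w$ with $w$ chosen to lie over $u$; by base-change of norms this group, read in $(k_i)_u$, lands in the $(k_i)_u$-norm group of $(L_\infty)_w$, hence --- by transitivity of norms for $(k_i)_u\subseteq (L_i)_{w'}\subseteq(L_\infty)_w$, where $w'=w|_{L_i}$ lies over $u$ --- in the $u$-component of $N_{L_i\otimes_kk_v/k_i\otimes_kk_v}((L_i\otimes_kk_v)^*)$. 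As this holds for every $u\mid v$, the scalar $g_v(\infty)$, viewed diagonally in $k_i\otimes_kk_v$, is a norm from $(L_i\otimes_kk_v)^*$, and one takes $x_{i,v}$ to be the product of the two witnesses.

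The main obstacle is precisely this last step: passing from a norm from $L_\infty$ over $k$ to a norm from $L_i$ over $k_i$. This is exactly what the auxiliary algebra $L_\infty$ attached to the point $\infty$ in the definition of $\Picplus(\P^1_k)$ is there to make possible --- attaching $k$ to $\infty$ would not suffice --- and it is the only part of the argument where anything beyond routine bookkeeping is needed.
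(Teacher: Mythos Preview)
Your proof is correct and follows the same approach as the paper: both derive the identity from Weil reciprocity on $\P^1$ over $(k_i)_w$, using a rational function witnessing the equality of classes in $\Picplus(\P^1_{k_v})$. You are in fact more explicit than the paper at the one delicate point---upgrading the value at~$\infty$ from a norm from $L_\infty$ over $k_v$ to a norm from $L_i$ over $(k_i)_u$---and your observation that one should take $L_\infty/k$ Galois (which is freely arrangeable) is precisely what makes this step go through.
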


\begin{proof}
For any~$i$, any~$v$ and any place~$w$ of~$k_i$ dividing~$v$,
we must show
that $b_i(t_v-a_i)$, as an element of~$(k_i)_w$, is a norm from $L_i \otimes_{k_i} (k_i)_w$.
Let us fix~$i$, $v$ and~$w$.
As the classes of~$c_1$ and of~$f(x_v'')$ in~$\Pic_+(\P^1_{k_v})$ are equal,
there exists a rational function $h_v \in k_v(t)^*$
such that $\div(h_v)=f(x_v'')-c_1$ and such that
the value of~$h_v$ at any $(k_i)_w$\nobreakdash-point of~$m_i \cup \{\infty\}$
is a norm from $L_i \otimes_{k_i}(k_i)_w$ (recall that~$L_i$ embeds into~$L_\infty$).
Thus, evaluating~$h_v$ along the divisor of the rational function $t-a_i$ on~$\P^1_{(k_i)_w}$
yields an element of
$N_{L_i \otimes_{k_i} (k_i)_w/(k_i)_w}((L_i\otimes_{k_i} (k_i)_w)^*)$.
By Weil's reciprocity law (see~\cite[Ch.~III, \textsection1.4, Proposition~7]{serregroupesalg}),
the lemma follows.
\end{proof}

Let~$S$ be a finite set of places of~$k$ containing the infinite places of~$k$,
the places at which we want to approximate~$(x_v)_{v \in \Omega}$
and the finite places above which, for some~$i$, either~$b_i$ is not a unit or~$L_i/k_i$
is ramified.  We choose~$S$ large enough
that the order of~$B$ is invertible in~$\sOint_S$,
that $\beta(x''_v)=0$ for all $v\in \Omega\setminus S$ and all $\beta\in B+f^*\Gamma$,
that~$X$ extends to a smooth scheme~$\sX$ over~$\sOint_S$
and that $f:X\to \P^1_k$
extends to a flat morphism $f:\sX \to \P^1_{\sOint_S}$.
For~$m \in \P^1_k$, let~$\mtilde$ denote the Zariski closure of~$m$ in~$\P^1_{\sOint_S}$.
For each~$i$, let~$\sX_i$ and~$\sY_i$ denote the Zariski closures of~$X_i$ and~$Y_i$ in~$\sX$, endowed with the reduced
scheme structures, let $\sY_i^0 \subseteq \sY_i$ be a dense open subset
and let~$\sE_i$
denote the normalisation of~$\sY_i^0$ in the finite extension~$E_i/k(Y_i)$.
By shrinking~$\sY_i^0$, we may assume that~$\sE_i$ is finite and \'etale
over~$\sY_i^0$
and that $\sY_i^0$ is smooth over~$\sOint_S$.
Let $\sU=\P^1_{\sOint_S} \setminus \big(\bigcup_{i=1}^N \mtilde_i\big)$.
Finally, for $i \in \{n+1,\dots,N\}$, let $G_i=\Gal(E_i/k(Y_i))$
and $H_i=\Gal(E_i/k(Y_i)K_i) \subseteq G_i$.

After enlarging~$S$, we may assume that $B \subseteq \Br(f^{-1}(\sU))$,
that $\Gamma \subseteq \Br(\sU)$,
that the Zariski closure~$\Mtilde$ of~$M$ in~$\P^1_{\sOint_S}$
is \'etale over~$\sOint_S$ and,
by the Lang--Weil--Nisnevich bounds~\cite{langweil} \cite{nisnevic}
and by a geometric version of Chebotarev's density theorem~\cite[Lemma~1.2]{ekedahl},
that the following statements hold:
\begin{itemize}
\item the fiber of~$f$ above any closed point of~$\sU$
contains a smooth rational point;
\smallskip
\item for any $i \in \{1,\dots,n\}$ and any place~$w$ of~$k_i$
which does not lie above a place of~$S$,
if~$L_i$ possesses a place of degree~$1$ over~$w$
then the fiber of $\sE_i \to \mtilde_i$ above the closed point corresponding to~$w$ contains a rational point;
\smallskip
\item for any $i \in \{n+1,\dots,N\}$,
the closed fibers of $\sY_i^0 \to \mtilde_i$
contain rational points;
\smallskip
\item for any $i \in \{n+1,\dots,N\}$ and any place~$w$ of~$k_i$
which splits completely in~$L_i$
and which does not lie above a place of~$S$,
any element of~$H_i$ can be realised as the Frobenius automorphism
of the irreducible abelian \'etale cover $\sE_i\to\sY_i^0$
at some rational point
of the fiber of $\sY_i^0 \to \mtilde_i$ above the closed point corresponding to~$w$.
\end{itemize}

Using Chebotarev's density theorem, 
we fix pairwise distinct places $v_{n+1},\dots,v_N \in \Omega\setminus S$
such that~$v_i$ splits completely in~$L_i$ for each~$i$.
For each $i \in \{n+1,\dots,N\}$,
we also fix a place~$w_i$ of~$k_i$ lying over~$v_i$
and an element $t_{v_i} \in k_{v_i}$
such that $w_i(t_{v_i}-a_i)=1$.

Thanks to Lemma~\ref{lem:checkhypconj}, we may apply Conjecture~\ref{conj:mainstrong}
(resp.~Conjecture~\ref{conj:mainweak}) to the
polynomials $P_1,\dots,P_n$, to the finite extensions~$L_i/k_i$
for $i \in \{1,\dots,n\}$ and to the set of places $S'=S \cup\{v_{n+1},\dots,v_N\}$:
there exists $t_0 \in k$ satisfying conditions~(1) and~(2) (resp.~(1') and~(2))
of~\textsection\ref{subsec:conj},
arbitrarily close to~$t_{v_i}$ for $i\in\{n+1,\dots,N\}$.

Let $c \in \P^1(k)$ denote the point with coordinate~$t_0$.
As at least one~$x''_v$ belongs to a smooth fiber of~$f$ above~$U$,
we may assume that $c\in U(k)$ and that~$X_c$ is smooth.
By the inverse function theorem,
for each $v \in S$ (resp.~$v \in S \cap \Omega_f$),
we can choose $x'_v \in X_c(k_v)$ arbitrarily close to~$x_v''$.
In
the case in which only Conjecture~\ref{conj:mainweak} is assumed to hold for~$P_1,\dots,P_n$,
let us define~$x'_v$ for $v \in \Omega_\infty$.
If~$v$ is complex, we let~$x'_v$ be an arbitrary point of~$X_c(k_v)$.
If~$v$ is real, then~$k$ is not totally imaginary;
recall that we have assumed, in this case, that~$M''=\emptyset$
and that~$f$ is proper.
As $M''=\emptyset$,
condition~(1') of Conjecture~\ref{conj:mainweak}
implies that~$c$ and~$f(x_v'')$
belong to the same connected component of~$U(k_v)$.
As~$f$ is flat and proper,
the map $f^{-1}(U)(k_v)\to U(k_v)$ induced by~$f$ is open and closed,
so that
it maps any connected component of $f^{-1}(U)(k_v)$ onto a connected component
of~$U(k_v)$.
Thus, we can choose $x'_v \in X_c(k_v)$ such that~$x_v''$ and~$x'_v$ belong to the
same connected component of~$f^{-1}(U)(k_v)$.

Let us now construct~$x'_v$ for $v \in \Omega\setminus S$.

For $v \in \Omega\setminus S$, let $w \in \P^1_{\sOint_S}$ denote the closed point $w=\ctilde \cap \P^1_{\Fv{v}}$.
For $i \in \{1,\dots,N\}$, we define~$\Omega_i$ to be the set of places $v \in \Omega\setminus S$ such that $w\in\mtilde_i$.  The sets~$\Omega_i$ are finite and pairwise disjoint.
When $v \in \Omega_i$, we may view~$w$ as a place of~$k_i$ dividing~$v$;
we then have $w(t_0-a_i)>0$.  For $i>n$,
we may assume that $w_i(t_0-a_i)=1$,
and hence that $v_i \in \Omega_i$,
by choosing~$t_0$ close enough to~$t_{v_i}$.

For each $v \in \Omega \setminus S$
which does not belong to any~$\Omega_i$,
we use Hensel's lemma to lift an arbitrary smooth rational point of~$f^{-1}(w)$ to a $k_v$\nobreakdash-point~$x'_v$ of~$X_c$.
For each $i\in\{1,\dots,n\}$ and each $v\in \Omega_i$,
the field~$L_i$ possesses a place of degree~$1$ over~$w$,
by condition~(2) of Conjecture~\ref{conj:mainstrong}.
Therefore the fiber of $\sE_i\to \mtilde_i$ above~$w$ contains a rational point.
We fix any such rational point, let~$\xi_{w,i}$ denote its image in~$\sY^0_i$
and
use Hensel's lemma to lift~$\xi_{w,i}$ to a $k_v$\nobreakdash-point~$x'_v$ of~$X_c$.
For each $i\in\{n+1,\dots,N\}$ and each $v \in \Omega_i \setminus \{v_i\}$,
we fix an arbitrary rational
point~$\xi_{w,i}$ of the fiber of $\sY_i^0 \to \mtilde_i$ above~$w$
and again lift it to a $k_v$\nobreakdash-point~$x'_v$ of~$X_c$.

We have now defined~$x'_v$ for all $v \in \Omega \setminus \{v_{n+1},\dots,v_N\}$.
Let us finally
construct~$x'_v$ at the remaining places.
For $i \in \{1,\dots,N\}$
and $v \in \Omega_i$, let $n_{w,i} = w(t_0-a_i)$.
For $i \in \{n+1,\dots,N\}$, let
$\sigma_i=\sum_{v \in \Omega_i \setminus \{v_i\}} n_{w,i} \mkern2mu\Frob_{\xi_{w,i}} \in G_i$,
where $\Frob_{\xi_{w,i}}$ denotes the Frobenius automorphism of
the irreducible abelian \'etale cover $\sE_i\to\sY_i^0$
at~$\xi_{w,i}$.

\begin{lem}
\label{lem:isinhi}
For each $i \in \{n+1,\dots,N\}$,
we have $\sigma_i \in H_i$.
\end{lem}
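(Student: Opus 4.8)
The plan is to reformulate the assertion as the vanishing of the image $\bar\sigma_i$ of $\sigma_i$ in $G_i/H_i$ and then to pin down that image by a reciprocity computation with the classes in $\Gamma_i$. First I would identify $G_i/H_i$ with $\Gal(K_i/k_i)$: since $i>n$ the component $Y_i$ was chosen geometrically irreducible over $k_i$, so $k_i$ is algebraically closed in $k(Y_i)$, whence $k(Y_i)K_i=k(Y_i)\otimes_{k_i}K_i$ is a field, $G_i/H_i=\Gal(k(Y_i)K_i/k(Y_i))$, and restriction identifies the latter group canonically with $\Gal(K_i/k_i)$. Enlarging $S$ beforehand so that the finite cover of $\mtilde_i$ attached to $K_i/k_i$ is étale, the quotient cover $\sE_i/H_i\to\sY_i^0$ is its pull-back along $\sY_i^0\to\mtilde_i$; as $\xi_{w,i}$ is a \emph{rational} point of the fibre of $\sY_i^0\to\mtilde_i$ above $w$ (hence has the same residue field as $w$), the image of $\Frob_{\xi_{w,i}}$ in $G_i/H_i=\Gal(K_i/k_i)$ is the Frobenius substitution $\Frob_w$ at $w$. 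Therefore $\bar\sigma_i=\sum_{v\in\Omega_i\setminus\{v_i\}}n_{w,i}\,\Frob_w$ in the abelian group $\Gal(K_i/k_i)$. Because $K_i$ is algebraically closed in $E_i$, the map $H^1(K_i,\Q/\Z)\to H^1(E_i,\Q/\Z)$ is injective, so $\Ker(H^1(k_i,\Q/\Z)\to H^1(E_i,\Q/\Z))=\Hom(\Gal(K_i/k_i),\Q/\Z)$, and by construction $\partial_{m_i}$ maps $\Gamma_i$ onto this group of characters. It thus suffices to prove, for every $\gamma\in\Gamma_i$, that
\begin{align*}
\sum_{v\in\Omega_i\setminus\{v_i\}}n_{w,i}\,(\partial_{m_i}\gamma)(\Frob_w)=0 \qquad\text{in }\Q/\Z.
\end{align*}

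Next I would feed $\gamma$ into the Brauer--Manin orthogonality of $(x''_v)_{v\in\Omega}$. One checks that $\Gamma_i\subseteq\Brplus(\P^1_k)$: at $m_i$ its residue lies in $\Ker(r_{m_i})$ since $L_i=K_i$, at $m_j$ for $j\le n$ it lies in $\Ker(H^1(k_j,\Q/\Z)\to H^1(K'_j,\Q/\Z))\subseteq\Ker(r_{m_j})$ because $K'_j\subseteq L_j$, and it vanishes at the remaining points of $M$. Hence $(x''_v)$ is orthogonal to $f^*\gamma$; combining this with $\beta(x''_v)=0$ for $v\notin S$ and $\beta\in B+f^*\Gamma$, and with the fact that $t_0$ is $v$-adically close to the coordinate of $f(x''_v)$ for $v\in S$ (there is nothing to add at the archimedean places: either Conjecture~\ref{conj:mainstrong} is in force, or $M''=\emptyset$ --- in which case the present lemma is empty --- or $k$ is totally imaginary and the complex places contribute nothing), one gets $\sum_{v\in S}\inv_v(\gamma(c))=0$, where $c\in U(k)$ is the point of coordinate $t_0$. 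On the other hand $\gamma(c)\in\Br(k)$, so the reciprocity law of global class field theory gives $\sum_{v\in\Omega}\inv_v(\gamma(c))=0$, and therefore $\sum_{v\notin S}\inv_v(\gamma(c))=0$.

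It then remains to compute $\sum_{v\notin S}\inv_v(\gamma(c))$ explicitly. Enlarging $S$ once more, Faddeev's description of $\Br(k(\P^1))$ (see \cite[\textsection1.2]{ctsd94}) lets one write $\gamma=\sum_{j=1}^{N}\Cores_{k_j(t)/k(t)}(t-a_j,\partial_{m_j}\gamma)+\delta$ with $\delta\in\Br(\sOint_S)$ and every $\partial_{m_j}\gamma$ unramified outside $S$. Evaluating at $c$ and using $\inv_v\circ\Cores_{k_j/k}=\sum_{w\mid v}\inv_w$ and $\inv_w(u,\chi)=w(u)\,\chi(\Frob_w)$ for $\chi$ unramified at $w$, one obtains for $v\notin S$ the formula $\inv_v(\gamma(c))=\sum_{j=1}^{N}\sum_{w\mid v}w(t_0-a_j)\,(\partial_{m_j}\gamma)(\Frob_w)$, the inner sum running over the places $w$ of $k_j$ above $v$. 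Only the places with $w(t_0-a_j)>0$ contribute, that is, $v\in\Omega_j$ and $w=\ctilde\cap\P^1_{\Fv{v}}$, and then $w(t_0-a_j)=n_{w,j}$. Now for $j\in\{n+1,\dots,N\}\setminus\{i\}$ one has $\partial_{m_j}\gamma=0$, as $\gamma$ is unramified at $m_j$; for $j\le n$ and $v\in\Omega_j$, such a $v$ lies outside $S'=S\cup\{v_{n+1},\dots,v_N\}$ (the $\Omega$'s are pairwise disjoint and each $v_\ell$ lies in $\Omega_\ell$), so condition~(2) of Conjecture~\ref{conj:mainstrong} applied to $w$ provides a degree-one place of $L_j$, hence of $K'_j$, above $w$, which --- as $K'_j/k_j$ is abelian --- forces $w$ to split completely in $K'_j$, so $(\partial_{m_j}\gamma)(\Frob_w)=0$ since $\partial_{m_j}\gamma$ factors through $\Gal(K'_j/k_j)$; finally the place $w_i$ of $k_i$ above $v_i$ splits completely in $L_i=K_i$, so the $v=v_i$ term also vanishes. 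Putting this together, $0=\sum_{v\notin S}\inv_v(\gamma(c))=\sum_{v\in\Omega_i\setminus\{v_i\}}n_{w,i}\,(\partial_{m_i}\gamma)(\Frob_w)$, which is the required identity.

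The steps I expect to require the most care are the two identifications in the first paragraph --- $G_i/H_i\cong\Gal(K_i/k_i)$ and $\Frob_{\xi_{w,i}}\bmod H_i=\Frob_w$, which rely on $Y_i$ being geometrically irreducible over $k_i$ and on $\xi_{w,i}$ being a rational point of its fibre --- together with the bookkeeping of the residue formula over the finitely many places in each $\Omega_j$; in particular one must ensure that all the auxiliary enlargements of $S$ (so that $\Gamma\subseteq\Br(\sU)$, the extensions $K_i/k_i$ and $K'_j/k_j$ are unramified outside $S$, the relevant $\delta$'s lie in $\Br(\sOint_S)$, and the cover of $\mtilde_i$ attached to $K_i/k_i$ is étale) are carried out \emph{before} the places $v_{n+1},\dots,v_N$ and the element $t_0$ are chosen.
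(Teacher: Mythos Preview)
Your argument is correct and follows the same route as the paper's: fix $\gamma\in\Gamma_i$, use the orthogonality of $(x''_v)$ to $f^*\Br_+(\P^1_k)$ together with the proximity of $c$ to $f(x''_v)$ at the places of~$S$ to obtain $\sum_{v\in S}\inv_v\gamma(c)=0$, apply global reciprocity, identify the remaining local invariants with $n_{w,j}\,(\partial_{m_j}\gamma)(\Frob_w)$, kill the $j\le n$ terms via condition~(2) of the conjecture and the abelianness of $K'_j/k_j$, kill the $j>n$, $j\neq i$ terms because $\gamma$ is unramified there, and kill the $v=v_i$ term because $v_i$ splits in $L_i=K_i$. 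The only difference is cosmetic: the paper reaches the local formula directly from $\gamma\in\Br(\sU)$ and \cite[Corollaire~2.4.3]{harariduke}, whereas you go through an explicit Faddeev decomposition; in that route your clause ``only the places with $w(t_0-a_j)>0$ contribute'' silently relies on the following point, which you should make explicit: at any $v\notin S$ with $v(t_0)<0$ (the conjecture imposes no integrality on~$t_0$), the reduction of~$c$ is the point at infinity in~$\sU$, so $\inv_v\gamma(c)=0$---equivalently, the Faddeev relation $\sum_j\Cores_{k_j/k}(\partial_{m_j}\gamma)=0$, which holds because $\gamma$ is unramified at~$\infty$, makes the negative-valuation contributions cancel.
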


\begin{proof}
Let us fix $i \in \{n+1,\dots,N\}$.
Let~$\gamma$ denote an element of~$\Gamma_i$.
As~$c$ is arbitrarily close to~$f(x''_v)$ for $v \in S\cap\Omega_f$,
as~$c$ and~$f(x''_v)$ belong to the same connected component of~$U(k_v)$
for $v \in \Omega_\infty$,
as $\sum_{v \in \Omega} \inv_v \gamma(f(x''_v))=0$
and as $\gamma(f(x''_v))=0$ for $v \in \Omega\setminus S$,
we have $\sum_{v\in S} \inv_v \gamma(c)=0$.
As $\gamma \in \Br(\sU)$ and $\Br(\sOint_v)=0$ for any finite place~$v$, we have $\inv_v \gamma(c)=0$
for $v \in \Omega \setminus (S \cup \Omega_1 \cup\dots\cup\Omega_N)$.
For $j \in \{1,\dots,N\}$ and $v \in \Omega_j$,
we have
\begin{align}
\inv_v \gamma(c)=n_{w\mkern-1mu,\mkern1muj} \mkern2mu\partial_{\gamma,m_j}(\Frob_w)\rlap{\text{,}}
\end{align}
where $\partial_{\gamma,m_j} \in \Ker\big(H^1(k_j,\Q/\Z)\to H^1(K'_j,\Q/\Z)\big)=\Hom(\Gal(K_j'/k_j),\Q/\Z)$
is the residue of~$\gamma$ at~$m_j$
and $\Frob_w \in \Gal(K_j'/k_j)$ denotes the Frobenius at~$w$,
since $\gamma \in \Br(\sU)$
(see~\cite[Corollaire~2.4.3]{harariduke}).
By the global reciprocity law,
we deduce from these remarks that
\begin{align}
\sum_{j=1}^N \sum_{v \in \Omega_j}
n_{w\mkern-1mu,\mkern1muj} \mkern2mu\partial_{\gamma,m_j}(\Frob_w)=0\rlap{\text{.}}
\end{align}
We have $\Frob_w=0$ when $j\leq n$ and when~$v=v_j$ since~$L_j$ possesses a place of degree~$1$
over~$w$ in these two cases.  Moreover, by the definition of~$\Gamma_i$, we have $\partial_{\gamma,m_j}=0$ if $j>n$ and $j\neq i$.  All in all, we conclude that
\begin{align}
\sum_{v \in \Omega_i \setminus \{v_i\}} n_{w,i} \mkern2mu\partial_{\gamma,m_i}(\Frob_w)=0\rlap{\text{,}}
\end{align}
or in other words
$\partial_{\gamma,m_i}(\barsigma_i)=0$
if~$\barsigma_i$ denotes the image of~$\sigma_i$ in $G_i\mkern.2mu/\mkern-.2muH_i=\Gal(K_i/k_i)$.
As~$\partial_{\gamma,m_i}$ takes all possible values in $\Hom(\Gal(K_i/k_i),\Q/\Z)$ when~$\gamma$
ranges over~$\Gamma_i$, we conclude that $\barsigma_i=0$.
\end{proof}

By Lemma~\ref{lem:isinhi}
and our hypotheses on~$S$, we can choose, for each $i \in \{n+1,\dots,N\}$,
a rational point~$\xi_{w_i,i}$ of the fiber of $\sY_i^0\to \mtilde_i$ above~$w_i$
such that $\Frob_{\xi_{w_i,i}}=-\sigma_i$
and hence
\begin{align}
\label{eq:sumfrob}
\sum_{v \in \Omega_i} n_{w,i} \mkern2mu\Frob_{\xi_{w,i}}=0
\end{align}
in~$G_i$.
We then lift~$\xi_{w_i,i}$ to a $k_{v_i}$\nobreakdash-point~$x'_{v_i}$ of~$X_c$.

We have thus constructed an adelic point $(x'_v)_{v \in \Omega} \in X_c(\A_k)$
arbitrarily close to~$(x_v)_{v \in \Omega}$ in $X(\A_k)$ (resp.~in~$X(\A_k)_{\smash{\bullet}}$).
It remains to check that $(x'_v)_{v \in \Omega}$ is orthogonal to~$B$ with respect to the Brauer--Manin pairing.
By the orthogonality of~$(x''_v)_{v \in \Omega}$ to~$B$ and by the definition of~$S$,
we have $\sum_{v \in S} \inv_v \beta(x''_v)=0$ for all $\beta\in B$.
On the other hand,
as~$x'_v$ is arbitrarily close to~$x''_v$ for $v \in S \cap \Omega_f$ and as~$x'_v$ and~$x''_v$ belong to the same connected component of
$f^{-1}(U)(k_v)$ for $v \in \Omega_\infty$, we have $\beta(x''_v)=\beta(x'_v)$ for all $v \in S$ and all $\beta\in B$,
so that $\sum_{v \in S} \inv_v \beta(x'_v)=0$ for all $\beta\in B$.
For $v \in \Omega\setminus (S \cup \Omega_1 \cup \dots \cup \Omega_N)$
we have $\beta(x'_v)=0$ for all $\beta\in B$ since $B \subseteq \Br(f^{-1}(\sU))$.
For $v \in \Omega_1\cup \dots \cup \Omega_n$,
the existence of a rational point in the fiber of~$\sE_i \to \sY^0_i$ above~$\xi_{w,i}$
implies, by the same argument as in Lemma~\ref{lem:evalvanishes}, that~$\beta(x'_v)=0$ for all $\beta\in B$.  Thus
\begin{align}
\sum_{v \in\Omega} \inv_v \beta(x'_v) = \sum_{i=n+1}^N \sum_{v \in \Omega_i} \inv_v \beta(x'_v)
\end{align}
for all $\beta \in B$.
Now,
as $\beta \in \Br(f^{-1}(\sU))$,
we have $\inv_v\beta(x'_v)=n_{w,i} \mkern2mu\partial_{\beta,Y_i}(\Frob_{\xi_{w,i}})$
for any $\beta \in B$,
any $i\in \{n+1,\dots,N\}$
and any $v \in \Omega_i$,
if $\partial_{\beta,Y_i} \in \Hom(G_i,\Q/\Z)\subset H^1(k(Y_i),\Q/\Z)$
denotes the residue of~$\beta$ at the generic point of~$Y_i$
(see~\cite[Corollaire~2.4.3]{harariduke}).
In view of~\eqref{eq:sumfrob}, 
we conclude that $\sum_{v \in \Omega}\inv_v \beta(x'_v)=0$ for all $\beta \in B$.
\end{proof}

\begin{rmk}
The rather delicate arguments applied in the proof of Theorem~\ref{th:ratpointsmain}
to deal with the split fibers~$f^{-1}(m)$ for $m \in M''$
are those used by Harari in~\cite{harariduke}.
As explained above, these arguments can be avoided entirely
by taking~$M''=\emptyset$, at the expense of losing some control over the polynomials
to which Conjectures~\ref{conj:mainstrong} and~\ref{conj:mainweak}
are applied in the course of the proof.
\end{rmk}

Using arguments similar to those of~\textsection\ref{sec:hilbertsubsets},
we now incorporate a Hilbert set into the conclusion
of Theorem~\ref{th:ratpointsmain}.

\begin{thm}
\label{th:ratpointsmainhilb}
Let us keep the notation and assumptions of Theorem~\ref{th:ratpointsmain}.
For any Hilbert subset $H \subseteq \P^1_k$,
the conclusion of Theorem~\ref{th:ratpointsmain} still holds
if the rational point $c \in U(k)$ is required,
in addition, to belong to~$H$.
\end{thm}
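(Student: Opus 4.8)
The plan is to deduce Theorem~\ref{th:ratpointsmainhilb} from Theorem~\ref{th:ratpointsmain} by a localised modification of the input adelic point, exactly parallel to the way Theorem~\ref{th:existenceresulthilb} was obtained from Theorem~\ref{th:existenceresult}. Fix a finite subset $S_0\subset\Omega$, containing $\Omega_\infty$ and the places at which we wish to approximate $(x_v)_{v\in\Omega}$. First, using Hilbert's irreducibility theorem over $k$ applied to the finite irreducible étale covers of $\P^1_k$ defining $H$, choose a rational point $h$ of $\P^1_k$ lying in $H$, lying in $U$, and lying in the dense open subset over which $f$ is smooth with geometrically irreducible fibers; then $X_h$ is a smooth, geometrically irreducible $k$\nobreakdash-variety, so by the Lang--Weil--Nisnevich estimates $X_h(k_v)\neq\emptyset$ for all but finitely many~$v$. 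Apply Lemma~\ref{lem:hilbertopen} with $V=\P^1_k$ and the closed point~$h$ (enlarging $S_0$ beforehand if needed): this yields a finite subset $T\subset\Omega$ disjoint from~$S_0$ and a neighbourhood $\sV\subset\prod_{v\in T}\P^1(k_v)$ of~$h$ such that every rational point of $\P^1_k$ whose image in $\prod_{v\in T}\P^1(k_v)$ lies in~$\sV$ belongs to~$H$. Shrinking~$T$ by removing its finitely many exceptional places, we may also assume $X_h(k_v)\neq\emptyset$ for all $v\in T$.

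Next, modify the adelic point. Arguing as in Lemma~\ref{lem:inclusionfiniteness} in the case $C=\P^1_k$ (where $\Br(\P^1_k)=f^*\mkern.5mu\Br(k)$ via pull-back), the group $(B+f_\eta^*\mkern.5mu\Br(\eta))\cap\Br(X)$ is finite modulo $f^*\mkern.5mu\Br(k)$; hence there is a finite subgroup $B_0\subset\Br(X)$ with $(B+f_\eta^*\mkern.5mu\Br(\eta))\cap\Br(X)\subseteq B_0+f^*\mkern.5mu\Br(k)$. Enlarging~$S_0$ once more, we may assume that $X$ extends to a smooth $\sOint_{S_0}$\nobreakdash-scheme~$\sX$ with $B_0\subset\Br(\sX)$ and that $\beta(x_v)=0$ for all $v\notin S_0$ and all $\beta\in B_0$; we keep $T$ disjoint from this enlarged~$S_0$. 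For $v\in T$ pick any $x^1_v\in X_h(k_v)$, and set $x^1_v=x_v$ for $v\notin T$. Since $T\cap S_0=\emptyset$, since any $\beta\in\Br(\sX)$ evaluates trivially on $X(k_v)=\sX(\sOint_v)$ for finite $v\notin S_0$ (because $\Br(\sOint_v)=0$), and since classes in $f^*\mkern.5mu\Br(k)$ pair to~$0$ on every adelic point by the reciprocity law, the new family $(x^1_v)_{v\in\Omega}$ is again orthogonal to $(B+f_\eta^*\mkern.5mu\Br(\eta))\cap\Br(X)$.

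Now apply Theorem~\ref{th:ratpointsmain} to the adelic point $(x^1_v)_{v\in\Omega}$, with all other data ($B$, $M'$, $M''$, the polynomials $P_i$, the choice of Conjecture~\ref{conj:mainstrong} or Conjecture~\ref{conj:mainweak}) unchanged, requiring approximation at the places of $S_0\cup T$; note that $T\subset\Omega_f$ is disjoint from $\Omega_\infty$, so the archimedean discussion, and the variant in $X(\A_k)_{\smash{\bullet}}$, are unaffected. This produces $c\in U(k)$ with $X_c$ smooth and a family $(x'_v)_{v\in\Omega}\in X_c(\A_k)$ orthogonal to~$B$, with $x'_v$ arbitrarily close to $x^1_v$ for $v\in S_0\cup T$. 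For $v\notin T$, and in particular for $v\in S_0$, one has $x^1_v=x_v$, so $(x'_v)$ approximates $(x_v)$ where required. For $v\in T$, continuity of~$f$ at $x^1_v\in X_h(k_v)$ shows that $f(x'_v)=c$ is arbitrarily close to~$h$ in $\P^1(k_v)$; choosing the approximations fine enough forces the image of~$c$ in $\prod_{v\in T}\P^1(k_v)$ into~$\sV$, whence $c\in H$. This gives the desired refinement. I expect the only genuinely delicate point to be the bookkeeping in the middle paragraph—verifying that altering the local components at the auxiliary places~$T$ preserves orthogonality to $(B+f_\eta^*\mkern.5mu\Br(\eta))\cap\Br(X)$—which rests on the finiteness of that group modulo $f^*\mkern.5mu\Br(k)$ together with a suitable enlargement of~$S_0$; everything else is a direct invocation of Theorem~\ref{th:ratpointsmain} and Lemma~\ref{lem:hilbertopen}.
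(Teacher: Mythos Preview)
Your strategy is the paper's: fix a rational point $h\in H\cap U$ by Hilbert's irreducibility, modify the adelic point at auxiliary places so that it lies in $X_h$ there, check that orthogonality to $(B+f_\eta^*\Br(\eta))\cap\Br(X)$ is preserved using the finiteness of that group modulo $f^*\Br(k)$, apply Theorem~\ref{th:ratpointsmain}, and deduce $c\in H$ from $v$\nobreakdash-adic proximity to~$h$. The paper differs only cosmetically: it replaces $x_v$ by a point of $X_h(k_v)$ at \emph{every} place $v\notin S$ (rather than only at the finitely many places of your set~$T$) and then invokes the rational-point case of Lemma~\ref{lem:hilbertopen}; this sidesteps the ordering issues below.

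Two points need adjustment. First, you cannot ``shrink~$T$'': once Lemma~\ref{lem:hilbertopen} has produced $T$ and~$\sV$, removing places from~$T$ weakens the condition $c\in\sV$ and may no longer force $c\in H$. Similarly, enlarging~$S_0$ \emph{after} producing~$T$ risks $T\cap S_0\neq\emptyset$, which you cannot simply wish away. The correct order is to perform \emph{all} enlargements of~$S_0$ first---in particular, throw in the finitely many~$v$ with $X_h(k_v)=\emptyset$ and the places needed for $B_0\subset\Br(\sX)$---and only then apply Lemma~\ref{lem:hilbertopen} to obtain~$T$ disjoint from the final~$S_0$; the condition $X_h(k_v)\neq\emptyset$ for $v\in T$ is then automatic. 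Second, the equality $X(k_v)=\sX(\sOint_v)$ is false in general since Theorem~\ref{th:ratpointsmain} does not assume~$X$ proper; instead, choose each $x^1_v$ (for $v\in T$) to be an $\sOint_v$\nobreakdash-point of the closure of~$X_h$ in~$\sX$, which exists after the enlargement above, so that $\beta(x^1_v)=0$ for $\beta\in B_0\subset\Br(\sX)$ follows from $\Br(\sOint_v)=0$.
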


\begin{proof}
By Lemma~\ref{lem:inclusionfiniteness}, there exists
a finite subgroup $B_0 \subset \Br(X)$ such that
\begin{align*}
(B+f_\eta^*\mkern.5mu\Br(\eta))\cap\Br(X) = B_0 + f^*\Br(k)\rlap{\text{.}}
\end{align*}
Let us fix a rational point $h \in H \cap U$.  Such a point exists
by Hilbert's irreducibility theorem.
Let $(x_v)_{v \in \Omega} \in X(\A_k)$
be orthogonal to~$B_0$.
Let $S \subset \Omega$ be a finite subset.  We must prove the existence
of $c \in U(k)$ belonging to~$H$
and of $(x'_v)_{v \in \Omega} \in X_c(\A_k)$ orthogonal to~$B$
such that~$x'_v$ is
arbitrarily close to~$x_v$ for $v \in S$ (resp.~arbitrarily close
to~$x_v$ for $v \in S\cap\Omega_f$ and in the same connected component as~$x_v$ for $v \in \Omega_\infty$).
To this end, we may assume, after enlarging~$S$,
that $X_h(k_v)\neq\emptyset$ for any $v \in \Omega\setminus S$
(see~\cite{langweil}, \cite{nisnevic}),
that $\Omega_\infty\subseteq S$
and that any element of~$B_0$
evaluates trivially on~$X(k_v)$
for any $v \in\Omega\setminus S$.
For $v \in \Omega\setminus S$,
let~$x''_v$ be an arbitrary $k_v$\nobreakdash-point of~$X_h$.
Let $x''_v=x_v$ for $v \in S$.
By the definition of~$B_0$,
the adelic point $(x''_v)_{v \in \Omega}$ is orthogonal to $(B+f_\eta^*\mkern.5mu\Br(\eta))\cap\Br(X)$
for the Brauer--Manin pairing.
We may therefore apply Theorem~\ref{th:ratpointsmain} to it.
The resulting point $c \in U(k)$ will then be arbitrarily close to~$h$ in~$\prod_{v \in \Omega\setminus S}\P^1(k_v)$,
which implies, by~\cite[Proposition~6.1]{smeets}, that $c\in H$.
\end{proof}

In the remainder of~\textsection\ref{subsec:consequencesrationalpoints},
we spell out the most significant conditional corollaries of
Theorem~\ref{th:ratpointsmainhilb}.

\begin{cor}
\label{cor:ratpointsfull}
Let~$X$ be a smooth, proper, irreducible variety over a number field~$k$, endowed with a morphism
$f:X \to \P^1_k$ whose geometric generic fiber~$X_\etabar$ is irreducible.
Assume that
\begin{enumerate}
\item $H^1(X_{\bar\eta},\Q/\Z)=0$ and $H^2(X_\etabar,\sO_{X_\etabar})=0$;
\item every fiber of~$f$ contains
an irreducible component of multiplicity~$1$;
\item Conjecture~\ref{conj:mainstrong} (resp.~Conjecture~\ref{conj:mainweak}) holds;
\item there exists a Hilbert subset $H \subseteq \P^1_k$ such that
$X_c(k)$ is dense in $X_c(\A_k)^{\Br(X_c)}$
(resp.~in~$X_c(\A_k)^{\Br(X_c)}_{\smash{\bullet}}$)
for every rational point~$c$ of~$H$.
\end{enumerate}
Then~$X(k)$ is dense in $X(\A_k)^{\Br(X)}$
(resp. in~$X(\A_k)^{\Br(X)}_{\smash{\bullet}}$).
\end{cor}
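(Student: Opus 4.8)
The plan is to deduce Corollary~\ref{cor:ratpointsfull} from Theorem~\ref{th:ratpointsmainhilb} together with the specialisation statement of Proposition~\ref{prop:specialisation}, by the same assembly which, in the setting of zero-cycles, yields Proposition~\ref{prop:commonstatement}. Fix an adelic point $(x_v)_{v\in\Omega}\in X(\A_k)^{\Br(X)}$ (resp.\ in $X(\A_k)^{\Br(X)}_\bullet$), a finite set of places of~$k$, and neighbourhoods at which it is to be approximated. Since $X_\etabar$ is irreducible and $X$ is smooth, the generic fiber~$X_\eta$ is geometrically integral, hence split; moreover, by assumption~(1) and Lemma~\ref{lem:implications}, the quotient $\Br(X_\eta)/f_\eta^*\mkern.5mu\Br(\eta)$ is finite. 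First I would pick finitely many elements of $\Br(X_\eta)$ whose classes generate this quotient, then a dense open $U\subseteq\P^1_k$ over which every fiber of~$f$ is split and over which all these elements extend to classes in $\Br(f^{-1}(U))$, and let $B\subseteq\Br(f^{-1}(U))$ be the subgroup they generate, so that $B$ surjects onto $\Br(X_\eta)/f_\eta^*\mkern.5mu\Br(\eta)$. Composing~$f$ with a suitable automorphism of~$\P^1_k$ (which alters neither the geometric generic fiber, nor any hypothesis, nor $X(\A_k)^{\Br(X)}$) and then deleting one further rational point from~$U$, I may also assume that $\infty\in U$ and that $\P^1_k\setminus U$ is nonempty.

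Next I would apply Proposition~\ref{prop:specialisation} over a dense open $C^0\subseteq U$ contained in the smooth locus of~$f$: since $B\to\Br(X_\eta)/f_\eta^*\mkern.5mu\Br(\eta)$ is onto, it provides a Hilbert subset $H'\subseteq C^0$ such that the restriction map $B\to\Br(X_c)/f_c^*\mkern.5mu\Br(c)$ is surjective for every closed point $c\in H'$. I would then invoke Theorem~\ref{th:ratpointsmainhilb} with this~$B$, with the open set~$U$ just constructed, with $M'=\P^1_k\setminus U$ (so that $M''=\emptyset$ and the hypothesis on the residue map~\eqref{eq:mapresidue} is vacuous), with the Hilbert subset $H\cap H'$, and with the adelic point $(x_v)_{v\in\Omega}$, which is orthogonal to $(B+f_\eta^*\mkern.5mu\Br(\eta))\cap\Br(X)$ because it is orthogonal to the whole of $\Br(X)$. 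Assumption~(2) supplies the multiplicity-one hypothesis of Theorem~\ref{th:ratpointsmain}; assumption~(3) supplies Conjecture~\ref{conj:mainstrong} (resp.\ Conjecture~\ref{conj:mainweak}) for the monic polynomials cutting out~$M'$; and in the weak case $f$ is proper (as~$X$ is) and $M''=\emptyset$, so the additional proviso of Theorem~\ref{th:ratpointsmain} holds. The output is a rational point $c\in U(k)$ lying in $H\cap H'$, with $X_c$ smooth, and an adelic point $(x'_v)_{v\in\Omega}\in X_c(\A_k)$ which is orthogonal to~$B$ and arbitrarily close to $(x_v)_{v\in\Omega}$ in $X(\A_k)$ (resp.\ in $X(\A_k)_\bullet$).

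Finally I would upgrade orthogonality to~$B$ into orthogonality to all of $\Br(X_c)$ and descend to a rational point. Because $c\in H'$, the image of~$B$ in $\Br(X_c)$ together with $f_c^*\mkern.5mu\Br(c)$ spans $\Br(X_c)$; and since any adelic point of~$X_c$ is orthogonal to $f_c^*\mkern.5mu\Br(c)$ by the global reciprocity law, the point $(x'_v)_{v\in\Omega}$ is orthogonal to $\Br(X_c)$, \emph{i.e.}\ it lies in $X_c(\A_k)^{\Br(X_c)}$ (resp., the Brauer--Manin pairing factoring through $\pi_0$ at the archimedean places, its image lies in $X_c(\A_k)^{\Br(X_c)}_\bullet$). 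As $c$ belongs to the Hilbert subset~$H$ of assumption~(4), $X_c(k)$ is dense in $X_c(\A_k)^{\Br(X_c)}$ (resp.\ in $X_c(\A_k)^{\Br(X_c)}_\bullet$), so there is a rational point $x'\in X_c(k)$ arbitrarily close to $(x'_v)_{v\in\Omega}$; regarding~$x'$ as a point of~$X(k)$ and chaining the two approximations yields a rational point of~$X$ arbitrarily close to $(x_v)_{v\in\Omega}$, as desired. Beyond the bookkeeping already carried out \emph{mutatis mutandis} in~\textsection\ref{subsec:preliminaries}, I expect the only point requiring real care to be the simultaneous control of the two Hilbert subsets: the specialisation of the Brauer group (Proposition~\ref{prop:specialisation}) and assumption~(4) must both be available at the rational point~$c$ produced by Theorem~\ref{th:ratpointsmainhilb}, which is precisely why that theorem is stated with a built-in Hilbert set.
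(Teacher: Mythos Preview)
Your proof is correct and follows essentially the same route as the paper's: arrange $\infty\in U$ after a coordinate change, use Lemma~\ref{lem:implications} to pick a finite $B\subset\Br(f^{-1}(U))$ surjecting onto $\Br(X_\eta)/f_\eta^*\Br(\eta)$, invoke Proposition~\ref{prop:specialisation} to obtain a Hilbert subset~$H'$ on which~$B$ spans $\Br(X_c)/f_c^*\Br(c)$, apply Theorem~\ref{th:ratpointsmainhilb} with $M'=\P^1_k\setminus U$ (so $M''=\emptyset$) to the Hilbert set $H\cap H'$, and then approximate in the fiber using assumption~(4). Your write-up is in fact slightly more explicit than the paper's at the final step (spelling out why orthogonality to~$B$ plus global reciprocity yields orthogonality to~$\Br(X_c)$), but there is no substantive difference.
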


\begin{proof}
After a change of coordinates on~$\P^1_k$, we may assume
that $f^{-1}(\infty)$ is split.
Let~$U$ be a dense open subset of~$\P^1_k$ over which the fibers of~$f$ are
split, with $\infty \in U$.
After shrinking~$U$, we may assume that
$U\neq \P^1_k$
and that there exists
a finite
subgroup $B \subset \Br(f^{-1}(U))$
such that $B+f_\eta^*\mkern.5mu\Br(\eta)=\Br(X_\eta)$.
Indeed, it follows from~(1) that $\Br(X_\eta)/f_\eta^*\mkern.5mu\Br(\eta)$ is finite
(see Lemma~\ref{lem:implications}).
By Proposition~\ref{prop:specialisation},
there exists a Hilbert subset $H' \subseteq U$ such that
the natural map $B \to \Br(X_c)/f_c^*\mkern.5mu\Br(k)$ is surjective
for all $c \in H'$.
Applying Theorem~\ref{th:ratpointsmainhilb} to the Hilbert subset $H\cap H'$,
to $(x_v)_{v \in \Omega} \in X(\A_k)^{\Br(X)}$ and to any subset $M' \subseteq \P^1_k \setminus U$
satisfying the hypothesis of Theorem~\ref{th:ratpointsmain} (for instance $M'=\P^1_k\setminus U$)
produces a rational point~$c$ of~$H\cap H'$ and an element of $X_c(\A_k)$
orthogonal to~$B$, and therefore to~$\Br(X_c)$, which is arbitrarily
close to~$(x_v)_{v\in\Omega}$ in~$X(\A_k)$ (resp.~in~$X(\A_k)_{\smash{\bullet}}$).
In view of~(4), we may approximate in the fiber~$X_c$ to conclude the proof.
\end{proof}

\begin{cor}
\label{cor:ratpointsfullbis}
Let~$X$ be a smooth, proper, irreducible variety over a number field~$k$, endowed with a morphism
$f:X \to \P^1_k$ with geometrically irreducible generic fiber,
such that the hypotheses~(1), (2) and~(4) of Corollary~\ref{cor:ratpointsfull} are satisfied
and~$f^{-1}(\infty)$ is split.
Let~$M' \subset \A^1_k$ be a finite closed subset
containing the points with non-split fiber.
Assume~$M'$ contains a rational point or~$k$ is totally imaginary.
If Conjecture~\ref{conj:mainstrong}
holds for the irreducible monic polynomials which vanish at the points of~$M'$,
then~$X(k)$ is dense in $X(\A_k)^{\Br(X)}$.
\end{cor}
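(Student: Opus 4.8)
The plan is to imitate the proof of Corollary~\ref{cor:ratpointsfull}, but to apply Theorem~\ref{th:ratpointsmainhilb} with the prescribed finite set~$M'$ in place of $\P^1_k\setminus U$, so that Conjecture~\ref{conj:mainstrong} is invoked only for the monic irreducible polynomials vanishing on~$M'$. We may assume $X(\A_k)^{\Br(X)}\neq\emptyset$ and fix an adelic point $(x_v)_{v\in\Omega}$ in it; the goal is to approximate it in $X(\A_k)$ by a rational point of~$X$.

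First I would choose a dense open subset $U\subseteq\P^1_k$ with $\infty\in U$, over which every fibre of~$f$ is split, small enough that $U\neq\P^1_k$, that $M'\subseteq\P^1_k\setminus U$ (possible since $M'\subset\A^1_k$ is a finite closed set containing every point of non-split fibre and $\infty\notin M'$), and that there is a finite subgroup $B\subset\Br(f^{-1}(U))$ with $B+f_\eta^*\Br(\eta)=\Br(X_\eta)$; such a~$B$ exists because hypothesis~(1) of Corollary~\ref{cor:ratpointsfull} forces $\Br(X_\eta)/f_\eta^*\Br(\eta)$ to be finite, by Lemma~\ref{lem:implications}. Since $(B+f_\eta^*\Br(\eta))\cap\Br(X)\subseteq\Br(X)$, the fixed adelic point is orthogonal to $(B+f_\eta^*\Br(\eta))\cap\Br(X)$. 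Next, applying Proposition~\ref{prop:specialisation} to a dense open $C^0\subseteq U$ over which~$f$ is smooth and with $B\subset\Br(f^{-1}(C^0))$, I would obtain a Hilbert subset $H'\subseteq C^0$ along which the specialisation map $B\to\Br(X_c)/f_c^*\Br(k)$ is surjective; in particular $X_c$ is smooth for every closed point~$c$ of~$H'$.

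Now set $M''=(\P^1_k\setminus U)\setminus M'$; it is disjoint from~$M'$ and every point of it has split fibre. By Remark~\ref{rk:useofconj}~(ii), the standing hypothesis that $M'$ contains a rational point or that~$k$ is totally imaginary ensures that the residue map~\eqref{eq:mapresidue} is surjective for every $m\in M''$. Thus, with $P_1,\dots,P_n$ the monic irreducible polynomials vanishing on~$M'$ --- for which Conjecture~\ref{conj:mainstrong} is assumed --- all the hypotheses of Theorem~\ref{th:ratpointsmain}, and hence of its refinement Theorem~\ref{th:ratpointsmainhilb}, are in force. Applying the latter to the Hilbert subset $H\cap H'$ (where~$H$ is the Hilbert subset of hypothesis~(4)) and to $(x_v)_{v\in\Omega}$ produces a rational point $c\in U(k)$ lying in $H\cap H'$ with $X_c$ smooth, and $(x'_v)_{v\in\Omega}\in X_c(\A_k)$ orthogonal to~$B$ and arbitrarily close to $(x_v)_{v\in\Omega}$ in $X(\A_k)$. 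Because $c\in H'$, orthogonality to~$B$ upgrades to orthogonality to $\Br(X_c)$ (using that $f_c^*\Br(k)$ is orthogonal to every adelic point by the global reciprocity law), so $(x'_v)_{v\in\Omega}\in X_c(\A_k)^{\Br(X_c)}$; since $c\in H$, hypothesis~(4) lets us approximate $(x'_v)_{v\in\Omega}$ by a rational point of $X_c$, and hence $(x_v)_{v\in\Omega}$ by a rational point of~$X$, as desired.

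The only substantive part of the argument is the verification that Theorem~\ref{th:ratpointsmain} applies with the given~$M'$ rather than with all of $\P^1_k\setminus U$; its delicate inner workings --- in particular the treatment of the split fibres over~$M''$ by the ``formal lemma''-type reasoning of Harari, which is exactly what permits a nonempty~$M''$ --- are used as a black box. The point genuinely being exploited, and the one to check with care, is the surjectivity of the residue maps~\eqref{eq:mapresidue}: this is precisely the reason for the arithmetic restriction on~$M'$ (a rational point) or on~$k$ (totally imaginary) in the statement, and dropping it would push one back to the full hypotheses of Corollary~\ref{cor:ratpointsfull}. In the degenerate situation where~$M'$ would be empty (so that every fibre of~$f$ is split), one simply shrinks~$U$ by a single point of~$\A^1_k$ with split fibre and runs the same argument with that point playing the role of~$M'$.
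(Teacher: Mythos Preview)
Your proof is correct and follows essentially the same approach as the paper: the paper's own proof is simply ``Same proof as Corollary~\ref{cor:ratpointsfull}, in view of Remark~\ref{rk:useofconj}~(ii)'', and you have spelled out precisely what this entails, choosing $U$ so that $M'\subseteq\P^1_k\setminus U$ and invoking Remark~\ref{rk:useofconj}~(ii) to verify the surjectivity of~\eqref{eq:mapresidue} needed to apply Theorem~\ref{th:ratpointsmainhilb} with the prescribed~$M'$. Your explicit handling of the degenerate case $M'=\emptyset$ (possible only when~$k$ is totally imaginary) is a small clarification the paper leaves implicit.
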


\begin{proof}
Same proof as Corollary~\ref{cor:ratpointsfull},
in view of Remark~\ref{rk:useofconj}~(ii).
\end{proof}

\begin{cor}
\label{cor:ratpointsRC}
Let $n\geq 1$. Let~$X$ be a smooth, proper, irreducible variety over a number field~$k$
and let
$f:X \to \P^n_k$
be a dominant morphism with rationally connected geometric generic fiber.
Assume Conjecture~\ref{conj:mainstrong} (resp.~Conjecture~\ref{conj:mainweak}) holds.
If there exists a Hilbert subset $H \subseteq \P^n_k$ such that
$X_c(k)$ is dense in $X_c(\A_k)^{\Br(X_c)}$
(resp.~in~$X_c(\A_k)^{\Br(X_c)}_{\smash{\bullet}}$)
for every rational point~$c$ of~$H$,
then~$X(k)$ is dense in $X(\A_k)^{\Br(X)}$
(resp. in~$X(\A_k)^{\Br(X)}_{\smash{\bullet}}$).
\end{cor}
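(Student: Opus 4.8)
The plan is to argue by induction on $n$, reducing a fibration over $\P^n_k$ to a fibration over $\P^1_k$ exactly as in the proof of Corollary~\ref{cor:zerocycleshigherdimbase}, and then to invoke Corollary~\ref{cor:ratpointsfull} as the engine. For $n=1$ there is nothing to prove beyond checking the hypotheses of Corollary~\ref{cor:ratpointsfull}: since $X_\etabar$ is rationally connected one has $H^1(X_\etabar,\Q/\Z)=0$ and $H^i(X_\etabar,\sO_{X_\etabar})=0$ for all $i>0$ (see~\cite[Corollary~4.18~(b)]{debarrehigherdim} for the first), so hypothesis~(1) holds; every fibre of $f$ lies over a codimension~$1$ point of $\P^1_k$ and hence contains an irreducible component of multiplicity~$1$ by~\cite{ghs}, so hypothesis~(2) holds; hypothesis~(3) is Conjecture~\ref{conj:mainstrong} (resp.~Conjecture~\ref{conj:mainweak}), and hypothesis~(4) is precisely our assumption. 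So let $n\geq 2$ and assume the statement for $n-1$.

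Since the hypotheses and the conclusion of Corollary~\ref{cor:ratpointsRC} are invariant under replacing $X$ by a smooth proper birational model over $k$ and $\P^n_k$ by a birationally equivalent smooth $k$-variety (compare the opening lines of the proof of Corollary~\ref{cor:zerocycleshigherdimbase}), and as $\P^n_k$ is birationally equivalent to $\P^{n-1}_k\times\P^1_k$, I would first arrange that $f$ is a \emph{morphism} $X\to\P^{n-1}_k\times\P^1_k$; here the Hilbert subset hypothesis is transported by pulling back the irreducible \'etale covers that define it under the birational map and restricting to a dense open subset over which the corresponding fibres remain birationally equivalent, smooth, proper and irreducible. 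Let $g:X\to\P^1_k$ be the composition of $f$ with the second projection, and for a point $c$ of $\P^1_k$ write $X_c=g^{-1}(c)$, equipped with the morphism $f_c:X_c\to\P^{n-1}_{k(c)}$ induced by $f$.

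Next I would verify the four hypotheses of Corollary~\ref{cor:ratpointsfull} for $g$. Hypothesis~(1): the geometric generic fibre of $g$ is fibered over $\P^{n-1}$ with rationally connected base and rationally connected geometric generic fibre (the geometric generic fibre of $f$), hence is itself rationally connected, so it satisfies $H^1=0$ and $H^2(\sO)=0$ as above (alternatively, one may combine Lemma~\ref{lem:geomgenfiberg} with Lemma~\ref{lem:implications}). Hypothesis~(2): again every fibre of $g$ lies over a codimension~$1$ point of $\P^1_k$, so~\cite{ghs} applies. Hypothesis~(3) is the assumed Conjecture. For hypothesis~(4) I must produce a Hilbert subset $H_1\subseteq\P^1_k$ such that $X_c(k)$ is dense in $X_c(\A_k)^{\Br(X_c)}$ (resp.~in $X_c(\A_k)^{\Br(X_c)}_{\smash{\bullet}}$) for every rational point $c$ of $H_1$. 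By spreading out — using the openness of rational connectedness in smooth proper families, in the spirit of Lemma~\ref{lem:propertyast} — there is a dense open $C^0\subseteq\P^1_k$ such that for $c\in C^0$ the variety $X_c$ is smooth, proper and irreducible and $f_c$ is dominant with rationally connected geometric generic fibre. By Lemma~\ref{lem:hilbertproduct} applied to $V_1=\P^{n-1}_k$, $V_2=\P^1_k$ and the given Hilbert subset $H\subseteq\P^{n-1}_k\times\P^1_k$, there is a Hilbert subset $H_2\subseteq\P^1_k$ such that for $c\in H_2$ the set $H\cap(\P^{n-1}_k\times\{c\})$ contains a Hilbert subset of $\P^{n-1}_{k(c)}$. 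For a rational point $c$ of $H_1:=C^0\cap H_2$, the fibres of $f_c$ over the rational points of that Hilbert subset are fibres of $f$ over rational points of $H$, hence satisfy the required density; so the induction hypothesis applied to $f_c:X_c\to\P^{n-1}_{k(c)}\cong\P^{n-1}_k$ shows that $X_c(k)$ is dense in $X_c(\A_k)^{\Br(X_c)}$ (resp.~in $X_c(\A_k)^{\Br(X_c)}_{\smash{\bullet}}$). Thus hypothesis~(4) holds for $g$ with $H_1$, and Corollary~\ref{cor:ratpointsfull} applied to $g$ yields the conclusion for $X$, completing the induction.

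The substantive content is entirely contained in Theorem~\ref{th:ratpointsmain} (through Corollary~\ref{cor:ratpointsfull}); the only delicate point in the inductive step — the same one as in the proof of Corollary~\ref{cor:zerocycleshigherdimbase} — is arranging that the inductive hypothesis may be applied to $X_c$ for $c$ ranging over a Hilbert subset of $\P^1_k$ rather than merely over a dense open subset, which is handled by Lemma~\ref{lem:hilbertproduct} together with the fact that rational connectedness of the geometric generic fibre of $f_c$ and smoothness, properness and irreducibility of $X_c$ hold over a dense open subset of $\P^1_k$. The remaining work, namely the birational reduction and carrying the two variants (weak approximation and its bullet refinement) consistently through the argument, is routine bookkeeping.
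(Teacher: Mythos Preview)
Your proof is correct and follows essentially the same approach as the paper's: induction on~$n$, with the base case reduced to Corollary~\ref{cor:ratpointsfull} via \cite{ghs} and \cite[Corollary~4.18~(b)]{debarrehigherdim}, and the inductive step obtained by passing birationally to $\P^{n-1}_k\times\P^1_k$, applying Lemma~\ref{lem:hilbertproduct}, and invoking the induction hypothesis on the fibres of the projection to~$\P^1_k$. The paper is slightly more explicit than you are on two points you absorbed into ``routine bookkeeping'': it cites \cite[Ch.~IV, Theorem~3.5.3]{kollarbook} for the openness of rational connectedness and \cite[Corollary~1.3]{ghs} for the rational connectedness of the geometric generic fibre of~$g$, and it spells out that the final transfer from~$X'$ back to~$X$ uses the finiteness of $\Coker(\Br(k)\to\Br(X))$ together with \cite[Proposition~6.1~(iii)]{cps}.
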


\begin{proof}
We argue by induction on~$n$.
If $n=1$, Corollary~\ref{cor:ratpointsfull} applies:
its assumption~(1) is satisfied
by Lemma~\ref{lem:implications} and by~\cite[Corollary~4.18(b)]{debarrehigherdim}
while~(2) follows from the Graber--Harris--Starr theorem~\cite{ghs}.
Let us now assume $n>1$.
As~$\P^n_k$ is birationally equivalent to~$\P^{n-1}_k\times\P^1_k$,
we can find a smooth, proper, irreducible variety~$X'$ over~$k$, a morphism
$f':X' \to \P^{n-1}_k \times \P^1_k$
and a Hilbert subset $H' \subseteq \P^{n-1}_k \times \P^1_k$ such that
the generic fibers of~$f$ and of~$f'$ are isomorphic and such that
$X'_c(k)$ is dense in $X'_c(\A_k)^{\Br(X'_c)}$
(resp.~in~$X'_c(\A_k)^{\Br(X'_c)}_{\smash{\bullet}}$)
for every rational point~$c$ of~$H'$.
Let $g:X' \to \P^1_k$ denote the composition of~$f'$
with the second projection.
Let $H_1\subseteq\P^1_k$ be a Hilbert subset satisfying the conclusion of Lemma~\ref{lem:hilbertproduct}
with respect to the Hilbert subset~$H'$.
After replacing~$H_1$ with $H_1\cap U$ for a small enough dense open subset~$U$ of~$\P^1_k$,
we may assume that for every rational point~$h$ of~$H_1$,
the variety~$X'_h=g^{-1}(h)$ is smooth and irreducible and
the morphism $f'_h:X'_h \to \P^{n-1}_k$ is dominant, with rationally connected geometric generic fiber
(see \cite[Ch.~IV, Theorem~3.5.3]{kollarbook}).
By the definition of~$H_1$ and the hypothesis on~$H'$,
the morphism $f'_h:X'_h \to \P^{n-1}_k$ then satisfies all of the assumptions
of Corollary~\ref{cor:ratpointsRC}
if~$h$ is a rational point of~$H_1$.
Thus, by our induction hypothesis, the set~$X'_h(k)$
is dense in $X'_h(\A_k)^{\Br(X'_h)}$
(resp.~in~$X'_h(\A_k)_{\smash{\bullet}}^{\Br(X'_h)}$) for every rational point~$h$ of~$H_1$.
Now,
according to \cite[Corollary~1.3]{ghs},
the geometric generic fiber of$~g$
is a rationally connected variety
since it dominates~$\P^{n-1}$ with rationally connected
geometric generic fiber.
We may therefore apply the case $n=1$ of Corollary~\ref{cor:ratpointsRC}
to~$g$ and conclude that~$X'(k)$ is dense in $X'(\A_k)^{\Br(X')}$
(resp.~in~$X'(\A_k)_{\smash{\bullet}}^{\Br(X')}$).
The desired result finally follows as~$X$ and~$X'$ are birationally equivalent
and $\Coker(\Br(k)\to\Br(X))$ is finite
(see~\cite[Proposition~6.1~(iii)]{cps}; for the finiteness assertion,
see~\cite[Lemma~1.3~(i)]{ctskogoodreduction}, \cite[Corollary~4.18~(b)]{debarrehigherdim},
\cite[Corollary~1.3]{ghs}).
\end{proof}

\begin{rmk}
In the situation of Corollary~\ref{cor:ratpointsfull}, let us assume
that the generic fiber of~$f$ is birationally equivalent to a torsor
under a torus~$T$ over the function field of~$\P^1_k$.  In this case, assumptions~(1), (2) and~(4)
are satisfied (for~(4), see~\cite{sansuclinear}).
Thus~$X(k)$ is dense in~$X(\A_k)^{\Br(X)}$ as soon as the varieties~$W$ associated to~$f$
in~\textsection\ref{subsubsec:geomcrit} satisfy strong approximation off any finite place,
according to Corollary~\ref{cor:strongapproxconj}.
This should be compared with \cite[Theorem~1.1]{skodescenttoric}, which asserts that when the torus~$T$ is
defined over~$k$, the same conclusion can be reached under the sole assumption that the varieties~$W$ satisfy weak approximation.
\end{rmk}

Combining
Theorem~\ref{th:ratpointsmain} (with $M''=\emptyset$ and $B=0$),
Theorem~\ref{th:schinzelconjmain}
and Remark~\ref{rk:useofconj}~(iii) also yields the following conditional corollary,
which extends the main results of~\cite[\textsection1]{ctsksd98}
and recovers \cite[Theorem~3.5]{weioneq}.

\begin{cor}
\label{cor:schinzel}
Let~$X$ be a smooth, proper, irreducible variety over a number field~$k$,
endowed with a morphism $f:X \to \P^1_k$ with geometrically irreducible generic fiber.
We make the following assumptions:
\begin{itemize}
\item Schinzel's hypothesis~$(\mathrm{H})$ holds;
\item for every $m \in \P^1_k$, the fiber $f^{-1}(m)$ possesses an irreducible component of multiplicity~$1$
in the function field of which the algebraic closure of~$k(m)$ is an almost abelian extension of~$k(m)$
in the sense of Definition~\ref{def:almostabelian};
\item the fibers of~$f$ above the rational points of a Hilbert subset of~$\P^1_k$
satisfy weak approximation.
\end{itemize}
Then $X(k)$ is dense in $X(\A_k)^{\Br_\vert(X/\P^1_k)}$,
where $\Br_\vert(X/\P^1_k)=\Br(X)\cap f_\eta^*\mkern.5mu\Br(\eta)$.
\end{cor}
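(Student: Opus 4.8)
The plan is to deduce the corollary from Theorem~\ref{th:ratpointsmainhilb} applied with $B=0$, feeding in the instance of Conjecture~\ref{conj:mainstrong} supplied by Theorem~\ref{th:schinzelconjmain} and using Remark~\ref{rk:useofconj}~(iii) to check that the field extensions which occur inside the proof of Theorem~\ref{th:ratpointsmain} are precisely the almost abelian ones furnished by the second hypothesis. Once the machinery of~\textsection\ref{subsec:consequencesrationalpoints} and~\textsection\ref{subsubsec:relationschinzel} is in place, this is essentially a matter of matching notation.

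First I would perform a change of coordinates on $\P^1_k$ so that $f^{-1}(\infty)$ is split. Since $f$ has geometrically irreducible generic fiber, only finitely many fibers of $f$ are non-split; let $U \subseteq \P^1_k$ be the dense open subset over which the fibers of~$f$ are split, shrunk if necessary so that $U \neq \P^1_k$ while keeping $\infty \in U$. Set $M' = \P^1_k \setminus U$, so that in the notation of Theorem~\ref{th:ratpointsmain} one has $M'' = \emptyset$ (in particular the surjectivity hypothesis on~\eqref{eq:mapresidue} is vacuous and $f$ is proper because $X$ is), and let $P_1,\dots,P_n \in k[t]$ be the irreducible monic polynomials vanishing at the points $m_1,\dots,m_n$ of $M'$, with $k_i = k(m_i)$. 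By the second hypothesis, each $f^{-1}(m_i)$ carries an irreducible component $Y_i$ of multiplicity~$1$ such that the algebraic closure $L_i$ of $k_i$ in $k(Y_i)$ is almost abelian over $k_i$. Since Schinzel's hypothesis~$(\mathrm H)$ is assumed, Theorem~\ref{th:schinzelconjmain} yields the statement of Conjecture~\ref{conj:mainstrong} for the polynomials $P_1,\dots,P_n$ together with these extensions $L_i/k_i$.

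Now fix $(x_v)_{v\in\Omega} \in X(\A_k)^{\Br_\vert(X/\P^1_k)}$. Because $B=0$, the orthogonality required in Theorem~\ref{th:ratpointsmain} is orthogonality to $(0+f_\eta^*\mkern.5mu\Br(\eta))\cap\Br(X)=\Br_\vert(X/\P^1_k)$, so $(x_v)_{v\in\Omega}$ satisfies the hypotheses of Theorem~\ref{th:ratpointsmainhilb} for $B=0$, for $M'$ as above, and for the Hilbert subset $H \subseteq \P^1_k$ provided by the third assumption. By Remark~\ref{rk:useofconj}~(iii), the finite extensions to which Conjecture~\ref{conj:mainstrong} is applied within the proof of Theorem~\ref{th:ratpointsmain} may be taken to be exactly the extensions $L_i/k_i$ described above, which are almost abelian; hence the needed case of Conjecture~\ref{conj:mainstrong} is available in the precise form used. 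Theorem~\ref{th:ratpointsmainhilb} then produces a rational point $c \in U(k) \cap H$ with $X_c$ smooth, together with $(x'_v)_{v\in\Omega} \in X_c(\A_k)$ that is orthogonal to $B=0$ (hence imposes no condition) and arbitrarily close to $(x_v)_{v\in\Omega}$ in $X(\A_k)$.

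Finally, since $c$ belongs to the Hilbert subset $H$, the smooth and proper fiber $X_c$ satisfies weak approximation by the third hypothesis, i.e.\ $X_c(k)$ is dense in $X_c(\A_k)=\prod_{v\in\Omega}X_c(k_v)$. Choosing a rational point of $X_c$ arbitrarily close to $(x'_v)_{v\in\Omega}$, and therefore to $(x_v)_{v\in\Omega}$, at the prescribed places, and regarding it as a point of $X$, we conclude that $X(k)$ is dense in $X(\A_k)^{\Br_\vert(X/\P^1_k)}$. The only delicate points are bookkeeping ones: arranging $M'' = \emptyset$ so that the auxiliary arguments for split fibers in the proof of Theorem~\ref{th:ratpointsmain} are not needed, and invoking Remark~\ref{rk:useofconj}~(iii) to line up the almost abelian hypothesis with the extensions actually used there. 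I do not expect any genuine obstacle beyond this, the substance of the argument being entirely contained in Theorems~\ref{th:ratpointsmainhilb} and~\ref{th:schinzelconjmain}.
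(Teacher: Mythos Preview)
Your proposal is correct and follows essentially the same approach as the paper, which deduces the corollary by combining Theorem~\ref{th:ratpointsmain} (with $M''=\emptyset$ and $B=0$), Theorem~\ref{th:schinzelconjmain} and Remark~\ref{rk:useofconj}~(iii). The only cosmetic difference is that you explicitly invoke the Hilbert refinement Theorem~\ref{th:ratpointsmainhilb} rather than Theorem~\ref{th:ratpointsmain} itself; this is indeed what is needed in order to land~$c$ in the given Hilbert subset and apply the weak approximation hypothesis, and it is implicit in the paper's one-line derivation.
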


\subsection{Some unconditional results}
\label{subsec:someunconditional}

Combining Corollary~\ref{cor:ratpointsfullbis} with
Matthiesen's Theorem~\ref{thm:matthiesen} yields the following unconditional result.

\begin{thm}
\label{th:consequenceofmatthiesen}
Let~$X$ be a smooth, proper, irreducible variety over~$\Q$
and $f:X \to \P^1_\Q$
be a dominant morphism,
with rationally connected geometric generic fiber,
whose non-split fibers all lie over rational points of~$\P^1_\Q$.
Assume that
$X_c(\Q)$ is dense in $X_c(\A_\Q)^{\Br(X_c)}$
for every rational point~$c$ of a Hilbert subset of~$\P^1_\Q$.
Then~$X(\Q)$ is dense in~$X(\A_\Q)^{\Br(X)}$.
\end{thm}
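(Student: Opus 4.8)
The plan is to deduce Theorem~\ref{th:consequenceofmatthiesen} from Corollary~\ref{cor:ratpointsfullbis} and from Matthiesen's Theorem~\ref{thm:matthiesen}, the entire content of the proof being the verification that the hypotheses of Corollary~\ref{cor:ratpointsfullbis} are met in the present situation. First I would arrange, by a change of coordinates on~$\P^1_\Q$ given by an element of~$\mathrm{PGL}_2(\Q)$, that $f^{-1}(\infty)$ is split. This is possible because~$f$ has only finitely many non-split fibres --- all of them, by hypothesis, above rational points --- whereas~$\P^1(\Q)$ is infinite, so one may pick a rational point~$c_0$ with split fibre and carry it to~$\infty$. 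Such an automorphism leaves the fibres of~$f$ unchanged, carries Hilbert subsets to Hilbert subsets, and preserves the hypothesis that all non-split fibres lie over rational points; hence, after this reduction, we still have that~$X_c(\Q)$ is dense in~$X_c(\A_\Q)^{\Br(X_c)}$ for the rational points~$c$ of a Hilbert subset of~$\P^1_\Q$, and the geometric generic fibre is unchanged, hence still rationally connected and in particular geometrically irreducible.

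Next I would check hypotheses~(1) and~(2) of Corollary~\ref{cor:ratpointsfull}, exactly as in the proof of Corollary~\ref{cor:ratpointsRC} for $n=1$: since~$X_\etabar$ is rationally connected, $H^1(X_\etabar,\Q/\Z)=0$ by Lemma~\ref{lem:implications} and~\cite[Corollary~4.18~(b)]{debarrehigherdim}, and $H^2(X_\etabar,\sO_{X_\etabar})=0$ by the implication $(\mathrm{a})\Rightarrow(\mathrm{d})$ of Lemma~\ref{lem:implications} together with flat base change, while every fibre of~$f$ above a closed point of~$\P^1_\Q$, being a fibre above a codimension~$1$ point, contains an irreducible component of multiplicity~$1$ by the Graber--Harris--Starr theorem~\cite{ghs}. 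Hypothesis~(4) of Corollary~\ref{cor:ratpointsfull} is precisely the density assumption made in Theorem~\ref{th:consequenceofmatthiesen}.

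It remains to produce the set~$M'$ and to check the arithmetic input. Let $M'\subset\A^1_\Q$ be the finite set of closed points above which~$f$ is not split, enlarged by a single arbitrarily chosen rational point should it be empty. By hypothesis the points of~$M'$ are all rational, so~$M'$ is a nonempty finite set of rational points and the irreducible monic polynomials $P_1,\dots,P_n$ vanishing at its points are linear, of the form $P_i(t)=t-a_i$ with $a_i\in\Q$; in particular $\Q[t]/(P_i(t))=\Q$ for all~$i$. Therefore Conjecture~\ref{conj:mainstrong} for $P_1,\dots,P_n$ is exactly the case $k=k_1=\dots=k_n=\Q$ of that conjecture, which holds by Theorem~\ref{thm:matthiesen}. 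Since~$M'$ contains a rational point, Corollary~\ref{cor:ratpointsfullbis} applies and gives that~$X(\Q)$ is dense in~$X(\A_\Q)^{\Br(X)}$, which is the assertion of Theorem~\ref{th:consequenceofmatthiesen}.

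There is no serious obstacle in this argument: all the hard analysis is contained in Matthiesen's theorem and all the hard algebraic geometry in the proof of Corollary~\ref{cor:ratpointsfullbis}. The single point that makes the present result unconditional, in contrast to Corollary~\ref{cor:ratpointsRC}, is the observation that when the non-split fibres of~$f$ all lie over \emph{rational} points of~$\P^1_\Q$, the polynomials to which Conjecture~\ref{conj:mainstrong} must be applied are linear polynomials over~$\Q$ --- precisely the case proved in~\cite{matthiesen}. Everything else is the routine matching of hypotheses carried out above.
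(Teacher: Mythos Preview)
Your proposal is correct and follows exactly the approach indicated in the paper, which simply states that the theorem is obtained by combining Corollary~\ref{cor:ratpointsfullbis} with Matthiesen's Theorem~\ref{thm:matthiesen}. You have supplied precisely the routine verifications (arranging $f^{-1}(\infty)$ split, checking hypotheses~(1), (2), (4) via rational connectedness and Graber--Harris--Starr, and observing that the polynomials are linear so that Theorem~\ref{thm:matthiesen} applies) that the paper leaves implicit.
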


Theorem~\ref{th:consequenceofmatthiesen} was previously
known when~$f$ has a unique non-split fiber
(see~\cite{hararifleches})
or when
every singular fiber of~$f$ contains an irreducible component of multiplicity~$1$ split by an abelian extension of~$\Q$ and
the smooth fibers of~$f$ above the rational points of~$\P^1_\Q$ satisfy weak approximation (see~\cite{hsw}).

Thanks to Borovoi's theorem \cite[Corollary~2.5]{borovoi},
Theorem~\ref{th:consequenceofmatthiesen}
has the following corollary, whose statement should
be compared with \cite[Conjecture~1]{cttiruchirapalli}.

\begin{cor}
\label{cor:pencilhommatthiesen}
Let~$X$ be a smooth, proper, irreducible variety over~$\Q$ endowed with a morphism $f:X\to \P^1_\Q$ whose generic fiber is birationally equivalent
to a homogeneous space of a connected linear algebraic group, with connected geometric stabilisers.
If the non-split fibers of~$f$ lie over rational points of~$\P^1_\Q$, then~$X(\Q)$ is dense in $X(\A_\Q)^{\Br(X)}$.
\end{cor}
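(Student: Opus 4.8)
The plan is to derive Corollary~\ref{cor:pencilhommatthiesen} from Theorem~\ref{th:consequenceofmatthiesen}, so I first check its hypotheses for the morphism $f:X\to\P^1_\Q$. As a homogeneous space of a connected linear algebraic group is geometrically unirational, and since the generic fiber of~$f$ is birationally equivalent to such a homogeneous space over the function field of~$\P^1_\Q$, the geometric generic fiber of~$f$ is rationally connected (see~\cite[Chapter~IV]{kollarbook}); in particular~$f$ is dominant. The hypothesis that the non-split fibers of~$f$ lie over rational points of~$\P^1_\Q$ is assumed. It therefore remains to exhibit a Hilbert subset $H\subseteq\P^1_\Q$ such that $X_c(\Q)$ is dense in $X_c(\A_\Q)^{\Br(X_c)}$ for every rational point~$c$ of~$H$.

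I would take $H$ to be a dense open subset $U\subseteq\P^1_\Q$ --- which is, in particular, a Hilbert subset in the sense of~\textsection\ref{sec:intro} --- chosen small enough that~$X_c$ is smooth and proper for every $c\in U$ and that the birational equivalence between the generic fiber of~$f$ and a homogeneous space~$Z$ of a connected linear algebraic group~$G$ with connected geometric stabilisers, all defined over the function field of~$\P^1_\Q$, spreads out over~$U$. Then for every rational point~$c$ of~$U$, the fiber~$X_c$ is a smooth proper variety over~$\Q$ birationally equivalent to a homogeneous space of a connected linear algebraic group over~$\Q$ with connected geometric stabilisers. By Borovoi's theorem~\cite[Corollary~2.5]{borovoi}, a smooth compactification of such a homogeneous space has its rational points dense in its Brauer--Manin set; since~$X_c$ is rationally connected, $\Coker(\Br(\Q)\to\Br(X_c))$ is finite, and the density of rational points in the Brauer--Manin set is therefore a birational invariant of smooth proper varieties over~$\Q$ (see~\cite[Proposition~6.1~(iii)]{cps}), so $X_c(\Q)$ is dense in $X_c(\A_\Q)^{\Br(X_c)}$. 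Applying Theorem~\ref{th:consequenceofmatthiesen} then yields the density of $X(\Q)$ in $X(\A_\Q)^{\Br(X)}$.

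The step requiring the most care is the spreading-out assertion used to pass from the generic fiber to the fibers over~$U$: one must ensure that, after shrinking~$U$, these fibers remain birationally equivalent to homogeneous spaces of connected linear algebraic groups with \emph{connected} geometric stabilisers. This rests on standard constructibility arguments --- the locus over which the fibers of a family of group schemes are connected is constructible, as is the locus over which a prescribed rational map restricts to an isomorphism --- so no essential difficulty arises; the remainder of the proof is a formal combination of Theorem~\ref{th:consequenceofmatthiesen}, Borovoi's theorem, and the birational invariance recorded in~\cite[Proposition~6.1~(iii)]{cps}.
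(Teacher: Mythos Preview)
Your proof is correct and follows exactly the approach the paper intends: the paper's justification is simply the one-line remark that the corollary follows from Theorem~\ref{th:consequenceofmatthiesen} together with Borovoi's theorem~\cite[Corollary~2.5]{borovoi}, and you have spelled out the details of that deduction (rational connectedness of the geometric generic fiber, spreading out over a dense open, and birational invariance via~\cite[Proposition~6.1~(iii)]{cps}).
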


\begin{rmk}
\label{rk:corhommatthiesenpn}
By the same induction argument as in the proof of Corollary~\ref{cor:ratpointsRC}, it
follows from Theorem~\ref{th:consequenceofmatthiesen}
that the statements of
Theorem~\ref{th:consequenceofmatthiesen}
and of Corollary~\ref{cor:pencilhommatthiesen}
remain valid when~$\P^1_\Q$ is replaced with~$\P^n_\Q$ for some~$n\geq 1$, if the non-split codimension~$1$ fibers of~$f$ all lie over
generic points of \emph{hyperplanes} of~$\P^n_\Q$ defined over~$\Q$.
\end{rmk}

Corollary~\ref{cor:pencilhommatthiesen} applies to the total space of an arbitrary pencil of toric varieties, as long as the non-split members of the pencil
are defined over~$\Q$.  Thus,
Corollary~\ref{cor:pencilhommatthiesen}, together with Remark~\ref{rk:corhommatthiesenpn},
subsumes all of the results of~\cite{browningmatthiesen},
\cite{bms}, \cite[\textsection4]{hsw},
as well as \cite[\textsection3.2]{smeets} and \cite[Corollary~1.2, Corollary~1.3]{skodescenttoric}.
One should note, however, that
Theorem~\ref{thm:matthiesen}
builds on the contents of~\cite{browningmatthiesen},
\emph{via}~\cite{matthiesen}.

Other unconditional results can be obtained by combining Corollary~\ref{cor:ratpointsfullbis} with
Theorem~\ref{th:algebraiccases}.
In the following statement, the \emph{rank} of~$f$
is the sum of the degrees of the closed points of~$\P^1_k$ above which
the fiber of~$f$ is not split.

\begin{thm}
\label{th:smallrank}
Let~$X$ be a smooth, proper, irreducible variety over
 a number field~$k$
and $f:X \to \P^1_k$
be a dominant morphism
with rationally connected geometric generic fiber.
Assume\footnote{Added in proof: the hypothesis that either~$k$ is totally imaginary
or the non-split fibers of~$f$ lie over rational points of~$\P^1_k$ is now
known to be superfluous.
Details will be given elsewhere.} that
 $\mathrm{rank}(f)\leq 2$
and that either~$k$ is totally imaginary
or the non-split fibers of~$f$ lie over rational points of~$\P^1_k$.
If~$X_c(k)$ is dense in $X_c(\A_k)^{\Br(X_c)}$ for every
rational point~$c$ of a Hilbert subset of~$\P^1_k$, then~$X(k)$ is dense in~$X(\A_k)^{\Br(X)}$.
\end{thm}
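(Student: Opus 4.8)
The plan is to derive Theorem~\ref{th:smallrank} by feeding Theorem~\ref{th:algebraiccases}~(i) into Corollary~\ref{cor:ratpointsfullbis}. First I would check that hypotheses~(1), (2) and~(4) of Corollary~\ref{cor:ratpointsfull} are satisfied. Since the geometric generic fiber of~$f$ is rationally connected, it is geometrically irreducible and, $X$ being smooth and proper, the generic fiber~$X_\eta$ is smooth and proper over the function field of~$\P^1_k$; hypothesis~(1) then holds by Lemma~\ref{lem:implications} and~\cite[Corollary~4.18~(b)]{debarrehigherdim}, and hypothesis~(2), which over the curve~$\P^1_k$ concerns every fiber of~$f$, holds by the Graber--Harris--Starr theorem~\cite{ghs}. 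Hypothesis~(4) is part of the assumptions of Theorem~\ref{th:smallrank}.

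Next I would make the change of coordinates demanded by Corollary~\ref{cor:ratpointsfullbis}. The fibers of~$f$ are split outside a finite set of closed points of~$\P^1_k$, and $\P^1(k)$ is infinite, so there is a rational point of~$\P^1_k$ whose fiber is split; composing~$f$ with a suitable automorphism of~$\P^1_k$ carrying this point to~$\infty$, we may assume $f^{-1}(\infty)$ is split. Such an automorphism leaves $\mathrm{rank}(f)$ unchanged and preserves the dichotomy ``$k$ totally imaginary or non-split fibers over rational points of~$\P^1_k$''. All non-split fibers now lie over closed points of~$\A^1_k$; let $M'\subset\A^1_k$ be the reduced closed subscheme formed by them, replaced by a single rational point with split fiber in the degenerate case where it is empty. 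When the non-split fibers lie over rational points, $M'$ consists of rational points; when $k$ is totally imaginary, no condition is needed. Hence the requirement of Corollary~\ref{cor:ratpointsfullbis} that $M'$ contain a rational point or $k$ be totally imaginary is met.

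It remains to verify Conjecture~\ref{conj:mainstrong} for the irreducible monic polynomials $P_1,\dots,P_n\in k[t]$ that vanish at the points of~$M'$. Setting $k_i=k[t]/(P_i(t))$, one has $\sum_{i=1}^n[k_i:k]=\sum_{i=1}^n\deg(P_i)=\mathrm{rank}(f)\leq 2$ (the sum being~$1$ if $M'$ has been enlarged), so Theorem~\ref{th:algebraiccases}~(i) applies and yields Conjecture~\ref{conj:mainstrong} for $P_1,\dots,P_n$, with arbitrary auxiliary data $L_i/k_i$, $b_i$, $S$ and $(t_v)_{v\in S}$. All the hypotheses of Corollary~\ref{cor:ratpointsfullbis} being in place, that corollary shows $X(k)$ is dense in $X(\A_k)^{\Br(X)}$, as desired.

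The argument is thus mostly a matter of invoking earlier results; the only delicate points are arranging a split fiber at~$\infty$ without disturbing the hypotheses and handling the trivial case $M'=\emptyset$. The role of the bound $\mathrm{rank}(f)\leq 2$ is precisely to ensure $\sum_i[k_i:k]\leq 2$, which is the range covered unconditionally by Theorem~\ref{th:algebraiccases}~(i); a rank~$3$ variant would instead need case~(ii) of that theorem, with its constraint $[L_i:k_i]=2$ on every factor, which is unavailable here.
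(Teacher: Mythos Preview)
Your argument is correct and follows exactly the route the paper intends: Theorem~\ref{th:smallrank} is stated immediately after the sentence ``Other unconditional results can be obtained by combining Corollary~\ref{cor:ratpointsfullbis} with Theorem~\ref{th:algebraiccases}'' and is given no further proof, so you have simply written out the details of that combination. Your handling of the change of coordinates to split the fiber at~$\infty$, of the dichotomy ``rational non-split locus versus totally imaginary~$k$'', and of the degenerate case $M'=\emptyset$ is all in order.
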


Theorem~\ref{th:smallrank} is due to Harari~\cite{harariduke} \cite{hararifleches} when $\mathrm{rank}(f)\leq 1$.
Fibrations over~$\P^1_k$ with rank~$2$ had been dealt with (using the descent method)
in~\cite[Theorem~A]{ctskodescent} under the assumption that the fibers above a Hilbert set of rational points satisfy weak approximation.
Theorem~\ref{th:smallrank} relaxes this assumption when~$k$ is totally imaginary.
When~$k$ is not totally imaginary, one can recover Theorem~A and Theorem~B of \emph{op.\ cit.}\ by combining Theorem~\ref{th:ratpointsmainhilb} (with $M''=\emptyset$)
and Theorem~\ref{th:algebraiccases}.
Thus, Theorem~\ref{th:smallrank} answers the question raised at the end of~\cite[\textsection2.2]{cttiruchirapalli}.
The results of~\cite{ctskodescent} were
also extended in~\cite[Theorem~2.9]{harskotorsors}
to cover the case of
Theorem~\ref{th:smallrank}
in which the non-split fibers of~$f$ lie over rational points of~$\P^1_k$
and are split by prime degree extensions of~$k$.

Finally, we note that the statement obtained by combining Theorem~\ref{th:irving} (a corollary
of Irving's arguments~\cite{irving})
with Theorem~\ref{th:ratpointsmain} (with $M''=\emptyset$)
recovers the main result of~\cite{irving} on the existence of solutions to the
equation~\eqref{eq:normhyp} for cubic polynomials~$P(t)$, and extends it to
general fibrations with the same degeneration data,
under the assumption that the cubic field~$K$ possesses a unique real place.

\bibliographystyle{amsalpha}
\bibliography{zcfib}
\end{document}